\newcommand{\overto}[1]{\xrightarrow{\;#1\;}}
\newcommand{\phat}{^{\sma}_{p}}
\mathchardef\nhyphen=45
\newcommand{\ssdot}{\bullet}
\newcommand{\subdot}{_\ssdot}
\newcommand{\Com}{\aC{om}}
\newcommand{\Ass}{\aA{ss}}
\newcommand{\Alg}{\nhyphen\aA{lg}}
\newcommand{\CAlg}{\nhyphen\aC{om}}
\newcommand{\Mod}{\nhyphen\aM{od}}
\newcommand{\Spec}{\aS}
\newcommand{\opTR}{{^{op}TR}}
\newcommand{\opTC}{{^{op}TC}}
\newcommand{\AN}[1][{A}]{{{}_{#1}N}}
\newcommand{\gAN}[1][{A}]{{{}^{g}_{#1}N}}
\newcommand{\ATC}[1][{A}]{{{}_{#1}TC}}
\newcommand{\ATR}[1][{A}]{{{}_{#1}TR}}
\newcommand{\ATF}[1][{A}]{{{}_{#1}TF}}
\newcommand{\ATHH}[1][{A}]{{{}_{#1}THH}}
\newcommand{\uTor}{\mathop{\rm \underline{Tor}\mathstrut}\nolimits}
\newcommand{\upi}{\underline\pi\mathstrut}
\newcommand{\uE}{\underline E\mathstrut}
\newcommand{\dersma}{\sma^{\mathbf{L}}}
\let\iso\cong
\let\sma\wedge
\newcommand{\htp}{\simeq}
\renewcommand{\to}{\mathchoice{\longrightarrow}{\rightarrow}{\rightarrow}{\rightarrow}}
\newcommand{\from}{\mathchoice{\longleftarrow}{\leftarrow}{\leftarrow}{\leftarrow}}
\newcommand{\sI}{\mathscr{I}}
\newcommand{\sJ}{\mathscr{J}}
\let\catsymbfont\mathcal
\newcommand{\aA}{{\catsymbfont{A}}}
\newcommand{\aB}{{\catsymbfont{B}}}
\newcommand{\aC}{{\catsymbfont{C}}}
\newcommand{\aF}{{\catsymbfont{F}}}
\newcommand{\aFin}{{\catsymbfont{F}}_{\textup{Fin}}}
\newcommand{\aI}{{\catsymbfont{I}}}
\newcommand{\aM}{{\catsymbfont{M}}}
\newcommand{\aP}{{\catsymbfont{P}}}
\newcommand{\aS}{{\catsymbfont{S}}}
\newcommand{\aT}{{\catsymbfont{T}}}
\newcommand{\aV}{{\catsymbfont{V}}}
\newcommand{\bC}{\mathbb{C}}
\newcommand{\bF}{\mathbb{F}}
\newcommand{\bP}{{\mathbb{P}}}
\newcommand{\bQ}{{\mathbb{Q}}}
\newcommand{\bR}{{\mathbb{R}}}
\newcommand{\bT}{\mathbb{T}}
\newcommand{\bZ}{{\mathbb{Z}}}
\def\quickop#1{\expandafter\DeclareMathOperator\csname
#1\endcsname{#1}}
\DeclareMathOperator*\lcolim{colim}
\numberwithin{equation}{section}
\newtheorem{theorem}[equation]{Theorem}
\newtheorem*{theorem*}{Theorem}
\newtheorem{corollary}[equation]{Corollary}
\newtheorem{lemma}[equation]{Lemma}
\newtheorem{proposition}[equation]{Proposition}
\theoremstyle{definition}
\newtheorem{definition}[equation]{Definition}
\newtheorem{notation}[equation]{Notation}
\newtheorem{remark}[equation]{Remark}
\newtheorem{example}[equation]{Example}
\newtheorem{warning}[equation]{Warning}
\newcommand{\term}[1]{\textit{#1}}
\begin{document}

\title{Topological cyclic homology via the norm}

\author[V.Angeltveit]{Vigleik Angeltveit}
\address{Australian National University, Canberra, Australia}
\email{vigleik.angeltveit@anu.edu.au}
\thanks{Angeltveit was supported in part by an NSF All-Institutes Postdoctoral Fellowship administered by the Mathematical Sciences Research Institute through its core grant DMS-0441170, NSF grant DMS-0805917, and an Australian Research Council Discovery Grant}
\author[A.Blumberg]{Andrew~J. Blumberg}
\address{University of Texas, Austin, TX 78712}
\email{blumberg@math.utexas.edu}
\thanks{Blumberg was supported in part by NSF grant DMS-1151577}
\author[T.Gerhardt]{Teena Gerhardt}
\address{Michigan State University, East Lansing, MI 48824}
\email{teena@math.msu.edu}
\thanks{Gerhardt was supported in part by NSF grants DMS-1007083 and DMS-1149408}
\author[M.Hill]{Michael~A. Hill}
\address{University of California Los Angeles\\Los Angeles, CA 90025}
\email{mikehill@math.ucla.edu}
\thanks{Hill was supported in part by NSF grant DMS-0906285, DARPA
grant FA9550-07-1-0555, and the Sloan Foundation}
\author[T.Lawson]{Tyler Lawson}
\address{University of Minnesota, Minneapolis, MN 55455}
\thanks{Lawson was supported in part by NSF grant DMS-1206008}
\email{tlawson@math.umn.edu}
\author[M.Mandell]{Michael~A. Mandell}
\address{Indiana University\\Bloomington, IN 47405}
\email{mmandell@indiana.edu}
\thanks{Mandell was supported in part by NSF grants DMS-1105255, DMS-1505579}


\begin{abstract}
We describe a construction of the cyclotomic structure on topological
Hochschild homology ($THH$) of a ring spectrum using the
Hill--Hopkins--Ravenel multiplicative norm.  Our analysis takes place
entirely in the category of equivariant orthogonal spectra, avoiding
use of the B\"okstedt coherence machinery.  We are also able to define
two relative versions of topological cyclic homology ($TC$) and
$TR$-theory: one starting with a ring $C_n$-spectrum and one starting
with an algebra over a cyclotomic commutative ring spectrum $A$.  We
describe spectral sequences computing the relative theory $\ATR$ in
terms of $TR$ over the sphere spectrum and vice versa.  Furthermore,
our construction permits a straightforward definition of the Adams
operations on $TR$ and $TC$.
\end{abstract}

\maketitle \setcounter{tocdepth}{1} \tableofcontents

\section{Introduction}

Over the last two decades, the calculational study of algebraic
$K$-theory has been revolutionized by the development of trace
methods.  In analogy with the Chern character from topological
$K$-theory to ordinary cohomology, there exist ``trace maps'' from
algebraic $K$-theory to various more homological approximations, which
also can be more computable.  For a ring $R$, Dennis constructed a map to Hochschild homology 
\[K(R) \to HH(R)
\]
 that generalizes the trace of a matrix.  Goodwillie
lifted this trace map to negative cyclic homology
\[
K(R) \to HC^{-}(R) \to HH(R)
\]
and showed that, rationally, this map can often be used to compute
$K(R)$.

In his 1990 ICM address, Goodwillie conjectured that there should be a
``brave new'' version of this story involving ``topological''
analogues of Hochschild and cyclic homology defined
by changing the ground ring from $\bZ$ to the sphere spectrum.
Although the modern symmetric monoidal categories of spectra had not
yet been invented, B\"okstedt developed coherence machinery that
enabled a definition of topological Hochschild homology ($THH$) along these lines. Further, he constructed a
``topological'' Dennis trace map ~\cite{Bokstedt}
\[K(R) \to THH(R).
\]
Subsequently, B\"okstedt--Hsiang--Madsen~\cite{BHM} defined topological cyclic homology ($TC$) and
constructed the cyclotomic trace map to $TC$, lifting the topological Dennis trace
\[
K(R) \to TC(R) \to THH(R).
\]
They did this in the course of resolving the $K$-theory Novikov conjecture for
groups satisfying a mild finiteness hypothesis.  Subsequently, seminal work of
McCarthy~\cite{McCarthy} and Dundas~\cite{Dundas} showed that when
working at a prime $p$, $TC$ often captures a great deal of
information about $K$-theory.  Hesselholt and Madsen (inter alia,
\cite{HMannals}) then used $TC$ to make extensive computations in
$K$-theory, including a computational resolution of the
Quillen--Lichtenbaum conjecture for certain fields.

The calculational power of trace methods depends on the ability to
compute $TC(R)$, which ultimately derives from the methods of equivariant
stable homotopy theory. B\"okstedt's definition of $THH(R)$ closely
resembles a cyclic bar construction, and as a consequence $THH(R)$ is an
$S^1$-spectrum. Topological cyclic homology is constructed from this $S^1$-action
on $THH(R)$, via fixed point spectra $TR^n(R) = THH(R)^{C_{p^{n}}}$. In fact, $THH(R)$ has a very special
equivariant structure: $THH(R)$ is a \emph{cyclotomic} spectrum, which
is an $S^1$-spectrum equipped with additional data that
models the structure of a free loop space $\Lambda X$.

The cyclic bar construction can be formed in any symmetric monoidal
category $(A, \boxtimes, 1)$; we will let $N^{\cyc}_{\boxtimes}$
denote the resulting simplicial (or cyclic) object.  Recall that in
the category of spaces, for a group-like monoid $M$, there is a
natural map
\[
|N^{\cyc}_{\times} M| \to \Map(S^1, BM) = \Lambda BM
\]
(where $|\cdot|$ denotes geometric realization)
that is a weak equivalence on fixed points for any finite subgroup
$C_n < S^1$.  Moreover, for each such $C_n$, the free loop space
is equipped with equivalences (in fact homeomorphisms) 
\[
(\Lambda BM)^{C_n} \cong \Lambda BM
\]
of $S^1$-spaces, where $(\Lambda
BM)^{C_n}$ is regarded as an $S^1$-space (rather than an
$S^{1}/C_{n}$-space) via pullback along the $n$th root isomorphism
\[
\rho_n \colon S^1 \cong S^1/C_n.
\]
In analogy, a cyclotomic spectrum is an $S^1$-spectrum
equipped with compatible equivalences of $S^1$-spectra
\[
t_n \colon \rho^*_n L\Phi^{C_n} X \to X,
\]
where $L\Phi^{C_{n}}$ denotes the (left derived) ``geometric'' fixed point
functor.

The construction of the cyclotomic structure on $THH$ has classically
been one of the more subtle and mysterious parts of the construction
of $TC$.  In a modern symmetric monoidal category of
spectra (e.g., symmetric spectra or EKMM $S$-modules), one can simply
define $THH(R)$ as
\[
THH(R) = |N^{\cyc}_{\sma} R|,
\]
but the resulting equivariant spectrum did not have the correct
homotopy type.  Only B\"okstedt's original construction of $THH$
seemed to produce the cyclotomic structure.

Although this situation has not impeded the calculational
applications, reliance on the B\"okstedt construction has limited
progress in certain directions.  For one thing, it does not seem to be
possible to use the B\"okstedt construction to define $TC$ relative to
a ground ring that is not the sphere spectrum $S$.  
Moreover, the details of the B\"okstedt construction make
it difficult to understand the equivariance (and therefore relevance
to $TC$) of various additional algebraic structures that arise on
$THH$, notably the Adams operations and the coalgebra structures.

The purpose of this paper is to introduce a new approach to the
construction of the cyclotomic structure on $THH$ using an
interpretation of $THH$ in terms of the Hill--Hopkins--Ravenel
multiplicative norm.  Our point of departure is the observation that
the construction of the cyclotomic structure on $THH(R)$ ultimately
boils down to having good models of the smash powers
\[
R^{\sma n} = \underbrace{R \sma R \sma \ldots \sma R}_{n}
\]
of a spectrum $R$ as a $C_n$-spectrum such that there is a
suitably compatible collection of diagonal equivalences
\[
R \to \Phi^{C_n} R^{\sma n}.
\]
The recent solution of the Kervaire invariant one problem involved the
detailed analysis of a multiplicative norm construction in equivariant
stable homotopy theory that has precisely this type of behavior.  Although
Hill--Hopkins--Ravenel studied the norm construction $N_{H}^G$ for
a finite group $G$ and subgroup $H$, using the cyclic bar construction one can
extend this construction to a norm $N_e^{S^1}$ on associative ring
orthogonal spectra; such a construction first appeared in the thesis
of Martin Stolz~\cite{Stolz}. 

For the following definition, we need to introduce some notation.  Let
$\Spec$ denote the category of orthogonal spectra and let
$\Spec^{S^1}_{U}$ denote the category of orthogonal $S^1$-spectra
indexed on the complete universe $U$.  Finally, let $\Ass$ 
denote the category of associative ring orthogonal
spectra.

\begin{definition}\label{defn:Tnorm}
Define the functor
\[
N_e^{S^1} \colon \Ass \to \Spec^{S^1}_{U}
\]
to be the composite functor
\[
R \mapsto \aI_{\bR^{\infty}}^{U} |N^{\cyc}_{\sma} R|,
\]
with $|N^{\cyc}_{\sma} R|$ regarded as an orthogonal $S^1$-spectrum
indexed on the standard trivial universe $\bR^{\infty}$. Here $\aI_{\bR^{\infty}}^{U}$ denotes the change of universe functor (see Definition \ref{def:changeuniverse}).
\end{definition}

Since both the cyclic bar construction and the change of universe
functor preserve commutative ring orthogonal spectra, the norm above
also preserves commutative ring orthogonal spectra.  In the following
proposition, proved in Section \ref{sec:THH}, $\Com$ and
$\Com^{S^1}_{U}$ denote the categories of commutative ring orthogonal
spectra and commutative ring orthogonal $S^1$-spectra, respectively.

\begin{proposition}
$N_e^{S^1}$ restricts to a functor
\[
N_e^{S^1} \colon\Com \to \Com^{S^{1}}_{U}
\]
that is the left
adjoint to the forgetful functor from commutative ring orthogonal
$S^1$-spectra to commutative ring orthogonal spectra.
\end{proposition}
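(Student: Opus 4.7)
The plan is to factor the forgetful functor $\Com^{S^1}_U \to \Com$ as a composite
\[
\Com^{S^1}_U \xrightarrow{\aI_U^{\bR^\infty}} \Com^{S^1}_{\bR^\infty} \xrightarrow{\text{forget $S^1$}} \Com,
\]
where the first functor restricts the indexing universe and the second forgets the $S^1$-action, and then to exhibit $N_e^{S^1}$ as the composite of the left adjoints to these two factors. This reduces the proposition to two independent adjunctions, each of which is formal once one has the right identifications.

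For the right-hand factor, $\Com^{S^1}_{\bR^\infty}$ is canonically identified with the category of $S^1$-objects in $\Com$, and the free $S^1$-action functor $S^1 \otimes (-) \colon \Com \to \Com^{S^1}_{\bR^\infty}$ is left adjoint to forgetting the $S^1$-action: an $S^1$-equivariant map out of $S^1 \otimes R$ is determined by its restriction at the identity element of $S^1$, which amounts to a non-equivariant map $R \to T$ in $\Com$. The key identification is that $|N^{\cyc}_{\sma} R| \cong S^1 \otimes R$ as an object of $\Com^{S^1}_{\bR^\infty}$. This follows from the observation that in any cocomplete symmetric monoidal category, the cyclic bar construction on a commutative monoid $R$ at simplicial level $n$ is the $(n+1)$-fold coproduct $R^{\sma(n+1)}$, which agrees with the tensor of $R$ over the set of $n$-simplices of a standard simplicial model $S^1_\ssdot$ of $S^1$; the cyclic face and degeneracy maps match those coming from $S^1_\ssdot$, and since tensors commute with colimits, passing to geometric realization yields $|N^{\cyc}_\sma R| \cong |S^1_\ssdot| \otimes R = S^1 \otimes R$. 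The standard rotation $S^1$-action on the realization of a cyclic object then corresponds to the translation action on the tensor.

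For the left-hand factor, the change of universe functors form an adjoint pair $\aI_{\bR^\infty}^U \dashv \aI_U^{\bR^\infty}$ on orthogonal $S^1$-spectra, and because $\aI_{\bR^\infty}^U$ is strong symmetric monoidal this adjunction descends to commutative ring objects. Composing the two left adjoints produces the functor
\[
\Com \xrightarrow{S^1 \otimes (-)} \Com^{S^1}_{\bR^\infty} \xrightarrow{\aI_{\bR^\infty}^U} \Com^{S^1}_U,
\]
whose value on $R$ is $\aI_{\bR^\infty}^U |N^{\cyc}_{\sma} R| = N_e^{S^1}(R)$, completing the proof. The main obstacle is the verification that $\aI_{\bR^\infty}^U$ is strong symmetric monoidal on orthogonal $S^1$-spectra so that it descends cleanly to an adjunction on commutative ring objects; this is a technical property of the change of universe functor that is presumably established in the paper's preparatory material on equivariant orthogonal spectra, and once it is in hand the rest of the argument is formal.
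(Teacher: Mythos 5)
Your proposal is correct and matches the paper's approach: both factor the forgetful functor as $\Com^{S^1}_U \to \Com^{S^1}_{\bR^\infty} \to \Com$ using the change-of-universe equivalence, identify $|N^{\cyc}_\sma R|$ with the tensor $R \otimes S^1$ in commutative ring orthogonal spectra, and observe that this tensor is the free $S^1$-object functor. The paper establishes the identification $|N^{\cyc}_\sma R| \iso R\otimes S^1$ slightly more explicitly (via the reflexive coequalizer $\bP\bP R \rightrightarrows \bP R \to R$ and the isomorphism $\bP|X_\ssdot|\iso|\bP X_\ssdot|$) whereas you appeal directly to the levelwise coproduct description and commutation of tensors with colimits, but the underlying content is the same.
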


The forgetful functor from commutative ring orthogonal
$S^1$-spectra to commutative ring orthogonal spectra is the composite
of the change of universe functor $\aI_{U}^{\bR^{\infty}}$ and the
functor that forgets equivariance. The proof of the above proposition
identifies $N_{e}^{S^{1}}\colon\Com \to\Com^{S^{1}}_{U}$
as the composite functor
\[
R \mapsto \aI_{\bR^{\infty}}^{U} (R \otimes S^1),
\]
which is left adjoint to the forgetful functor. Here $\otimes$ denotes the tensor of a commutative ring orthogonal
spectrum with an unbased space, and we regard $(-)\otimes S^{1}$ as a
functor from commutative ring orthogonal spectra to commutative ring
orthogonal spectra with an action of $S^{1}$.

The deep aspect of the Hill--Hopkins--Ravenel treatment of the norm
functor is their analysis of the left derived functors of the norm.
As part of this analysis they show that the norm $N_{H}^{G}$ preserves
certain weak equivalences.  For our norm $N_{e}^{S^{1}}$ into
$\Spec^{S^1}_{U}$, we work with the homotopy theory defined by the 
\emph{$\aF$-equivalences} of orthogonal $S^{1}$-spectra, where an
$\aF$-equivalence is a map that induces an isomorphism on all the
homotopy groups at the fixed point spectra for the finite subgroups of
$S^{1}$.  We prove the following theorem in Section~\ref{sec:THH}.

\begin{proposition}\label{prop:derive}
Assume that $R$ is a cofibrant associative ring orthogonal
spectrum and $R'$ is either a cofibrant associative ring orthogonal
spectrum or a cofibrant commutative ring orthogonal spectrum.  If
$R \to R'$ is a weak equivalence, then $N_{e}^{S^{1}} R \to
N_{e}^{S^{1}} R'$ is an $\aF$-equivalence in $\Spec^{S^1}_{U}$.
\end{proposition}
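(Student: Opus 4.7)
My strategy has three main steps: decompose $N_{e}^{S^{1}}$ as cyclic bar realization followed by change of universe; reduce the question of $\aF$-equivalence to geometric fixed points by isotropy separation; and compute the geometric fixed points using edgewise subdivision together with the Hill--Hopkins--Ravenel diagonal for the finite norms $N_{e}^{C_{n}}$.

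By construction $N_{e}^{S^{1}}R = \aI_{\bR^{\infty}}^{U}|N^{\cyc}_{\sma}R|$, and since $\aF$ consists of finite subgroups of $S^{1}$, the change of universe $\aI_{\bR^{\infty}}^{U}$ preserves $\aF$-equivalences. So it suffices to show that the map induced by $R\to R'$ is a $\pi_{*}^{C_{n}}$-isomorphism for every finite cyclic $C_{n}\subset S^{1}$. Proceeding by induction on $n$, the isotropy separation cofiber sequence
\[
(E\mathcal{P})_{+}\sma X \to X \to \widetilde{E\mathcal{P}}\sma X
\]
(with $\mathcal{P}$ the family of proper subgroups of $C_{n}$), combined with the standard identification $(\widetilde{E\mathcal{P}}\sma X)^{C_{n}} \htp \Phi^{C_{n}}X$, reduces the problem to showing that $\Phi^{C_{n}}|N^{\cyc}_{\sma}R| \to \Phi^{C_{n}}|N^{\cyc}_{\sma}R'|$ is a nonequivariant weak equivalence for each finite $n$.

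For this step I apply $n$-fold edgewise subdivision: there is an $S^{1}$-equivariant homeomorphism $|\mathrm{sd}_{n}N^{\cyc}_{\sma}R| \iso |N^{\cyc}_{\sma}R|$, and at simplicial level $p$,
\[
(\mathrm{sd}_{n}N^{\cyc}_{\sma}R)_{p} \iso R^{\sma n(p+1)} \iso N_{e}^{C_{n}}\bigl(R^{\sma(p+1)}\bigr),
\]
with $C_{n}$ cyclically permuting the $n$ blocks of $p+1$ factors of $R$. Since $\Phi^{C_{n}}$ commutes with geometric realization, and since the HHR diagonal provides a natural equivalence $\Phi^{C_{n}}N_{e}^{C_{n}}(X)\htp X$ for cofibrant $X$, this yields a natural weak equivalence
\[
\Phi^{C_{n}}|N^{\cyc}_{\sma}R| \htp |N^{\cyc}_{\sma}R|.
\]
The cofibrancy hypothesis on $R$ (as an associative or commutative ring orthogonal spectrum) ensures that each smash power $R^{\sma(p+1)}$ is a cofibrant underlying orthogonal spectrum and that $R\to R'$ induces weak equivalences on every $R^{\sma(p+1)}$, hence on their geometric realizations, completing the induction.

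I expect the main obstacle to be verifying that the edgewise subdivision identification $\mathrm{sd}_{n}N^{\cyc}_{\sma}R \iso N_{e}^{C_{n}}(N^{\cyc}_{\sma}R)$ is genuinely compatible with the cyclic structure and the $C_{n}$-action as simplicial $C_{n}$-orthogonal spectra (so that $\Phi^{C_{n}}$ may be interchanged with the realization), and confirming that the HHR diagonal and norm-cofibrancy results, which are originally formulated in the commutative setting, apply to smash powers $R^{\sma k}$ arising from a cofibrant associative ring orthogonal spectrum, where noncommutativity demands a more delicate cofibrancy analysis.
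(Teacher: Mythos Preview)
Your approach is essentially the paper's, just unpacked. The paper first establishes (Theorems~\ref{thm:cyclodiag} and~\ref{thm:levelwisefp}) that for cofibrant $R$ the diagonal gives isomorphisms $N_e^{S^1}R \to \rho_n^*\Phi^{C_n}N_e^{S^1}R$ and that the point-set $\Phi^{C_n}$ here agrees with the derived one, so $N_e^{S^1}R$ is cyclotomic; it then invokes Proposition~\ref{prop:wkeqivcyc} (a map of $p$-cyclotomic spectra is an $\aF_p$-equivalence iff it is an underlying weak equivalence) to reduce to checking that $|N^{\cyc}_{\sma}R|\to|N^{\cyc}_{\sma}R'|$ is a non-equivariant weak equivalence, which follows levelwise from properness. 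Your isotropy-separation induction is exactly the argument hidden inside Proposition~\ref{prop:wkeqivcyc}, and your edgewise-subdivision/diagonal computation is exactly Theorems~\ref{thm:cyclodiag} and~\ref{thm:levelwisefp}. The paper's packaging via the cyclotomic structure is cleaner because it isolates the hard work (the diagonal isomorphism and the derived-vs-point-set comparison) once and for all.

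Two points need correction. First, the claim ``$\aI_{\bR^{\infty}}^{U}$ preserves $\aF$-equivalences'' is false in general and is not what your argument actually uses: the geometric fixed point functor $\Phi^{C_n}$ lives on the complete universe, so the change of universe must remain inside the computation. What you really compute is $\Phi^{C_n}\aI_{\bR^{\infty}}^{U}|\sd_n N^{\cyc}_{\sma}R|$, using that levelwise $\aI_{\bR^{\infty}}^{\tilde U}(\sd_n N^{\cyc}_{\sma}R)_k \cong N_e^{C_n}(R^{\sma(k+1)})$ together with the HHR diagonal; drop the misleading sentence and work directly with $\Phi^{C_n}N_e^{S^1}R$. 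Second, the assertions that $\Phi^{C_n}$ commutes with geometric realization and that the point-set $\Phi^{C_n}$ agrees with the derived one both rest on $\aI_{\bR^{\infty}}^{\tilde U}\sd_n N^{\cyc}_{\sma}R$ being a \emph{proper} simplicial $C_n$-spectrum (Lemma~\ref{lem:proper}); you should state and use this, especially since in the commutative case the latching maps are only $h$-cofibrations rather than cofibrations. Finally, your worry about the associative case is misplaced: cofibrant associative ring orthogonal spectra are cofibrant as underlying orthogonal spectra, so Theorem~\ref{thm:normisobasic} applies directly; the genuinely delicate diagonal argument is the commutative one (Theorem~\ref{thm:comnormisobasic}).
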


Of course the conclusion holds if $R$ is a cofibrant commutative ring
orthogonal spectrum as well; the point of
Proposition~\ref{prop:derive} is to compare cofibrant replacements in
associative and commutative ring orthogonal spectra.

As a consequence we obtain the following additional observation about
the adjunction in the commutative case.  See
Proposition~\ref{prop:Tcommquillen} for a more precise statement. 

\begin{proposition}\label{prop:dercom}
The functor
\[
N_{e}^{S^{1}}\colon \Com\to \Com^{S^{1}}_{U}
\]
is Quillen left adjoint to the forgetful functor 
(for an appropriate model structure with weak equivalences the
$\aF$-equivalences on the codomain); in particular, its left
derived functor exists and is left
adjoint to the right derived forgetful functor.
\end{proposition}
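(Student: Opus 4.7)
By the preceding proposition the adjunction itself is already in hand: $N_e^{S^1}(R)=\aI_{\bR^{\infty}}^{U}(R\otimes S^{1})$ is manifestly left adjoint to the forgetful functor. The content of Proposition~\ref{prop:dercom} is therefore to equip $\Com^{S^{1}}_{U}$ with a model structure whose weak equivalences are the $\aF$-equivalences and to verify that this adjunction is Quillen. I would take the model structure on $\Com^{S^{1}}_{U}$ to be transferred from the (positive) $\aF$-model structure on $\Spec^{S^{1}}_{U}$ along the free commutative ring functor, in direct parallel with the transferred model structure on $\Com$ coming from the (positive) model structure on $\Spec$; the existence of the transfer will have been established elsewhere in the paper from the relevant cell structure and smallness.

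To check the Quillen property I must show that $N_e^{S^{1}}$ preserves cofibrations and acyclic cofibrations. Generating (acyclic) cofibrations of $\Com$ and $\Com^{S^{1}}_{U}$ are of the form $\Com(i)$ for $i$ a generator of the underlying model structure on orthogonal (equivariant) spectra, and since $N_e^{S^{1}}$ is a left adjoint it suffices to check behavior on these generators. Uniqueness of left adjoints to the composite forgetful functor $\Com^{S^{1}}_{U}\to \Spec$ (forget commutativity, change universe, forget equivariance) produces a natural isomorphism
\[
N_{e}^{S^{1}}\Com(X)\;\iso\;\Com^{S^{1}}_{U}\bigl(\aI_{\bR^{\infty}}^{U}(X\sma S^{1}_{+})\bigr),
\]
so the question reduces to showing that the underlying functor $X\mapsto \aI_{\bR^{\infty}}^{U}(X\sma S^{1}_{+})$ sends (positive) generating (acyclic) cofibrations of $\Spec$ to (acyclic) cofibrations in the (positive) $\aF$-model structure on $\Spec^{S^{1}}_{U}$.

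The cofibration half is routine: smashing with $S^{1}_{+}$ and the change of universe $\aI_{\bR^{\infty}}^{U}$ are each left Quillen between the relevant model categories of orthogonal spectra, so their composite preserves cofibrations. The acyclic cofibration case is the main obstacle, because $\aF$-equivalences are finer than ordinary $\pi_{*}$-equivalences of underlying spectra and the composite above is not obviously homotopically controlled in this refined sense. Here Proposition~\ref{prop:derive} does the key work: a generating acyclic cofibration $\Com(j)$ is a cofibration between cofibrant commutative ring spectra that is also a weak equivalence, so $N_e^{S^{1}}\Com(j)$ is a cofibration in $\Com^{S^{1}}_{U}$ (by the previous paragraph) and simultaneously an $\aF$-equivalence (by Proposition~\ref{prop:derive}), hence an acyclic cofibration. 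The entire Quillen property therefore reduces to Proposition~\ref{prop:derive}; granted that, the conclusion follows by these formal manipulations, and the left derived functor then exists and is left adjoint to the right derived forgetful functor by general homotopical algebra.
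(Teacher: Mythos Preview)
Your argument is correct in outline, but it is far more elaborate than what the paper actually does. The paper's proof (stated as Proposition~\ref{prop:Tcommquillen}) is a single sentence: \emph{the forgetful functor preserves fibrations and acyclic fibrations}. That is all that is needed, because in both the positive complete model structure on $\Com$ and the positive complete $\aFin$-model structure on $\Com^{S^{1}}_{U}$ the fibrations and weak equivalences are \emph{created} by the forgetful functors to the underlying categories of orthogonal spectra. So the question reduces to the spectrum-level forgetful functor $\Spec^{S^{1}}_{U}\to\Spec$, and there it is immediate: since the trivial subgroup lies in $\aFin$, an $\aFin$-equivalence is in particular a non-equivariant $\pi_{*}$-isomorphism, and an $\aFin$-fibration has the right lifting property against the non-equivariant generating acyclic cofibrations (which sit inside the $\aFin$ generating set via $H=e$).

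Your route---checking that the left adjoint preserves generating cofibrations and generating acyclic cofibrations, identifying $N_{e}^{S^{1}}\Com(X)$ explicitly, and then invoking Proposition~\ref{prop:derive} for the acyclic case---does work, and it has the virtue of making precise the dependence on Proposition~\ref{prop:derive} that the introduction's phrasing (``As a consequence\dots'') suggests. But the paper's actual proof does not use Proposition~\ref{prop:derive} at all; the introduction's ordering is expository rather than logical. The right-adjoint check is both shorter and avoids the verifications you label ``routine'' (that $(-)\sma S^{1}_{+}$ and $\aI_{\bR^{\infty}}^{U}$ send positive generating cofibrations into the $\aFin$ cofibrations), which, while true, are not entirely without content.
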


Our first main theorem is that when $R$ is a cofibrant associative
ring orthogonal spectrum, $N_{e}^{S^{1}}R$ is a cyclotomic spectrum.
To be precise, we use the point-set model of cyclotomic spectra
from~\cite{BM}, which provides a definition entirely in terms of the
category of orthogonal $S^1$-spectra.

\begin{theorem}\label{thm:maincyc}
Let $R$ be a cofibrant associative or cofibrant commutative ring
orthogonal spectrum.  Then $N_{e}^{S^{1}}R$ has a natural structure of
a cyclotomic spectrum.
\end{theorem}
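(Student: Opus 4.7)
The plan is to construct the cyclotomic structure maps at the point-set level by exploiting the interaction of the cyclic bar construction with edgewise subdivision and then invoking the Hill--Hopkins--Ravenel diagonal. First I would work simplicially: the $n$-fold edgewise subdivision $\mathrm{sd}_{n} N^{\cyc}_{\sma} R$ has $k$-simplices $R^{\sma n(k+1)}$, and as a simplicial object with $C_n$-action (where $C_n$ cyclically permutes the $n$ blocks) it is canonically isomorphic to $N^{\cyc}_{\sma} R$ after restriction along $\rho_n\colon S^1\cong S^1/C_n$. Since the geometric realization of an edgewise subdivision is equivariantly homeomorphic to the original realization, this yields a canonical $\rho_n^{*}$-equivariant isomorphism of underlying $S^1$-spectra after subdivision.

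Next I would apply the geometric fixed point functor $\Phi^{C_n}$ levelwise. For a cofibrant associative (or commutative) ring spectrum $R$, the HHR diagonal provides an equivalence $R^{\sma(k+1)} \to \Phi^{C_n}(R^{\sma n(k+1)})$, and these levelwise diagonals assemble into a map of simplicial orthogonal spectra that is compatible with the cyclic structure. Using that $\Phi^{C_n}$ commutes with geometric realization on suitably cofibrant simplicial objects, one obtains a map
\[
|N^{\cyc}_{\sma} R| \to \Phi^{C_n}|\mathrm{sd}_n N^{\cyc}_{\sma} R| \cong \Phi^{C_n}|N^{\cyc}_{\sma} R|
\]
which after change of universe and adjunction delivers the required structure map $t_n\colon \rho_n^{*}\Phi^{C_n} N_e^{S^1} R \to N_e^{S^1} R$. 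Proposition~\ref{prop:derive} and the HHR analysis then guarantee that $t_n$ is an $\aF$-equivalence.

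The coherence required by the point-set model of \cite{BM} needs to be checked directly: for $m,n\geq 1$, the composite $t_{m}\circ \rho_m^{*}\Phi^{C_m}(t_n)$ must agree with $t_{mn}$ after the natural identifications $\rho_m^{*}\rho_n^{*}\cong \rho_{mn}^{*}$ and $\Phi^{C_m}\Phi^{C_n}\cong \Phi^{C_{mn}}$. This reduces, using the associativity of edgewise subdivision ($\mathrm{sd}_m\mathrm{sd}_n \cong \mathrm{sd}_{mn}$) and the compatibility of the HHR diagonal with iteration, to a simplicial-level identity of maps $R^{\sma (k+1)} \to \Phi^{C_{mn}}(R^{\sma mn(k+1)})$, which is the standard transitivity of the norm diagonal. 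The final verification is that these point-set maps are genuinely maps of orthogonal $S^1$-spectra indexed on the complete universe $U$, which uses that $\aI_{\bR^{\infty}}^{U}$ is symmetric monoidal and intertwines with $\Phi^{C_n}$ up to a natural comparison.

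The main obstacle is the bookkeeping needed to ensure that all the identifications above are \emph{strict} isomorphisms of orthogonal $S^1$-spectra rather than only equivalences, since the BM framework asks for point-set structure maps and strict coherence. Most of the difficulty concentrates on two points: correctly matching the $S^1$-equivariance of the edgewise subdivision with the $\rho_n$-pullback on $\Phi^{C_n}$, and verifying that $\Phi^{C_n}$ commutes on the nose with the relevant geometric realizations after the change of universe. Once these identifications are nailed down, the cofibrancy hypothesis on $R$ ensures (via Proposition~\ref{prop:derive} and HHR) that the resulting $t_n$ are $\aF$-equivalences, giving the cyclotomic structure.
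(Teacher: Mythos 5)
Your overall strategy — subdivide, assemble the levelwise Hill--Hopkins--Ravenel diagonals, and use that $\Phi^{C_n}$ commutes with realization on proper simplicial objects — is essentially the paper's route (Theorems \ref{thm:cyclodiag} and \ref{thm:levelwisefp}). However, there are two places where your write-up conceals a gap rather than closing it.

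First, you build a map in the direction $|N^{\cyc}_{\sma}R| \to \Phi^{C_n}|\mathrm{sd}_n N^{\cyc}_{\sma}R|$ and then assert that ``after change of universe and adjunction'' this yields $t_n \colon \rho_n^*\Phi^{C_n}N_e^{S^1}R \to N_e^{S^1}R$. No adjunction reverses a map of $S^1$-spectra. The actual content of the paper's Theorem~\ref{thm:cyclodiag} is that for $R$ cofibrant in $\Ass$ or cofibrant in $\Com$, the HHR diagonal $\Delta_n\colon R^{\sma(k+1)} \to \Phi^{C_n}N_e^{C_n}R^{\sma(k+1)}$ is a point-set \emph{isomorphism} (Theorems~\ref{thm:normisobasic} and~\ref{thm:comnormisobasic}, drawing on \cite[B.209]{HHR}), so the assembled map $\tau_n \colon N_e^{S^1}R \to \rho_n^*\Phi^{C_n}N_e^{S^1}R$ is an isomorphism and $t_n$ is defined to be its \emph{inverse}. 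You acknowledge in your closing paragraph that the identifications should be strict isomorphisms, but the main construction as you present it does not actually invoke that fact, and without inverting an isomorphism you have no candidate for $t_n$ at all.

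Second, you cite Proposition~\ref{prop:derive} to conclude that $t_n$ is an $\aF$-equivalence, but that proposition concerns $N_e^{S^1}$ preserving weak equivalences between cofibrant rings, which is not what the cyclotomic condition asks for. The condition is that $\rho_p^*L\Phi^{C_p}N_e^{S^1}R \to N_e^{S^1}R$ is an $\aF_p$-equivalence; since $t_p$ is a point-set isomorphism, this reduces to showing that the canonical map $L\Phi^{C_p}N_e^{S^1}R \to \Phi^{C_p}N_e^{S^1}R$ is an equivalence. That is the content of Theorem~\ref{thm:levelwisefp}, which requires verifying that $\aI_{\bR^\infty}^{\tilde U}\sd_n N^{\cyc}_\sma R$ is a proper simplicial object (Lemma~\ref{lem:proper}) and that the point-set geometric fixed points compute the derived ones at each simplicial level (using again the norm-diagonal isomorphisms). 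You gesture at ``suitably cofibrant simplicial objects'' but do not identify this as the place where cofibrancy of $R$ (associative or commutative) is actually consumed.
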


Experts will recognize that one can give a direct construction of the
cyclotomic trace induced by the inclusion of objects in a spectral
category enriched in orthogonal spectra (e.g., see~\cite{BMtw}).  We
review this construction in Section~\ref{sec:trace}.

Proposition~\ref{prop:dercom}, which describes $N_{e}^{S^{1}}$ as the
homotopical left adjoint to the forgetful functor, suggests a
generalization of our construction of $THH$ that takes ring orthogonal
$C_{n}$-spectra as input.  For commutative ring orthogonal
$C_{n}$-spectra, we can define $N_{C_{n}}^{S^{1}}$ as the left adjoint
to the forgetful functor.  However, to extend to the non-commutative
case, we need an explicit construction. We give such a construction in
Section~\ref{sec:cncyclo} in terms of a cyclic bar construction, which
we denote as $N^{\cyc,C_{n}}_{\sma} R$. Its geometric realization
$|N^{\cyc,C_{n}}_{\sma} R|$ has an $S^{1}$-action, and by promoting it
to the complete universe we obtain a genuine 
orthogonal $S^{1}$-spectrum that we denote as $N_{C_{n}}^{S^{1}}R$.  The
following proposition is a consistency check.

\begin{proposition}
Let $R$ be a commutative ring orthogonal $C_{n}$-spectrum.  Then
$N_{C_{n}}^{S^{1}} R$ is isomorphic to the left adjoint of the
forgetful functor from commutative ring orthogonal $S^{1}$-spectra to
commutative ring orthogonal $C_{n}$-spectra.
\end{proposition}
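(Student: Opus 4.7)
The plan is to mimic the strategy used earlier in the paper to identify $N_e^{S^1}$ on commutative rings as $\aI_{\bR^{\infty}}^U(R \otimes S^1)$, which served as the corresponding consistency check for the $e \subset S^1$ case.

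First I would establish that the forgetful functor from commutative ring orthogonal $S^1$-spectra indexed on $U$ to commutative ring orthogonal $C_n$-spectra admits a left adjoint with an explicit formula. This forgetful functor factors as the change of universe $\aI_U^{\bR^{\infty}}$ composed with the forgetful functor on commutative rings $\Com^{S^1}_{\bR^{\infty}} \to \Com^{C_n}_{\bR^{\infty}}$ at the trivial universe. Since commutative ring orthogonal $G$-spectra are tensored over unbased spaces, the left adjoint to the restriction along $C_n \subseteq S^1$ on commutative rings is given by the tensor $R \otimes_{C_n} S^1$, where $S^1$ carries its standard rotational $C_n$-action and the left $S^1$-action on the output comes from multiplication on $S^1$. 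Composing with change of universe, the total left adjoint sends $R$ to $\aI_{\bR^{\infty}}^U(R \otimes_{C_n} S^1)$.

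Next I would examine the cyclic bar construction $N^{\cyc,C_n}_{\sma} R$ from Section~\ref{sec:cncyclo} in the special case that $R$ is commutative. Generalizing the classical identification $|N^{\cyc}_{\sma} R| \iso R \otimes S^1$ for commutative $R$ that underlies the $N_e^{S^1}$ case, the $C_n$-twisted cyclic bar construction should realize to $R \otimes_{C_n} S^1$ equipped with the anti-diagonal $C_n$-action combining the given $C_n$-action on $R$ with rotation on $S^1$. Combining this identification with the change-of-universe functor then yields $N_{C_n}^{S^1} R \iso \aI_{\bR^{\infty}}^U(R \otimes_{C_n} S^1)$, exactly the left adjoint identified in the first step.

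The main obstacle I expect is in the second step: verifying that the explicit point-set model $N^{\cyc,C_n}_{\sma} R$ (designed to work for arbitrary associative $C_n$-ring spectra) specializes correctly in the commutative case. This requires carefully matching the $C_n$-twist built into its simplicial face and degeneracy structure with the anti-diagonal $C_n$-action on the standard simplicial model of $S^1$, and then using that on a commutative ring all the face maps collapse to iterated multiplication. Given the close parallel to the non-equivariant commutative computation already handled in the paper, the comparison should reduce to an essentially formal manipulation of cyclic objects once the simplicial structure is written out.
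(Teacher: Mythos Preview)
Your proposal is correct and follows essentially the same route as the paper. The paper sketches exactly this argument in the introduction: the left adjoint is identified as $\aI_{\bR^{\infty}}^{U}(R\otimes_{C_{n}}S^{1})$ via the coequalizer of the two $C_{n}$-actions on $(i^{*}R)\otimes S^{1}$, and the isomorphism with $|N^{\cyc,C_{n}}_{\sma}R|$ is obtained by ``choosing an appropriately subdivided model of the circle''---precisely the $\Lambda_{n}^{\op}$-structure on $N^{\cyc,C_{n}}_{\sma}R$ that you plan to unwind. One small phrasing issue: $R\otimes_{C_{n}}S^{1}$ is already the coequalizer, so it carries the residual $S^{1}$-action rather than an ``anti-diagonal $C_{n}$-action''; what you presumably mean is that before passing to the coequalizer one has $(i^{*}R)\otimes S^{1}$ with the diagonal $C_{n}$-action, and the subdivided simplicial circle makes this visible levelwise.
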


Again, we can describe the left adjoint in terms of a tensor
\[
N_{C_{n}}^{S^{1}} R = \aI_{\bR^{\infty}}^{U} (R \otimes_{C_{n}} S^{1}),
\]
where the relative tensor $R \otimes_{C_{n}} S^{1}$ may be explicitly
constructed as the coequalizer
\[ 
(i^{*} R) \otimes C_{n} \otimes S^{1} \rightrightarrows (i^{*} R)
\otimes S^{1}
\]
of the canonical action of $C_{n}$ on $S^{1}$ and the action map
$(i^{* }R) \otimes C_{n} \to i^{*}R$, where $i^{*}$ denotes the
change-of-group functor to the trivial group.  Choosing an
appropriately subdivided model of the circle produces the isomorphism
between the two descriptions.  

As above, by cofibrantly replacing $R$ we can compute the left-derived
functor of $N_{C_{n}}^{S^{1}}$, and in this case $N_{C_{n}}^{S^{1}}R$
is a $p$-cyclotomic spectrum (see Definition~\ref{def:pcyclotomic}) provided either $n$ is prime to $p$ or
$R$ is ``$C_{n}$-cyclotomic'' (q.v.~Definition~\ref{defn:cncyclo}
below).  This leads to the obvious definition of 
$TC_{C_{n}} R$.  This
$C_{n}$-relative $THH$ (and the associated constructions of $TR$ and
$TC$) is expected to be both interesting and comparatively easy to
compute for some of the equivariant spectra that arise in
Hill--Hopkins--Ravenel, in particular the real cobordism spectrum
$MU_{\bR}$.

We can also consider another kind of relative construction, namely in
the situation where $R$ is an algebra over an arbitrary commutative
ring orthogonal spectrum $A$.  Definition~\ref{defn:Tnorm} can be
extended to the relative setting; the equivariant indexed product can be carried out
in any symmetric monoidal category, and the homotopical analysis
in the case of $A$-modules is given in Section~\ref{sec:relTHH}.

\begin{definition}
Let $A$ be a cofibrant commutative ring orthogonal spectrum, and
denote by $A\Alg$ the category of $A$-algebras.  We define
the $A$-relative norm functor
\[
\AN_e^{S^1} \colon A\Alg \to A_{S^{1}}\Mod^{S^{1}}_{U}
\]
by
\[
R\mapsto \aI_{\bR^{\infty}}^{U}|N^{\cyc}_{\sma_{A}}R|.
\]
Here $A_{S^{1}}$ denotes $\aI_{\bR^{\infty}}^{U}A$, constructed by
applying the point-set change of universe functor
$\aI_{\bR^{\infty}}^{U}$ to $A$ regarded  as
a commutative ring orthogonal $S^{1}$-spectrum (on the universe
$\bR^{\infty}$) with trivial $S^{1}$-action.  Then $A_{S^{1}}$ is a
commutative ring orthogonal 
$S^{1}$-spectrum (on the universe $U$) and
$A_{S^1}\Mod^{S^1}_{U}$ denotes the category of
$A_{S^{1}}$-modules in $\Spec^{S^{1}}_{U}$.
\end{definition}

We write $\ATHH(R)$ for the underlying non-equivariant spectrum of
$\AN_{e}^{S^{1}}R$; this spectrum was denoted $thh^{A}(R)$ in
\cite[IX.2.1]{EKMM}.
When $R$ is a commutative $A$-algebra, $\AN_{e}^{S^{1}}R$ is naturally
a commutative $A_{S^{1}}$-algebra.  The functor
\[
\AN_{e}^{S^{1}}\colon A\CAlg\to
A_{S^1}\CAlg^{S^{1}}_{U}
\]
is again left adjoint to the forgetful functor.
However, due to the subtleties of the behavior of
$\aI_{\bR^{\infty}}^U$ when applied to cofibrant commutative ring
orthogonal spectra regarded as $S^1$-spectra with trivial action,
$\AN_e^{S^1}R$ is not in general cyclotomic.  Instead, we must settle
for the following weaker analogue of Theorem~\ref{thm:maincyc}, which
we prove in Section~\ref{sec:amodcyc}.

\begin{theorem}\label{thm:relwhencyc}
Let $A$ be a cofibrant commutative ring orthogonal spectrum that is
$\iota_e^* \underbar{A}$ for a cofibrant $p$-cyclotomic commutative
ring orthogonal $S^1$-spectrum $\underbar{A}$.  Moreover, assume that
the canonical counit map $N_e^{S^1} A \to \underbar{A}$ is a
$p$-cyclotomic map.  Let $R$ be a cofibrant $A$-algebra.  Then
\[
\AN_e^{S^1} R \cong N_e^{S^1} R \sma_{N_e^{S^1} A} \underbar{A}
\]
is a $p$-cyclotomic spectrum.
\end{theorem}

In fact, we have a slightly more general version of this result.

\begin{theorem}\label{thm:relwhencycmod}
Let $A$ be a cofibrant commutative ring orthogonal spectrum and $R$ a
cofibrant $A$-algebra.  Let $M$ be a $p$-cyclotomic object in
$N_e^{S_1} A$-modules.  Then the smash product
\[
N_e^{S^1} R \sma_{N_e^{S^1} A} M
\]
is a $p$-cyclotomic spectrum.
\end{theorem}

When these theorems apply, we can form relative topological cyclic
homology $\ATC(R)$, which is the target of an $A$-relative cyclotomic
trace $K(R)\to \ATC(R)$, factoring though the usual cyclotomic trace
$K(R)\to TC(R)$, essentially by construction.

\begin{theorem}\label{thm:trace}
Under the hypotheses above, there is an $A$-relative
cyclotomic trace map $K(R)\to \ATC(R)$ making the
following diagram commute in the stable category
\[
\xymatrix{%
K(R)\ar[r]\ar[dr]&TC(R)\ar[r]\ar[d]&THH(R)\ar[d]\\
& \ATC(R)\ar[r]& \ATHH(R).  }
\]
\end{theorem}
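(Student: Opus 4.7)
The plan is to construct the $A$-relative cyclotomic trace as the composite $K(R)\to TC(R)\to \opATC(R)$, where the first map is the absolute cyclotomic trace reviewed in Section~\ref{sec:trace} and the second is induced from a natural transformation of equivariant spectra $\theta\colon N_e^{S^1}R\to \AN_e^{S^1}R$.  With this definition, commutativity of the right-hand square in the diagram reduces to naturality of the forgetful map from $TC$ to $THH$ applied to $\theta$, and commutativity of the outer triangle is immediate from the factorization of the $A$-relative trace through $TC(R)$.

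First I would construct $\theta$.  On underlying orthogonal $S^1$-spectra indexed on $\bR^{\infty}$, this is the canonical map of geometric realizations $|N^{\cyc}_{\sma}R|\to |N^{\cyc}_{\sma_A}R|$ induced by the quotient maps $R^{\sma n}\to R^{\sma_A n}$ that identify the $A$-actions on adjacent smash factors; we then apply the change-of-universe functor $\aI_{\bR^{\infty}}^{U}$.  The map $\theta$ is tautologically $S^1$-equivariant and hence induces maps on $C_{p^{n-1}}$-fixed point spectra compatible with the Frobenius maps coming from inclusions of subgroup fixed points.

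Next I would verify that $\theta$ intertwines the cyclotomic structure map of $N_e^{S^1}R$ (provided by Theorem~\ref{thm:maincyc}) with the op-$p$-precyclotomic structure map of $\AN_e^{S^1}R$.  Concretely, the cyclotomic structure map for $THH(R)$ is a stable equivalence $\rho_p^{*}L\Phi^{C_p}N_e^{S^1}R\to N_e^{S^1}R$, which one inverts to obtain a ``wrong-way'' map; the op-structure map of $\AN_e^{S^1}R$ already points in the wrong-way direction.  One would show that these two wrong-way maps fit into a commutative square with $\theta$ and $\rho_p^{*}L\Phi^{C_p}\theta$ by reducing both composites, via the point-set models developed in Sections~\ref{sec:THH} and~\ref{sec:amodcyc}, to the common diagonal map on edgewise-subdivided cyclic bar constructions.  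Passing to $C_{p^{n-1}}$-fixed points then shows that $\theta$ commutes with the wrong-way restriction maps used to form $\opATC(R)$, so $\theta$ induces the desired map of homotopy limits $TC(R)\to \opATC(R)$.

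The main obstacle is this compatibility of structure maps: because the cyclotomic structure on $THH(R)$ and the op-precyclotomic structure on $\ATHH(R)$ point in opposite directions, one cannot simply invoke naturality and must instead argue at the point-set level.  This requires choosing cofibrant replacements carefully so that both structure maps are represented by honest maps of orthogonal $S^1$-spectra whose compatibility via $\theta$ can be verified directly, while keeping track of the derived geometric fixed point functor $L\Phi^{C_p}$ throughout.  Once the map $TC(R)\to \opATC(R)$ is in hand, composing with the absolute cyclotomic trace $K(R)\to TC(R)$ produces the $A$-relative cyclotomic trace, and the commutativity of the diagram in the stable category follows as described above.
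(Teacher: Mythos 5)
Your overall strategy matches the paper's: construct the $A$-relative trace as the composite $K(R)\to TC(R)\to\opATC(R)$, with the second arrow coming from a natural map $\theta\colon N_e^{S^1}R\to\AN_e^{S^1}R$ induced by $S\to A$, and check compatibility of the structure maps. However, the way you propose to produce the arrow $TC(R)\to\opATC(R)$ hides a real gap, and the paper takes a cleaner route that you should notice.

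The difficulty is that $TC(R)$ is a homotopy limit over a tower of honest maps $R,F$ (built from a fibrant replacement in cyclotomic spectra) while $\opATC(R)$ is a homotopy limit over a \emph{zigzag} diagram of a different shape. Your claim that ``$\theta$ induces the desired map of homotopy limits $TC(R)\to\opATC(R)$'' does not follow from merely showing that the inverted cyclotomic structure map and the op-structure map fit in a square with $\theta$: you would still need to rectify the mismatch of diagram shapes into a single map in the stable category. The paper isolates exactly this rectification as Proposition~\ref{prop:zconsist}, which produces a zig-zag of weak equivalences connecting $TC(X)$ and $\opTC(X)$ whenever $X$ carries compatible $p$-cyclotomic and op-$p$-precyclotomic structures. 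This is not a formality; its proof requires the explicit homotopy-coherent straightening argument (or the marked-anodyne argument sketched afterward).

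The paper's route is also structurally simpler than what you propose. Rather than trying to intertwine the \emph{cyclotomic} structure on $N_e^{S^1}R$ with the \emph{op}-precyclotomic structure on $\AN_e^{S^1}R$ (which forces you to invert an equivalence), the paper observes that $N_e^{S^1}R$ itself already carries an op-$p$-precyclotomic structure via the diagonal, and that $\theta$ is tautologically a map of op-$p$-precyclotomic spectra by functoriality of the relative diagonal in the base ring map $\phi\colon S\to A$ (the proposition immediately preceding the invocation of the theorem). So $\opTC(\theta)\colon\opTC(R)\to\opATC(R)$ is defined with no inversion at all. The only remaining step is the comparison $TC(R)\simeq\opTC(R)$, which follows from Proposition~\ref{prop:zconsist} since by Theorem~\ref{thm:cyclodiag} the diagonal $\tau_p$ is a point-set \emph{isomorphism} for cofibrant $R$, so the composite $t_p\circ\gamma=\tau_p^{-1}\circ\tau_p$ is literally the identity, not merely homotopic to it. You should replace the hand-wavy ``reduce both composites to the common diagonal'' step with an explicit appeal to this isomorphism, and then cite the rectification result rather than asserting that a map of homotopy limits exists.
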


Using the identification $N_{e}^{S^{1}}A\iso\aI_{\bR^{\infty}}^{U} (A
\otimes S^1)$ in the commutative context, the map $S^{1}\to *$ induces
a map of equivariant commutative ring orthogonal spectra
$N_{e}^{S^{1}}A\to A_{S^{1}}$.  Just as in the
non-equivariant case, we can identify $\AN_{e}^{S^{1}}R$ as
extension of scalars along this map.

\begin{proposition}\label{prop:introsmader}
Let $R$ be an associative $A$-algebra.
There is a natural isomorphism
\[
\AN_{e}^{S^{1}}R\iso N_{e}^{S^{1}}R\sma_{N_{e}^{S^{1}}A}
A_{S^{1}}.
\]
When $R$ is a cofibrant associative $A$-algebra or cofibrant
commutative $A$-algebra, this induces a natural isomorphism in the
stable category
\[
\AN_{e}^{S^{1}} R\iso N_{e}^{S^{1}} R \dersma_{N_{e}^{S^{1}}A}
A_{S^{1}}.
\]
\end{proposition}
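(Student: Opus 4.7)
The plan is to construct the isomorphism at the point-set simplicial level and then pass through geometric realization and the change-of-universe functor. The crucial algebraic observation is that for a commutative ring orthogonal spectrum $A$, the relative smash power admits the presentation
\[
R^{\sma_{A}(n+1)} \iso R^{\sma(n+1)} \sma_{A^{\sma(n+1)}} A,
\]
where $A^{\sma(n+1)}$ acts on $R^{\sma(n+1)}$ factorwise via the $A$-algebra structure of $R$, and acts on $A$ via the iterated multiplication map $A^{\sma(n+1)} \to A$, which is well-defined precisely because $A$ is commutative.  Interpreted simplicially, the left-hand side is $(N^{\cyc}_{\sma_{A}} R)_{n}$, the first factor on the right is $(N^{\cyc}_{\sma} R)_{n}$, the ring over which we smash is $(N^{\cyc}_{\sma} A)_{n}$, and the last $A$ is the $n$-simplices of the constant cyclic orthogonal spectrum on $A$.

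Next I would check that the face, degeneracy, and cyclic operators are compatible with this presentation, upgrading the degreewise identification to an isomorphism of cyclic orthogonal spectra
\[
N^{\cyc}_{\sma_{A}} R \iso N^{\cyc}_{\sma} R \sma_{N^{\cyc}_{\sma} A} A,
\]
where the last $A$ is understood as the constant cyclic object.  The canonical projection $N^{\cyc}_{\sma}R \to N^{\cyc}_{\sma_{A}}R$ is a map of cyclic orthogonal spectra by inspection, and the augmentation $N^{\cyc}_{\sma}A \to A$ by iterated multiplication in each degree is one as well (commutativity of $A$ is needed to check compatibility with the cyclic rotation).  Geometric realization commutes with coequalizers and with the smash product, so this yields an isomorphism
\[
|N^{\cyc}_{\sma_{A}} R| \iso |N^{\cyc}_{\sma} R| \sma_{|N^{\cyc}_{\sma} A|} A
\]
of orthogonal $S^{1}$-spectra indexed on the trivial universe $\bR^{\infty}$.

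Applying the change-of-universe functor $\aI_{\bR^{\infty}}^{U}$, which is strong symmetric monoidal and preserves colimits, and recalling the identification $|N^{\cyc}_{\sma}A| \iso A \otimes S^{1}$ from the adjunction description of $N_{e}^{S^{1}}$ on commutative ring orthogonal spectra, we obtain the claimed point-set isomorphism
\[
\AN_{e}^{S^{1}}R \iso N_{e}^{S^{1}}R \sma_{N_{e}^{S^{1}}A} A_{S^{1}}.
\]
For the derived statement, when $R$ is cofibrant as an associative or commutative $A$-algebra, Proposition~\ref{prop:derive} together with the Quillen adjunction of Proposition~\ref{prop:dercom} (in its relative form) ensures that $N_{e}^{S^{1}}R$ is cofibrant as an $N_{e}^{S^{1}}A$-algebra and hence cofibrant as an $N_{e}^{S^{1}}A$-module; consequently the point-set relative smash product over $N_{e}^{S^{1}}A$ computes the derived smash product.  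The main obstacle is this last step: verifying that the cofibrancy of $R$ as an $A$-algebra transports through $N_{e}^{S^{1}}$ to cofibrancy as a module over $N_{e}^{S^{1}}A$, so that smashing against $A_{S^{1}}$ (built from the possibly non-homotopical change-of-universe functor applied to $A$ with trivial action) is homotopically meaningful.  Once this is in hand, the remainder is bookkeeping in the symmetric monoidal structure.
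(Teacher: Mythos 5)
Your construction of the point-set isomorphism is essentially the paper's own argument (Lemma~\ref{lem:nonequirel} and Propositions~\ref{prop:basechange} and~\ref{prop:extscal}), just written out in more detail at the simplicial level; the degreewise identification
\[
R^{\sma_{A}(n+1)} \iso R^{\sma(n+1)} \sma_{A^{\sma(n+1)}} A,
\]
compatibility with the cyclic structure (using commutativity of $A$), and passage through geometric realization and $\aI_{\bR^\infty}^{U}$ are all correct.

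The derived statement, however, contains a genuine gap that you yourself flag but do not close. The claim that cofibrancy of $R$ as an $A$-algebra (associative or commutative) yields cofibrancy of $N_{e}^{S^{1}}R$ as an $N_{e}^{S^{1}}A$-module does not follow from Propositions~\ref{prop:derive} or~\ref{prop:dercom}, which concern preservation of weak equivalences and the Quillen adjunction on commutative ring orthogonal spectra; neither statement says anything about the module-level cofibrancy of the output. Moreover the claim is almost certainly false as stated: already non-equivariantly, cofibrant algebras do not in general have cofibrant underlying modules, and the cyclic bar construction does not manufacture such cofibrancy. The paper's proof of Theorem~\ref{thm:relsmader} deliberately avoids this route: it takes a cofibrant $N_{e}^{S^{1}}A$-module approximation $N \to N_{e}^{S^{1}}R$, compares both $N\sma_{N_{e}^{S^{1}}A}A_{S^{1}}$ and $N_{e}^{S^{1}}R\sma_{N_{e}^{S^{1}}A}A_{S^{1}}$ to two-sided bar constructions $B(-,N_{e}^{S^{1}}A,A_{S^{1}})$, and then uses that $\iota^{*}_{C_{n}}N_{e}^{S^{1}}A$, $\iota^{*}_{C_{n}}N_{e}^{S^{1}}R$ and $\iota^{*}_{C_{n}}N$ are \emph{flat} in the sense of~\cite[B.15]{HHR} (built as sequential colimits of pushouts over $h$-cofibrations of flat objects, going through $N_{e}^{C_{n}}A$ and $N_{e}^{C_{n}}R$). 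Flatness, not cofibrancy, is what makes the bar constructions compute the derived smash product and makes the levelwise comparisons weak equivalences. To complete your argument you would need to replace the cofibrancy assertion with a flatness argument along these lines, together with the bar construction comparison; the commutative case (Proposition~\ref{prop:relTdercompder}) is then handled via cofibrancy of $N_{e}^{S^{1}}R'$ as a commutative $N_{e}^{S^{1}}A$-algebra.
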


The equivariant homotopy groups $\pi^{C_{n}}_{*}(N_{e}^{S^{1}}R)$ are the
$TR$-groups $TR^{n}_{*}(R)$ and so $\pi^{C_{n}}_{*}(\AN_{e}^{S^{1}}R)$
are by definition the relative $TR$-groups $\ATR^{n}_{*}(R)$.  The
K\"unneth spectral sequence of \cite{LewisMandell2} can be combined
with the previous theorem to compute the relative $TR$-groups from the
absolute $TR$-groups and Mackey functor $\uTor$.  
More often we expect to use the
relative theory to compute the absolute theory.  Non-equivariantly,
the isomorphism
\begin{equation}\label{eq:changebasering}
THH(R) \sma A \cong \ATHH(R \sma A)
\end{equation}
gives rise to a K\"unneth spectral sequence
\[
\Tor^{A_*(R \sma_{S} R^{\op})}_{*,*}(A_*(R), A_*(R))
\quad\Longrightarrow\quad A_* (THH(R)).
\]
An Adams spectral sequence can then in theory be used to compute the
homotopy groups of $THH(R)$.  For formal reasons, the
isomorphism~\eqref{eq:changebasering} still holds equivariantly, but
now we have three different versions of the non-equivariant K\"unneth
spectral sequence (none of which have quite as elegant an
$E^{2}$-term) which we use in conjunction with
equation~\eqref{eq:changebasering}.  We discuss these in
Section~\ref{sec:spectralsequences}.  

A further application of our model of $THH$ and $TC$ is a construction,
when $R$ is commutative, of Adams operations on $N_{e}^{S^{1}}R$ and
$\AN_{e}^{S^{1}} R$ that are compatible (in the absolute case) with
the cyclotomic structure.  McCarthy explained how Adams operations can
be constructed on any cyclic object that, when viewed as a functor from
the cyclic category, factors through the category of finite sets (and
all maps).  As a consequence, it is possible to construct Adams
operations on $THH$ of a commutative monoid object in any symmetric
monoidal category of spectra.  An advantage of our formulation is
that we can easily verify the equivariance of these operations and in
particular show they descend to $TC$.  We prove the following theorem
in Section~\ref{sec:adams}.

\begin{theorem}
Let $A$ be a commutative ring orthogonal spectrum and $R$ a
commutative $A$-algebra.  There are Adams operations $\psi^r
\colon \AN_{e}^{S^{1}} R \to \AN_{e}^{S^{1}} R$.  When $r$ is prime to
$p$, the operation $\psi^{r}$ is compatible with the restriction and
Frobenius maps on the $p$-cyclotomic spectrum $THH(R)$ and so induces
a corresponding operation on $TR(R)$ and $TC(R)$.
\end{theorem}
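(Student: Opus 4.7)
The plan is to exploit McCarthy's observation that when $R$ is a commutative $A$-algebra, the cyclic bar construction $N^{\cyc}_{\sma_{A}} R$ extends from a functor on Connes' cyclic category to a functor on the category $\aFin$ of finite pointed sets: to a pointed set $T$ with $|T|=n+1$ one assigns $R^{\sma_{A} n}$, and the commutativity of $R$ supplies functoriality along non-injective pointed maps via the commutative multiplication. After geometric realization and change of universe, this yields a functor from based $S^{1}$-spaces to $A_{S^{1}}\Mod^{S^{1}}_{U}$ that sends a fixed simplicial model of $S^{1}$ to $\AN_{e}^{S^{1}} R$.

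To construct $\psi^{r}$, I would apply this functor to the rth power map $z\mapsto z^{r}$ on $S^{1}$, which admits a simplicial model via $r$-fold edgewise subdivision followed by ``grouping by $r$'' using iterated multiplication. This produces an $A_{S^{1}}$-module map $\psi^{r}\colon \AN_{e}^{S^{1}} R\to \AN_{e}^{S^{1}} R$. The self-map is automatically $S^{1}$-equivariant because the rth power endomorphism of $S^{1}$ intertwines the standard rotation action with itself (up to passage to the rth root action on the target), and the simplicial model makes this manifest.

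For compatibility with the cyclotomic structure when $\gcd(r,p)=1$, recall that in the point-set model of \cite{BM} the cyclotomic structure map $t_{p}\colon \rho_{p}^{*}L\Phi^{C_{p}} N_{e}^{S^{1}}R\to N_{e}^{S^{1}}R$ is implemented using $p$-fold edgewise subdivision together with the diagonal $R\to R^{\sma p}$. The rth power map restricts to an automorphism of $C_{p}\subset S^{1}$ precisely because $\gcd(r,p)=1$, so it commutes with the subdivision and the passage to $\Phi^{C_{p}}$; moreover, the diagonal is invariant under any commutative-ring endomorphism coming from a self-map of a finite pointed set. Consequently the square
\[
\xymatrix@C=3em{
\rho_{p}^{*}L\Phi^{C_{p}} N_{e}^{S^{1}}R \ar[r]^{t_{p}} \ar[d]_{\rho_{p}^{*}L\Phi^{C_{p}} \psi^{r}} & N_{e}^{S^{1}}R \ar[d]^{\psi^{r}} \\
\rho_{p}^{*}L\Phi^{C_{p}} N_{e}^{S^{1}}R \ar[r]^{t_{p}} & N_{e}^{S^{1}}R
}
\]
commutes in the stable category, giving compatibility with the restriction map $R\colon TR^{n+1}\to TR^{n}$. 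The Frobenius $F$ is the change-of-fixed-points map $TR^{n+1}\to TR^{n}$, with which $\psi^{r}$ is compatible automatically by its $S^{1}$-equivariance. It follows that $\psi^{r}$ descends to $TR(R)$ and $TC(R)$, which are constructed as homotopy limits over the $R$ and $F$ maps.

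The main obstacle is making the edgewise-subdivision identification of $L\Phi^{C_{p}}\psi^{r}$ with $\psi^{r}$ rigorous in the point-set model of cyclotomic spectra, carefully tracking how the zigzag of equivalences defining $t_{p}$ is natural with respect to self-maps arising from $\aFin$. The hypothesis $\gcd(r,p)=1$ enters at exactly this step: it is what ensures that the rth power endomorphism preserves, rather than collapses, the $C_{p}$-structure; this is also the reason $\psi^{p}$ itself is not expected to be compatible with the $p$-cyclotomic structure.
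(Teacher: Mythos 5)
Your proposal takes essentially the same route as the paper: $\psi^r$ is obtained by tensoring $R$ with the $r$th power map $q_r$ of the circle, modeled simplicially as the McCarthy covering quotient $\sd_r S^{1}\subdot \to S^{1}\subdot$, and compatibility with the restriction and Frobenius maps is verified via edgewise subdivision and naturality of the diagonal (the paper works with the op-$p$-precyclotomic structure map $\tau_p$, inverse to $t_p$ for cofibrant $R$, and invokes the subdivision compatibility diagram~\eqref{eq:subdiv-compat} exactly at the step you flag as the remaining obstacle). Be careful, though, about the equivariance claim: since $q_r(wz) = w^r q_r(z)$, the map $\psi^r$ is not literally $S^1$-equivariant but rather equivariant relative to the $r$th power endomorphism of $S^1$, so the precise statement (Proposition~\ref{thm:Adamsequiv}) is that $\psi^r$ is a map of orthogonal $C_n$-spectra $\iota^*_{C_n}\AN_e^{S^1}R \to q_r^*\iota^*_{C_n}\AN_e^{S^1}R$, which yields a self-map only after using that $q_r|_{C_n}$ is an automorphism when $\gcd(r,n)=1$; it is this twisted equivariance, not naive $S^1$-equivariance, that underlies the Frobenius compatibility.
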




We have organized the paper to contain a brief review with references
to much of the background needed here.  Section~\ref{sec:background}
is mostly review of \cite{MM} and \cite[App.~B]{HHR}, and
Section~\ref{sec:cyclosec} is in part a review of~\cite[\S 4]{BM}.  In
addition, the main results in Section~\ref{sec:THH} overlap
significantly with~\cite{Stolz}, although our treatment is very
different: we rely on~\cite{HHR} to study the absolute $S^1$-norm
whereas \cite{Stolz} directly analyzes the construction by using a
somewhat different model structure and focuses on the case of
commutative ring orthogonal spectra.

\subsection*{Acknowledgments.}  The authors would like to thank Lars
Hesselholt, Mike Hopkins, and Peter May for many helpful
conversations.  We thank Lars Hesselholt, Aaron Royer, and Ernie Fontes
for helping to identify serious errors in previous drafts.  We thank
Cary Malkiewich for many helpful suggestions regarding a previous
draft.  This project was made possible by the hospitality of AIM, the
IMA, MSRI, and the Hausdorff Research Institute for Mathematics at the
University of Bonn.

\section{Background on equivariant stable homotopy
theory}\label{sec:background}

In this section, we briefly review necessary details about the
category of orthogonal $G$-spectra and the geometric fixed point and
norm functors.  Our primary sources for this material are the
monograph of Mandell-May~\cite{MM} and the appendices to
Hill--Hopkins--Ravenel~\cite{HHR}.  See also~\cite[\S 2]{BM} for a
review of some of these details.  We begin with two subsections
discussing the point-set theory followed by two subsections on
homotopy theory and derived functors.

\subsection{The point-set theory of equivariant orthogonal
spectra}\label{sec:pointset}

Let $G$ be a compact Lie group.  We denote by
$\aT^{G}$ the category of based $G$-spaces and based $G$-maps.  The smash
product of $G$-spaces makes this a closed symmetric monoidal category,
with function object $F(X,Y)$ the based space of (non-equivariant)
maps from $X$ to $Y$ with the conjugation $G$-action.  In particular,
$\aT^{G}$ is enriched over $G$-spaces.  We will denote by $U$ a fixed
universe of $G$-representations~\cite[\S II.1.1]{MM}, by which we mean
a countable dimensional vector space with linear $G$-action and
$G$-fixed inner product that contains
$\bR^{\infty}$, is the sum of finite dimensional $G$-representations,
and that has the property that any $G$-representation that occurs in
$U$ occurs infinitely often.
Let $\aV^G(U)$ denote the set of finite dimensional $G$-inner product spaces which are isomorphic to a $G$-vector subspace of $U$.
%
Except in this
section, we always assume that $U$ is a \term{complete} $G$-universe,
meaning that all finite dimensional irreducible $G$-representations are
in $U$.  For $V$, $W$ in
$\aV^{G}(U)$, denote by $\sI_G(V,W)$ the space of (non-equivariant)
isometric isomorphisms $V \to W$, regarded as a $G$-space via
conjugation.  Let $\sI^{U}_{G}$ be the category enriched in
$G$-spaces with $\aV^{G}(U)$ as its objects and $\sI_G(V,W)$ as its
morphism $G$-spaces; we write just $\sI_{G}$ when $U$ is understood.

\begin{definition}[{\cite[II.2.6]{MM}}] An orthogonal $G$-spectrum is
a $G$-equivariant continuous functor $X \colon \sI_{G} \to \aT^{G}$
equipped with a structure map
\[
\sigma_{V,W} \colon X(V) \sma S^W \to X(V \oplus W)
\]
that is a natural transformation of enriched functors $\sI_{G}\times
\sI_{G}\to \aT^{G}$ and that is associative and unital in the obvious
sense.  A map of orthogonal $G$-spectra $X \to X'$ is a
natural transformation that commutes with the structure map.
\end{definition}

We denote the category of orthogonal $G$-spectra by $\Spec^{G}$.  When
necessary to specify the universe $U$, we include it in the notation
as $\Spec^{G}_{U}$.

The category of orthogonal $G$-spectra is enriched over based
$G$-spaces, where the $G$-space of maps consists of all natural
transformations (not just the equivariant ones).  Tensors and
cotensors are computed levelwise.  The category of orthogonal
$G$-spectra is a closed symmetric monoidal category with unit the
equivariant sphere spectrum $S_G$ (with $S_{G}(V)=S^{V}$).

For technical reasons, it is often convenient to give an equivalent
formulation of orthogonal $G$-spectra as diagram spaces.
Following~\cite[\S II.4]{MM}, we consider the category $\sJ_G$ which
has the same objects as $\sI_G$ but morphisms from $V$ to $W$ given by
the Thom space of the complement bundle of linear isometries from $V$
to $W$.

\begin{proposition}[{\cite[II.4.3]{MM}}]\label{prop:orthodesc}
The category $\Spec^{G}$ of orthogonal $G$-spectra is equivalent to
the category of $\sJ_G$-spaces, i.e., the continuous equivariant
functors from $\sJ_G$ to $\aT_G$.  The symmetric monoidal structure is
given by the Day convolution.
\end{proposition}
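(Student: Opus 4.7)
The plan is to construct mutually inverse equivalences of enriched categories between $\Spec^{G}$ and the category of $\sJ_{G}$-spaces, and then separately to match the point-set smash product of orthogonal $G$-spectra with Day convolution.  Throughout, I will think of $\sJ_{G}(V,W)$ as the Thom space of the bundle over the space of linear isometric embeddings $V\hookrightarrow W$ whose fiber over $f$ is $W - f(V)$; this packages both $\sI_{G}$-functoriality and the structure maps into a single $G$-space enriched category, with composition given by $(g,w')\circ(f,w) = (g\circ f,\, g(w)+w')$.

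First, from an orthogonal $G$-spectrum $(X,\sigma)$ I would produce a $\sJ_{G}$-space $\tilde X$ by setting $\tilde X(V)=X(V)$ on objects, and defining the action of a morphism $(f,w)\in \sJ_{G}(V,W)$ to be the composite
\[
X(V)\overto{\sma w} X(V)\sma S^{W-f(V)} \overto{\sigma_{V,W-f(V)}} X(V\oplus(W-f(V))) \overto{X(\bar f)} X(W),
\]
where $\bar f\colon V\oplus(W-f(V))\to W$ is the isomorphism induced by $f$.  The associativity and unit axioms for $\sigma$, together with continuity, give that $(f,w)\mapsto (f,w)_{*}$ is a functor of $G$-space enriched categories.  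Going back, from a $\sJ_{G}$-space $Y$ I would restrict along the canonical functor $\sI_{G}\to \sJ_{G}$ (which identifies an isomorphism $f\colon V\to W$ with the non-basepoint of the $S^{0}$-fiber $\sJ_{G}(V,W)$ over it) to obtain an $\sI_{G}$-functor $\aV^{G}(U)\to \aT^{G}$, and would extract the structure map from the inclusion $V\hookrightarrow V\oplus W$: its complement bundle is the trivial $W$-bundle, so the Thom space is $S^{W}$ and adjointness yields $Y(V)\sma S^{W}\to Y(V\oplus W)$.

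To verify these are mutually inverse, I would use the canonical factorization of every morphism $(f,w)\in \sJ_{G}(V,W)$ as an $\sI_{G}$-isomorphism $V\to f(V)$ followed by an inclusion-plus-Thom-point $(f(V),w)\hookrightarrow W$; under this factorization the two constructions round-trip to the identity up to the coherence isomorphisms that the axioms of an orthogonal spectrum and of a $\sJ_{G}$-space already encode.  For the monoidal identification, $\sJ_{G}$ is symmetric monoidal under $\oplus$ on objects and the evident pairing of Thom spaces on morphism $G$-spaces.  The Day convolution $X\boxtimes Y$ is universally characterized by the property that maps $X\boxtimes Y\to Z$ of $\sJ_{G}$-spaces correspond to $\sJ_{G}\times\sJ_{G}$-enriched bimorphisms $X(V)\sma Y(W)\to Z(V\oplus W)$; the point-set smash product of orthogonal $G$-spectra, defined as a coequalizer encoding exactly this universal property, therefore coincides with $\boxtimes$ once the underlying categories are identified.

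The main obstacle is the bookkeeping of basepoint conventions in the Thom space morphism objects (particularly that the basepoint must act as the zero map, and the corresponding degeneration of the composite above when $w$ is the basepoint) and the verification that the resulting assignments are continuous and $G$-equivariant as functors of enriched categories.  However, since both the smash product and Day convolution are determined by the same universal bilinear pairings, once the enriched equivalence of underlying categories is in hand the symmetric monoidal comparison follows formally from Day's theorem.
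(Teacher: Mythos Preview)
Your sketch is correct and is essentially the standard argument from Mandell--May~\cite[II.4.3]{MM}. Note, however, that the paper does not give its own proof of this proposition: it is stated as a citation of~\cite[II.4.3]{MM} and used as background, so there is no in-paper proof to compare against. Your outline---building the $\sJ_G$-action from the structure maps via the Thom-space factorization of morphisms, recovering the structure maps from the action of inclusions, and identifying the smash product with Day convolution by their shared universal property---matches the approach in the cited reference.
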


This description provides simple formulas for suspension spectra and
desuspension spectra in orthogonal $G$-spectra.

\begin{definition}[{\cite[II.4.6]{MM}}] For any finite-dimensional
$G$-inner product space $V$ we have the shift desuspension spectrum
functor
\[
F_V \colon \aT^{G} \to \Spec^{G}
\]
defined by
\[
(F_V A)(W) = \sJ_G(V,W) \sma A.
\]
This is the left adjoint to the evaluation functor which evaluates an
orthogonal $G$-spectrum at $V$.
\end{definition}

\begin{remark}
In \cite{HHR}, the desuspension spectrum $F_{V}S^{0}$ is denoted as
$S^{-V}$ and $F_{0}A$ is denoted as $\Sigma^{\infty}A$ in a nod to the
classical notation.  (They write $S^{-V}\sma A$ for $F_{V}A\iso
F_{V}S^{0}\sma A$.)
\end{remark}

Since the category $\Spec^{G}_{U}$ is symmetric monoidal under the smash
product, we have categories of associative and commutative monoids,
i.e., algebras over the monads $\bT$ and $\bP$ that create associative
and commutative monoids in symmetric monoidal categories (e.g.,
see~\cite[\S II.4]{EKMM} for a discussion).

\begin{notation}\label{notn:rings}
Let $\Ass^{G}$ and $\Com^{G}$ denote the categories of
associative and commutative ring orthogonal $G$-spectra.
\end{notation}

For a fixed object $A$ in $\Com^{G}$, there is an associated
symmetric monoidal category $A\Mod^{G}$ of $A$-modules in orthogonal
$G$-spectra, with product the $A$-relative smash product $\sma_{A}$.
As in Notation~\ref{notn:rings}, there are categories $A\Alg^{G}$ of
$A$-algebras, and $A\CAlg^{G}$ of commutative
$A$-algebras~\cite[III.7.6]{MM}.

We now turn to the description of various useful functors on
orthogonal $G$-spectra.  We begin by reviewing the change of universe
functors.  In contrast to the classical framework of
``coordinate-free'' equivariant spectra~\cite{LMS}, orthogonal
$G$-spectra disentangle the point-set and homotopical roles of the
universe $U$.  A first manifestation of this occurs in the behavior of
the point-set ``change of universe'' functors.

\begin{definition}[{\cite[V.1.2]{MM}}]\label{def:changeuniverse} For any pair of universes $U$
and $U'$, the point-set change of universe functor
\[
\aI_{U}^{U'} \colon \Spec^{G}_{U} \to \Spec^{G}_{U'}
\]
is defined by $\aI_{U}^{U'}X(V)=\sJ(\bR^{n},V)\sma_{O(n)}X(\bR^{n})$
for $V$ in $\aV^{G}(U')$, where $n=\dim V$.
\end{definition}

These functors are strong symmetric monoidal equivalences of
categories:

\begin{proposition}[{\cite[V.1.1,V.1.5]{MM}}] Given universes
$U,U',U''$,
\begin{enumerate}
\item $\aI_{U}^{U}$ is naturally isomorphic to the identity.
\item $\aI_{U'}^{U''}\circ \aI_{U}^{U'}$ is naturally isomorphic to
$\aI_{U}^{U''}$.
\item $\aI_{U}^{U'}$ is strong symmetric monoidal.
\end{enumerate}
\end{proposition}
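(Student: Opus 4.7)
The plan is to verify all three assertions directly from the coend formula $\aI_U^{U'} X(V) = \sJ(\bR^n, V) \sma_{O(n)} X(\bR^n)$ (with $n = \dim V$), using only elementary properties of the morphism spaces of $\sJ_G$, and in particular the identification $\sJ(\bR^n, \bR^n) \iso O(n)_+$ coming from the fact that the complement bundle in the Thom space description of $\sJ_G$ is zero-dimensional over $\sI_G(\bR^n, \bR^n) = O(n)$.

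For part (1), the enriched functoriality of $X \colon \sJ_G \to \aT^G$ provides a canonical $O(n)$-invariant comparison $\sJ(\bR^n, V) \sma X(\bR^n) \to X(V)$, and hence a natural map $\aI_U^U X(V) \to X(V)$. This map is a Yoneda-type isomorphism: since $\dim V = n$, one has $\sJ(\bR^n, V) \iso \sI_G(\bR^n, V)_+$, which is a free $O(n)$-torsor (non-equivariantly) on a point; choosing any isometric isomorphism $\bR^n \to V$ and applying the functoriality of $X$ constructs the inverse, with naturality in $V$ encoded by the residual $O(V)$-action.

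For part (2), direct substitution gives
\[
\aI_{U'}^{U''}(\aI_U^{U'} X)(V) \iso \sJ(\bR^n, V) \sma_{O(n)} \bigl(\sJ(\bR^n, \bR^n) \sma_{O(n)} X(\bR^n)\bigr),
\]
and the identification $\sJ(\bR^n, \bR^n) \iso O(n)_+$ collapses the inner smash product to $X(\bR^n)$, producing precisely the formula for $\aI_U^{U''} X(V)$. For part (3), the smash product on $\Spec^G$ is Day convolution along the monoidal structure $\oplus$ on $\sJ_G$, so $(X \sma Y)(Z)$ is a coend involving $\sJ(V \oplus W, Z) \sma X(V) \sma Y(W)$. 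Since $\aI_U^{U'}$ is itself defined by a coend and therefore commutes with colimits in its input, swapping the order of coends yields a natural isomorphism $\aI_U^{U'}(X \sma Y) \iso \aI_U^{U'} X \sma \aI_U^{U'} Y$ after reducing to trivial representations using the Thom space identification above. The unit $S_G$ is preserved by part (1) applied to $S_G$, and the associativity, unit, and symmetry coherences are automatic because the comparison isomorphisms are built from identities on underlying coend data. The equivalence-of-categories claim follows by combining (1) and (2) to exhibit $\aI_{U'}^U$ as a two-sided inverse.

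The main obstacle is part (3), specifically writing out the two iterated coends cleanly enough to exhibit the natural isomorphism and to verify the strong monoidal coherence diagrams; everything else reduces formally to the Thom space identification $\sJ(\bR^n, \bR^n) \iso O(n)_+$ together with the enriched Yoneda lemma.
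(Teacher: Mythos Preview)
The paper does not give a proof of this proposition at all; it is simply cited from Mandell--May \cite[V.1.1, V.1.5]{MM} as background. Your direct verification from the coend formula is correct and is essentially the argument one finds upon unwinding the definitions in \cite{MM}: the key observation, as you identify, is that the formula $\aI_U^{U'}X(V)=\sJ(\bR^n,V)\sma_{O(n)}X(\bR^n)$ depends only on $\dim V$ and on $X(\bR^n)$, so the universes $U,U'$ play no role beyond specifying which $V$ are allowed, and everything collapses via $\sJ(\bR^n,\bR^n)\iso O(n)_+$.

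Your treatment of part~(3) is a bit compressed. The cleanest way to finish it is to note that $\aI_U^{U'}$ is left Kan extension along a strong monoidal functor between the indexing categories $\sJ_G^U\to\sJ_G^{U'}$ (both have the same objects up to isomorphism, since every $V\in\aV^G(U')$ is $G$-isomorphic to some $\bR^n$-indexed object), and left Kan extension along a strong monoidal functor is strong monoidal for Day convolution. Alternatively, once parts~(1) and~(2) establish that $\aI_U^{U'}$ is an equivalence with inverse $\aI_{U'}^U$, one checks directly that it carries the generating objects $F_VA$ to $F_VA$ (same formula, different ambient category) and preserves smash products of these; strong monoidality then follows since both sides commute with colimits. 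Either route makes the coherence diagrams automatic, as you say.
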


We are particularly interested in the change of universe functors
associated to the universes $U$ and $U^{G}$.  The latter of these
universes is isomorphic to the standard trivial universe
$\mathbb{R}^\infty$.  Note that the category of orthogonal $G$-spectra
on $\bR^{\infty}$ is just the category of orthogonal spectra with
$G$-actions.

Given a subgroup $H < G$, we can regard a $G$-space $X(V)$ as an
$H$-space $\iota_H^* X(V)$.  The space-level construction gives rise
to a spectrum-level change-of-group functor.

\begin{definition}[{\cite[V.2.1]{MM}}]\label{def:changegroup}
For a subgroup $H < G$,
define the functor
\[
\iota_H^* \colon \Spec^{G}_{U} \to \Spec^{H}_{\iota_{H}^* U}
\]
by 
\[
(\iota_H^* X)(V) = \sJ_{H}(\bR^{n},V)\sma_{O(n)}\iota_H^* (X(\bR^{n}))
\]
for $V$ in $\aV^{H}(\iota_{H}^{*}U)$, where $n=\dim(V)$.
\end{definition}

As observed in \cite[V.2.1, V.1.10]{MM}, for $V$ in $\aV^{G}(U)$,
\[
(\iota_H^* X)(\iota_H^* V) \iso \iota_H^* (X(V)).
\]

In contrast to the category of $G$-spaces, there are two reasonable
constructions of fixed-point functors: the ``categorical'' fixed
points, which are based on the description of fixed points as
$G$-equivariant maps out of $G/H$, and the ``geometric'' fixed points,
which commute with suspension and the smash product (on the level of
the homotopy category).  Again, the description of orthogonal
$G$-spectra as $\sJ_G$-spaces in Proposition~\ref{prop:orthodesc}
provides the easiest way to construct the categorical and geometric
fixed point functors~\cite[\S V]{MM}.

For any normal $H \lhd G$, let $\sJ_G^H(U,V)$ denote the
$G/H$-space of $H$-fixed points of $\sJ_G(U,V)$.  Given any orthogonal
spectrum $X$, the collection of fixed points $\{X(V)^H\}$ forms a
$\sJ_G^H$-space.  We can turn this collection into a $\sJ_{G/H}$-space
in two ways. There is a functor $q \colon \sJ_{G/H} \to \sJ_G^H$ induced
by regarding $G/H$-representations as $H$-trivial
$G$-representations via the quotient map $G \to G/H$.


\begin{definition}[{\cite[\S V.3]{MM}}] For $H$ a normal subgroup of
$G$, the categorical fixed point functor
\[
(-)^H \colon \Spec^{G}_{U} \to \Spec^{G/H}_{U^{H}}
\]
is computed by regarding the $\sJ_{G}^H$-space $\{X(V)^H\}$ as a
$\sJ_{G/H}$-space via $q$.
\end{definition}

On the other hand, there is an equivariant continuous functor $\phi
\colon \sJ_G^H \to \sJ_{G/H}$ induced by taking a $G$-representation
$V$ to the $G/H$-representation $V^H$.

\begin{definition}[{\cite[\S V.4]{MM}}] For $H$ a normal subgroup
of $G$, let $\Fix^{H}$ denote the functor from orthogonal $G$-spectra
(=$\sJ_{G}$-spaces) to $\sJ_{G}^{H}$-spaces defined by
$(\Fix^{H}X)(V)=(X(V))^{H}$.  The geometric fixed point functor
\[
\Phi^H(-) \colon \Spec^{G}_{U} \to \Spec^{G/H}_{U^{H}}
\]
is constructed by taking $\Phi^H(X)$ to be the left Kan extension of
the $\sJ_G^H$-space $\Fix^{H} X$ along $\phi$.
\end{definition}

\begin{remark}
Hill--Hopkins--Ravenel \cite[B.190]{HHR} call the point-set geometric
fixed point functor ``the monoidal geometric fixed point functor'' and
define it using the coequalizer
\[
\xymatrix@C-1pc{
\bigvee\limits_{V,W<U}
\sJ_{G}^{H}(V,W)\sma F_{W^{H}}S^{0} \sma (X(V))^{H}
\ar@<.5ex>[r]\ar@<-.5ex>[r]
&\bigvee\limits_{V<U}F_{V^{H}}S^{0}\sma (X(V))^{H},
}
\]
derived from applying the geometric fixed point functor above to the
``tautological presentation'' of $X$:
\[
\xymatrix@C-1pc{
\bigvee\limits_{V,W<U}
\sJ_{G}(V,W)\sma F_{W}S^{0}\sma X(V)
\ar@<.5ex>[r]\ar@<-.5ex>[r]
&\bigvee\limits_{V<U}F_{V}S^{0}\sma X(V),
}
\]
noting that $\Phi^{H}F_{V}A\iso F_{V^{H}}A^{H}$ for a $G$-space $A$.
Although $\Phi^{H}$ does not preserve coequalizers in general, it does
preserve the coequalizers preserved by $\Fix^{H}$, and $\Fix^{H}$
preserves the canonical coequalizer diagram since it is levelwise
split.   Thus, the definition above agrees with the definition
in~\cite[B.190]{HHR}.
\end{remark}

Both fixed-point functors are lax symmetric monoidal~\cite[V.3.8,
V.4.7]{MM} and so descend to categories of associative and commutative
ring orthogonal $G$-spectra. 

\begin{proposition}\label{prop:fixsymmon}
Let $H \lhd G$ be a normal subgroup.  Let $X$ and $Y$ be orthogonal
$G$-spectra.  There are natural maps
\[
\Phi^H X \sma \Phi^H Y \to \Phi^H (X \sma Y) \qquad \textrm{and}
\qquad X^H \sma Y^H \to (X \sma Y)^H
\]
that exhibit $\Phi^H$ and $(-)^H$ as lax symmetric monoidal functors.

Therefore, there are induced functors
\[
\Phi^H, (-)^H \colon \Ass^{G} \to \Ass^{G/H}
\]
and
\[
\Phi^H, (-)^H \colon \Com^{G} \to \Com^{G/H}.
\]
\end{proposition}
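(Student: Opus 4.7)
My plan is to split the proof in three stages: construct the lax monoidal structure maps, verify the coherence axioms, and then deduce the statements about rings formally.

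For the categorical fixed point functor, I would start at the space level. For based $H$-spaces $A, B$, the map $A^H \wedge B^H \to (A\wedge B)^H$ is the evident inclusion, and it is associative, unital, and symmetric on the nose. Since $(-)^H$ applied to an orthogonal $G$-spectrum $X$ is computed by taking $\{X(V)^H\}$ as a $\sJ_G^H$-space and then restricting along $q\colon \sJ_{G/H}\to \sJ_G^H$, and since the smash product of orthogonal spectra is a Day convolution (hence a coend built out of smashes and morphism spaces, both of which are levelwise $H$-spaces), the level-wise inclusions assemble to a natural map $X^H\wedge Y^H\to (X\wedge Y)^H$. The coherence axioms then follow levelwise from the unbased case, and the unit map $S^0\to S_G^H$ is the identity.

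For the geometric fixed point functor, the cleanest approach is via the tautological presentation used in the Hill--Hopkins--Ravenel formula. The key computation is that $\Phi^H F_V A\iso F_{V^H}A^H$ for a based $G$-space $A$, and that the smash product of orthogonal spectra satisfies $F_V A\wedge F_W B\iso F_{V\oplus W}(A\wedge B)$. Combining these with the space-level lax map $A^H\wedge B^H\to (A\wedge B)^H$ and the identification $(V\oplus W)^H=V^H\oplus W^H$ gives natural maps on ``generators'' of the form $F_VA\wedge F_WB$; I would then check that these pass to the coequalizer presentations defining $\Phi^H(X)$ and $\Phi^H(X)\wedge \Phi^H(Y)$, using that $\Fix^H$ preserves the levelwise split coequalizers in sight. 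Associativity and symmetry hold because each ingredient (the identifications $\Phi^HF_VA\iso F_{V^H}A^H$, the monoidal structure on $F_{(-)}$, the space-level map) is associative and symmetric; unitality holds because $\Phi^H S_G\iso S_{G/H}$.

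The final sentence of the proposition follows by a general monadic argument: a lax symmetric monoidal functor between symmetric monoidal categories preserves algebras over any operad valued in sets, in particular over the associative and commutative operads $\bT$ and $\bP$, so we obtain induced functors on $\Ass^G\to\Ass^{G/H}$ and $\Com^G\to\Com^{G/H}$. I expect the main obstacle to be the compatibility check for $\Phi^H$ with the coequalizer presentation, since left Kan extensions do not generally preserve smash products; however, the required compatibility is exactly the content of \cite[V.4.7]{MM}, and the point-set identifications on free objects $F_V A$ make the coherence verifications straightforward rather than requiring any homotopical input.
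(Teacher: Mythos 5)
Your proposal is correct and takes essentially the same route as the paper, which simply cites Mandell--May \cite[V.3.8, V.4.7]{MM} for the lax symmetric monoidal structures; your sketch faithfully unpacks what those references contain, and the deduction for $\Ass^G$ and $\Com^G$ by monadic/operadic generalities is standard.

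One small point worth sharpening: the worry that ``left Kan extensions do not generally preserve smash products'' is in this case misplaced. The functor $\phi\colon\sJ_G^H\to\sJ_{G/H}$ is \emph{strong} symmetric monoidal (because $(V\oplus W)^H=V^H\oplus W^H$), and left Kan extension along a strong symmetric monoidal functor is itself strong symmetric monoidal with respect to Day convolution. So $\bP_\phi$ preserves smash products on the nose; the only source of laxness in $\Phi^H=\bP_\phi\circ\Fix^H$ is the levelwise fixed-point functor $\Fix^H$, whose lax structure is exactly the space-level inclusion $A^H\sma B^H\to(A\sma B)^H$ that you wrote down. Coherence then follows from coherence at the space level plus functoriality of $\bP_\phi$, without needing any argument about coequalizers preserving the monoidal structure.
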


For a commutative ring orthogonal $G$-spectrum $A$, a corollary of
Proposition~\ref{prop:fixsymmon} is that the fixed-point functors
interact well with the category of $A$-modules.

\begin{corollary}
Let $A$ be a commutative ring orthogonal $G$-spectrum.  The
fixed-point functors restrict to functors
\[
\Phi^H \colon A\Mod^{G} \to ({\Phi^H A})\Mod^{G/H}
\]
and
\[
(-)^H \colon A\Mod^{G} \to A^{H}\Mod^{G/H}.
\]
\end{corollary}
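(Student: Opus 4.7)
The plan is to observe that this is a completely formal consequence of Proposition~\ref{prop:fixsymmon}: any lax symmetric monoidal functor $F\colon \aC\to\aD$ between symmetric monoidal categories carries modules to modules. Given a monoid $A$ in $\aC$ and an $A$-module $M$ with action $\mu\colon A\sma M\to M$, one defines a $F(A)$-action on $F(M)$ by the composite
\[
F(A)\sma F(M)\overto{} F(A\sma M)\overto{F(\mu)} F(M),
\]
where the first arrow is the lax structure map. The unit and associativity axioms for this action follow by pasting together the naturality of the lax structure map, the lax monoidal unit and associativity coherence diagrams for $F$, and the unit and associativity diagrams for $M$ as an $A$-module; this is a standard diagram chase that I will not spell out.

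Applying this template to $F=\Phi^H$ with its lax symmetric monoidal structure from Proposition~\ref{prop:fixsymmon} gives the first functor. Given an $A$-module $M$ in $\Spec^{G}$, the geometric fixed points $\Phi^H M$ acquire a $\Phi^H A$-module structure via the composite above; morphisms of $A$-modules are carried to morphisms of $\Phi^H A$-modules by the naturality of the lax structure map in both arguments. The same argument with $F=(-)^H$ and its lax symmetric monoidal structure produces the second functor, giving an $A^H$-module structure on $M^H$.

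The only thing one might call an obstacle is bookkeeping: one must check that the lax structure maps $\Phi^H A\sma\Phi^H M\to \Phi^H(A\sma M)$ and $A^H\sma M^H\to(A\sma M)^H$ are natural in both variables and satisfy the unit and associativity coherences needed for the module axioms. But these coherences are precisely the content of ``lax symmetric monoidal'' as recorded in Proposition~\ref{prop:fixsymmon} (and cited from \cite[V.3.8, V.4.7]{MM}), so no new verification is required beyond what is already in hand.
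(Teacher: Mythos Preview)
Your proposal is correct and matches the paper's approach: the paper states this as a corollary with no proof, treating it as an immediate formal consequence of the lax symmetric monoidal structure recorded in Proposition~\ref{prop:fixsymmon}. Your explicit description of how a lax symmetric monoidal functor carries modules to modules is exactly the standard argument the paper is tacitly invoking.
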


\begin{remark}
We can extend these constructions to subgroups $H < G$ that are
not normal by considering the normalizer $NH$ and quotient $WH = NH /
H$.  However, since we do not need this generality herein, we do not
discuss it further.
\end{remark}

Let $z\in G$ be an element in the center of $G$.  Then multiplication
by $z$ is a natural automorphism on objects of
$\Spec^{G}_{\bR^{\infty}}$ or on objects of
$A\Mod^{G}_{\bR^{\infty}}$. In particular, it will induce a natural automorphism $\aI_{\bR^{\infty}}^{U}z$
 of $N_{H}^{G}X$ or of $\AN_{H}^{G}X$, as described in 
Sections~\ref{sec:THH} and~\ref{sec:amodcyc}.

\begin{proposition} \label{prop:diagdiag}
Let $z$ be an element in the center of $G$, and $K$ a normal
subgroup. Then for any $X \in \Spec^{G}_{\bR^{\infty}}$, we have an
identification
\[
  \Phi^K(\aI_{\bR^{\infty}}^{U}z) = \aI_{\bR^{\infty}}^{U^{K}}\bar z
\]
where $\bar z=zK\in G/K$.
In particular, for $z \in K$ the map $\Phi^K(\aI_{\bR^{\infty}}^{U}z)$
is the identity.
\end{proposition}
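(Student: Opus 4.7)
The plan is to track the central element $z$ through the definitions of the functors $\aI_{\bR^{\infty}}^{U}$ and $\Phi^{K}$, exploiting naturality throughout. First I would observe that centrality of $z$ is precisely the condition needed for multiplication by $z$ to define a natural automorphism of the identity functor on $\sI_{G}^{\bR^{\infty}}$; composing with $X$ then yields the self-map $z\colon X\to X$ of orthogonal $G$-spectra. Functoriality of $\aI_{\bR^{\infty}}^{U}$ identifies $\aI_{\bR^{\infty}}^{U}z$ with the action of $z$ in the $G$-structure on $\aI_{\bR^{\infty}}^{U}X$; this is transparent from the pointset formula $(\aI_{\bR^{\infty}}^{U}X)(V)=\sJ(\bR^{n},V)\sma_{O(n)}X(\bR^{n})$, where the coend identifies acting by $z$ on the $X(\bR^{n})$ factor with acting by $z$ on the $V$ factor via the isometries.

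Next I would apply $\Phi^{K}$ using its pointset description as the left Kan extension of $\Fix^{K}$ along $\phi\colon\sJ_{G}^{K}\to\sJ_{G/K}$. Since $z$ is central it is automatically $K$-equivariant, hence preserves the $K$-fixed subspace $(\aI_{\bR^{\infty}}^{U}X)(V)^{K}$ of each level. The key observation is that on these $K$-fixed points the residual $G$-action factors through $G/K$ (because $K$ acts trivially on its own fixed points), so the action of $z$ agrees with the action of $\bar z$. Functoriality of left Kan extension propagates this equality from $\Fix^{K}\aI_{\bR^{\infty}}^{U}X$ to $\Phi^{K}\aI_{\bR^{\infty}}^{U}X$, showing that $\Phi^{K}(\aI_{\bR^{\infty}}^{U}z)$ is the self-map given by $\bar z$ acting through the residual $G/K$-structure.

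To match this with $\aI_{\bR^{\infty}}^{U^{K}}\bar z$ on the right-hand side, I would unwind the natural identification of $\Phi^{K}\aI_{\bR^{\infty}}^{U}X$ as $\aI_{\bR^{\infty}}^{U^{K}}$ applied to a $G/K$-spectrum on the trivial universe: since $X$ starts on the trivial universe, the Kan extension along $\phi$ factors through the change of universe from $\bR^{\infty}$ to $U^{K}$, and the resulting $G/K$-spectrum (on the trivial universe) carries precisely the action of $\bar z$ recovered above. The last assertion is then immediate: for $z\in K$ we have $\bar z=e$ in $G/K$, so $\aI_{\bR^{\infty}}^{U^{K}}\bar z$ is the image under a functor of an identity morphism, which is the identity.

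The main obstacle is establishing the identification $\Phi^{K}\aI_{\bR^{\infty}}^{U}X\cong \aI_{\bR^{\infty}}^{U^{K}}\Phi^{K}X$ cleanly from the defining coend and Kan extension formulas, which requires careful bookkeeping with the three enriched categories $\sJ_{G}$, $\sJ_{G}^{K}$, and $\sJ_{G/K}$ and the functors $q$ and $\phi$ relating them. Once this identification is in place, however, comparison of the two self-maps is a purely formal diagram chase, and both assertions of the proposition follow.
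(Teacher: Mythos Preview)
Your first two paragraphs contain a genuine error. The claim that $\aI_{\bR^{\infty}}^{U}z$ coincides with ``the action of $z$ in the $G$-structure on $\aI_{\bR^{\infty}}^{U}X$'' is not correct. At a level $V$ with nontrivial $G$-action, the functorial image $\aI_{\bR^{\infty}}^{U}z$ acts only on the $X(\bR^{n})$ factor of the coend, as $(f,x)\mapsto (f,\,z\cdot x)$, whereas the levelwise $G$-action on $(\aI_{\bR^{\infty}}^{U}X)(V)$ acts diagonally on both factors. The coend is taken over $O(n)$, not over $G$, so there is no relation moving the $z$-action between the factors; in fact the levelwise $z$-action on an object of $\Spec^{G}_{U}$ is not even a morphism of $\sJ_{G}$-spaces once nontrivial representations are present. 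Consequently your ``key observation'' in paragraph~2, that on $K$-fixed points the $G$-action factors through $G/K$ and hence $z$ agrees with $\bar z$, applies to the levelwise action but says nothing about $\Fix^{K}(\aI_{\bR^{\infty}}^{U}z)$.

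What you sketch in paragraphs~3--4 is a valid alternative: once one establishes a natural isomorphism $\Phi^{K}\circ\aI_{\bR^{\infty}}^{U}\cong \aI_{\bR^{\infty}}^{U^{K}}\circ(-)^{K}$ of functors $\Spec^{G}_{\bR^{\infty}}\to\Spec^{G/K}_{U^{K}}$ (with $(-)^{K}$ the levelwise fixed points on the trivial universe), naturality reduces the proposition to the trivial fact that $z\colon X\to X$ restricts to $\bar z\colon X^{K}\to X^{K}$. You are right that setting up this identification carefully is the main work.

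The paper takes a more direct route that avoids the general identification. It uses the tautological presentation to reduce to generators $F_{V}Y$ with $V\in\aV^{G}(U)$ and $Y$ a $G$-space, and then computes the component of the natural automorphism $\aI_{\bR^{\infty}}^{U}z$ on $F_{V}Y$ explicitly: at level $W$ it is $f\sma y\mapsto (f\circ z^{-1})\sma(z\cdot y)$. Note that $z$ acts on \emph{both} the $\sJ_{G}(V,W)$ factor and the $Y$ factor; this is precisely the subtlety your first paragraph glosses over. Since $\Phi^{K}F_{V}Y\cong F_{V^{K}}Y^{K}$, and both the space-level functor $(-)^{K}$ and the passage $\sJ_{G}^{K}(V,V)\to\sJ_{G/K}(V^{K},V^{K})$ carry multiplication by $z$ to multiplication by $\bar z$, the identity follows at once on generators and hence everywhere.
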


\begin{proof}
Using the tautological presentation of $\aI_{\bR^{\infty}}^{U}X$ and naturality, it suffices to verify this identity on orthogonal
spectra of the form $F_V Y$ for a $G$-representation $V \in \aV^{G}(U)$; on
such spectra, the map $\aI_{\bR^\infty}^{U} z\colon F_V Y \to F_V Y$ is
given by $f \sma y \mapsto (f \circ z^{-1}) \sma (z\cdot y)$. The
result follows from the fact the fixed point functor $(-)^K$ takes
multiplication by $z$ to multiplication by $\bar z$, and the functor
$\sJ_G^K \to \sJ_{G/K}$ induces maps $\sJ_G^K(V,V) \to
\sJ_{G/K}(V^K,V^K)$ taking $z$ to $\bar z$.
\end{proof}

\subsection{The point-set theory of the norm}

Central to our work is the realization by Hill, Hopkins, and
Ravenel~\cite{HHR} that a tractable model for the ``correct''
equivariant homotopy type of a smash power can be formed as a
point-set construction using the point-set change of universe
functors. It is ``correct'' insofar as there is a diagonal map which
induces an equivalence onto the geometric fixed points (see
Section~\ref{sec:horev} below).  They refer to this construction as
the norm after the norm map of Greenlees-May~\cite{GreenleesMayLoc},
which in turn is named for the norm map of Evens in group
cohomology~\cite[Chapter~6]{Evens}.

The point of departure for the construction of the norm is the use of
the change-of-universe equivalences to regard orthogonal $G$-spectra
on any universe $U$ as $G$-objects in orthogonal spectra.  (Good
explicit discussions of the interrelationship can be found in~\cite[\S
V.1]{MM} and~\cite[2.7]{schwequi}.)  We now give a point-set
description of the norm following~\cite{schwequi} and~\cite{bohmann};
these descriptions are equivalent to the description of~\cite[\S
A.3]{HHR} by the work of~\cite{bohmann}.

For the construction of the norm, it is convenient to use $\aB G$ to
denote the category with one object, whose monoid of endomorphisms is
the finite group $G$.  The category $\Spec^{\aB G}$ of functors from
$\aB G$ to the category $\Spec$ of 
(non-equivariant) orthogonal spectra indexed on the universe
$\bR^{\infty}$ is isomorphic to the category
$\Spec^{G}_{\bR^{\infty}}$ of orthogonal $G$-spectra indexed on the
universe $\bR^{\infty}$.  We can then use the change of universe functor
$\aI_{\bR^{\infty}}^{U}$ to give an equivalence of $\Spec^{\aB G}$
with the category $\Spec^{G}_{U}$ of orthogonal 
$G$-spectra indexed on $U$. 

\begin{definition}\label{def:indexedsmash}
Let $G$ be a finite group and $H < G$ be a finite index subgroup
with index $n$.  Fix an ordered set of coset representatives
$(g_{1},\dotsc,g_{n})$, and let $\alpha \colon G\to \Sigma_{n}\wr H$
be the homomorphism 
\[
\alpha(g)= (\sigma, h_1, \ldots, h_n)
\]
defined by the relation $g g_i =  g_{\sigma(i)} h_i$.
The indexed smash-power functor 
\[
\sma_H^G \colon \Spec^{\aB H} \to \Spec^{\aB G} 
\]
is defined as the composite
\[
\xymatrix{
\Spec^{\aB H} \ar[r]^-{\sma^{n}} & \Spec^{\aB (\Sigma_n \wr
H)} \ar[r]^-{\alpha^*} & \Spec^{\aB G}.
}
\]

The norm functor
\[
N_H^G \colon \Spec^{H}_{U} \to \Spec^{G}_{U'}
\]
is defined to be the composite
\[
X \mapsto \aI_{\mathbb{R}^{\infty}}^{U'} (\sma_H^G (\aI_{U}^{\mathbb{R}^{\infty}} X)).
\]
\end{definition}

This definition depends on the choice of coset representatives;
however, any other choice gives a canonically naturally isomorphic
functor (the isomorphism induced by permuting factors and multiplying
each factor by the appropriate element of $H$).  As observed
in~\cite[A.4]{HHR}, in fact it is possible to give a description of the
norm which is independent of any choices and is determined instead by
the universal property of the left Kan extension.  Alternatively,
Schwede~\cite[9.3]{schwequi} gives another way of avoiding the choice
above, using the set $\langle
G:H \rangle$ of all choices of ordered sets of coset representatives; 
$\langle
G:H \rangle$ is a free transitive $\Sigma_{n}\wr H$-set and the
inclusion of $(g_{1},\dotsc,g_{n})$ in $\langle G:H \rangle$ induces
an isomorphism 
\[
\sma_H^G X \iso \left<G : H\right>_+ \sma_{\Sigma_n \wr H} X^{\sma n}.
\]
In our work, $G$ will be the cyclic group $C_{nr}<S^{1}$ and $H=C_{r}$
(usually for $r=1$), and we have the obvious choice of coset
representatives $g_{k}=e^{2\pi (k-1)i/nr}$, letting us take advantage of
the explicit formulas.  In the case $r=1$, we have the following.

\begin{proposition}\label{prop:pointsetnorm}
Let $G$ be a finite group and $U$ a complete $G$-universe.  The norm
functor
\[
N_e^G \colon \Spec \to \Spec^{G}_{U}
\]
is given by the composite 
\[
X \mapsto \aI_{\bR^{\infty}}^{U} X^{\sma G},
\]
where $X^{\sma G}$ denotes the smash power indexed on the set $G$.
\end{proposition}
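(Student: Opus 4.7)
The plan is to directly unwind Definition~\ref{def:indexedsmash} in the case $H=\{e\}$. First, I would observe that the wreath product $\Sigma_{n}\wr H$ collapses to $\Sigma_{n}$ when $H$ is trivial, so the homomorphism $\alpha\colon G\to \Sigma_{n}\wr H$ determined by $gg_{i}=g_{\sigma(i)}h_{i}$ has $h_{i}=e$ forced and reduces to $\alpha\colon G\to \Sigma_{n}$, $g\mapsto \sigma$. Under the bijection $\{1,\dotsc,n\}\leftrightarrow G$ given by the chosen coset representatives $(g_{1},\dotsc,g_{n})$, the permutation $\sigma=\alpha(g)$ is precisely left multiplication by $g$ on the underlying set of $G$.

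Second, since $X\in\Spec$ has trivial $\{e\}$-action on the universe $U\iso\bR^{\infty}$ (as a $\{e\}$-universe), the natural isomorphism $\aI_{\bR^{\infty}}^{\bR^{\infty}}\iso \id$ identifies $\aI_{U}^{\bR^{\infty}}X$ with $X$. Hence the defining composite simplifies to
\[
N_{e}^{G}X \iso \aI_{\bR^{\infty}}^{U}\bigl(\sma_{e}^{G}X\bigr).
\]

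Third, I would identify $\sma_{e}^{G}X$ with the smash power $X^{\sma G}$ indexed on the underlying set of $G$, on which $G$ acts by permuting smash factors via left translation. The ordering $(g_{1},\dotsc,g_{n})$ induces a non-equivariant isomorphism $X^{\sma n}\iso X^{\sma G}$, and the computation of $\alpha$ in the first step shows that the $G$-action obtained by pulling back the $\Sigma_{n}$-action on $X^{\sma n}$ along $\alpha$ corresponds under this isomorphism to the left-translation action on $X^{\sma G}$. Equivalently, one may appeal to the presentation $\sma_{H}^{G}X\iso \langle G:H\rangle_{+}\sma_{\Sigma_{n}\wr H}X^{\sma n}$ noted after Definition~\ref{def:indexedsmash}: for $H=\{e\}$, the set $\langle G:\{e\}\rangle$ of orderings of $G$ is a free transitive $\Sigma_{n}$-set, and the orbit description $\langle G:\{e\}\rangle_{+}\sma_{\Sigma_{n}}X^{\sma n}$ is canonically identified with $X^{\sma G}$.

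There is no genuine obstacle; the only substantive task is the bookkeeping in the third step, which amounts to observing that the formula in Definition~\ref{def:indexedsmash} was arranged precisely to encode this intrinsic $G$-indexed description. Combining the three steps yields the claimed formula $N_{e}^{G}X = \aI_{\bR^{\infty}}^{U}X^{\sma G}$.
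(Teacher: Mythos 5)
Your argument is correct and is exactly the unwinding that the paper leaves implicit: Proposition~\ref{prop:pointsetnorm} is stated without proof precisely because it is the $H=\{e\}$ specialization of Definition~\ref{def:indexedsmash}, and your three steps (collapse of $\Sigma_n\wr\{e\}$ to $\Sigma_n$ with $\alpha$ becoming the left-regular representation, triviality of $\aI_{\bR^\infty}^{\bR^\infty}$, and identification of $\sma_e^G X$ with $X^{\sma G}$ either via the chosen ordering or intrinsically via $\langle G:\{e\}\rangle_+\sma_{\Sigma_n}X^{\sma n}$) are precisely what that unwinding requires. The only minor point worth making explicit, if you wished to tighten the third step, is the precise convention for how $\alpha(g)\in\Sigma_n$ permutes the smash factors so that the resulting $G$-action on $X^{\sma G}$ is (say) translation on the indexing set, matching the convention used later in the proof of Proposition~\ref{prop:diag}; but this is a bookkeeping remark, not a gap.
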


When dealing with commutative ring orthogonal $G$-spectra, the norm
has a particularly attractive formal description~\cite[A.56]{HHR}, which
is a consequence of the fact that the norm is a symmetric monoidal
functor.

\begin{theorem}\label{thm:normadj}
Let $G$ be a finite group and let $H$ be a subgroup of $G$.  The norm
restricts to the left adjoint in the adjunction 
\[
N_H^G \colon \Com^{H} \leftrightarrows \Com^{G} \colon \iota^*_{H},
\]
where $\iota^*_{H}$ denotes the change of group functor along $H<G$.
\end{theorem}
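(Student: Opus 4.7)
The plan is to construct the unit and counit of the adjunction directly, exploiting the fact that the norm is strong symmetric monoidal.

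First, I would record that $N_H^G$ is strong symmetric monoidal: this follows from writing it as the composite of the change-of-universe functor $\aI_{\bR^\infty}^{U'}$ (strong symmetric monoidal by the cited results of Mandell--May) with the indexed smash $\sma_H^G$, which is strong symmetric monoidal because it is restriction along $\alpha\colon G\to \Sigma_n\wr H$ of the $n$-fold smash power. In particular $N_H^G$ sends commutative ring $H$-spectra to commutative ring $G$-spectra, so the functor in the statement is well-defined on objects.

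Next, I would construct the unit $\eta_X\colon X\to \iota_H^* N_H^G X$ on a commutative ring $H$-spectrum $X$ by using the identification $N_H^G X \iso \aI_{\bR^\infty}^{U'}(X^{\sma G/H})$ and including $X$ into the factor corresponding to the identity coset $eH$, using the unit $S\to X$ on the remaining factors; this is a map of commutative ring $H$-spectra, and it is $H$-equivariant because $H$ stabilizes $eH$. For the counit $\epsilon_Y\colon N_H^G \iota_H^* Y\to Y$ on a commutative ring $G$-spectrum $Y$, I would take the iterated multiplication: on the $G/H$-indexed smash $\bigwedge_{G/H} Y$, schematically send $\bigotimes_i y_i$ to the product $\prod_i g_i\cdot y_i$ in $Y$, where the $g_i$ are the chosen coset representatives. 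The triangle identities then follow by a direct calculation, using that $H$ stabilizes $eH$ so that one can take $g_{eH}=e$.

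The delicate point, and main obstacle, is verifying $G$-equivariance of the counit. An element $g\in G$ acts on $N_H^G \iota_H^*Y$ by permuting factors via translation on $G/H$ and simultaneously twisting by the $H$-elements $h_i$ characterized by $g\cdot g_{\sigma^{-1}(i)}=g_i h_i$. One then checks that
\[
\prod_i g_i\cdot h_i\cdot y_{\sigma^{-1}(i)}
\;=\;\prod_i g\cdot g_{\sigma^{-1}(i)}\cdot y_{\sigma^{-1}(i)}
\;=\;g\cdot\prod_j g_j\cdot y_j,
\]
which requires both that $g$ acts on $Y$ by ring maps and, crucially, that multiplication on $Y$ is \emph{commutative}, in order to reorder the factors after the substitution $j=\sigma^{-1}(i)$. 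Without commutativity, there is no canonical iterated multiplication out of the unordered smash, so the counit would not be well-defined at all. A secondary technical point is compatibility with $\aI_{\bR^\infty}^{U'}$, which is automatic from its strong symmetric monoidal structure. An alternative organization would be to observe that in any cocomplete symmetric monoidal category the coproduct of commutative monoids is the smash product, identify $N_H^G X$ as the $G/H$-indexed coproduct of $X$ in $\Com^{H}$ with $G$-action by translation, and read off the adjunction from this universal property; but one still pays the same equivariance cost in the identification of the $G$-action on this indexed coproduct.
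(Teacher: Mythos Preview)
Your proposal is correct. The paper does not give its own proof of this statement; it simply cites \cite[A.56]{HHR} and remarks that the adjunction ``is a consequence of the fact that the norm is a symmetric monoidal functor.'' The argument in HHR is essentially the ``alternative organization'' you describe at the end: since the smash product is the coproduct in commutative monoids, $N_H^G X$ is the $G/H$-indexed coproduct of copies of $X$ in $\Com^H$, equipped with the evident $G$-action, and the adjunction is read off from this universal property. Your primary approach---writing down explicit unit and counit and verifying the triangle identities and equivariance by hand---is a more concrete unpacking of the same idea; in particular your equivariance check for the counit is exactly where commutativity enters, matching the paper's remark that symmetric monoidality is the key input. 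Both approaches are standard; yours has the advantage of making explicit why the argument fails for associative (non-commutative) ring spectra.
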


The relationship of the norm with the geometric fixed point functor is
encoded in the diagonal map~\cite[B.209]{HHR}.

\begin{proposition}\label{prop:diag}
Let $G$ be a finite group, $H < G$ a subgroup, and $K \lhd G$ a normal subgroup. Let $X$ be an orthogonal $H$-spectrum. Then there is a natural diagonal map of orthogonal $G/K$-spectra
\[
 \Delta \colon N_{HK/K}^{G/K} \Phi^{H \cap K} X \to \Phi^K N_H^G X.
\]
(Here we suppress the isomorphism $H/H \cap K \cong HK/K$ from the notation.)
\end{proposition}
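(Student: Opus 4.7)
The plan is to construct $\Delta$ on the point-set level from the indexed smash product description of $N_H^G X$ in Definition~\ref{def:indexedsmash}, using the lax symmetric monoidal structure on $\Phi^K$ (Proposition~\ref{prop:fixsymmon}), the basic identity $\Phi^K F_V A \iso F_{V^K} A^K$ from the tautological-presentation remark, and the canonical comparison between $\Phi^K$ and the change-of-universe functor $\aI_{\bR^\infty}^U$. Since $\Phi^K \aI_{\bR^\infty}^U \iso \aI_{\bR^\infty}^{U^K} \Phi^K$ naturally, it suffices to build the map in $\Spec^{G/K}_{\bR^\infty}$ at the level of indexed smash products $\sma_H^G X$, and then transport it by $\aI_{\bR^\infty}^{U^K}$.

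The key combinatorial input is the following. The subgroup $K$ acts on the coset set $G/H$ via the $G$-action, and since $K \lhd G$, the $K$-orbits are in canonical bijection with $G/(HK) \iso (G/K)/(HK/K)$. For a coset $gH$, the $K$-stabilizer is $K \cap gHg^{-1}$, and conjugation by $g^{-1}$ carries this isomorphically onto $H \cap K \subset H$ (again using the normality of $K$). Consequently, the smash factors of $X^{\sma G/H}$ lying in a single $K$-orbit assemble, up to these identifications, into the indexed smash power $N_{H \cap K}^K X$, with $K$ permuting factors and with the stabilizer of a chosen factor acting via the $H \cap K$-structure on $X$.

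Granted these identifications, the diagonal is built orbit by orbit: on each $K$-orbit we apply the basic per-orbit diagonal $\Phi^{H \cap K} X \to \Phi^K N_{H \cap K}^K X$, which is the special case of the proposition for the ambient group $K$. This per-orbit map is itself constructed directly from the tautological presentation of $X$: apply $\Phi^K$ levelwise to each smash factor $F_V X(V)$ using $\Phi^K F_V A \iso F_{V^K} A^K$, and then use the lax monoidal structure of $\Phi^K$ to assemble the factors into a map out of $\Phi^{H \cap K} X$. Smashing the per-orbit diagonals together, indexed over the $K$-orbits on $G/H$, and once more invoking lax symmetric monoidality of $\Phi^K$, yields a natural map $\sma_{HK/K}^{G/K} \Phi^{H \cap K} X \to \Phi^K(\sma_H^G X)$ in $\Spec^{G/K}_{\bR^\infty}$; applying $\aI_{\bR^\infty}^{U^K}$ produces $\Delta$.

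The main obstacle is bookkeeping: one must verify that the construction does not depend on the ordered coset representatives $(g_1, \dots, g_n)$ used in Definition~\ref{def:indexedsmash} (beyond the canonical permutation isomorphisms identified there), and that the resulting map is genuinely $G/K$-equivariant. The residual $G/K$-action permutes the $K$-orbits on $G/H$ and conjugates the chosen representatives within each orbit; the per-orbit diagonals are compatible with these conjugations by the naturality of $\Phi^{H \cap K}$ applied to the induced automorphisms of $X$. Once the identifications are set up consistently, $G/K$-equivariance of $\Delta$ is forced by the universal property of the lax monoidal structure of $\Phi^K$ together with the naturality of $\Phi^K F_V \iso F_{V^K}(-)^K$, and compatibility with $\aI_{\bR^\infty}^U$ then propagates the map to the completed universe.
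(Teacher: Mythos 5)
Your construction is correct in outline, but it is organized quite differently from the paper's, so a comparison is worth noting. The paper's proof works entirely at the level of based $H$-spaces: it models $N_H^G A$ as the space of ``$H$-equivariant'' tuples $(a_g)_{g\in G}$ with $a_{hg} = h\, a_g$, observes that the $K$-fixed points of such a tuple are exactly tuples that factor through $G/K$ and take values in $A^{H\cap K}$, and reads off the natural homeomorphism $N_{HK/K}^{G/K} A^{H\cap K} \iso (N_H^G A)^K$ directly; it then says the spectrum-level map $\Delta$ is obtained from this space-level isomorphism by the same mechanism as in~\cite[B.209]{HHR} (passing through the tautological presentation and the identity $\Phi^K F_V A \iso F_{V^K} A^K$). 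Your route instead decomposes the $K$-orbits of $G/H$ inside the indexed smash power, identifies the per-orbit pieces with $N_{H\cap K}^K X$ (via the conjugation $K \cap gHg^{-1} \iso H\cap K$, as you note), assembles the basic per-orbit diagonals $\Phi^{H\cap K}X \to \Phi^K N_{H\cap K}^K X$ via the lax symmetric monoidal structure of $\Phi^K$, and finally transports by $\aI_{\bR^\infty}^{U^K}$. Both approaches reduce to the same basic diagonal from~\cite[B.209]{HHR}, but the paper's tuple formula side-steps the orbit bookkeeping: it manifestly produces a single $G/K$-equivariant bijection, whereas in your approach the independence of ordered coset representatives and the $G/K$-equivariance of the assembled map (which you flag as ``the main obstacle'') would need to be checked carefully, since the per-orbit identifications involve conjugations by the chosen $g_i$. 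Your framing has the virtue of making the composition $N_H^G = N_{HK}^G N_H^{HK}$ and the restriction $\iota_K^* N_H^{HK} X \iso N_{H\cap K}^K X$ visible, which is conceptually clarifying, at the cost of a longer verification. Neither approach claims the spectrum-level $\Delta$ is an isomorphism (that is only asserted later, on cofibrants, in Theorem~\ref{thm:normiso}), and you correctly do not claim it either.
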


\begin{proof}
The construction of $\Delta$ is the same as \cite[Proposition
B.209]{HHR} after generalizing the corresponding space-level
diagonal.   To do this, first note that for any based $H$-space $A$,
there is a natural isomorphism 
\[
 \Delta \colon N_{HK/K}^{G/K} A^{H \cap K} \overto{\cong} (N_H^G A)^K.
\]
For this, it is convenient to model the space-level norm as
follows. The space $N_H^G A$ is isomorphic to the subspace of tuples
$a = (a_g)_{g \in   G} \in \bigwedge_{g \in G} A$ such that $a_{hg} =
h a_g$. The left $G$-action is given by $(k \cdot a)_g = a_{gk}$.

Under this identification, $N_{HK/K}^{G/K} A^{H \cap K}$ consists of
tuples $b = (b_{[g]})_{[g] \in G/K}$ of elements in $A^{H \cap K}$
such that $b_{[hg]} = h b_{[g]}$ for $h \in H$. Similarly, $(N_{H}^{G}
A)^K$ consists of tuples $a = (a_{g})_{g \in G}$ such that $a_{hg} = h
a_g$ for $h \in H$ and $a_{gk} = a_g$ for $k \in K$. This allows us to
define the bijection $\Delta$ by $(\Delta b)_g = b_{[g]}$.
\end{proof}

For any particular commutative ring orthogonal spectrum $A$, the
indexed smash-power construction of Definition~\ref{def:indexedsmash}
can be carried out in the symmetric  monoidal category $A\Mod$.
Denote the $A$-relative indexed smash-power by $(\sma_A)^G_H$. For $X$ an $A$-module with $H$-action, we
understand $(\sma_A)^G_H X$ to be 
\[
(\sma_A)^G_H X:=\alpha^{*}X^{\sma n},
\]
where the $n$th smash power is over
$A$ and $\alpha^{*}$ is as in Definition~\ref{def:indexedsmash}.  This  
is an $A$-module (in $\Spec^{G}_{\bR^{\infty}}$).  
We then have the following definition of the $A$-relative norm
functor: 

\begin{definition}
Let $A$ be a commutative ring orthogonal spectrum.  Write $A_H$ for
the commutative ring orthogonal $H$-spectrum $\aI_{\bR^{\infty}}^{U}A$
obtained by regarding $A$ (with trivial $H$-action) as an object of
$\Spec^{\aB H}$ and applying the change of universe functor, and
similarly for $A_G$.
The $A$-relative norm functor 
\[
\AN_H^G \colon A_{H}\Mod^{H}_{U}\to A_{G}\Mod^{G}_{U'}
\]
is defined to be the composite
\[
X \mapsto \aI_{\bR^{\infty}}^{U'} ((\sma_A)_H^G (\aI_U^{\bR^\infty} X)).
\]
\end{definition}

The theory of the diagonal map in the $A$-relative context is somewhat
more complicated than in the absolute setting; we explain the details
in Section~\ref{sec:amodcyc}.

\subsection{Homotopy theory of orthogonal spectra}\label{sec:horev}

We now review the homotopy theory of orthogonal $G$-spectra with a
focus on discussing the derived functors associated to the point-set
constructions of the preceding section.  We begin by reviewing the
various model structures on orthogonal $G$-spectra.  All of these
model structures are ultimately derived from the standard model
structure on $\aT^{G}$ (the category of based $G$-spaces), which we begin
by reviewing. 

Following the notational conventions of~\cite{MM}, we start with the
sets of maps
\[
I = \{(G/H \times S^{n-1})_+ \to (G/H \times D^n)_+\}
\]
and
\[
J = \{(G/H \times D^n)_+ \to (G/H \times (D^n \times
I))_+\},
\] 
where $n\geq 0$ and $H$ varies over the closed subgroups of $G$.
Recall that there is a compactly generated model structure on the
category $\aT^{G}$ in which $I$ and $J$ are the generating cofibrations
and generating acyclic cofibrations (e.g.,~\cite[III.1.8]{MM}).  The
weak equivalences and fibrations are the maps $X \to Y$ such that $X^H
\to Y^H$ is a weak equivalence or fibration for each closed $H <
G$.  Transporting this structure levelwise in $\aV^{G}(U)$, we get the
level model structure in orthogonal $G$-spectra.

\begin{proposition}[{\cite[III.2.4]{MM}}]
Fix a $G$-universe $U$.
There is a compactly generated model structure on $\Spec^{G}_{U}$ in
which the weak equivalences and fibrations are the maps $X \to Y$ such
that each map $X(V) \to Y(V)$ is a weak equivalence or fibration of
$G$-spaces.  The sets of generating cofibrations and acyclic
cofibrations are given by $I_{G}^{U} = \{F_V i\mid i\in I\}$ and
$J_{G}^{U}=\{F_V j\mid j\in J\}$, where $V$ varies
over $\aV^{G}(U)$.
\end{proposition}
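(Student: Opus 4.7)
The plan is to apply the standard recognition theorem for cofibrantly generated model structures (Kan's transfer/lifting theorem, as in~\cite[11.3.1]{HirschhornModel} or~\cite[2.1.19]{HoveyModel}) to transfer the compactly generated model structure on $\aT^{G}$ through the family of free--forgetful adjunctions $(F_V,\ev_V)$ for $V\in\aV^{G}(U)$. The strategy mirrors the analogous construction of the level model structure on nonequivariant orthogonal spectra in~\cite{MMSS} and the $G$-equivariant version in~\cite[III.2.4]{MM}.

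First I would verify the two smallness hypotheses needed for the small object argument. Because the domains and codomains of the maps in $I$ and $J$ are compact $G$-CW complexes, and $F_V$ preserves colimits, the domains of maps in $I_G^U$ and $J_G^U$ are compact (in particular small) with respect to the respective cell complexes built from $I_G^U$ and $J_G^U$; this uses that colimits in $\Spec^{G}_U$ are computed levelwise, so a map out of $F_V A$ factoring through a transfinite composite in $\Spec^{G}_U$ corresponds via adjunction to a map out of $A$ factoring through a transfinite composite of $G$-spaces. Next I would characterize fibrations and acyclic fibrations by the right lifting property: a map $f\colon X\to Y$ has the right lifting property with respect to $F_V j$ if and only if $\ev_V f = f(V)$ has the right lifting property with respect to $j$, so $J_G^U$-injectives are precisely the level fibrations and $I_G^U$-injectives are precisely the level acyclic fibrations. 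This simultaneously identifies the level acyclic fibrations as the intersection of level fibrations and level weak equivalences.

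The main step, and the principal obstacle, is verifying the ``acyclic cofibration condition'': every map in $J_G^U$-cell is both an $I_G^U$-cofibration and a level weak equivalence. The cofibration part is automatic once one checks that each $F_V j$ is in $I_G^U$-cof, which follows from the fact that $F_V$ is a left adjoint and takes $J$ into $I$-cell in $\aT^{G}$ (this is a property of the model structure on $\aT^{G}$ itself). For the weak equivalence part, since level weak equivalences are defined levelwise and are closed under transfinite composition and pushout along cofibrations of $G$-spaces (by the compactly generated model structure on $\aT^G$), it suffices to check that each $F_V j$ is a level weak equivalence; equivalently, for each $W\in\aV^G(U)$, the map $\sJ_G(V,W)\sma j$ is a weak equivalence of $G$-spaces. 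This holds because $\sJ_G(V,W)$ has the homotopy type of a $G$-CW complex and smashing with any based $G$-CW complex preserves weak equivalences between $G$-CW complexes, combined with the fact that $j$ is a $G$-homotopy equivalence in $\aT^{G}$ (each generating acyclic cofibration $(G/H\times D^n)_+\hookrightarrow (G/H\times D^n\times I)_+$ is a deformation retract inclusion).

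Finally, I would assemble the pieces: the recognition theorem then produces a compactly generated model structure on $\Spec^G_U$ with $I_G^U$ and $J_G^U$ as generating (acyclic) cofibrations, whose fibrations are the level fibrations and whose weak equivalences are (at least) the level weak equivalences. To pin down the weak equivalences exactly as the level ones, I would invoke the two-out-of-three closure and the identification of acyclic fibrations above, together with the standard factorization argument: any weak equivalence factors as an acyclic cofibration followed by an acyclic fibration, and both factors are level weak equivalences, hence so is the original map. This yields precisely the statement of the proposition.
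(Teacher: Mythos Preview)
The paper does not give its own proof of this proposition: it is stated as a citation of \cite[III.2.4]{MM} and used as background, with no argument supplied. Your proposal is therefore not being compared against an in-paper proof but rather against the standard argument in the cited reference, and in that respect your outline is essentially the correct one: transfer the compactly generated model structure on $\aT^{G}$ along the adjunctions $(F_V,\ev_V)$ using Kan's recognition theorem, exactly as in \cite{MM} (following the nonequivariant template of Mandell--May--Schwede--Shipley).

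Two small comments. First, your final paragraph is unnecessary: in the recognition theorem you invoke, the class of weak equivalences is part of the input data (here, the level weak equivalences), not something deduced after the fact, so there is nothing to ``pin down''. Second, the cleanest way to handle the acyclic cofibration condition is the one you hint at but then partially obscure: each $j\in J$ is the inclusion of a $G$-equivariant deformation retract, so $F_V j$ is a levelwise $G$-equivariant deformation retract (in particular a level acyclic $h$-cofibration), and such maps are preserved under pushout and transfinite composition. You do not need to appeal to $\sJ_G(V,W)$ having the homotopy type of a $G$-CW complex.
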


The level model structure is primarily scaffolding to construct the
stable model structures.  In order to specify the weak equivalences in
the stable model structures, we need to define equivariant homotopy
groups.   

\begin{definition}
Fix a $G$-universe $U$.
The homotopy groups of an orthogonal $G$-spectrum $X$ are defined for
a subgroup $H < G$ and an integer $q$ as
\[
\pi_q^H(X) =
\begin{cases}
\quad\displaystyle 
\lcolim_{V < U} \pi_{q}((\Omega^{V}X(V))^{H})&q\geq 0\\[1em]
\quad\displaystyle 
\lcolim_{\bR^{-q} < V < U} \pi_{0}((\Omega^{V-\bR^{-q}}X(V))^{H})&q< 0,\\
\end{cases}
\]
(see~\cite[\S III.3.2]{MM}).
\end{definition}

These are the homotopy groups of the underlying $G$-prespectrum
associated to $X$ (via the forgetful functor from orthogonal
$G$-spectra to prespectra).  We define the stable equivalences to be
the maps $X \to Y$ that induce isomorphisms on all homotopy groups.

\begin{proposition}[{\cite[III.4.2]{MM}}]
Fix a $G$-universe $U$.
The standard stable model structure on $\Spec^{G}_{U}$ is the compactly
generated symmetric monoidal model structure with the cofibrations
given by the level cofibrations, the weak equivalences the stable
equivalences, and the fibrations determined by the right lifting
property.  The generating cofibrations are given by $I_{G}^{U}$ as
above, and the generating acyclic cofibrations $K$ are the union of
$J_{G}^{U}$ and certain additional maps described in~\cite[III.4.3]{MM}.
\end{proposition}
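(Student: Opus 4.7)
The plan is to build the standard stable model structure by left Bousfield localizing the level model structure, following the template of~\cite{MM}. First I would set up the generating acyclic cofibrations $K$: take $K = J_{G}^{U} \cup K'$, where $K'$ consists of mapping cylinder factorizations of the canonical stabilization maps $\lambda_{V,W}\colon F_{V\oplus W}S^{W}\to F_{V}S^{0}$ adjoint to the identity of $F_{V}S^{V}$, indexed over pairs $V,W$ with $V,W\in\aV^{G}(U)$. These are the maps that force all $\pi_{*}^{H}$ to become invariant under stabilization.

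Next I would run the small object argument on $I_{G}^{U}$ and on $K$ to produce the two functorial factorizations. The key homotopical inputs are then twofold. First, a direct comparison lemma identifying the homotopy groups: for each $\lambda_{V,W}$, the adjoint description of $F_{V}$ together with the definition of $\pi_{*}^{H}$ as a colimit over $V<U$ shows that $\lambda_{V,W}$ is a $\pi_{*}$-isomorphism, and hence every map in $K$ is one. Second, a recognition principle for $\Omega$-$G$-spectra: a level fibration $X\to Y$ has the right lifting property against $K$ if and only if each adjoint structure square
\[
\xymatrix{
X(V)\ar[r]\ar[d]&\Omega^{W}X(V\oplus W)\ar[d]\\
Y(V)\ar[r]&\Omega^{W}Y(V\oplus W)
}
\]
is a homotopy pullback of $G$-spaces. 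This recognition uses the adjunction $(F_{V\oplus W},\mathrm{ev}_{V\oplus W})$ and the mapping cylinder construction.

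Given these two lemmas, I verify the model category axioms. Two out of three and retract closure for stable equivalences follow from the analogous properties of $\pi^H_*$-isomorphisms of $G$-prespectra. The factorization (cof, acyclic fib) is the level factorization, since level acyclic fibrations are stable equivalences. For the (acyclic cof, fib) factorization I produce from $K$, the left factor is a relative $K$-cell complex and hence a stable equivalence and a cofibration; the right factor has RLP against $K$. The remaining nontrivial point is the lifting axiom: an acyclic cofibration must lift against every fibration. This reduces, by the retract-of-relative-cell-complex argument of Quillen's small object argument, to showing that a map that is both a cofibration and a stable equivalence is a retract of a $K$-cell complex, which one proves by factoring it as a $K$-cell complex followed by a fibration and checking the fibration is a level acyclic fibration (using the recognition principle above).

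The hard part, as in~\cite{MM}, is the recognition principle for $\Omega$-$G$-spectra and the attendant proof that a stable equivalence between fibrant objects is a level equivalence; this is what makes the localization well-behaved and is what forces the particular form of $K'$. Once this is in place, the symmetric monoidal (pushout-product) axiom follows from the pushout-product axiom at the level model structure together with the computation that $F_{V}S^{0}\sma F_{W}S^{0}\iso F_{V\oplus W}S^{0}$, which makes the maps $\lambda_{V,W}$ behave well under smash with cofibrations and reduces the monoidal axiom to a check on generators.
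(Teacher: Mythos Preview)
The paper does not prove this proposition; it is stated as background and attributed entirely to \cite[III.4.2]{MM}, with no argument given in the paper itself. Your outline is a faithful sketch of the standard localization argument that \cite{MM} carries out, so there is nothing in the present paper to compare it against.
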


This model structure lifts to a model
structure on the category $\Ass^{G}_{U}$ of associative monoids in
orthogonal $G$-spectra.

\begin{theorem}[{\cite[III.7.6.(iv)]{MM}}]
Fix a $G$-universe $U$.
There are compactly generated model structures on 
$\Ass^{G}_{U}$ in which the weak equivalences are the stable
equivalences of underlying orthogonal $G$-spectra indexed on $U$,
the fibrations are the maps which are stable fibrations of underlying
orthogonal $G$-spectra indexed on $U$, and the cofibrations are
determined by the left lifting property.
\end{theorem}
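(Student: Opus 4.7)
The plan is to produce the model structure on $\Ass^{G}_{U}$ by transferring the stable model structure on $\Spec^{G}_{U}$ along the free-forgetful adjunction
\[
\bT \colon \Spec^{G}_{U} \rightleftarrows \Ass^{G}_{U} \colon U,
\]
where $\bT X = \bigvee_{n \ge 0} X^{\sma n}$ is the tensor-algebra monad.  The candidate generating cofibrations and generating acyclic cofibrations are $\bT I_{G}^{U}$ and $\bT K$ respectively, with $I_{G}^{U}$ and $K$ as in the stable model structure on $\Spec^{G}_{U}$.  The required weak equivalences and fibrations are, by design, created by the forgetful functor $U$, so the structure of the transferred model category is forced.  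Cofibrations are then defined to be the maps with the left lifting property against acyclic fibrations, as dictated by the standard cofibrantly generated setup.

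Next, I would apply a version of Kan's recognition theorem for cofibrantly generated model structures (the form used in \cite[III.7.6]{MM} or, equivalently, Hirschhorn).  The smallness hypothesis is routine: domains of maps in $I_{G}^{U}$ and $K$ are compact in $\Spec^{G}_{U}$, the functor $\bT$ is a left adjoint, and the forgetful functor from $\Ass^{G}_{U}$ preserves filtered colimits, so domains of maps in $\bT I_{G}^{U}$ and $\bT K$ are small relative to the relevant cellular classes.  The substantive condition to verify is acyclicity: every relative $\bT K$-cell complex in $\Ass^{G}_{U}$ is a stable equivalence of underlying orthogonal $G$-spectra.  Given a pushout
\[
\xymatrix{
\bT A \ar[r]\ar[d] & R \ar[d] \\
\bT B \ar[r] & R'
}
\]
with $j \colon A \to B$ in $K$, one filters the underlying orthogonal $G$-spectrum of $R'$ as $R = R_{0} \to R_{1} \to R_{2} \to \dotsb$ with colimit $R'$, in such a way that each $R_{n-1} \to R_{n}$ is a pushout along an iterated pushout-product built from $R$ and $j$.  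Once this filtration is in hand, acyclicity reduces to the assertion that smashing any orthogonal $G$-spectrum with a generating acyclic cofibration of $\Spec^{G}_{U}$ yields a stable equivalence, and that transfinite compositions and pushouts preserve stable equivalences.

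The main obstacle, and the technical heart of the argument, is therefore the \emph{monoid axiom} (in the sense of Schwede--Shipley) for $\Spec^{G}_{U}$ together with the analysis of the filtration of a $\bT$-algebra pushout.  The monoid axiom is delicate because the generating acyclic cofibrations of the stable model structure are not simply of the form $F_{V}j$ for $j$ in the generating acyclic cofibrations of $\aT^{G}$; the additional maps in $K$ encoding the stable equivalence $F_{V \oplus W} S^{W} \to F_{V} S^{0}$ must be handled by hand.  This is exactly the content of \cite[III.7.5]{MM}.  The pushout-filtration analysis, following the template of \cite[VII.6]{EKMM}, then shows that the associated graded of $R \to R'$ consists of smash products of $R$ with cofibers of $j$, and the monoid axiom together with the pushout-product axiom guarantees that each stage is a stable equivalence.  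Once these two ingredients are assembled, the transfer theorem produces the asserted compactly generated model structure on $\Ass^{G}_{U}$.
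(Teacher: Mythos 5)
This theorem is quoted without proof from \cite[III.7.6.(iv)]{MM}, so the paper supplies no argument of its own for it. Your sketch faithfully reconstructs the Mandell--May proof: lift the stable model structure along the free associative-algebra monad $\bT$, handle smallness via compactness of the domains in $I_{G}^{U}$ and $K$, and reduce the acyclicity hypothesis of the transfer theorem to the monoid axiom together with the EKMM-style filtration of a free-monoid pushout, which is exactly the route taken in the cited source.
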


To obtain a model structure on commutative ring orthogonal spectra, we
also need the ``positive'' variant of the stable model structure.  We
define the positive level model
structures in terms of the generating cofibrations $(I_G^U)^+ \subset
I_G^U$ and $(J_G^U)^+ \subset J_G^U$, consisting of those maps $F_V i$
and $F_V j$ such that the representation $V$ contains a
nonzero trivial representation; these also extend to a positive stable
model structure.

\begin{theorem}[{\cite[III.5.3]{MM}}]
Fix a $G$-universe $U$.
There are compactly generated model structures on $\Com^{G}_{U}$ in
which the weak equivalences are the stable equivalences of the
underlying orthogonal $G$-spectra, the fibrations are
the maps which are positive stable fibrations of underlying
orthogonal $G$-spectra indexed on $U$, and 
the cofibrations are determined by the left lifting
property.
\end{theorem}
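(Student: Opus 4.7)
The plan is to lift the positive stable model structure from $\Spec^{G}_{U}$ to $\Com^{G}_{U}$ along the free-forgetful adjunction $\bP \colon \Spec^{G}_{U} \leftrightarrows \Com^{G}_{U}$, where $\bP X = \bigvee_{n\geq 0} X^{\sma n}/\Sigma_{n}$. I would propose as generating cofibrations and generating acyclic cofibrations the sets $\bP((I_{G}^{U})^{+})$ and $\bP(K^{+})$, where $(-)^{+}$ denotes the positive analogue of the corresponding set for the stable model structure on $\Spec^{G}_{U}$; the fibrations and weak equivalences would then be forced by adjunction and the right lifting property. The apparatus is Kan's lifting theorem in its compactly generated variant.

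The formal smallness hypotheses are routine: positive desuspension spectra $F_{V}A$ with $V^{G}\neq 0$ are small relative to the cellular colimits that arise, and $\bP$ together with the forgetful functor respect such colimits. The substantive input is the acyclicity condition, namely that every relative $\bP(K^{+})$-cell complex is a stable equivalence of underlying $G$-spectra. By cellular induction and passage to filtered colimits, this reduces to a single pushout analysis: for each generating acyclic cofibration $j\colon A\to B$ in the positive stable structure on $\Spec^{G}_{U}$ and each $R\in\Com^{G}_{U}$, one must show that the canonical map
\[
R \to R \sma_{\bP A} \bP B
\]
is a stable equivalence on underlying $G$-spectra.

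The core of the argument is to equip $R \sma_{\bP A} \bP B$ with a natural filtration whose $n$-th subquotient is $R \sma (Q^{n}(j)/\Sigma_{n})$, where $Q^{n}(j)$ denotes the $n$-fold iterated pushout-product of $j$ with itself, carrying the canonical $\Sigma_{n}$-action coming from permutation of the smash factors. The pushout-product axiom for $\sma$ on $\Spec^{G}_{U}$ then exhibits $Q^{n}(j)$ as a $(G \times \Sigma_{n})$-cofibration that is moreover a stable equivalence, and the main point is that the quotient by $\Sigma_{n}$ preserves this stable equivalence. The main obstacle, and precisely the reason the theorem restricts to the \emph{positive} stable structure, is the control of the $\Sigma_{n}$-action on symmetric powers: for positive cells $F_{V}$ with $V^{G}\neq 0$, the iterated smash powers acquire sufficiently free $\Sigma_{n}$-actions that the point-set orbits $X^{\sma n}/\Sigma_{n}$ compute the correct homotopy orbits, whereas the non-positive cell $F_{0}S^{0}$ has smash powers on which $\Sigma_{n}$ acts with fixed points, and $\bP$ then fails to preserve weak equivalences between cofibrant objects. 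Once this symmetric-power lemma is in hand, the acyclicity condition follows, and the remaining model category axioms transfer formally from the positive stable structure on $\Spec^{G}_{U}$.
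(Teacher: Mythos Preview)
The paper does not supply its own proof of this theorem; it is recorded as background with a citation to \cite[III.5.3]{MM}, and no argument is given in the text. So there is nothing in the paper to compare your proposal against directly.

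That said, your outline is the standard transfer argument and is essentially what the cited reference (together with \cite[\S III.8]{MM} and the non-equivariant prototype in \cite[\S VII]{EKMM}) carries out: lift along the free--forget adjunction via Kan's theorem, reduce the acyclicity condition to a single-pushout analysis, and control that pushout by the filtration of $R\sma_{\bP A}\bP B$ whose associated graded involves $\Sigma_n$-orbits of iterated pushout-products. Your identification of the key obstruction---that positivity is needed so that symmetric powers of cells carry a free enough $\Sigma_n$-action for point-set orbits to model homotopy orbits---is exactly the right diagnosis. One small caution: the step ``smash the acyclic $Q^n(j)/\Sigma_n$ with an arbitrary $R$ and retain a weak equivalence'' is not automatic from the pushout-product axiom alone; in the cited references this is handled either via the monoid axiom or by observing that the maps in question are $h$-cofibrations and the argument proceeds through cofiber sequences rather than direct preservation of equivalences under $R\sma(-)$. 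You gesture at this but do not name it, and in a full write-up it is the one place that requires care.
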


We will also use a variant of the standard stable model structure that
can be more convenient when working with the derived functors of the
norm.  We refer to this as the positive complete stable model
structure.  See~\cite[\S B.4]{HHR} for a comprehensive discussion of
this model structure, and~\cite[\S A]{Ullman} for a brief review.  In
order to describe this, denote by $(I^{\iota^*_{H}U}_H)^+$ and
$(J^{\iota^*_{H}U}_H)^+$ generating
cofibrations for the positive stable model structure on orthogonal
$H$-spectra indexed on the universe $\iota^{*}_{H}U$.

\begin{theorem}[{\cite[B.63]{HHR}}]\label{thm:normstable}
Fix a $G$-universe $U$.
There is a compactly generated symmetric monoidal model structure on
$\Spec^{G}$ with generating cofibrations and acyclic cofibrations the
sets $\{G_+ \sma_{H} i\mid  i\in (I^{\iota^{*}_{H}U}_H)^+,\; H<G\}$ and
$\{G_+ \sma_{H} j\mid j\in (J_H^{\iota^{*}_{H}U})^+,\; H<G\}$
respectively. 
The weak equivalences are the stable equivalences, and the fibrations
are determined by the right lifting property.
\end{theorem}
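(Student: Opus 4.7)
The plan is to apply the standard recognition theorem for cofibrantly generated model structures (e.g., Hovey's theorem), taking the stable equivalences as the prescribed weak equivalences. Smallness of the domains of the generators is automatic: the generators are obtained from compact $G$-spaces by applying the left adjoints $F_V$ and $G_+ \sma_H (-)$, both of which preserve sequential colimits in their arguments, so the domains are sequentially small with respect to relative cell inclusions. Closure of stable equivalences under retracts and the 2-out-of-3 property are formal from the definition in terms of isomorphisms of homotopy groups. The content of the theorem therefore reduces to two verifications: (a) a map with the right lifting property against the generating set $I = \{G_+\sma_H i\}$ is both a stable equivalence and has the right lifting property against the generating acyclic set $J=\{G_+\sma_H j\}$; and (b) every relative $J$-cell complex is a stable equivalence and is in $I$-cof.

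For (a), using the adjunction $(G_+\sma_H F_V(-),\Omega^V\iota_H^*(-))$, one identifies the $I$-injectives as the maps $X\to Y$ such that $\iota_H^*X(V)\to \iota_H^*Y(V)$ is an acyclic Serre fibration of based $H$-spaces for every $H<G$ and every $V\in \aV^H(\iota_H^*U)$ with $V^H\ne 0$. Since $U$ is complete, the positive subuniverse $\{V\mid V^H\ne 0\}$ of $\iota_H^*U$ is cofinal in the colimit system defining $\pi_q^H$, so such maps induce isomorphisms on all $\pi_q^H$ for all closed $H<G$, hence are stable equivalences. Having RLP against $J$ is immediate by comparing with the positive stable level model structure on $\Spec^H_{\iota_H^*U}$.

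For (b), the hard step, I would argue that for each subgroup $H<G$ the induced functor $G_+\sma_H(-)$ sends stable equivalences between cofibrant (in the positive stable sense) orthogonal $H$-spectra on $\iota_H^*U$ to stable equivalences; this uses the decomposition $(G_+\sma_H X)^K \iso \bigvee_{KgH} (\iota^*_{K\cap gHg^{-1}}g^*X)^{K\cap gHg^{-1}}$ coming from the double coset formula together with the identification of $\pi_*^K$ as a filtered colimit that distributes over such wedges. Combined with the fact that each $j\in (J^{\iota_H^*U}_H)^+$ is itself a stable equivalence (a known property of the positive stable model structure of \cite[III.5.3]{MM}), one concludes $G_+\sma_H j$ is a stable equivalence. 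Stability under pushouts and transfinite composition then follows from the standard ladder argument provided one knows the generators have well-pointed domains so that pushouts preserve the needed levelwise homotopy invariance; this well-pointedness check is the main technical obstacle, and is precisely where the HHR machinery of $h$-cofibrations and ``pushout-power'' estimates in \cite[\S B]{HHR} enters. Finally, the symmetric monoidal structure is obtained by checking the pushout-product axiom on generators: the isomorphism $(G_+\sma_H A)\sma (G_+\sma_K B)\iso (G\times G)_+\sma_{H\times K}(A\sma B)$ followed by restriction along the diagonal $G\to G\times G$ and the double coset decomposition of $G/(H\times K)$ reduces the statement to the pushout-product axiom for the positive stable model structures on orthogonal $L$-spectra for subgroups $L<G$, which is already established in \cite[III.7.6]{MM}.
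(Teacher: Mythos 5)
The paper does not prove this theorem; it is quoted verbatim (with the surrounding review of model structures) from Hill--Hopkins--Ravenel \cite[B.63]{HHR}, so there is no ``paper proof'' to compare against. What can be said is that your sketch is broadly the same recognition-theorem strategy that HHR carry out in \S B.4 of their appendix, and the ingredients you flag (the adjunction identifying $I$-injectives with positive-level acyclic fibrations, cofinality of the positive indexing representations, the double coset/wedge decomposition for $\iota^*_K(G_+\sma_H X)$, the reduction of the pushout-product axiom via $(G\times G)_+\sma_{H\times K}(-)$ and restriction along the diagonal, and the reliance on $h$-cofibrations to control pushouts) are indeed the ones HHR use.

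Two caveats are worth recording. First, as stated, the set $\{G_+\sma_H j\}$ with $j$ drawn from the \emph{level} generating acyclic cofibrations would only produce a positive-complete \emph{level} model structure; for the stable structure the set $(J_H^{\iota_H^*U})^+$ must already include the auxiliary stabilizing maps (of the kind described in \cite[III.4.3]{MM} or \cite[\S B.4]{HHR}). The paper silently uses this convention (it defines $(J_H^{\iota_H^*U})^+$ as ``generating [acyclic] cofibrations for the positive stable model structure''), but your step (a) --- ``having RLP against $J$ is immediate by comparison with the level model structure'' --- is only immediate for the level cells; for the stabilizing cells you genuinely need the argument that level acyclic fibrations are stable fibrations, which is the standard $\lambda$-map computation. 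Second, in step (b) the assertion that $G_+\sma_H(-)$ sends stable equivalences between positive-cofibrant $H$-spectra to stable equivalences, phrased via the double coset decomposition and a filtered-colimit identification of $\pi_*^K$, needs the flatness input of \cite[\S B.2]{HHR} to justify that the double coset wedge is homotopically well-behaved; this is exactly the technical weight you acknowledge. With those points made explicit, your outline is a faithful condensation of HHR's argument.
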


We then have corresponding positive complete model structures for
$\Com^{G}$ and $\Ass^{G}$.

\begin{theorem}[{\cite[B.130]{HHR}}, {\cite[B.136 (0908.3724v3)]{HHR}}]
\label{thm:normstablemult}
Fix a $G$-universe $U$.
There are compactly generated model structures on $\Ass^{G}_U$ and $\Com^{G}_{U}$ in
which the weak equivalences are the stable equivalences of the
underlying orthogonal $G$-spectra, the fibrations are
the maps which are positive complete stable fibrations of underlying
orthogonal $G$-spectra indexed on $U$, and 
the cofibrations are determined by the left-lifting
property.
\end{theorem}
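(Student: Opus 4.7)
The plan is to lift the positive complete stable model structure on $\Spec^{G}_{U}$ from Theorem~\ref{thm:normstable} to $\Ass^{G}_{U}$ and $\Com^{G}_{U}$ via the free associative and free commutative monoid adjunctions
\[
\bT \colon \Spec^{G}_{U}\rightleftarrows \Ass^{G}_{U} \colon U, \qquad
\bP \colon \Spec^{G}_{U}\rightleftarrows \Com^{G}_{U} \colon U,
\]
using the standard transfer/lifting machinery (Kan's lifting theorem, or equivalently the Crans/Schwede--Shipley framework for model structures on monoids and algebras). The candidate generating (acyclic) cofibrations are the images of those from Theorem~\ref{thm:normstable} under $\bT$ and $\bP$ respectively, and the candidate weak equivalences and fibrations are those detected by the forgetful functor $U$.

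First I would verify the cocompleteness of $\Ass^{G}_{U}$ and $\Com^{G}_{U}$ (standard, since both monads preserve reflexive coequalizers and filtered colimits, cf.~\cite[II.7]{EKMM}), and check the smallness hypotheses needed to run the small object argument. The core homotopical input is then: for each generating acyclic cofibration $j$ of the underlying positive complete stable model structure, every transfinite composition of cobase changes of $\bT j$ (respectively $\bP j$) along a map of associative (respectively commutative) ring orthogonal $G$-spectra is a stable equivalence in $\Spec^{G}_{U}$. For the associative case this is the monoid axiom applied to the positive complete structure, which reduces to showing that smashing with an underlying positive-cofibrant object preserves stable equivalences, a consequence of the pushout-product axiom already packaged into Theorem~\ref{thm:normstable}.

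The commutative case is the main obstacle. A pushout in $\Com^{G}_{U}$ along $\bP j\colon \bP A\to \bP B$ is built by a filtration whose subquotients are of the form
\[
R\sma (E\Sigma_{n})_{+}\sma_{\Sigma_{n}}(B/A)^{\sma n},
\]
so one must control the $\Sigma_n$-equivariant homotopy type of these symmetric power constructions. The positive complete hypothesis is exactly what is needed: because the generating (acyclic) cofibrations have domain and codomain of the form $F_{V}(-)$ with $V$ containing a nonzero trivial summand and the model structure is built from all subgroups of all $\Sigma_n \wr H$, the symmetric power of an acyclic cofibration is an acyclic cofibration of underlying $G$-spectra. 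This is precisely the content of~\cite[B.117--B.130]{HHR}, and this step is where the ``positive'' and ``complete'' adjectives are both essential.

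Once these acyclicity statements are established, the transfer theorem produces the model structures and identifies the weak equivalences and fibrations as the underlying ones; symmetric monoidality of the underlying model structure then promotes the structure on modules over a commutative monoid as in Theorem~\ref{thm:normstable}. Accordingly, the proof amounts to reducing to Theorem~\ref{thm:normstable} and invoking the analysis of equivariant symmetric powers from~\cite{HHR}; no genuinely new argument is required beyond bookkeeping to ensure that the universe $U$ and the subgroup data used in the positive complete generating sets are preserved at each stage.
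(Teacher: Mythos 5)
The paper does not prove this theorem internally; the bracketed citations in the theorem header indicate that it is simply quoted from Hill--Hopkins--Ravenel's appendix (Proposition B.130 and, for the commutative case, B.136 in arXiv v3), so there is no in-paper argument to compare against. Your sketch correctly reconstructs the strategy used there: transfer the positive complete stable structure of Theorem~\ref{thm:normstable} along the free monoid adjunctions $\bT$ and $\bP$, verify cocompleteness and smallness, invoke the monoid axiom for the associative case, and control symmetric powers for the commutative case, with the positive complete hypothesis doing the essential homotopical work in that last step.

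One correction worth making: the filtration quotients of a pushout in $\Com^{G}_{U}$ along $\bP j$ are the \emph{strict} symmetric powers $R\sma (B/A)^{\sma n}/\Sigma_{n}$, not the Borel constructions $R\sma (E\Sigma_{n})_{+}\sma_{\Sigma_{n}}(B/A)^{\sma n}$ as you wrote. That the two agree up to equivalence when $B/A$ is built from positive complete generating (acyclic) cofibrations is precisely the content of the HHR symmetric-power analysis you invoke (the indexed smash powers $(B/A)^{\sma n}$ being appropriately $\Sigma_{n}\wr H$-cofibrant), so it should be stated as the thing to be proven rather than as the given form of the filtration. With that adjustment, your outline accurately reflects the argument being cited.
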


For a fixed object $A$ in $\Com^{G}_U$, there are also lifted
model structures on the categories $A\Mod^{G}_{U}$ of $A$-modules,
$A\Alg^{G}_{U}$ of $A$-algebras, and $A\CAlg^{G}_{U}$ of
commutative $A$-algebras in both the stable and positive complete stable model
structures (\cite[III.7.6]{MM} and~\cite[B.137]{HHR}).  There are also
lifted model structures on the category $A\Mod^{G}_{U}$ of $A$-modules
when $A$ is an object of $\Ass^{G}_{U}$, but we will not need
these. Part of the following is \cite[B.137]{HHR}; the rest follows by
standard arguments.

\begin{theorem}
Fix a $G$-universe $U$.  Let $A$ be a commutative ring orthogonal
$G$-spectrum indexed on $U$.  There are compactly generated model
structures on the categories $A\Mod^{G}_{U}$ and $A\Alg^{G}_{U}$ 
in which the fibrations and weak equivalences are created by
the forgetful functors to the stable, complete stable, and positive complete stable
model structures on $\Spec^{G}_{U}$.  There are compactly generated
model structures on $A\CAlg^{G}_{U}$ in which the fibrations
and weak equivalences are created by the forgetful functors to the
positive stable and positive complete stable model structures on
$A\Mod^{G}_{U}$.
\end{theorem}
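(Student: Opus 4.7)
The plan is to apply the standard lifting theorem (Kan transfer) for creating a model structure on algebras over a monad defined on a cofibrantly generated model category, to each of the three free--forgetful adjunctions at play. In every case the nontrivial step is the acyclicity condition: that every transfinite composite of pushouts of the image of the generating acyclic cofibrations under the free functor is a weak equivalence in the base category.

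First, for $A\Mod^{G}_{U}$ in each of the stable, complete stable, and positive complete stable model structures, I lift through the adjunction $A\sma(-)\colon\Spec^{G}_{U}\rightleftarrows A\Mod^{G}_{U}$. Smallness of the domains is inherited from $\Spec^{G}_{U}$, and the acyclicity condition is precisely the monoid axiom for the corresponding model structure on $\Spec^{G}_{U}$, verified in~\cite[III.7.4]{MM} for the standard stable structure and in~\cite[App.~B]{HHR} for the complete variants.

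Second, for $A\Alg^{G}_{U}$ I lift along the forgetful functor to $A\Mod^{G}_{U}$, using the free associative $A$-algebra monad $X\mapsto\bigvee_{n\geq 0}X^{\sma_{A} n}$. The acyclicity condition reduces to the one already established for $A$-modules by the standard filtration of pushouts along free monad maps, as in the argument of~\cite[III.7.6]{MM}; this works uniformly in all three of the stable, complete stable, and positive complete stable structures since no symmetric group quotients appear.

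Finally, for $A\CAlg^{G}_{U}$ one must restrict to the positive stable and positive complete stable model structures, because the free commutative $A$-algebra functor $X\mapsto\bigvee_{n}X^{\sma_{A}n}/\Sigma_{n}$ involves symmetric quotients, and these preserve stable equivalences only between positively cofibrant $A$-modules. The main obstacle is verifying the acyclicity condition in this setting: if $K\to L$ is a positive generating acyclic cofibration of $A$-modules and $X\in A\CAlg^{G}_{U}$, one must show that the pushout $X\to X\sma_{A}\bigl(\bP L/\bP K\bigr)$ computed in commutative $A$-algebras is a stable equivalence. One filters this pushout so that the subquotients are of the form $X\sma_{A}(E\Sigma_{n})_{+}\sma_{\Sigma_{n}}(L/K)^{\sma_{A}n}$, and the arguments of~\cite[III.5.3]{MM} (for the positive stable case) and~\cite[B.130]{HHR} (for the positive complete stable case) apply with only cosmetic changes: the passage from the absolute case $A=S_{G}$ to a general commutative $A$ is harmless because smashing over $A$ with a cofibrant $A$-module preserves the relevant stable equivalences, which is exactly the monoid axiom invoked in the first paragraph.
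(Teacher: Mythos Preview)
Your approach is exactly the ``standard arguments'' the paper alludes to (the paper gives no proof beyond citing \cite[B.137]{HHR} and invoking standard arguments), so the strategy is correct and matches the paper. One caveat: your description of the commutative case is garbled---the pushout of $\bP_{A}K\to\bP_{A}L$ along $\bP_{A}K\to X$ in $A\CAlg^{G}_{U}$ is $X\sma_{\bP_{A}K}\bP_{A}L$, not $X\sma_{A}(\bP L/\bP K)$, and the filtration subquotients are $X\sma_{A}(L/K)^{\sma_{A}n}/\Sigma_{n}$ (the $E\Sigma_{n}$ enters only when identifying these homotopically via the key positive-cofibrancy lemma, not as the literal form of the subquotients); the references you cite do contain the correct versions of these steps.
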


Finally, when dealing with cyclotomic spectra, we need to use variants
of these model structures where the stable equivalences are determined
by a family of subgroups of $G$.  Recall from~\cite[IV.6.1]{MM} the
definition of a family: a family $\aF$ is a collection of closed
subgroups of $G$ that is closed under taking closed subgroups (and
conjugation).  We say a map $X\to Y$ is an $\aF$-equivalence if it
induces an isomorphism on homotopy groups $\pi^{H}_{*}$ for all $H$ in
$\aF$.  All of the model structures described above have analogues
with respect to the $\aF$-equivalences (e.g., see~\cite[IV.6.5]{MM}),
which are built from sets $I$ and $J$ where the cells $(G / H
\times S^{n-1})_+ \to (G/H \times D^{n})_+$ and $ (G / H \times
D^{n})_+ \to  (G/H \times (D^{n}\times I))_+$ are restricted to $H
\in \aF$.  We record the situation in the following omnibus theorem.

\begin{theorem}\label{thm:fmodel}
There are stable, positive stable, and positive complete stable compactly generated
model structures on the categories $\Spec^{G}_{U}$ and
$\Ass^{G}_{U}$ where the weak equivalences are the
$\aF$-equivalences.  There are positive stable and positive complete stable compactly
generated model structures on the category $\Com^{G}_{U}$ where
the weak equivalences are the $\aF$-equivalences.

Let $A$ be a commutative ring orthogonal $G$-spectrum.  There are
stable, positive stable, and positive complete stable compactly generated model
structures on the categories $A\Mod^{G}_{U}$, $A\Alg^{G}_{U}$
where the weak equivalences are the $\aF$-equivalences.  There are positive stable and
positive complete stable compactly generated model structures on
$A\CAlg^{G}_{U}$ where the weak equivalences are the
$\aF$-equivalences. 
\end{theorem}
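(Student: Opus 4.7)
The plan is to assemble these model structures by systematically carrying over the constructions of the non-family versions from \cite{MM} and \cite{HHR}, with all cells indexed by orbits $G/H$ restricted to $H\in\aF$. In particular, \cite[IV.6.5]{MM} already supplies the stable $\aF$-model structure on $\Spec^{G}_{U}$, and our task is to produce the positive stable and positive complete stable variants and then lift each to $\Ass^{G}_{U}$, $\Com^{G}_{U}$, and the corresponding module and algebra categories.

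First, I would define the level $\aF$-model structure on $\Spec^{G}_{U}$ by taking the generating (acyclic) cofibrations to be $F_{V}i$ for $i$ in the space-level generating sets built from orbits $G/H$ with $H\in\aF$, and verify it is compactly generated by the standard small object argument; weak equivalences and fibrations are detected on $H$-fixed points for $H\in\aF$. The stable, positive stable, and positive complete stable $\aF$-model structures on $\Spec^{G}_{U}$ are then obtained by exactly the recipes of Theorems~\ref{thm:normstable} and of \cite[III.4.2, III.5.3]{MM}: restrict to $V$ containing a nontrivial fixed representation for the positive variants, and use the sets $\{G_{+}\sma_{H}i\}$ with $H\in\aF$ for the positive complete variant. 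The $\aF$-stable equivalences coincide with the maps inducing isomorphisms on $\pi_{*}^{H}$ for $H\in\aF$, and the fibrations are identified by the right lifting property.

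Second, to produce the $\aF$-model structures on $\Ass^{G}_{U}$, $\Com^{G}_{U}$, $A\Mod^{G}_{U}$, $A\Alg^{G}_{U}$, and $A\CAlg^{G}_{U}$, I would lift along the free/forgetful adjunctions by the standard Schwede--Shipley/Kan lifting criterion. This requires the monoid axiom and pushout--product axiom for the $\aF$-model structure on $\Spec^{G}_{U}$, which follow from their non-$\aF$ analogues in \cite[III.7.6]{MM} and \cite[B.130, B.137]{HHR} since the relevant pushout--products of $\aF$-cells are again $\aF$-cells and the class of $\aF$-equivalences is closed under the requisite filtered colimits and smash products with cofibrant objects. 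For commutative algebras, as in \cite[III.5.3]{MM} and \cite[B.137]{HHR}, one must use a positive model structure so that the symmetric powers $E\Sigma_{n}^{+}\sma_{\Sigma_{n}}X^{\sma n}$ behave well; the $\aF$-analogue of this technical input is obtained by replacing the ambient family of all closed subgroups with $\aF$ throughout the proof, which is legitimate because $\aF$ is closed under passage to subgroups.

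The main obstacle is verifying the positive complete stable $\aF$-analogue of the key structural fact used for $\Com^{G}_{U}$ in \cite[B.137]{HHR}, namely that cofibrations of commutative ring orthogonal $G$-spectra remain cofibrations of underlying orthogonal $G$-spectra. The proof in \cite{HHR} analyzes norms and symmetric powers of generating cells $G_{+}\sma_{H}F_{V}S^{0}$; restricting $H$ to lie in $\aF$ does not disturb this analysis provided one checks that the relevant $\Sigma_{n}\wr H$-indexed constructions produce cells built from orbits of groups all of whose projections onto $G$ lie in $\aF$, which holds because $\aF$ is closed under subgroups. Once this is checked the remaining verifications are routine compactly generated model category arguments applied mutatis mutandis.
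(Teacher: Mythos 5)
The paper records this theorem essentially without proof: the paragraph preceding it indicates only that the model structures are obtained by restricting the generating cells to those with isotropy in $\aF$, and it cites \cite[IV.6.5]{MM} for the precedent in the unstructured case. Your proposal is consistent with this approach and fills in the details of what is implicit in the paper, so it is substantially the same argument expanded.

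A few remarks on the places where your outline is a bit glib, since the paper leaves these unstated too. The assertion that the pushout--product and monoid axioms for the $\aF$-model structure ``follow from their non-$\aF$ analogues'' compresses a genuine check: one needs that smashing with an $\aF$-cofibrant (hence ordinary cofibrant) object carries $\aF$-equivalences to $\aF$-equivalences, which reduces by the usual cell-induction to the claim that for $H\in\aF$, smashing with $G/H_+$ preserves $\aF$-equivalences; this uses the double-coset formula and the closure of $\aF$ under subgroups and conjugation. For the positive complete structure, the generating sets $\{G_+\sma_H i\}$ should have $H$ restricted to $\aF$ while $i$ ranges over all positive $H$-cells; since $\aF$ is subgroup-closed, the isotropy appearing in $i$ automatically lies in $\aF$, which is the observation you make and is the correct one. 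Finally, the step you flag as the main obstacle---the analysis of symmetric and indexed smash powers of generating cells needed for $\Com^G_U$---is indeed the only place where the $\aF$-restriction interacts nontrivially with the \cite{HHR} argument, and the subgroup-closure of $\aF$ is again the reason the analysis goes through unchanged. So your identification of where the work lies matches the intent of the paper's omnibus statement.
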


We are most interested in case of $G = S^1$ and the families $\aFin$ of 
finite subgroups of $S^1$ and $\aF_{p}$ of $p$-subgroups $\{C_{p^n}\}$
of $S^1$ for a fixed prime $p$.

\subsection{Derived functors of fixed points and the norm}

We now discuss the use of the model structures described in the
previous section to construct the derived functors of the categorical
fixed point, geometric fixed point, and norm functors.  We begin with
the categorical fixed point functor.  Since this is a right adjoint,
we have right-derived functors computed using fibrant replacement (in
any of our available stable model structures):

\begin{theorem}\label{thm:catder}
Let $H \lhd G$ be a normal subgroup.  Then the categorical fixed point
functor $(-)^{H}\colon \Spec^{G}_{U}\to \Spec^{G/H}_{U^{H}}$ is a
Quillen right adjoint; in particular, it preserves fibrations and weak
equivalences between fibrant objects in the stable and positive
complete stable model structures on $\Spec^{G}_{U}$.
\end{theorem}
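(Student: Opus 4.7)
The plan is to establish the Quillen adjunction by showing that $(-)^{H}$ preserves fibrations and acyclic fibrations in both the standard stable and positive complete stable model structures on $\Spec^{G}_{U}$; the assertion about weak equivalences between fibrant objects is then immediate from Ken Brown's lemma applied to a right Quillen functor. First I would invoke the existence of a point-set left adjoint to $(-)^{H}$, which is formal from the description of $(-)^{H}$ via Proposition~\ref{prop:orthodesc} as the composite of the objectwise $H$-fixed-point functor on $\sJ_{G}$-spaces followed by restriction along $q\colon \sJ_{G/H}\to \sJ_{G}^{H}$; the content of the theorem is then the homotopical statement.

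Next I would verify that $(-)^{H}$ preserves level fibrations and level acyclic fibrations. This reduces to the fact that the $H$-fixed-point functor $\aT^{G}\to \aT^{G/H}$ is itself right Quillen for the standard model structures: a map $f$ of $G$-spaces is a (acyclic) fibration iff $f^{K}$ is a Serre (acyclic) fibration for every closed $K<G$, and by normality of $H$ we have $(f^{H})^{K/H}=f^{K}$ for $H\le K<G$. The same argument handles both the level and positive level model structures, since the positive-indexing condition is stable under $(-)^{H}$.

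To upgrade to the stable case I would use the characterization of stable fibrations as level fibrations $p\colon X\to Y$ such that the square
\[
\xymatrix@-1pc{X(V)\ar[r]\ar[d]&\Omega^{W}X(V\oplus W)\ar[d]\\ Y(V)\ar[r]&\Omega^{W}Y(V\oplus W)}
\]
is a homotopy pullback of $G$-spaces for each $V,W\in \aV^{G}(U)$. Applying $(-)^{H}$ yields the analogous square for $X^{H},Y^{H}$ indexed on $U^{H}$, since $(-)^{H}$ commutes with $\Omega^{W}$ via the canonical identification $(\Omega^{W}Z)^{H}\iso \Omega^{W^{H}}(Z^{H})$, and since homotopy pullbacks of $G$-spaces are detected by passage to $L$-fixed points for every closed $L<G$ (so in particular for $L\supseteq H$, which parametrizes the subgroups of $G/H$). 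Since stable acyclic fibrations coincide with level acyclic fibrations, this closes the standard stable case.

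The positive complete stable case proceeds by exactly the same strategy, using the characterization of fibrations in that structure determined by the generating acyclic cofibrations of Theorem~\ref{thm:normstable}; these are preserved by the same commutation-with-$\Omega$ and detection-by-fixed-points argument, together with the compatibility of $(-)^{H}$ with the induction functors $G_{+}\sma_{K}(-)$ appearing in the generating set. The main obstacle I anticipate is this last compatibility when $K$ and $H$ neither contain each other, where one must compute $(G_{+}\sma_{K}A)^{H}$ via coset decomposition in terms of $A^{K\cap H}$ with appropriate residual equivariance; once this combinatorial identity is in hand, the argument goes through and Ken Brown's lemma delivers the final assertion about weak equivalences between fibrant objects.
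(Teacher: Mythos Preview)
The paper does not actually supply a proof of this theorem; it is stated as a background fact, with the sentence preceding it (``Since this is a right adjoint, we have right-derived functors computed using fibrant replacement'') serving as the entire justification, implicitly deferring to \cite[V.3.4, V.3.10]{MM} for the standard stable case. So there is nothing to compare your argument against line by line.

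That said, your proposal is essentially the argument one would extract from \cite{MM} for the standard stable structure, and it is correct there: the identification $(f^{H})^{K/H}=f^{K}$ handles the level case, and the commutation $(\Omega^{W}Z)^{H}\cong \Omega^{W^{H}}(Z^{H})$ together with detection of homotopy pullbacks on fixed points handles the stable upgrade. One small comment: you should note explicitly that every $G/H$-representation in $U^{H}$ arises as $V^{H}$ for some $V\in\aV^{G}(U)$ (indeed $V=q^{*}V'$ works), so the squares you produce really do cover the required indexing set.

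For the positive complete stable structure your sketch is less convincing. You propose to analyze $(G_{+}\sma_{K}A)^{H}$ via coset combinatorics, but this is both awkward and unnecessary: the cleaner route is to work on the \emph{left} adjoint. The left adjoint is inflation $\epsilon^{*}$ along $G\to G/H$ (combined with change of universe), and it suffices to check that $\epsilon^{*}$ sends the generating (acyclic) cofibrations $(G/H)_{+}\sma_{K/H}i$ of Theorem~\ref{thm:normstable} to (acyclic) cofibrations in $\Spec^{G}_{U}$. Since $\epsilon^{*}((G/H)_{+}\sma_{K/H}i)\cong G_{+}\sma_{K}\epsilon^{*}i$ for $H\le K$, this reduces to the same statement one level down and goes through without any double-coset analysis. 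Your identified ``obstacle'' is an artifact of attacking the right adjoint directly; switching sides dissolves it.
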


As the fibrant objects in the model structures on associative and
commutative ring orthogonal spectra are fibrant in the underlying model
structures on orthogonal $G$-spectra, we can derive the categorical
fixed points by fibrant replacement in any of the settings in which we
work.

In contrast, the geometric fixed point functor admits a Quillen left derived
functor (see~\cite[V.4.5]{MM} and~\cite[B.197]{HHR}).

\begin{theorem}\label{thm:geoder}
Let $H$ be a normal subgroup of $G$.  The functor $\Phi^H(-)$
preserves cofibrations and weak equivalences between cofibrant objects
in the stable, positive stable, and positive complete stable model structures on
$\Spec^{G}_{U}$.
\end{theorem}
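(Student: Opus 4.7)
The plan is to prove the two claims—preservation of cofibrations and preservation of weak equivalences between cofibrant objects—by combining the explicit formula $\Phi^H F_V A \iso F_{V^H} A^H$ for geometric fixed points of a shift desuspension with standard cellular arguments. This reduces each claim to a check on generating (acyclic) cofibrations together with a verification that $\Phi^H$ commutes with the relevant colimits used to build cofibrant objects.

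First I would verify that $\Phi^H$ sends generating cofibrations to cofibrations. The generating cofibrations in the standard stable model structure have the form $F_V(G/K \times S^{n-1})_+ \to F_V(G/K\times D^n)_+$ for $V\in\aV^G(U)$ and $K<G$ closed. Applying the identification above, these are sent to $F_{V^H}((G/K)^H\times S^{n-1})_+ \to F_{V^H}((G/K)^H\times D^n)_+$ in $\Spec^{G/H}_{U^H}$. Since $H\lhd G$, the $H$-fixed points $(G/K)^H$ decompose as a disjoint union of $G/H$-orbits, so each such map is a wedge of generating cofibrations in the appropriate model structure on $\Spec^{G/H}_{U^H}$. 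Extending to arbitrary cofibrations requires showing that $\Phi^H$ commutes with coproducts, pushouts along cofibrations, and sequential colimits of cofibrations on the subcategory of cofibrant objects; this is a direct consequence of the coequalizer description of the monoidal geometric fixed point functor in~\cite[B.190]{HHR}, since the levelwise fixed-point operation commutes with the split levelwise colimits that build cofibrant cell complexes. The positive stable and positive complete stable cases follow from the identical argument after observing that the restriction $\dim V^G>0$ (or the corresponding condition for the positive complete model structure) is preserved under $V\mapsto V^H$.

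Second, for weak equivalences between cofibrant objects, I would apply Ken Brown's lemma, reducing to showing that $\Phi^H$ takes acyclic cofibrations between cofibrant objects to weak equivalences. By cellular induction, this reduces to the generating acyclic cofibrations, where again the formula $\Phi^H F_V A \iso F_{V^H} A^H$ makes the check routine, together with the fact that $\pi_*^J\Phi^H X$ for $J<G/H$ can be computed on cofibrant $X$ as a filtered colimit of homotopy groups of $(X(V))^H$ over $V<U$, compatibly with cell attachments.

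The main obstacle is verifying this last commutation property—that on sufficiently cofibrant $X$, the levelwise fixed points $(X(V))^H$ really do compute the homotopy groups of $\Phi^H X$ after passing to the colimit over $V$—because $\Phi^H$ is not a Quillen left adjoint in any classical sense (the fixed-points half of its definition is not a left adjoint). This identification, which is carried out in detail in~\cite[V.4.5]{MM} and~\cite[B.197]{HHR}, is what makes the cellular induction work and what allows the three variants of the stable model structure to be handled uniformly.
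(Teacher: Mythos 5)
Your cellular argument---check on generators via $\Phi^H F_V A \iso F_{V^H} A^H$, extend using the fact that the monoidal geometric fixed-point functor commutes with the levelwise-split colimits building cell complexes, and apply Ken Brown's lemma for the weak-equivalence claim---is the standard one, and it is what the cited references [MM V.4.5] and [HHR B.197] carry out. The paper itself gives no proof, only those citations, so your sketch is more explicit than the text. This works as written for the stable and positive stable model structures, where the generating cofibrations really are of the form $F_V i$ subject to a positivity condition on the $G$-representation $V$.

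The positive complete stable case, however, is not handled by your parenthetical ``(or the corresponding condition for the positive complete model structure).'' Its generating cofibrations are $G_+ \wedge_K i$, where $K < G$ ranges over subgroups and $i$ is a positive generating cofibration in $\Spec^{K}_{\iota_K^*U}$ (Theorem~\ref{thm:normstable}); they are not shift-desuspensions of $G$-cells, so the observation that $V \mapsto V^H$ preserves positivity has no purchase. You need a separate computation of $\Phi^H(G_+ \wedge_K X)$. Because $H$ is normal, the $H$-fixed cosets in $G/K$ are either all of $G/K$ (when $H \subseteq K$) or none, which yields $\Phi^H(G_+ \wedge_K X) \iso (G/H)_+ \wedge_{K/H} \Phi^H X$ in the nontrivial case; one must then check that this lies back in the generating set for the positive complete structure on $\Spec^{G/H}_{U^H}$. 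That verification is precisely the new content of [HHR, B.197] beyond [MM, V.4.5], and your sketch elides it. Everything else is sound, including your accurate identification of the real technical crux---that $\pi_*^J\Phi^H X$ for cofibrant $X$ is a filtered colimit of fixed-point homotopy groups compatibly with cell attachment.
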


Since the cofibrant objects in the lifted model structures on
$\Ass^{G}_{U}$ are cofibrant when regarded as objects in
$\Spec^{G}_{U}$ \cite[III.7.6]{MM}, an immediate corollary of Theorem~\ref{thm:geoder} is
that we can derive $\Phi^H$ by cofibrant replacement when working with
associative ring orthogonal $G$-spectra.  In contrast, the underlying
orthogonal $G$-spectra associated to cofibrant objects in
$\Com^{G}$, in either of the model structures we study, are
essentially never cofibrant and the point-set functor $\Phi^{G}$ does
not always agree on these with the geometric fixed point functor on
the equivariant stable category.

The first part of the following theorem is \cite[B.104]{HHR}; the
statement in the case of $A$-modules is similar and discussed in
Section~\ref{sec:relTHH}.

\begin{theorem}\label{thm:derivenorm}
The norm $N_H^G(-)$ preserves weak equivalences between cofibrant
objects in any of the various stable model structures on $\Spec^{H}$,
$\Ass^{H}$, and $\Com^{H}$.

Let $A$ be a commutative ring orthogonal spectrum.  Then the
$A$-relative norm $\AN_e^G(-)$ preserves weak equivalences between
cofibrant objects in any of the various stable model structures on
$A\Mod$, $A\Alg$, and $A\CAlg$. 
\end{theorem}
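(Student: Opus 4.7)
The plan is to reduce both parts to the corresponding statement for orthogonal $G$-spectra and then invoke the analysis of Hill--Hopkins--Ravenel. The first sentence is \cite[B.104]{HHR}: one factors $N_H^G$ as the composite of the indexed smash power $\sma_H^G$ with the point-set change-of-universe functor $\aI_{\bR^\infty}^U$. The latter preserves weak equivalences between cofibrant objects (it is a Quillen equivalence when $U$ is complete), while the former is the composite of an $n$-fold smash power in $\Spec^{\aB(\Sigma_n\wr H)}$ with pullback along $\alpha\colon G\to\Sigma_n\wr H$. The iterated smash power preserves weak equivalences between cofibrant objects by the standard pushout-product filtration argument; pullback along $\alpha$ preserves cofibrancy since free cells in $\Spec^{\aB(\Sigma_n\wr H)}$ decompose as sums of free cells in $\Spec^{\aB G}$.

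For $\Ass^H$, I would use that the model structures of Theorem~\ref{thm:normstablemult} are lifted from $\Spec^H$ and that cofibrant associative ring orthogonal spectra have underlying cofibrant orthogonal spectra in the same stable model structure, so the $\Spec^H$ case applies. For $\Com^H$, underlying cofibrancy fails, and one must use the positive complete stable model structure. Cofibrant commutative ring orthogonal spectra are built by attaching cells of the form $\bP V$ (the free commutative algebra) on objects $V$ cofibrant in the positive complete model structure. Because the norm is strong symmetric monoidal, $N_H^G\bP V$ is the free commutative ring $G$-spectrum on $N_H^G V$; a filtration of the cellular structure then reduces the claim to the $\Spec^H$ statement applied to $V$, just as in \cite[\S B]{HHR}.

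For the $A$-relative statement, the argument is parallel but carried out internally in the symmetric monoidal category of $A$-modules. The functor $\AN_e^G$ factors as an $A$-relative indexed smash power $(\sma_A)_e^G$ followed by the change-of-universe functor. A K\"unneth-style pushout-product argument in $A\Mod$ shows that the $|G|$-fold iterated $\sma_A$ preserves weak equivalences between cofibrant $A$-modules; pullback along $\alpha$ and change of universe behave as before. The cases of $A\Alg$ and $A\CAlg$ then reduce to $A\Mod$, respectively to the positive complete analysis of cells $\bP_A V$ for cofibrant $A$-modules $V$, as above. Alternatively, in the commutative $A$-algebra case one may invoke Proposition~\ref{prop:introsmader}, which identifies $\AN_e^G R$ with $N_e^G R\dersma_{N_e^G A}A_G$, and so deduces the $A$-relative case from the absolute case combined with the homotopical properties of $\sma$ over $N_e^G A$.

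The principal obstacle is the commutative case, both absolutely and $A$-relatively: cofibrant commutative ring spectra are not underlying cofibrant, so one cannot simply read off the result from the $\Spec^H$ (or $A\Mod$) statement. The positive complete stable model structure is specifically engineered so that the norm interacts well with the cellular building blocks $\bP V$ (and their $A$-analogues $\bP_A V$), and verifying this compatibility---essentially, showing $N_H^G\bP V\iso\bP_G N_H^G V$ together with a careful filtration argument on the norm of a cellular commutative algebra---is the delicate technical work, done in \cite[\S B]{HHR} for the absolute case and adapted to $A$-modules in Section~\ref{sec:relTHH}.
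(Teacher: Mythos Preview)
Your treatment of the absolute case is correct and matches the paper, which simply cites \cite[B.104]{HHR} and does not reprove it. The reductions for $\Ass^{H}$ (underlying cofibrancy) and $\Com^{H}$ (the positive complete analysis via $N_{H}^{G}\bP V\iso \bP N_{H}^{G}V$) are the right ones.

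For the $A$-relative statement, your ``alternative'' is in fact the paper's approach, not your primary one. The paper does not run a parallel pushout-product argument internal to $A\Mod$; instead Section~\ref{sec:relTHH} uses the base-change identification $\AN_{e}^{G}M\iso N_{e}^{G}M\sma_{N_{e}^{G}A}A_{G}$ (Proposition~\ref{prop:basechange}---your reference to Proposition~\ref{prop:introsmader} is the $S^{1}$ analogue) together with flatness of norms \cite[B.147]{HHR} and a bar-construction comparison to show the smash product is derived. Your direct approach has a gap as written: the pushout-product axiom in $A\Mod$ only yields \emph{non-equivariant} weak equivalences $X^{\sma_{A}|G|}\to Y^{\sma_{A}|G|}$, and the assertion that ``pullback along $\alpha$ and change of universe behave as before'' is exactly the non-formal step. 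In the absolute case this is the content of the HHR cell-by-cell distributive-law analysis; in the $A$-relative case one does not redo that analysis but rather reduces to it via the base-change formula, which is why the paper proceeds that way.
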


The utility of the positive complete model structure is the following
homotopical version of Theorem~\ref{thm:normadj}~\cite[B.135]{HHR}.

\begin{theorem}
Let $H$ be a subgroup of $G$.  The adjunction
\[
N_H^G \colon \Com^{H} \leftrightarrows \Com^{G} \colon \iota^*_{H}
\]
is a Quillen adjunction for the positive complete stable structures. 
\end{theorem}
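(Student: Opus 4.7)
The plan is to show that the right adjoint $\iota_H^*\colon \Com^{G}_U \to \Com^{H}_{\iota_H^*U}$ preserves fibrations and acyclic fibrations. By Theorem~\ref{thm:normstablemult}, the weak equivalences and fibrations in both model structures are created by the forgetful functors to the positive complete stable model structures on $\Spec^{G}_U$ and $\Spec^{H}_{\iota_H^*U}$. Since the restriction on commutative ring orthogonal spectra commutes with these forgetful functors, it suffices to verify that $\iota_H^*\colon \Spec^{G}_U \to \Spec^{H}_{\iota_H^*U}$ is a right Quillen functor for these model structures.

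For preservation of weak equivalences, I would observe that a stable equivalence in $\Spec^{G}_U$ is a map inducing isomorphisms on $\pi_q^K$ for every closed subgroup $K < G$. For $K < H$, the group $\pi_q^K(X)$ depends only on the $K$-action on $X$, so that $\pi_q^K(\iota_H^*X)=\pi_q^K(X)$; hence $\iota_H^*$ takes stable equivalences to stable equivalences.

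For preservation of fibrations, I would use the description of generating acyclic cofibrations in Theorem~\ref{thm:normstable}: on the $G$ side they are the maps $G_+ \sma_K j$ with $K$ ranging over subgroups of $G$ and $j$ a generating acyclic cofibration in the positive stable model structure on $\Spec^{K}_{\iota_K^*U}$, and analogously on the $H$ side with $K$ ranging over subgroups of $H$. By the induction-restriction adjunction $(G_+ \sma_K -, \iota_K^*)$, a map $f$ in $\Spec^{G}_U$ is a positive complete stable fibration if and only if $\iota_K^* f$ is a positive stable fibration in $\Spec^{K}_{\iota_K^*U}$ for every $K < G$. Since any subgroup of $H$ is a subgroup of $G$, the restriction $\iota_H^* f$ satisfies the analogous condition for every $K < H$, and is therefore a positive complete stable fibration in $\Spec^{H}_{\iota_H^*U}$. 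The same argument applied to generating cofibrations handles acyclic fibrations.

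The principal point requiring care is the bookkeeping of universes, namely that $\iota_H^*U$ is a complete $H$-universe (so the positive complete stable structure on the target makes sense) and that the identification of $\iota_H^*$ with the restriction at the level of generating cells respects the implicit change of universe $\aI_{\iota_H^*U}^{\iota_K^*U}$ at each subgroup $K < H < G$. Once this is in place, the argument reduces to the standard interaction of induction-restriction adjunctions with equivariant homotopy groups, and no new homotopical input beyond what appears in Theorems~\ref{thm:normstable} and~\ref{thm:normstablemult} is required.
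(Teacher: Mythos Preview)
The paper does not provide its own proof of this statement; it is simply cited from \cite[B.135]{HHR}. Your argument is correct and is the standard one: reduce to the underlying spectrum categories via Theorem~\ref{thm:normstablemult}, then use the induction--restriction adjunction to characterize positive complete fibrations and observe that the condition for $H$ is a subset of the condition for $G$. The universe bookkeeping you flag is routine (completeness of $\iota_H^*U$ follows from Frobenius reciprocity), and no further input is needed.
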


Finally, we have the following result about the derived version of the
diagonal map~\cite[B.209]{HHR}.  We note the strength of the conclusion:
the diagonal map is an isomorphism on cofibrant objects, not just a
weak equivalence.  

\begin{theorem}[{\cite[B.209]{HHR}}]\label{thm:normisobasic}
Let $H$ be a normal subgroup of $G$.  The diagonal map
\[
\Delta \colon \Phi^H X \to \Phi^G N_H^G X
\]
is an isomorphism of orthogonal spectra (and in particular a weak
equivalence) when $X$ is cofibrant in any of the stable model
structures on $\Spec^{H}$, or when $X$ is a cofibrant object in
$\Ass^{H}$.
\end{theorem}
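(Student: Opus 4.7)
The plan is to reduce the theorem to a direct check on shift desuspension spectra, and then extend to all cofibrant objects by a cell-induction argument using the strong symmetric monoidality of the norm.

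First, I would establish a space-level diagonal isomorphism: for a based $H$-space $A$, there is a natural identification $(N_{H}^{G} A)^{G} \iso A^{H}$ of the $G$-fixed points of the indexed smash power. Using the tuple description in the proof of Proposition~\ref{prop:diag}, a tuple $(a_{g})_{g \in G}$ with $a_{hg} = h a_{g}$ is $G$-fixed exactly when $a_{g} = a_{e}$ for all $g$, and the relation then forces $a_{e} \in A^{H}$.

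Second, I would verify $\Delta$ on shift desuspension spectra. For $V \in \aV^{H}(\iota_{H}^{*}U)$ and a based $H$-space $A$, the strong symmetric monoidality of the norm gives a natural isomorphism $N_{H}^{G} F_{V} A \iso F_{W}(N_{H}^{G} A)$, where $W$ is the induced $G$-representation $G \times_{H} V$. Combining this with the formula $\Phi^{G} F_{W} B \iso F_{W^{G}} B^{G}$ (immediate from the Kan extension definition of $\Phi^{G}$ and the fact that $\phi$ sends $W$ to $W^{G}$), the classical identification $W^{G} \iso V^{H}$, and the first step, I obtain
\[
\Phi^{G} N_{H}^{G} F_{V} A \;\iso\; F_{W^{G}}(N_{H}^{G} A)^{G} \;\iso\; F_{V^{H}} A^{H} \;\iso\; \Phi^{H} F_{V} A.
\]
A direct unwinding of the bijection $\Delta$ in the proof of Proposition~\ref{prop:diag} confirms that this composite is the diagonal.

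The heart of the argument is extending from shift desuspension spectra to all cofibrant $X$ by cell induction. Cofibrant objects of $\Spec^{H}$ are retracts of cell complexes built from generating cofibrations of the form $H/K_{+} \sma F_{V}(S^{n-1})_{+} \to H/K_{+} \sma F_{V}(D^{n})_{+}$. Both $\Phi^{H}$ and $\Phi^{G} N_{H}^{G}$ preserve retracts and transfinite compositions along $h$-cofibrations, so the essential step is a pushout along a generating cofibration. Here I would use the indexed filtration of the iterated smash power from~\cite[Appendix~A]{HHR}: for a pushout $Y = X \cup_{A} B$ with $A \to B$ a cofibration, the $n$-fold smash power $Y^{\sma n}$ carries a natural filtration indexed by subsets of $\{1, \ldots, n\}$ whose subquotients are built from indexed smash products of $A \to X$ and the cofiber $B/A$, and so reduce to shift desuspensions. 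Applying $\Phi^{G}$ to the corresponding $G$-indexed filtration of $N_{H}^{G} Y$ and invoking Step 2 on each filtration quotient shows that $\Delta$ is an isomorphism termwise, hence on the whole. For cofibrant associative ring orthogonal $H$-spectra, \cite[III.7.6]{MM} gives that these are cofibrant as underlying orthogonal $H$-spectra, so the same argument applies.

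The main obstacle is this pushout step: the norm is not a left Quillen adjoint in any underlying sense and does not preserve arbitrary pushouts, so one cannot argue directly that $\Delta$ is compatible with cell attachments. The resolution is to work with the explicit indexed filtration of the norm; cofibrancy of the cells ensures that the filtration pieces are cofibrations and that each subquotient reduces to a shift desuspension spectrum handled by Step 2. This is the only place where cofibrancy of $X$ is used in an essential way, and it is why the statement is sharp --- an isomorphism, not merely an equivalence --- since each reduction to the shift desuspension case is a genuine point-set isomorphism.
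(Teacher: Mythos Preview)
Your proposal is correct and follows essentially the same route as the paper, which defers to~\cite[B.209]{HHR}: reduce to shift desuspension spectra via the space-level diagonal and the formulas $N_{H}^{G}F_{V}A \iso F_{\Ind_{H}^{G}V}(N_{H}^{G}A)$ and $\Phi^{G}F_{W}B \iso F_{W^{G}}B^{G}$, then propagate by cell induction using the indexed filtration on the norm of a pushout. Your observation that cofibrant associative ring spectra are cofibrant as underlying spectra (via~\cite[III.7.6]{MM}) is exactly how the paper handles the $\Ass^{H}$ case as well.
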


Along the lines of Proposition~\ref{prop:diag}, we also need the
following more general statement, which essentially follows from the
argument of \cite[B.209]{HHR} using the isomorphism given in the proof
of Proposition~\ref{prop:diag} to start the induction.

\begin{theorem}\label{thm:normiso}
Let $G$ be a finite group, $H < G$ a subgroup, and $K \lhd G$ a normal
subgroup. Let $X$ be an orthogonal $H$-spectrum.  The diagonal map of
orthogonal $G/K$-spectra 
\[
 \Delta \colon N_{HK/K}^{G/K} \Phi^{H \cap K} X \to \Phi^K N_H^G X.
\]
is an isomorphism of orthogonal spectra (and in particular a weak
equivalence) when $X$ is cofibrant in any of the stable model
structures on $\Spec^{H}$ or when $X$ is a cofibrant object in
$\Ass^{H}$.
\end{theorem}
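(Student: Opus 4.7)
The plan is to mirror the inductive strategy of \cite[B.209]{HHR}, with the base case replaced by the slightly more general space-level identification established in the proof of Proposition~\ref{prop:diag}. First I would dispense with the associative case: by \cite[III.7.6]{MM} a cofibrant object in $\Ass^{H}$ is cofibrant as an underlying orthogonal $H$-spectrum, so it suffices to treat the case of $X$ cofibrant in one of the stable model structures on $\Spec^{H}$.

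Next I would verify the isomorphism on the generating cells. A generating cofibration has the form $F_{V}i$ for $i$ a cofibration of $H$-CW complexes and $V\in \aV^{H}(\iota_{H}^{*}U)$, so it is enough to check $\Delta$ on $F_{V}A$ for $A$ a based $H$-CW complex. Here the identities $\Phi^{K}F_{V}Y\iso F_{V^{K}}Y^{K}$ and $N_{H}^{G}F_{V}Y\iso F_{\mathrm{Ind}_{H}^{G}V}N_{H}^{G}Y$ (with induction understood as the indexed sum coming from $\sma_{H}^{G}$) together with the natural compatibility $(\mathrm{Ind}_{H}^{G}V)^{K}\iso \mathrm{Ind}_{HK/K}^{G/K}V^{H\cap K}$ reduce the check to the space-level natural isomorphism
\[
N_{HK/K}^{G/K}A^{H\cap K}\iso (N_{H}^{G}A)^{K}
\]
that was constructed in the proof of Proposition~\ref{prop:diag} by the assignment $(\Delta b)_{g}=b_{[g]}$.

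Then I would induct on the cellular filtration of $X$. Any cofibrant $X$ is a retract of a transfinite composition of pushouts along generating cofibrations, so I need both sides of $\Delta$, and the natural transformation itself, to commute with (i) retracts, (ii) transfinite compositions along $h$-cofibrations for each $h<H$, and (iii) pushouts along cellular inclusions. For $\Phi^{K}$ this is standard (\cite[V.4.5]{MM}, \cite[B.197]{HHR}). For $N_{H}^{G}$ the relevant tool, following \cite[B.89--B.104]{HHR}, is the natural filtration of $N_{H}^{G}(X\cup_{A}B)$ whose associated graded is built from indexed smash products of $A$'s and $B$'s; applying $\Phi^{K}$ commutes with each filtration step through the pushouts of $h$-cofibrations at the filtered pieces, and on each associated graded summand the problem reduces by the monoidal structure of $\Phi^{K}$ (Proposition~\ref{prop:fixsymmon}) back to the base case above.

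The principal obstacle is the compatibility on the filtration quotients of the norm: neither $N_{H}^{G}$ nor $\Phi^{K}$ preserves arbitrary colimits, so one must carefully analyze how the $\Sigma_{n}\wr H$-orbits on the indexing set $\left<G:H\right>$ decompose under $K$ in order to identify the associated graded of $\Phi^{K}N_{H}^{G}(-)$ with that of $N_{HK/K}^{G/K}\Phi^{H\cap K}(-)$. Once the bijection of orbit sets (fixed by $K$) with $HK/K$-orbits on $\left<G/K:HK/K\right>$ is matched with the $(H\cap K)$-fixed point summands appearing in $\Phi^{H\cap K}$, the monoidal naturality of $\Delta$ identifies the two graded pieces, completing the induction.
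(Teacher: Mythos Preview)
Your proposal is correct and follows essentially the same approach as the paper: the paper's proof of this theorem is just the remark that it ``essentially follows from the argument of \cite[B.209]{HHR} using the isomorphism given in the proof of Proposition~\ref{prop:diag} to start the induction,'' which is exactly the strategy you outline. Your treatment is in fact more detailed than what the paper provides, correctly handling the reduction from $\Ass^{H}$, the base case on $F_{V}A$, and the cellular induction via the HHR filtration of the norm of a pushout.
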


We also need the commutative ring orthogonal spectrum version of
Theorem~\ref{thm:normisobasic}. 

\begin{theorem}\label{thm:comnormisobasic}
The diagonal map
\[
\Delta \colon X \to \Phi^G N_e^G X
\]
is an isomorphism of orthogonal spectra when $X$ is a cofibrant
commutative ring orthogonal spectrum.
\end{theorem}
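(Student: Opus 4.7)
The plan is a cellular induction on the structure of cofibrant commutative ring orthogonal spectra. Cofibrant objects in $\Com$ (in any of the positive stable model structures of Theorem~\ref{thm:fmodel}) are retracts of cell objects built from the unit $S$ by transfinite composition of pushouts along maps $P(A)\to P(B)$, where $P\colon \Spec\to \Com$ is the free commutative ring functor and $A\to B$ is a positive generating cofibration of orthogonal spectra. It suffices to establish the isomorphism for such cell objects and then pass to retracts.

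First I would handle the base case $X=P(A)$ for a positively cofibrant orthogonal spectrum $A$. Using that $N_e^G\colon \Com\to \Com^G$ is left adjoint to the forgetful functor (Theorem~\ref{thm:normadj}) and that the free commutative $G$-ring functor $P_G\colon \Spec^G\to \Com^G$ is left adjoint to the forgetful $\Com^G\to \Spec^G$, one obtains a natural isomorphism $N_e^G P(A)\iso P_G(iA)$, where $iA=\aI_{\bR^{\infty}}^{U}(G_+\sma A)$ is the left adjoint to the composite forgetful functor $\Spec^G_U\to \Spec$. Since $iA$ is cofibrant as an orthogonal $G$-spectrum, strong monoidality of $\Phi^G$ on cofibrant objects (Theorem~\ref{thm:geoder}) together with compatibility of $\Phi^G$ with the $\Sigma_n$-orbit constructions appearing in $P_G$ (under the positive stable convention) yields $\Phi^G P_G(iA)\iso P(\Phi^G iA)$. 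A direct point-set calculation then identifies $\Phi^G iA\iso \Phi^G(G_+)\sma \Phi^G(A)\iso S^0\sma A\iso A$, since $G$ acts freely on $G_+$ away from the basepoint (so $G^G=\emptyset$) and $A$ carries the trivial $G$-action. Thus $\Phi^G N_e^G P(A)\iso P(A)$, and one verifies that the diagonal of Proposition~\ref{prop:diag} realizes this isomorphism.

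Next I would run the induction on the cell structure. Given a pushout in $\Com$
\[
\xymatrix{
P(A)\ar[r]\ar[d] & R\ar[d]\\
P(B)\ar[r] & R'
}
\]
with $A\to B$ a positive generating cofibration, applying $N_e^G$ (a left adjoint) produces a pushout $P_G(iA)\to P_G(iB)$ against $P_G(iA)\to N_e^G R$ with vertex $N_e^G R'$ in $\Com^G$. The substantive step is to show that the point-set functor $\Phi^G$ sends this pushout to a pushout in $\Com$; combined with the base-case identifications $\Phi^G N_e^G P(A)\iso P(A)$ and $\Phi^G N_e^G P(B)\iso P(B)$ and with the inductive hypothesis $\Phi^G N_e^G R\iso R$, this forces $\Phi^G N_e^G R'\iso R'$ compatibly with the diagonal. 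For this I would invoke the filtration analysis of pushouts of free commutative ring extensions from \cite[App.~B]{HHR}, which exhibits $N_e^G R'$ as a transfinite colimit of $\Sigma_n$-orbit attachments built from $(iB/iA)^{\sma n}$-type cells over $N_e^G R$, and shows that $\Phi^G$ preserves each stage under the positive conventions. Transfinite composites and retracts are then handled by the corresponding colimit- and retract-preservation statements for both functors.

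The principal obstacle is precisely this pushout-preservation step for $\Phi^G$: at the point-set level $\Phi^G$ does not preserve general colimits in $\Com^G$, and one must exploit the specific structure of the filtration arising in free commutative attachment---together with the freeness of the relevant $\Sigma_n$-actions provided by the positive convention and the strong monoidality of $\Phi^G$ on cofibrant $G$-spectra---to control it. The argument is parallel to the proof of Theorem~\ref{thm:normisobasic} for associative ring spectra, but carried out at the level of free commutative extensions rather than individual cells of orthogonal spectra.
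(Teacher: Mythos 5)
Your overall scaffolding---reduce by cellular induction to the free commutative ring on a single cell, then propagate through pushouts, transfinite composites, and retracts---is sound, and is essentially the reduction the paper makes by invoking the induction of \cite[B.209]{HHR} and monoidality. The paper's base cases are the individual symmetric powers $(F_V B_+)^{(m)}/\Sigma_m$, which are exactly the wedge summands of your $P(F_V B_+)$, so up to this point the two arguments run in parallel. However, your treatment of the base case contains two errors, one of which is fatal.

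The first is the computation $\Phi^G(G_+)\cong S^0$. The left translation action of $G$ on $G$ is free, so $G^G=\emptyset$, and adjoining a basepoint gives $(G_+)^G=\emptyset_+=\ast$, the one-point (zero) object---\emph{not} $S^0$, which has two points. Consequently $\Phi^G(iA)\cong \ast\sma A\cong\ast$, not $A$. The second, deeper error is the claimed commutation $\Phi^G P_G(iA)\cong P(\Phi^G iA)$. Given the corrected value $\Phi^G iA\cong\ast$, this formula would yield $\Phi^G N_e^G P(A)\cong P(\ast)\cong S$, contradicting the theorem (which forces it to be $P(A)$). The formula fails because $\Phi^G$ does not commute with $\Sigma_n$-orbits in the naive way: for $Y=iA=G_+\sma A$, the degree-$n$ wedge summand $Y^{\sma n}/\Sigma_n$ of $P_G(Y)$ has trivial $\Phi^G$ unless $|G|$ divides $n$, and when $n=k|G|$ its geometric fixed points contribute the degree-$k$ summand $A^{\sma k}/\Sigma_k$ of $P(A)$. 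This degree shift is invisible to the would-be formula $\Phi^G\circ P_G\cong P\circ\Phi^G$, and the underlying combinatorics is precisely the content that must be checked by hand. The paper handles it by unwinding the left Kan extension defining $\Phi^G$ and verifying, for the cell $X=(F_V B_+)^{(m)}/\Sigma_m$ (using compactness of $B$), that the induced map of $\sJ_G^G$-spaces into $(\sJ_G(\Ind_e^G V^m,-)\sma_{\Sigma_m^{\times G}}N_e^G(B^m)_+)^G$ is a bijection at each level; this point-set analysis is what actually identifies the diagonal as an isomorphism on the base cells and cannot be replaced by a formal adjunction argument.
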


\begin{proof}
The induction in~\cite[B.209]{HHR}
and monoidality of both sides reduces the statement to
the case when $X=(F_{V}B_{+})^{(m)}/\Sigma_{m}$ where $V$ is a
finite-dimensional (non-equivariant) inner product space and $B$ is
the disk $D^{n}$ or sphere $S^{n-1}$---in particular, when $B$ is a
compact Hausdorff space. In general, for a (non-equivariant)
orthogonal spectrum $T$ the diagonal map is constructed
as follows: for every (non-equivariant) inner product space $Z$, the
universal property of the indexed smash product gives a map of based
$G$-spaces $N_{e}^{G}(T(Z))\to (N_{e}^{G}T)(\Ind_{e}^{G} Z)$, which restricts on
the diagonal to a map 
\begin{equation}\label{eq:fix}
T(Z)\to (N_{e}^{G}T(\Ind_{e}^{G} Z))^{G}=(\Fix^{G}(N_{e}^{G}T))(\Ind_{e}^{G} Z),
\end{equation}
and then (passing to the left Kan extension $\bP_{\phi}$ along the
fixed point functor $\phi \colon \sJ^{G}_{G}\to
\sJ_{e}$ on the right) induces a map
\[
T(Z)\to (\Phi^{G}(N_{e}^{G}T))((\Ind_{e}^{G} Z)^{G}) =(\Phi^{G}(N_{e}^{G}T))(Z).
\]
When $T$ is a cell of the form $F_{V}B_{+}$, the map in~\eqref{eq:fix}
factors as 
\begin{multline*}
T(Z)=\sJ_{e}(V,Z)\sma B_{+}\to
\sJ_{G}^{G}(\Ind_{e}^{G}V,\Ind_{e}^{G}Z)\sma B_{+}
\to \\
(\sJ_{G}(\Ind_{e}^{G}V,\Ind_{e}^{G}Z)\sma N_{e}^{G}(B)_{+})^{G}
=(\Fix^{G}(N_{e}^{G}T))(\Ind_{e}^{G} Z).
\end{multline*}
The first map $T(Z)=\sJ_{e}(V,Z)\sma B_{+}\to
\sJ_{G}^{G}(\Ind_{e}^{G}V,\Ind_{e}^{G}Z)\sma B_{+}$
induces an isomorphism 
\[
T\to \bP_{\phi}(\sJ_{G}^{G}(\Ind_{e}^{G}V,-)\sma B_{+})\iso 
\sJ_{e}((\Ind_{e}^{G}V)^{G},-)\sma B_{+}.
\]
By passing to quotients, we see that likewise in the case of interest, 
\[
T=X=(F_{V}B_{+})^{(m)}/\Sigma_{m}\iso F_{V^{m}}B^{m}_{+}/\Sigma_{m},
\]
the diagonal map factors as an isomorphism
\[
X\to \bP_{\phi}(\sJ_{G}^{G}(\Ind_{e}^{G}V^{m},-)\sma_{\Sigma_{m}} B^{m}_{+})\iso 
\sJ_{e}((\Ind_{e}^{G}V^{m})^{G},-)\sma_{\Sigma_{m}} B^{m}_{+}
\]
followed by a map
\[
\bP_{\phi}(\sJ_{G}^{G}(\Ind_{e}^{G}V^{m},-)\sma_{\Sigma_{m}}
B^{m}_{+})\to \Phi^{G}(N_{e}^{G} X)
\]
that is the induced map on left Kan extension from a map of $\sJ_{G}^{G}$-spaces
\[
\sJ_{G}^{G}(\Ind_{e}^{G}V^{m},-)\sma_{\Sigma_{m}}
B^{m}_{+} \to
(\sJ_{G}(\Ind_{e}^{G}V^{m},-)\sma_{\Sigma_{m}^{\times G}} N_{e}^{G}(B^{m})_{+})^{G}.
\]
Thus, it suffices to show that the latter map is an isomorphism.  This
amounts to showing that for each $G$-inner product space $W$, the map
\[
\sJ_{G}^{G}(\Ind_{e}^{G}V^{m},W)\sma_{\Sigma_{m}}
B^{m}_{+} \to
(\sJ_{G}(\Ind_{e}^{G}V^{m},W)\sma_{\Sigma_{m}^{\times G}} N_{e}^{G}(B^{m})_{+})^{G}
\]
is a homeomorphism, but since both sides are compact Hausdorff spaces,
it amounts to showing that the map is a bijection.  The map is clearly
an injection.  To see that it is a surjection, we note that any
non-basepoint $x$ of
$\sJ_{G}(\Ind_{e}^{G}V^{m},W)\sma_{N_{e}^{G}\Sigma_{m}}
N_{e}^{G}(B^{m})_{+}$
is represented by a collection of points $\vec b_{h}\in B^{m}$
(indexed on $h\in G$) and isometries $\phi_{h}\colon V^{m}\to
W$ (indexed on $h\in G$) such that $\bigoplus_{h}\phi_{h}\colon
\Ind_{e}^{G}V^{m}\to W$ is injective.  The point $x$ is $G$-fixed if
for every $g\in G$, there exist an element $\sigma(g)$ in
$N_{e}^{G}\Sigma_{m}$ such that 
\begin{equation}\label{eq:gfix}
g\cdot((\phi_{h}),(\vec b_{h}))=
((\phi_{h})\circ \sigma(g)^{-1},\sigma(g)\cdot(\vec b_{h})).
\end{equation}
If we write $\sigma(g)$ also in coordinates $\sigma(g)=(\sigma_{h}(g))$,
where 
\[
(\phi_{h})\circ \sigma(g)^{-1}=(\phi_{h}\circ \sigma_{h}(g)^{-1})
\qquad \text{and}\qquad 
\sigma(g)\cdot(\vec b_{h})=(\sigma_{h}(g)\cdot \vec b_{h}),
\]
then \eqref{eq:gfix} becomes
\begin{align*}
g\circ \phi_{g^{-1}h}&=\phi_{h}\circ \sigma_{h}(g)^{-1}\\
\vec b_{g^{-1}h}&=\sigma_{h}(g)\vec b_{h}.
\end{align*}
for all $g,h\in G$, 
where we have written $h\circ (-)$ to denote the action of $h$ on $W$
(and likewise we use $(-)\circ h$ below to denote the action of $h$
on $\Ind_{e}^{G}V^{m}$).  Let
\begin{align*}
\phi'_{h}&=h\circ \phi_{1}=\phi_{h}\circ \sigma_{h}(h)^{-1}\\
\vec b'_{h}&=\sigma_{h}(h)\cdot \vec b_{h}=\vec b_{1},
\end{align*}
Then $((\phi'_{h}),(\vec b'_{h}))$ also represents the element $x$,
with $(\vec b'_{h})$ clearly a diagonal element.  
Since 
\begin{align*}
(g\cdot \phi')_{h}&=(g\circ \phi'\circ g^{-1})_{h}\\
&=g\circ \phi'_{g^{-1}h}= g\circ g^{-1}h\circ \phi_{1}\\
&=h\circ \phi_{1}=\phi'_{h},
\end{align*}
we also have $(\phi'_{h})$ in the image of
$\sJ_{G}^{G}(\Ind_{e}^{G}V^{m},W)$. 
\end{proof}

\section{Cyclotomic spectra and topological cyclic homology}\label{sec:cyclosec}

In this section, we review the details of the category of $p$-cyclotomic
spectra and the construction of topological cyclic homology ($TC$).
The diagonal maps that naturally arise in the context of the norm go
in the opposite direction to the usual cyclotomic structure maps, and so we also
explain how to construct $TC$ from these ``op''-cyclotomic spectra.
In the following, fix a prime $p$ and a complete $S^1$-universe
$U$. 

\subsection{Background on $p$-cyclotomic spectra}\label{sec:cyclo}

In this section, we briefly review the point-set description of
$p$-cyclotomic spectra from~\cite[\S 4]{BM}; we refer the reader to
that paper for more detailed discussion.  

\begin{definition}[{\cite[4.5]{BM}}]\label{def:pcyclotomic}
A \term{$p$-precyclotomic spectrum} $X$ consists of an orthogonal
$S^1$-spectrum $X$ together with a map of orthogonal $S^1$-spectra 
\[
t_p \colon \rho_{p}^{*}\Phi^{C_{p}} X \to X.
\]
Here $\rho_{p}$ denotes the $p$-th root isomorphism $S^1 \to
S^1/C_{p}$. A $p$-precyclotomic spectrum is a \term{$p$-cyclotomic spectrum} when
the induced map on the derived functor
$\rho_{p}^{*}L\Phi^{C_{p}} X \to X$ is an $\aF_{p}$-equivalence.
A morphism of $p$-cyclotomic spectra consists of a map of
orthogonal $S^1$-spectra $X\to Y$ such that the diagram
\[
\xymatrix@-1pc{%
\rho_{p}^{*}\Phi^{C_{p}}X\ar[r]\ar[d]
&X\ar[d]\\
\rho_{p}^{*}\Phi^{C_{p}}Y\ar[r]
&Y
}
\]
commutes. 
\end{definition}

\begin{remark}
A \term{cyclotomic spectrum} is an orthogonal spectrum with
$p$-cyclotomic structures for all primes $p$ satisfying certain
compatibility relations; see~\cite[4.7--8]{BM} for details.
\end{remark}

Following \cite[5.4--5]{BM}, we have the following weak equivalences for
$p$-precyclotomic spectra.

\begin{definition}
A map of $p$-precyclotomic spectra is a \term{weak equivalence} when
it is an $\aF_{p}$-equivalence of the underlying orthogonal $S^{1}$-spectra.
\end{definition}

\begin{proposition}[{\cite[5.5]{BM}}]\label{prop:wkeqivcyc}
A map of $p$-cyclotomic spectra is a weak
equivalence if and only if is a weak equivalence of the underlying
(non-equivariant) orthogonal spectra.
\end{proposition}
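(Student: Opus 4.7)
The forward implication is immediate since the trivial subgroup lies in $\aF_{p}$, so any $\aF_{p}$-equivalence is in particular a non-equivariant equivalence. The substance of the statement is the reverse implication, and the plan is to induct on $n$ to show that a non-equivariant weak equivalence $f\colon X\to Y$ of $p$-cyclotomic spectra induces an isomorphism on $\pi^{C_{p^{n}}}_{*}$ for every $n\geq 0$. The base case $n=0$ is by hypothesis, so suppose inductively that $\pi^{C_{p^{k}}}_{*}(f)$ is an isomorphism for every $k<n$.

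For the inductive step, I would use the isotropy separation cofiber sequence for the family $\aP$ of proper subgroups of $C_{p^{n}}$, namely
\[
(E\aP)_{+}\sma X\to X\to \tilde E\aP\sma X,
\]
and apply the derived $C_{p^{n}}$-fixed points. This produces a cofiber sequence in which the right-hand term is (by definition) the derived geometric fixed points $L\Phi^{C_{p^{n}}}X$, and the left-hand term has equivariant homotopy groups computable from $\pi^{C_{p^{k}}}_{*}(X)$ for $k<n$ (for instance by a standard cellular filtration of $E\aP$ built from the orbits $C_{p^{n}}/C_{p^{k}}$). By the inductive hypothesis, $f$ induces an isomorphism on the homotopy groups of the left-hand term, so by the five-lemma it suffices to prove $f$ induces an isomorphism on $\pi_{*}L\Phi^{C_{p^{n}}}X$.

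The key input at this point is the cyclotomic structure. By iterating the identification $L\Phi^{C_{p^{n}}}\iso L\Phi^{C_{p^{n-1}}}\circ L\Phi^{C_{p}}$, together with the cyclotomic structure equivalence $\rho_{p}^{*}L\Phi^{C_{p}}X\to X$ in $\Spec^{S^{1}}_{U}$ (which is an $\aF_{p}$-equivalence by the definition of a $p$-cyclotomic spectrum), I would show by induction on $n$ that there is a natural zig-zag of $\aF_{p}$-equivalences of orthogonal $S^{1}$-spectra
\[
L\Phi^{C_{p^{n}}}X \htp (\rho_{p}^{n})^{*}X,
\]
where $(\rho_{p}^{n})^{*}$ denotes the composite of $n$ applications of $\rho_{p}^{*}$. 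Since $\rho_{p}^{*}$ does not change the underlying non-equivariant spectrum, this implies that the underlying spectrum of $L\Phi^{C_{p^{n}}}X$ agrees with that of $X$ naturally in $X$, so $\pi_{*}L\Phi^{C_{p^{n}}}f$ is identified with $\pi_{*}^{e}(f)$, which is an isomorphism by hypothesis.

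The main obstacle is bookkeeping rather than conceptual: one must be careful to work with the derived geometric fixed points throughout (since the naive point-set functor $\Phi^{C_{p^{n}}}$ is not homotopical on arbitrary $S^{1}$-spectra), and the iterated reparametrization by $\rho_{p}$ must be tracked so that the five-lemma at the end is genuinely comparing homotopy groups at the correct subgroup level. A clean way to manage this is to cofibrantly replace $X$ and $Y$ in a model structure in which $\Phi^{C_{p^{k}}}$ is homotopical (e.g., the positive complete stable model structure of Theorem~\ref{thm:normstable}) before running the argument, which permits replacing all derived functors by their point-set counterparts.
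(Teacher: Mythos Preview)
The paper does not supply its own proof of this proposition; it simply cites \cite[5.5]{BM}. Your argument is correct and is essentially the standard one: induct on $n$ using the isotropy separation sequence, with the cyclotomic structure controlling the geometric fixed-point term.

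One small streamlining is available. Instead of the family $\aP$ of all proper subgroups of $C_{p^{n}}$ (which forces you to iterate the cyclotomic equivalence $n$ times to identify the underlying spectrum of $L\Phi^{C_{p^{n}}}X$), you can use the family $\{e\}$ of subgroups not containing $C_{p}$. Then the cofiber term, after taking derived $C_{p^{n}}$-fixed points, is $(L\Phi^{C_{p}}X)^{C_{p^{n-1}}}$, and a \emph{single} application of the cyclotomic equivalence $\rho_{p}^{*}L\Phi^{C_{p}}X\simeq X$ identifies this with $X^{C_{p^{n-1}}}$, which is covered directly by the inductive hypothesis. The homotopy-orbit term $((E\{e\})_{+}\sma X)^{C_{p^{n}}}$ is then controlled by $\pi^{e}_{*}X$ alone. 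This avoids the iterated-$\rho_{p}$ bookkeeping you flag at the end, though your version is of course also valid.
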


\subsection{Constructing $TR$ and $TC$ from a cyclotomic spectrum}

In this section, we give a very rapid review of the definition of
$TR$ and $TC$ in terms of the point-set category of cyclotomic
spectra described above.  The interested reader is referred to the
excellent treatment in Madsen's CDM notes \cite{MadsenTraces} for more details on the
construction in terms of the classical (homotopical) definition of a
cyclotomic spectrum.

For a $p$-precyclotomic spectrum $X$, the collection $\{X^{C_{p^n}}\}$ of
(point-set) categorical fixed points is equipped with functors
\[
F, R \colon X^{C_{p^n}} \to X^{C_{p^{n-1}}}
\]
for all $n$, defined as follows.
The Frobenius maps $F$ are simply the obvious inclusions of
fixed points, and the restriction maps $R$ are constructed as
the composites
\[
X^{C_{p^n}} \cong (\rho_{p}^{*}X^{C_{p}})^{C_{p^{n-1}}} \xrightarrow{(\rho_p^* \omega )^{C_{p^{n-1}}}} 
(\rho_{p}^{*}\Phi^{C_p} X)^{C_{p^{n-1}}} \xrightarrow{(t_{p})^{C_{p^{n-1}}}}
X^{C_{p^{n-1}}},
\]
where the map $\omega$ is the usual map from categorical to geometric fixed points \cite[V.4.3]{MM}.
The Frobenius and restriction maps satisfy the identity $F\circ
R=R\circ F$. When $X$ is fibrant in the $\aF_{p}$-model structure (of
Theorem~\ref{thm:fmodel}), we then define
\[
TR(X) = \holim_{R} X^{C_{p^n}} \qquad \textrm{and} \qquad TC(X)
= \holim_{R,F} X^{C_{p^n}}.
\]

In general, we define $TR$ and $TC$ using a fibrant replacement that
preserves the $p$-precyclotomic structure; such a functor is provided by
the main theorems of~\cite[\S 5]{BM}, which construct model
structures on $p$-precyclotomic and $p$-cyclotomic spectra where the fibrations are the
fibrations of the underlying orthogonal $S^{1}$-spectra in the
$\aF_{p}$-model structure.  Alternatively, an explicit construction of
a fibrant replacement functor on orthogonal spectra that preserves
precyclotomic structures is given in~\cite[4.6--7]{BM2}.

\begin{proposition}[cf.~{\cite[1.4]{BM}}]
A weak equivalence $X \to Y$ of $p$-precyclotomic spectra induces weak
equivalences $TR(X_{f}) \to TR(Y_{f})$ and $TC(X_{f}) \to TC(Y_{f})$
of orthogonal spectra, where $(-)_{f}$ denotes any fibrant replacement
functor in $p$-cyclotomic spectra.
\end{proposition}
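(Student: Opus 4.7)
The plan is to reduce the claim to a levelwise weak equivalence statement about the tower of categorical fixed points, and then invoke that homotopy limits preserve levelwise weak equivalences.

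First I would observe that by the cited result from \cite[\S 5]{BM}, the category of $p$-precyclotomic spectra carries a model structure whose weak equivalences are by definition the $\aF_p$-equivalences of underlying orthogonal $S^1$-spectra, and whose fibrations are the fibrations of the underlying orthogonal $S^1$-spectra in the $\aF_{p}$-model structure of Theorem~\ref{thm:fmodel}. Because fibrant replacement in any model category preserves weak equivalences between objects (applied functorially to the map $X\to Y$ and its factorization through $X_f$ and $Y_f$), the induced map $X_f\to Y_f$ is again an $\aF_p$-equivalence. Moreover, $X_f$ and $Y_f$ are fibrant in the $\aF_p$-model structure on $\Spec^{S^1}_U$ by construction of the lifted model structure.

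Next I would apply Theorem~\ref{thm:catder} (and its evident analogue for the $\aF_p$-model structure, which is built the same way from the level model structure on $\aT^G$): the categorical fixed point functor
\[
(-)^{C_{p^n}}\colon \Spec^{S^1}_U\to \Spec^{S^1/C_{p^n}}_{U^{C_{p^n}}}
\]
is a Quillen right adjoint and therefore preserves weak equivalences between fibrant objects. Since $X_f\to Y_f$ is an $\aF_p$-equivalence between objects that are fibrant in the $\aF_p$-model structure, we conclude that
\[
X_f^{C_{p^n}} \to Y_f^{C_{p^n}}
\]
is a weak equivalence of orthogonal spectra for every $n\geq 0$. These equivalences are compatible with the Frobenius maps $F$ (which are just inclusions of fixed points) and the restriction maps $R$ (which are constructed naturally from $\omega$ and the cyclotomic structure map $t_p$), because the map $X_f\to Y_f$ is a map of $p$-precyclotomic spectra and all of the constructions in the definition of $R$ are functorial.

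Finally, $TR(X_f)$ and $TC(X_f)$ are defined as homotopy limits over the tower $\{X_f^{C_{p^n}}\}$ indexed on $R$, respectively on $R$ and $F$ jointly. Homotopy limits in orthogonal spectra preserve levelwise weak equivalences, so the maps
\[
TR(X_f)=\holim_R X_f^{C_{p^n}} \to \holim_R Y_f^{C_{p^n}} = TR(Y_f)
\]
and similarly for $TC$ are weak equivalences of orthogonal spectra, which is what was to be shown. The only substantive step is the Quillen right adjoint property of categorical fixed points in the $\aF_p$-model structure; once this is in hand, the rest is a standard manipulation of homotopy limits.
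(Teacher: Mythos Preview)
Your argument is correct and is exactly the standard one: functorial fibrant replacement preserves weak equivalences, categorical $C_{p^n}$-fixed points take $\aF_p$-equivalences between $\aF_p$-fibrant objects to weak equivalences (since on such objects the point-set fixed points compute the derived ones, which is what being an $\aF_p$-equivalence controls), the maps commute with $F$ and $R$ by naturality, and homotopy limits preserve levelwise weak equivalences. The paper itself does not supply a proof of this proposition, deferring instead to \cite[1.4]{BM}; your write-up fills in precisely the expected details. The one place to be slightly careful is your appeal to an ``evident analogue'' of Theorem~\ref{thm:catder} for the $\aF_p$-model structure: the cleanest justification is not that $(-)^{C_{p^n}}$ is Quillen right adjoint in that model structure, but simply that for an $\aF_p$-fibrant $Z$ the groups $\pi_*^{C_{p^n}}(Z)$ coincide with $\pi_*(Z^{C_{p^n}})$, so an $\aF_p$-equivalence of $\aF_p$-fibrant objects is by definition a weak equivalence on each $(-)^{C_{p^n}}$.
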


\begin{remark}
We do not yet have an abstract homotopy theory for multiplicative
objects in cyclotomic spectra, and the explicit fibrant replacement
functor $Q^{\aI}$ of \cite[4.6]{BM2} is lax monoidal but not lax
symmetric monoidal.  As a consequence, at present we do not know how
to convert a $p$-cyclotomic spectrum which is also a commutative ring
orthogonal $S^1$-spectrum into a cyclotomic spectrum that is a fibrant
commutative ring orthogonal $S^{1}$-spectrum.
\end{remark}

\subsection{Op-precyclotomic spectra}

For our construction of $THH$ based on the norm (in the next section),
the diagonal map $X\to \Phi^G N_e^G X$ is in the opposite direction of
the cyclotomic structure map needed in the definition of a cyclotomic
spectrum.  In the case when $X$ is cofibrant (or a cofibrant ring or
cofibrant commutative ring orthogonal spectrum), the diagonal map is
an isomorphism and so presents no difficulty; in the case when $X$ is
just of the homotopy type of a cofibrant orthogonal spectrum, the fact
that the structure map goes the wrong way necessitates some technical
maneuvering in order to construct $TR$ and $TC$.

\begin{definition}
An \term{op-$p$-precyclotomic spectrum} $X$ consists of an orthogonal
$S^1$-spectrum $X$ together with a map of orthogonal $S^1$-spectra  
\[
\gamma \colon X \to \rho_{p}^{*}\Phi^{C_{p}} X.
\]
An \term{op-$p$-cyclotomic spectrum} is an op-$p$-precyclotomic spectrum
where the structure map is an $\aF_{p}$-equivalence.  
A map of op-$p$-precyclotomic spectra
is a map of orthogonal $S^{1}$-spectra that commutes with the
structure map.  A map of op-$p$-precyclotomic spectra is a weak
equivalence when it is an $\aF_{p}$-equivalence of the underlying
orthogonal $S^{1}$-spectra. 
\end{definition}

Note that the definition above uses a condition on the point-set
geometric fixed point functor rather than the derived geometric fixed
point functor.  Such a definition works well  when we restrict to those
op-$p$-cyclotomic spectra $X$ where the canonical map in the
$S^{1}$-equivariant stable category $\rho_{p}^{*}L\Phi^{C_{p}}X\to
\rho^{*}_{p}\Phi^{C_{p}}X$ is an $\aF_{p}$-equivalence.  For
op-$p$-cyclotomic spectra in this subcategory, a 
map is a weak equivalence if and 
only if it is a weak equivalence of the underlying (non-equivariant)
orthogonal spectra.

Rather than study the category of op-$p$-precyclotomic spectra in detail,
we simply explain an approach to constructing $TR$ and $TC$ from this
data.  In what follows, let $(-)_{f}$ denote a fibrant replacement
functor in the $\aF_{p}$-model structure on orthogonal
$S^{1}$-spectra; to be clear, we assume the given natural
transformation $X\to X_{f}$ is always an acyclic cofibration.
Then for an op-$p$-precyclotomic spectrum $X$, we get a commutative
diagram 
\[
\xymatrix{%
X \ar[r]^-{\gamma} \ar@{ >->}[d]_{\simeq}
&\rho^{*}_{p}\Phi^{C_{p}}X\ar@{ >->}[d]_{\simeq}
\\
X_{f}\ar[r]_-{\gamma_{f}}&(\rho^{*}_{p}\Phi^{C_{p}}X)_{f}\ar[r]_-{\simeq}
&(\rho^*_{p} \Phi^{C_p}(X_{f}))_{f}
}
\]
where the bottom right horizontal map is a weak equivalence because
$\rho^{*}_{p}$ and $\Phi^{C_{p}}$ preserve acyclic cofibrations. In place of the restriction map $R$, we have a zigzag
\[
R\colon (X_{f})^{C_{p^{n}}}\to 
((\rho^{*}_{p}\Phi^{C_{p}}(X_{f}))_{f})^{C_{p^{n-1}}}
\from (X_{f})^{C_{p^{n-1}}}
\]
constructed as the following composite
\[
\xymatrix{
(X_{f})^{C_{p^n}} \ar[r]^-{\cong} 
&(\rho^{*}_{p}(X_{f})^{C_p})^{C_{p^{n-1}}} \ar[r]^-{\htp} 
& ((\rho^{*}_{p}(X_{f})^{C_p})_{f})^{C_{p^{n-1}}} \ar[dll] \\
((\rho^*_{p} \Phi^{C_p}(X_{f}))_{f})^{C_{p^{n-1}}} 
& \ar[l]^-{\htp} ((\rho^*_{p} \Phi^{C_p} X)_{f})^{C_{p^{n-1}}} 
& \ar[l] (X_{f})^{C_{p^{n-1}}}.\\
}
\]
We can use this as an analogue of $TR$.
\begin{definition}\label{defn:zTR}
Define $\opTR(X)$ as the homotopy limit of the diagram
\begin{multline*}
\cdots  \longleftarrow
(X_{f})^{C_{p^{n}}}\longrightarrow 
((\rho^{*}_{p}\Phi^{C_{p}}(X_{f}))_{f})^{C_{p^{n-1}}}
\longleftarrow(X_{f})^{C_{p^{n-1}}}
 \longrightarrow  \cdots \\
\cdots
\longleftarrow
(X_{f})^{C_{p}}\longrightarrow 
(\rho^{*}_{p}\Phi^{C_{p}}(X_{f}))_{f}
\longleftarrow X_{f}.
\end{multline*}
\end{definition}

The zigzags $R$ are compatible with the 
inclusion maps 
\[
F \colon (X_{f})^{C_{p^{n}}} \to (X_{f})^{C_{p^{n-1}}}
\]
in the sense that the following diagram commutes:
\[
\xymatrix@-1pc{%
(X_{f})^{C_{p^{n+1}}}\ar[r]\ar[dr]_{F}
&((\rho^{*}_{p}\Phi^{C_{p}}(X_{f}))_{f})^{C_{p^{n}}}\ar@{.>}[dr]^{F}
&\ar[l](X_{f})^{C_{p^{n}}}\ar[dr]^{F}\\
&(X_{f})^{C_{p^{n}}}\ar[r]
&((\rho^{*}_{p}\Phi^{C_{p}}(X_{f}))_{f})^{C_{p^{n-1}}}
&\ar[l](X_{f})^{C_{p^{n-1}}}\\
}
\]
We can therefore form an analogue of $TC$.

\begin{definition}\label{defn:zTC}
Define $\opTC(X)$ by taking the homotopy limit over the
diagram
\vtop{\vskip1.5em}
\[
\xymatrix{
\cdots \ar@/^1.5pc/[rr] \ar@/_1pc/[r] & 
\ar[l] (X_{f})^{C_{p^n}} \ar@/_1.5pc/[rr] \ar[r] & 
((\rho^{*}_{p}\Phi^{C_{p}}(X_{f}))_{f})^{C_{p^{n-1}}} \ar@/^1.5pc/[rr] &
(X_{f})^{C_{p^{n-1}}} \ar[r] \ar[l] \ar@/_1pc/[r] & \cdots \\
}\vtop{\vskip1.5em}
\]
where the middle parts are the $R$ zigzags and the top and bottom the
$F$ maps.
\end{definition}

This has the expected homotopy invariance property.

\begin{proposition}
Let $X\to Y$ be a weak equivalence of op-$p$-precyclotomic spectra.  The
induced maps $\opTR(X)\to \opTR(Y)$ and $\opTC(X)\to \opTC(Y)$ are
weak equivalences. 
\end{proposition}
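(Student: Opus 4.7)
The plan is to verify that the induced map on each vertex of the defining zigzag diagrams for $\opTR$ and $\opTC$ from Definitions~\ref{defn:zTR} and~\ref{defn:zTC} is a weak equivalence; because homotopy limits preserve level-wise weak equivalences of diagrams, this handles both statements simultaneously. The vertices come in two kinds, namely $(X_f)^{C_{p^n}}$ and $((\rho_p^{*}\Phi^{C_p}(X_f))_f)^{C_{p^{n-1}}}$, so I would treat these two cases in sequence.

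First, since $X\to Y$ is an $\aF_{p}$-equivalence of underlying orthogonal $S^{1}$-spectra and fibrant replacement in the $\aF_{p}$-model structure (Theorem~\ref{thm:fmodel}) preserves weak equivalences, we obtain an $\aF_{p}$-equivalence $X_{f}\to Y_{f}$ between $\aF_{p}$-fibrant objects. For vertices of the first kind, the categorical fixed point functor is a Quillen right adjoint (Theorem~\ref{thm:catder}) and so preserves weak equivalences between fibrant objects; hence $(X_f)^{C_{p^n}}\to (Y_f)^{C_{p^n}}$ is a weak equivalence.

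For vertices of the second kind, the same argument about categorical fixed points reduces the problem to showing that $(\rho_p^{*}\Phi^{C_p}(X_f))_f\to (\rho_p^{*}\Phi^{C_p}(Y_f))_f$ is an $\aF_{p}$-equivalence between $\aF_{p}$-fibrant $S^{1}$-spectra. Since $\rho_p^{*}$ is an isomorphism of categories, this further reduces to proving that $\Phi^{C_p}(X_f)\to\Phi^{C_p}(Y_f)$ becomes an $\aF_{p}$-equivalence after fibrant replacement.

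This last step is the principal obstacle, since $\Phi^{C_p}$ is only guaranteed to preserve weak equivalences between cofibrant objects (Theorem~\ref{thm:geoder}), whereas $X_{f}$ and $Y_{f}$ are fibrant but need not be cofibrant. To overcome this, I would appeal to the explicit precyclotomic-preserving fibrant replacement of \cite[4.6--7]{BM2}, whose construction is designed to interact well with the point-set geometric fixed points; alternatively, one can compare $\Phi^{C_p}(X_f)$ with $\Phi^{C_p}$ applied to a cofibrant replacement $QX_{f}\to X_{f}$ and verify that the $\aF_{p}$-fibrant replacement absorbs the discrepancy between the point-set and derived geometric fixed points. Combining this with the previous steps yields the desired level-wise weak equivalences, and passing to homotopy limits then produces the asserted equivalences $\opTR(X)\to\opTR(Y)$ and $\opTC(X)\to\opTC(Y)$.
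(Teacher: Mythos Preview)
The paper offers no proof for this proposition, so there is nothing to compare directly; your level-wise argument via invariance of homotopy limits is the natural approach and almost certainly what the authors intend. You have also correctly isolated the one non-formal step: controlling the middle vertices $((\rho_p^{*}\Phi^{C_p}(X_f))_f)^{C_{p^{n-1}}}$ requires that $\Phi^{C_p}(X_f)\to\Phi^{C_p}(Y_f)$ be an $\aF_p$-equivalence, and the point-set $\Phi^{C_p}$ is only known to preserve weak equivalences between \emph{cofibrant} objects (Theorem~\ref{thm:geoder}), whereas $X_f,Y_f$ are merely fibrant.

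However, neither of your proposed resolutions is complete as written. The fibrant replacement of \cite[4.6--7]{BM2} is built for precyclotomic rather than op-precyclotomic spectra, and you would need to verify both that an analogue preserves the op-structure map and that the resulting $X_f$ has point-set $\Phi^{C_p}$ agreeing with the derived functor; neither is automatic. Your second suggestion---comparing with $\Phi^{C_p}(QX_f)$ for a cofibrant replacement $Q$---runs into the difficulty that the forward leg of the zigzag, coming from the canonical map $(X_f)^{C_p}\to\Phi^{C_p}(X_f)$, genuinely uses the point-set $\Phi^{C_p}(X_f)$, so replacing the vertex requires a further comparison of diagrams that you have not supplied. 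Note too that the paper's own observation that $(\rho_p^{*}\Phi^{C_p}X)_f\to(\rho_p^{*}\Phi^{C_p}(X_f))_f$ is a weak equivalence (since $X\to X_f$ is an acyclic cofibration and $\Phi^{C_p}$ preserves those) does not help, because it only shifts the problem to $\Phi^{C_p}X\to\Phi^{C_p}Y$, which faces the same obstruction. In short: for arbitrary op-$p$-precyclotomic spectra the statement seems to require either a mild cofibrancy hypothesis on $X$ and $Y$ or a carefully engineered fibrant replacement; in the paper's actual applications (Section~\ref{sec:amodcyc}) the relevant spectra are realizations of proper simplicial objects built from norms of cofibrant inputs, where point-set and derived $\Phi^{C_p}$ agree (cf.\ the argument for Theorem~\ref{thm:levelwisefp}), so the issue does not arise in practice.
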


Although we have nothing to say in general about the relationship
between $p$-cyclotomic spectra and op-$p$-cyclotomic spectra or
between $\opTC$ and $TC$, we have the following comparison result in
the case when $X$ has compatible $p$-cyclotomic and
op-$p$-precyclotomic structures.  This in particular applies when $X$
has the homotopy type of a cofibrant orthogonal spectrum, as we
explain in Section~\ref{sec:THH}.  We apply it in
Section~\ref{sec:amodcyc} to prove Theorem~\ref{thm:trace}.

\begin{proposition}\label{prop:zconsist}
Let $X$ be an op-$p$-precyclotomic spectrum and a $p$-cyclotomic spectrum
and assume that the composite of the two structure maps
\[
\rho^{*}_{p}\Phi^{C_{p}}X\to X\to \rho^{*}_{p}\Phi^{C_{p}}X
\] 
is homotopic to the identity.
Then there is a zig-zag of weak equivalences
connecting $TR(X)$ and $\opTR(X)$ and a 
zig-zag of weak equivalences connecting $TC(X)$ and $\opTC(X)$. 
\end{proposition}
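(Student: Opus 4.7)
The plan is to show that the $\opTR$ zigzag diagram and the $TR$ tower compute the same homotopy limit via an intermediate enhanced diagram mapping by weak equivalences to both. First I would verify that the wrong-way maps $b_n$ in the $\opTR$ zigzag are weak equivalences. Since $X$ is $p$-cyclotomic, $t_p$ induces an $\aF_p$-equivalence on derived geometric fixed points; combined with $\gamma\circ t_p\htp\id$, this forces $\gamma$ (a homotopy inverse to $t_p$ in the derived category) to be an $\aF_p$-equivalence as well, and then $b_n$, which is essentially $(\gamma_f)^{C_{p^{n-1}}}$ up to composition with fibrant-replacement equivalences, is a weak equivalence.

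Next I would verify the identity $b_n\circ R\htp a_n$ by unwinding the definitions. The cyclotomic restriction factors as $R=(t_p)^{C_{p^{n-1}}}\circ(\rho_p^*\omega)^{C_{p^{n-1}}}$, where $\omega\colon(-)^{C_p}\to\Phi^{C_p}(-)$ is the canonical map, while $a_n$ is $(\rho_p^*\omega)^{C_{p^{n-1}}}$ after fibrant replacement. The composite $b_n\circ R$ therefore contains $\gamma\circ t_p$, which the hypothesis equates up to homotopy with the identity, yielding $b_n\circ R\htp a_n$.

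To upgrade these pointwise observations to a zigzag of weak equivalences on homotopy limits, I would form an enhanced diagram carrying both the $R$-arrows of the $TR$ tower and the $a_n,b_n$-arrows of the $\opTR$ zigzag, together with coherent homotopies realizing $b_n\circ R\htp a_n$. The homotopy limit $\widetilde{TR}(X)$ of this enhanced diagram projects to both $TR(X)$ (by forgetting the $Y_n$'s, $a_n$, $b_n$ and homotopies) and $\opTR(X)$ (by forgetting the $R$-arrows and homotopies), and both projections are weak equivalences: the omitted data at each stage is determined up to contractible choice, using that $b_n$ is a weak equivalence to contract the spaces of factorizations through $Y_n$. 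The $TC$-case follows by augmenting the enhanced diagram with the Frobenius maps $F$, which are merely inclusions of fixed points and commute strictly with all the other constructions, yielding the zigzag $TC(X)\from\widetilde{TC}(X)\to\opTC(X)$.

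The main technical obstacle I anticipate is a rigorous treatment of the coherent choice of homotopies $b_n\circ R\htp a_n$ across all $n$; this should be addressable by a Reedy-cofibrancy argument that rectifies the homotopy-commutative squares into strict diagrams, or equivalently by a Bousfield--Kan style construction that encodes the homotopies directly as edges in the indexing category before passing to homotopy limits.
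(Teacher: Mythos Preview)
Your proposal is correct and follows essentially the same approach as the paper: reduce to comparing the homotopy limit of the $\opTR$ zigzag with that of the $TR$ tower, using that the wrong-way maps are equivalences and that composing with the cyclotomic $R$ recovers the forward map up to homotopy (via $\gamma\circ t_p\htp\id$). The paper resolves the coherence obstacle you flag by an explicit three-row rectification: it inserts a middle row with cylinder objects $Y'_n\times I$ encoding the chosen homotopies $g_n^{-1}\circ g_n\Rightarrow\id$ strictly, so that the top row is the $\opTR$ zigzag, the bottom row is (a cofinal model of) the $TR$ tower, and all vertical maps are weak equivalences---precisely the kind of rectification you anticipated; the paper also sketches a quasicategorical alternative using marked-anodyne extensions that matches your ``enhanced diagram'' idea more directly.
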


\begin{proof}
In the case of the comparison of $TR(X)$ and
$\opTR(X)$, we can use a fibrant replacement of $X$ in the category of cyclotomic spectra to compute both $TR(X)$ and $\opTR(X)$. It follows that it suffices to show that the homotopy limits of
diagrams of fibrant objects of the form
\begin{equation}\label{eq:comptr1}
\xymatrix{
\ldots & \ar[l] Y_n \ar[r]^{f_n} & Y'_n &
Y_{n-1} \ar[l]_{g_n^{-1}} \ar[r] & \ldots 
}
\end{equation}
and
\begin{equation}\label{eq:comptr2}
\xymatrix{
\ldots \ar[r] & Y_n \ar[r]^{f_n} & Y'_n \ar[r]^{g_n} & Y_{n-1} \ar[r] & \ldots
}
\end{equation}
are equivalent, where $g_n$ is an equivalence and $g_n^{-1}\circ
g_{n}$ is homotopic to the identity.  This kind of rectification
argument is standard, although we are not sure of a place in the
literature where the precise fact we need is spelled out.  We argue as
follows.  Choosing a homotopy $H$ from the identity to
$g_{n}^{-1}\circ g_{n}$, we get strictly commuting diagrams of the
form
\[
\xymatrix@C=4pc{
Y_n \ar[r]^{f_n} & Y_n' & Y_{n-1} \ar[l]_{g_n^{-1}} \ar[r]^{\id} & Y_{n-1} \\
Y_n \ar[u]^{\id} \ar[d]_{\id} \ar[r]^-{f_n \times \{0\}} &
Y_n' \times I \ar[d]_{\pi_1} \ar[u]^{H} &
Y_n' \ar[d]_{\id} \ar[u]^{g_n} \ar[l]_-{\id \times \{1\}} \ar[r]^{g_n} &
Y_{n-1} \ar[u]^{\id} \ar[d]_{\id} \\
Y_n \ar[r]^{f_n} & Y_n'& Y_n' \ar[l]_{\id}  \ar[r]^{g_n} & Y_{n-1}. \\ 
}
\]
Note that all the vertical maps are weak equivalences, and therefore
the induced maps between the homotopy limits of the rows are both weak
equivalences.  The homotopy limit of the top row is weakly equivalent
to the homotopy limit of \eqref{eq:comptr1} and the homotopy limit of
the bottom row is weakly equivalent to the homotopy limit of
\eqref{eq:comptr2}.  This completes the comparison of $TR(X)$ and
$\opTR(X)$; the argument for comparing $TC(X)$ and $\opTC(X)$ is
analogous using ``ladders'' in place of rows.
\end{proof}

\begin{remark}
The following sketches a reformulation of the above argument, showing
the equivalence of homotopy limits of (\ref{eq:comptr1}) and
(\ref{eq:comptr2}), using the more general-purpose machinery of
coherent diagrams. All numbered references in the following are to
\cite{HTT}.

As homotopy limits are invariant up to equivalence, we can assume that
the objects in the diagram are cofibrant-fibrant and hence that $g_n$
is a homotopy equivalence. If $N(\Spec^\circ)$ denotes the
``simplicial nerve'' [1.1.5.5] of the simplicial category of
cofibrant-fibrant orthogonal spectra, homotopy limits can be computed
in the quasicategory $N(\Spec^\circ)$ [4.2.4.8].

There is a simplicial set $K$ whose $0$-simplices correspond to the
objects $Y_n$ and $Y_n'$, whose $1$-simplices correspond to the maps
$f_n$, $g_n$, and $g_n^{-1}$, and whose $2$-simplices express the
composition homotopies $g_n^{-1} \circ g_n \Rightarrow \id$. We have a
homotopy coherent diagram of orthogonal spectra indexed on $K$ in the
sense of Vogt (or [1.2.6]) expressed as follows:
\[
\xymatrix{
\cdots Y_{n+1} \ar[r]^{f_{n+1}} &
Y'_{n+1} \ar@/^1pc/[r]^{g_{n+1}} &
Y_n \ar@/^1pc/[l]^{g_{n+1}^{-1}} \ar@{=>}[l] \ar[r]^{f_n} &
Y'_n \ar@/^1pc/[r]^{g_{n}} &
Y_{n-1} \ar@/^1pc/[l]^{g_{n}^{-1}} \ar@{=>}[l] \ar[r]^{f_{n-1}} &
Y'_{n-1} \ar@/^1pc/[r]^{g_{n-1}} &
Y_{n-2} \cdots \ar@/^1pc/[l]^{g_{n-1}^{-1}} \ar@{=>}[l]
}
\]
We write $K^+$ for the upper subcomplex containing the edges $f_n$ and
$g_n$, and similarly write $K^-$ for the lower subcomplex containing
the $f_n$ and $g_n^{-1}$.

The inclusion $K^+ \to K$ is an iterated pushout along horn-filling
maps $\Lambda^2_0 \to \Delta^2$, so this map is left anodyne [2.0.0.3]
and hence final [4.1.1.3]. The restriction from $K$-diagrams to
$K^+$-diagrams therefore preserves all homotopy limits [4.1.1.8].

We now consider the inclusion $K^- \to K$, which is an iterated
pushout along horn-filling maps $\Lambda^2_2 \to \Delta^2$ whose last
edges are $g_n^{-1}$. Because the maps $g_n^{-1}$ are equivalences,
the space of extensions of a diagram indexed on $K^-$ to a diagram
indexed on $K$ is contractible because the map $\Lambda^2_2 \to
\Delta^2$, with the final edge marked as an equivalence, is marked
anodyne [3.1.1.1, 3.1.3.4]. In addition, the subspace of homotopy
right Kan extensions is also contractible [4.2.4.8,
4.3.2.15]. Therefore, any extension of this $K^-$-diagram to a
$K$-diagram is a homotopy right Kan extension, and the homotopy limit
of a homotopy right Kan extension is equivalent to the homotopy limit
of the original diagram [4.3.2.8].

The comparison between $TC$ and $\opTC$ follows by a similar
argument. There is a diagram indexed by $K \times \Delta^1$,
representing the natural transformation $F$ on the comparison diagram
for $TR$: we
define a simplicial set $L$ by identifying $K \times \{1\}$ with $K
\times \{0\}$ after a shift. There are subcomplexes $L^+$ and $L^-$,
generated by $K^+ \times \Delta^1$ and $K^- \times \Delta^1$
respectively, representing the diagrams defining $TR$ and $\opTR$. As
before, the inclusion $L^+ \to L$ is left anodyne and the inclusion
$L^- \to L$ only involves extension along equivalences.
\end{remark}

\section{The construction and homotopy theory of the
$S^1$-norm}\label{sec:THH} 

In this section, we construct the norm from the trivial group to
$S^{1}$ and study its basic point-set and homotopy properties.  In
particular, we prove that under mild hypotheses it gives a model for
$THH$ which is cyclotomic.  Unlike norms for finite groups, the
$S^{1}$-norm does not apply to arbitrary orthogonal spectra; instead
we need an associative ring structure. In the case when $R$ is
commutative, we identify the $S^{1}$-norm as the left adjoint of the
forgetful functor from commutative ring orthogonal $S^{1}$-spectra
indexed on a complete universe to (non-equivariant) commutative ring
orthogonal spectra.

Throughout this section, we fix a complete $S^1$-universe $U$.
As in the definition of the norm for finite groups, the (point-set)
equivalence of categories $\aI_{\bR^{\infty}}^{U}$ discussed in
Section~\ref{sec:pointset} will play a key technical role.

For a ring orthogonal spectrum $R$, let $N^{\cyc}_{\sma} R$
denote the cyclic bar construction with respect to the smash product;
i.e., the cyclic object in orthogonal spectra with $k$-simplices 
\[
[k] \to \underbrace{R \sma R \sma \ldots \sma R}_{k+1}
\]
and the usual cyclic structure maps induced from the ring structure on
$R$.

\begin{lemma}\label{lem:cycl}
Let $R$ be an object in $\Ass$.  Then the geometric realization of
the cyclic bar construction $|N^{\cyc}_{\sma} R|$ is naturally an
object in $\Spec^{S^1}_{\bR^{\infty}}$.
\end{lemma}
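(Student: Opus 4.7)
The plan is to reduce the statement to Connes's classical theorem that the geometric realization of a cyclic based space carries a natural $S^1$-action, applied levelwise to the representation variable. Concretely, I would first observe that since $(\Spec,\sma,S)$ is a symmetric monoidal category, the cyclic bar construction $N^{\cyc}_{\sma}R$ is a genuine cyclic object in $\Spec$: besides the face and degeneracy maps built from the multiplication $R\sma R\to R$ and the unit $S\to R$, the cyclic operator $\tau_{k}$ on $R^{\sma(k+1)}$ is given by the symmetry isomorphism that cycles the smash factors, and the standard cyclic identities hold formally in any symmetric monoidal category.

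Next I would pass to spaces by evaluating at an object $V\in \aV(\bR^{\infty})$. Since geometric realization commutes with evaluation at a level of an orthogonal spectrum (as both are computed as colimits in based spaces), we have
\[
|N^{\cyc}_{\sma}R|(V) \;=\; \bigl|[k]\mapsto R^{\sma(k+1)}(V)\bigr|,
\]
the geometric realization of the underlying cyclic based space. Connes's theorem (equivalently, the computation $|\Lambda[k]|\cong S^{1}\times\Delta^{k}$ as $S^{1}$-spaces) then equips $|N^{\cyc}_{\sma}R|(V)$ with a natural continuous $S^{1}$-action, natural in the cyclic space. Functoriality of this construction in the cyclic space variable is the key input for the remaining checks.

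Finally, I would verify that this levelwise $S^{1}$-action assembles into an orthogonal $S^{1}$-spectrum indexed on the trivial universe $\bR^{\infty}$. For any isometric isomorphism $f\colon V\to W$, the induced map $R^{\sma(k+1)}(V)\to R^{\sma(k+1)}(W)$ is a map of cyclic based spaces (it is applied coordinatewise and commutes with multiplication, unit, and the symmetry), hence its realization is $S^{1}$-equivariant by naturality of Connes's action. The same argument shows that the structure map
\[
|N^{\cyc}_{\sma}R|(V)\sma S^{W}\longrightarrow |N^{\cyc}_{\sma}R|(V\oplus W)
\]
is $S^{1}$-equivariant (with $S^{1}$ acting trivially on $S^{W}$), since it is the realization of a map of cyclic based spaces built from the structure maps of the factors of $R$. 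Together these give a $\sJ$-space with $S^{1}$-action, i.e., an object of $\Spec^{S^{1}}_{\bR^{\infty}}$, and naturality in $R$ is immediate from functoriality of the cyclic bar construction and of realization.

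The only potential subtlety—and it is more bookkeeping than mathematics—is matching the coherence of the $S^{1}$-action coming from Connes's theorem at different representation levels; since this action is defined uniformly from the universal cyclic structure maps and is natural in the cyclic object in based spaces, no further compatibility needs to be checked beyond the naturality statements above.
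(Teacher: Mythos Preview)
Your proposal is correct and follows essentially the same approach as the paper: reduce to the classical fact that the realization of a cyclic space carries a natural $S^{1}$-action, apply this levelwise, and use naturality to see that the $S^{1}$-action is compatible with the orthogonal spectrum structure. The paper compresses your detailed checks on isometries and structure maps into the phrase ``continuous naturality'' and then invokes the identification of $\Spec^{S^{1}}_{\bR^{\infty}}$ with orthogonal spectra carrying an $S^{1}$-action; your version simply unpacks this.
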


\begin{proof}
It is well known that the geometric realization of a cyclic space has
a natural $S^{1}$-action \cite[3.1]{JonesCyclic}.  Since geometric
realization of an orthogonal spectrum is computed levelwise, it
follows by continuous naturality that the geometric realization of a
cyclic object in orthogonal spectra has an $S^{1}$-action.  As noted
in Section~\ref{sec:pointset}, the category
$\Spec^{S^1}_{\bR^{\infty}}$ of orthogonal $S^{1}$-spectra indexed on 
$\bR^{\infty}$ is isomorphic to the category of orthogonal spectra
with $S^{1}$-actions.
\end{proof}

Using the point-set change of universe functors we can regard this as
indexed on the complete universe $U$.  The following definition
repeats Definition~\ref{defn:Tnorm} from the introduction.

\begin{definition}
Let $R$ be a ring orthogonal spectrum.  Define the functor
\[
N_e^{S^1} \colon \Ass \to \Spec^{S^1}_{U}
\]
to be the composite functor
\[
R \mapsto \aI_{\bR^{\infty}}^{U} |N^{\cyc}_\sma R|.
\]
\end{definition}

When $R$ is a commutative ring orthogonal spectrum, the usual
tensor homeomorphism~\cite[IX.3.3]{EKMM} 
\[
|N^{\cyc}_{\sma} R| \cong R \otimes S^1
\]
yields the following characterization:

\begin{proposition}\label{prop:commuT}
The restriction of $N_e^{S^1}$ to $\Com$ lifts to a functor
\[
N_e^{S^1} \colon \Com \to \Com^{S^{1}}_{U}
\]
that is left adjoint to the forgetful functor 
\[
\iota^{*}\colon \Com^{S^{1}}_{U}\to \Com.
\]
\end{proposition}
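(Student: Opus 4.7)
The plan is to assemble the adjunction as the composition of two well-understood adjunctions, using the symmetric monoidal properties already recorded in the excerpt.

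First, I would use the standard identification $|N^{\cyc}_\sma R| \cong R \otimes S^1$ for commutative $R$ cited from \cite[IX.3.3]{EKMM}. The key observation is that for a commutative ring object in any symmetric monoidal category enriched over spaces, the cyclic bar construction is a simplicial commutative ring object (the cyclic face and degeneracy maps are ring maps because the multiplication is commutative), and its geometric realization agrees with the tensor of the commutative ring object with the unbased space $S^1$. Thus $|N^{\cyc}_\sma R|$ is naturally a commutative ring orthogonal spectrum with $S^1$-action, i.e.\ an object of $\Com^{S^1}_{\bR^\infty}$, and the assignment $R \mapsto R \otimes S^1$ is left adjoint to the forgetful functor $\Com^{S^1}_{\bR^\infty} \to \Com$ that forgets the $S^1$-action.

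Next, I would apply the point-set change of universe functor $\aI_{\bR^\infty}^U$. By \cite[V.1.1,V.1.5]{MM}, $\aI_{\bR^\infty}^U$ is a strong symmetric monoidal equivalence of categories, with inverse $\aI_U^{\bR^\infty}$. Consequently it restricts to an equivalence on categories of commutative monoids
\[
\aI_{\bR^\infty}^U \colon \Com^{S^1}_{\bR^\infty} \xrightarrow{\ \cong\ } \Com^{S^1}_U,
\]
with inverse given by $\aI_U^{\bR^\infty}$, which is therefore trivially both left and right adjoint to it.

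Composing these two adjunctions yields the composite adjunction
\[
\xymatrix{
\Com \ar@<.5ex>[r]^-{(-)\otimes S^1}
& \Com^{S^1}_{\bR^\infty} \ar@<.5ex>[l] \ar@<.5ex>[r]^-{\aI_{\bR^\infty}^U}
& \Com^{S^1}_U, \ar@<.5ex>[l]^-{\aI_U^{\bR^\infty}}
}
\]
whose left adjoint $R \mapsto \aI_{\bR^\infty}^U(R \otimes S^1)$ is, via the tensor identification, exactly $N_e^{S^1}$. The right adjoint is the composite of $\aI_U^{\bR^\infty}$ with forgetting the $S^1$-action, which is precisely the forgetful functor $\iota^*\colon \Com^{S^1}_U\to \Com$. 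This proves the proposition.

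There is no serious obstacle here: the only point that requires care is verifying that the cyclic bar construction $|N^{\cyc}_\sma R|$, \emph{as a commutative ring orthogonal spectrum with $S^1$-action}, really does represent $R \otimes S^1$; this is a classical calculation that goes back to the cyclic nerve of a commutative monoid and is carried out in \cite[IX.3.3]{EKMM} in a parallel setting. Everything else is formal from the symmetric monoidal equivalence $\aI_{\bR^\infty}^U$.
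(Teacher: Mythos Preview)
Your proposal is correct and follows essentially the same approach as the paper: identify $|N^{\cyc}_{\sma}R|$ with the tensor $R\otimes S^{1}$ in $\Com^{S^{1}}_{\bR^{\infty}}$ (hence left adjoint to forgetting the $S^{1}$-action), then compose with the symmetric monoidal equivalence $\aI_{\bR^{\infty}}^{U}$. The only difference is one of detail: where you cite \cite[IX.3.3]{EKMM} for the tensor identification, the paper spells it out explicitly by using the isomorphism $\bP|X\subdot|\cong|\bP X\subdot|$ on free commutative rings and then passing to the canonical reflexive coequalizer $\bP\bP R\rightrightarrows\bP R\to R$.
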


\begin{proof}
To obtain the refinement of $N_{e}^{S^{1}}$ to a functor 
$\Com \to \Com^{S^{1}}_{U}$, it suffices to construct a
refinement of $|N^{\cyc}_{\sma}|$ to a functor
\[
|N^{\cyc}_{\sma}|\colon \Com \to \Com^{S^1}_{\bR^{\infty}}.
\]
We obtain this immediately from the strong symmetric monoidal
isomorphism 
\[
|X\subdot|\sma |Y\subdot|\iso |X\subdot \sma Y\subdot|
\]
for simplicial objects $X\subdot$,$Y\subdot$ in orthogonal spectra and
the easy observation that the map is $S^{1}$-equivariant for cyclic
objects.  Indeed, using the isomorphism 
\[
\bP |X\subdot|\iso |\bP X\subdot|,
\]
we can identify $|N^{\cyc}_{\sma}\bP X|$ as $\bP(X\sma S^{1}_{+})$.
Now using the canonical reflexive coequalizer 
\[
\xymatrix@C-1pc{%
\bP \bP R\ar@<.5ex>[r]\ar@<-.5ex>[r]&\bP R\ar[r]&R
}
\]
we can identify $|N^{\cyc}_{\sma}R|$ as the reflexive coequalizer 
\[
\xymatrix@C-1pc{
\bP\bP (R \sma S^{1}_{+})\ar@<.5ex>[r]\ar@<-.5ex>[r]
&\bP (R\sma S^{1}_{+})\ar[r]&R\otimes S^{1},
}
\]
constructing the tensor of $R$ with the unbased space $S^{1}$ in the
category of commutative ring orthogonal spectra.  A formal argument
now identifies this as the left adjoint to the forgetful functor 
\[
\iota^* \colon \Com^{S^1}_{\bR^{\infty}} \to \Com
\]
and it follows that $N_{e}^{S^{1}}$ is the left adjoint to the
forgetful functor indicated in the statement.
\end{proof}

We now show that the $S^{1}$-norm $N_e^{S^1} R$ is a cyclotomic
spectrum in orthogonal $S^1$-spectra.  For this, we need to work with
the $C_{n}$ geometric fixed points. 
Since
$|N^{\cyc}_{\sma} R|$ is the geometric realization of a cyclic
spectrum, the $C_n$-action can be computed in terms of the edgewise
subdivision of the cyclic spectrum $N^{\cyc}_{\sma} R$~\cite[\S
1]{BHM}.  Specifically, the $n$th edgewise subdivision $\sd_n
N^{\cyc}_{\sma} R$ is a simplicial orthogonal spectrum with a
simplicial $C_n$-action such that there is a natural isomorphism of
orthogonal $S^1$-spectra
\[
|\sd_n N^{\cyc}_{\sma} R| \cong |N^{\cyc}_{\sma} R|,
\]
where the $S^1$-action on the left extends the $C_n$-action induced
from the simplicial structure.  For $N_{e}^{S^{1}}$ then, taking
$\tilde{U}=\iota^{*}_{C_{n}}U$, a complete $C_n$-universe, there is
an isomorphism of orthogonal 
$C_n$-spectra indexed on $\tilde{U}$  
\[
\iota_{C_n}^* N_e^{S^1}R \cong \aI_{\bR^{\infty}}^{\tilde{U}} (\iota_{C_n}^*
|N^{\cyc}_{\sma} R|). 
\]
This allows us to understand the $C_n$-action on $N_e^{S^1} R$ in
terms of the $C_n$-action on $|N^{\cyc}_{\sma} R|$.  

Writing this out, the orthogonal $C_n$-spectrum
$\iota^{*}_{C_{n}}N_{e}^{S^1}(R)$ has a description as the geometric
realization of a simplicial orthogonal $C_n$-spectrum having
$k$-simplices given by norms
\[
(N_e^{C_n} R)^{\sma
(k+1)} \cong \aI_{\bR^{\infty}}^{\tilde{U}}(R^{\sma n(k+1)}),
\] 
where $C_n$ acts by block permutation on $R^{\sma n(k+1)}$ and
$\tilde{U}=\iota^{*}_{C_n}U$ (for $U$ a complete $S^{1}$-universe).  The
faces are also given blockwise, with $d_{i}$ for $0\leq i\leq k-1$ the
map
\[
N_{e}^{C_{n}}(R^{\sma (k+1)})\to N_{e}^{C_{n}}(R^{\sma k})
\]
on norms induced by the multiplication of the $(i+1)$st and $(i+2)$nd
factors of $R$. The face map $d_{k}$ is a bit more complicated and
uses both an internal cyclic permutation inside the last
$N_{e}^{C_{n}}R$ factor (as in Proposition~\ref{prop:diagdiag}) and a
permutation of the $(k+1)$ factors of $(N_{e}^{C_{n}}R)^{\sma (k+1)}$
together with the multiplication $d_{0}$.  Writing $g=e^{2\pi i/n}$
for the canonical generator of $C_{n}<S^{1}$ and $\alpha$ for the
natural cyclic permutation on $X^{\sma (k+1)}$, $d_{k}$ is the
composite
\[
(N_{e}^{C_{n}}R)^{\sma (k+1)}
\overto{\id^{\sma k}\sma \aI_{\bR^{\infty}}^{\tilde U}g}
(N_{e}^{C_{n}}R)^{\sma (k+1)}
\overto{\alpha}
(N_{e}^{C_{n}}R)^{\sma (k+1)}
\overto{d_{0}}
(N_{e}^{C_{n}}R)^{\sma k}.
\]

In fact, we have the following concise description of the $C_n$-action
in $N_e^{S^1}$-bimodule terms.  We obtain a ($N^{C_n}_e R$,$N^{C_n}_e
R$)-bimodule ${^{g}N_e^{C_n} R}$, using the standard right action
but twisting the left action using $\aI_{\bR^{\infty}}^{\tilde U}g$.  In
the following statement, we use the cyclic bar construction with
coefficients in a bimodule, q.v.~\cite[\S 2]{BHM}.

\begin{theorem}\label{thm:sdcyc}
Let $R$ be a ring orthogonal spectrum.  For any $C_n < S^1$,
there is an isomorphism of orthogonal $C_n$-spectra
\[
\iota_{C_n}^* N_e^{S^1}(R) \cong |N^{\cyc}_{\sma} (N_e^{C_n} R,
{^{g}N_e^{C_n} R})|,
\]
where the cyclic bar construction is taken in the symmetric monoidal
category $\Spec^{C_n}_{\tilde U}$.
\end{theorem}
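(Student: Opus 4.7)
The plan is that the discussion in the paragraphs preceding the theorem already gives an explicit description of $\iota^{*}_{C_n} N_e^{S^1}(R)$ as the geometric realization of a simplicial orthogonal $C_n$-spectrum with $k$-simplices $(N_e^{C_n} R)^{\sma (k+1)}$ and specific face and degeneracy maps. The theorem thus reduces to a formal identification: verify that this simplicial $C_n$-spectrum is isomorphic to the cyclic bar construction $N^{\cyc}_{\sma}(N_e^{C_n} R, \g{N_e^{C_n} R})$ formed in $\Spec^{C_n}_{\tilde U}$. Geometric realization then yields the claimed isomorphism of orthogonal $C_n$-spectra, using that geometric realization of simplicial objects in $\Spec^{C_n}_{\tilde U}$ commutes with the change-of-universe and change-of-group functors that have already been applied.

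For the identification itself, recall that for any associative ring object $A$ in a symmetric monoidal category and any $(A,A)$-bimodule $M$, the cyclic bar construction $N^{\cyc}_{\sma}(A, M)$ has $k$-simplices $M \sma A^{\sma k}$; the inner face $d_i$ (for $0 < i < k$) multiplies the $i$-th and $(i+1)$-st $A$-factors, $d_0$ uses the right action of $A$ on $M$, and $d_k$ is the composite ``cyclically permute the factors, then multiply the new first two using the left action of $A$ on $M$''. Specialized to $A = N_e^{C_n} R$ and $M = \g{N_e^{C_n} R}$, the left action by $a_k$ is by definition the standard left action precomposed with $\aI_{\bR^{\infty}}^{\tilde U} g$, so $d_k$ becomes precisely the three-step composite ``apply $\aI_{\bR^{\infty}}^{\tilde U} g$ to the last factor, cyclically permute all $k+1$ factors, then multiply the first two factors'' that matches the explicit description of $d_k$ given immediately before the theorem. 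The inner faces $d_i$ for $0 \le i < k$ on the two sides agree by inspection as factorwise multiplication, and the degeneracies on both sides are insertion of the unit of $N_e^{C_n} R$; the $C_n$-action on $(N_e^{C_n} R)^{\sma (k+1)}$ on both sides is the diagonal $C_n$-action induced from the standard $C_n$-action on each factor.

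The crux of the proof, and the main obstacle, is justifying the explicit formula for $d_k$ on the left-hand side in the first place; namely, that the $n$-fold edgewise subdivision of $N^{\cyc}_{\sma} R$, regrouped into $k+1$ blocks of $n$ smash factors with block $b$ consisting of positions $b, (k+1)+b, \ldots, (n-1)(k+1)+b$, has its top subdivision face $d_k^{\sd} = d_k \circ d_{2k+1} \circ \cdots \circ d_{n(k+1)-1}$ equal to that composite. The $n-1$ internal faces $d_{j(k+1)+k}$ for $0 \le j < n-1$ contribute standard multiplications of adjacent block factors, while the cyclic face $d_{n(k+1)-1}$ of $N^{\cyc}_{\sma} R$ uses the cyclic structure to wrap the last smash factor back to the front; tracking this wrap through the block grouping shows that it internally rotates the $n$ factors of the wrapped block by one step, which after passing through $\aI_{\bR^{\infty}}^{\tilde U}$ is precisely the action of $\aI_{\bR^{\infty}}^{\tilde U} g$. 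This is the source of the $g$-twist in $\g{N_e^{C_n} R}$. Once this combinatorial unpacking is carried out, the identification is natural in $R$ and $C_n$-equivariant in each simplicial degree, so geometric realization yields the desired isomorphism of orthogonal $C_n$-spectra.
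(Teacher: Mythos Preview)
Your proposal is correct and follows essentially the same approach as the paper. In fact, the paper does not give a separate proof of this theorem: the preceding paragraphs explicitly unpack the simplicial structure on $\iota^{*}_{C_n} N_e^{S^1}(R)$ coming from the edgewise subdivision, write down the formula for $d_k$ as the composite $d_0 \circ \alpha \circ (\id^{\sma k} \sma \aI_{\bR^{\infty}}^{\tilde U} g)$, and then state the theorem as the ``concise description'' of this simplicial object in bimodule terms. Your write-up supplies more detail than the paper on why the top face of the subdivision produces the $g$-twist, which the paper simply asserts; otherwise the two arguments coincide.
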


Next we assemble the diagonal maps into a map $N_e^{S^1}
R \to \rho^*_n \Phi^{C_n} N_e^{S^1} R$ of orthogonal $S^1$-spectra.
The following lemma (which is just a specialization of
Proposition~\ref{prop:diagdiag}) provides the basic compatibility we
need.  (The lemma also follows as an immediate consequence of the
much more general rigidity theorem of
Malkiewich~\cite[\S 3]{MalkiewichDX}.) 

\begin{lemma}\label{lem:multcompat}
Let $R$ be an orthogonal spectrum, let $H<S^1$ be a finite
subgroup, and let $h \in H$.  Then the map
$\Phi^H(\aI_{\bR^{\infty}}^{\tilde U}h)\colon \Phi^H N_e^H R \to \Phi^H
N_e^H R$ is the identity.
\end{lemma}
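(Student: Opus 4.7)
The plan is to directly invoke Proposition~\ref{prop:diagdiag} with the group $G$ taken to be $H$ itself and the normal subgroup $K$ also taken to be $H$. First I would observe that the hypotheses of that proposition are satisfied in a very strong way here: since $H$ is a finite subgroup of the abelian group $S^1$, it is cyclic and in particular abelian, so every element $h\in H$ lies in the center of $H$. Moreover, $H$ is trivially a normal subgroup of itself, and $h\in H=K$, so the ``in particular'' clause of Proposition~\ref{prop:diagdiag} applies.

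Next I would identify the relevant orthogonal $H$-spectrum to which the proposition is applied. By construction, $N_e^H R$ is obtained by applying $\aI_{\bR^{\infty}}^{\tilde U}$ to the smash power $R^{\sma H}$, regarded as an object of $\Spec^{\aB H}=\Spec^{H}_{\bR^{\infty}}$ with the block-permutation action of $H$. The element $h$ acts on $N_e^H R$ precisely by $\aI_{\bR^{\infty}}^{\tilde U}h$ where $h$ on the right denotes the action of $h$ on $R^{\sma H}$ in the universe $\bR^{\infty}$. This places us exactly in the setting of Proposition~\ref{prop:diagdiag} with $X=R^{\sma H}$.

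Applying the proposition then yields
\[
\Phi^{H}(\aI_{\bR^{\infty}}^{\tilde U}h)=\aI_{\bR^{\infty}}^{\tilde U^{H}}\bar h,
\]
where $\bar h=hH$ is the image of $h$ in the trivial quotient group $H/H$. Since $\bar h$ is the identity element, $\aI_{\bR^{\infty}}^{\tilde U^{H}}\bar h$ is the identity automorphism of $\Phi^{H}N_{e}^{H}R$, completing the proof.

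There is essentially no obstacle in this argument, since all the real work is already packaged into Proposition~\ref{prop:diagdiag}. The only thing I would be slightly careful about is the bookkeeping around universes: the $\tilde U$ appearing in the statement of the lemma is a complete $H$-universe (specifically $\iota^{*}_{C_{n}}U$ in the applications above with $H=C_{n}$), so one must note that the change-of-universe functor $\aI_{\bR^{\infty}}^{\tilde U}$ here agrees with the one used in the construction of $N_e^H R$ after applying $\iota_H^*$. This is immediate from the naturality of $\aI$ in the universe, so the conclusion follows without further work.
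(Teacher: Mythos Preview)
Your proof is correct and is exactly the approach the paper takes: the text immediately preceding the lemma states that it ``is just a specialization of Proposition~\ref{prop:diagdiag},'' and your argument spells out precisely that specialization with $G=K=H$ and $z=h$.
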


We now prove the main theorem about the diagonal map cyclotomic
structure.

\begin{theorem}\label{thm:cyclodiag}
Let $R$ be a ring orthogonal spectrum.  The 
diagonal maps 
\[
\Delta_n \colon R^{\sma(k+1)} \to \Phi^{C_n} N_e^{C_n} R^{\sma (k+1)}
\]
assemble into natural maps of $S^1$-spectra 
\[
 \tau_n \colon N_e^{S^1} R \to \rho_n^* \Phi^{C_n} \aI_{\bR^{\infty}}^{U}|N^{\cyc}_{\sma} R|\iso\rho_n^* \Phi^{C_n} N^{S^{1}}_{e}R.
\]
If $R$ is cofibrant or cofibrant as a commutative ring orthogonal
spectrum, then these maps are isomorphisms.
\end{theorem}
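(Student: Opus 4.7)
The strategy is to combine the edgewise subdivision description of Theorem~\ref{thm:sdcyc} with the diagonal isomorphisms of Theorems~\ref{thm:normisobasic} and~\ref{thm:comnormisobasic}, using the rigidity Lemma~\ref{lem:multcompat} to neutralize the twist inherent in the subdivided cyclic bar construction. Both the point-set functor $\Phi^{C_n}$ and geometric realization preserve colimits, so
\[
\Phi^{C_n} |N^{\cyc}_{\sma} (N_e^{C_n} R, {}^{g}N_e^{C_n} R)| \iso |\Phi^{C_n} N^{\cyc}_{\sma} (N_e^{C_n} R, {}^{g}N_e^{C_n} R)|,
\]
reducing the problem to a simplicial-level analysis.

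To construct $\tau_n$, I use the strong symmetric monoidality of the indexed smash power to identify $(N_e^{C_n} R)^{\sma (k+1)} \iso N_e^{C_n}(R^{\sma(k+1)})$ and then compose with the diagonal map of Proposition~\ref{prop:diag}, giving
\[
R^{\sma(k+1)} \overto{\Delta_n} \Phi^{C_n}N_e^{C_n}(R^{\sma(k+1)}) \iso \Phi^{C_n}(N_e^{C_n}R)^{\sma(k+1)}.
\]
Naturality of $\Delta$ in $R^{\sma(k+1)}$ handles the face maps $d_i$ and degeneracies $s_i$ for $i<k$ automatically, since these come from the multiplication and unit of $R$. For the twisted last face $d_k$ of the subdivision, which is the composite of $\id^{\sma k} \sma \aI^{\tilde U}_{\bR^{\infty}} g$, the outer block permutation $\alpha$, and the multiplication $d_0$, the rigidity Lemma~\ref{lem:multcompat} trivializes the internal $g$-twist after applying $\Phi^{C_n}$, and naturality of $\Delta$ with respect to $\alpha$ identifies it with the ordinary cyclic permutation on $R^{\sma(k+1)}$. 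A direct check then shows that $\Phi^{C_n}(d_k)$ matches the ordinary $d_k$ of $N^{\cyc}_{\sma} R$. The maps $\Delta_n$ therefore assemble to a map of cyclic orthogonal spectra; after realization and pulling back along $\rho_n$ to convert $S^1/C_n$-equivariance to $S^1$-equivariance, we obtain the natural map $\tau_n$.

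For the isomorphism statement, suppose first that $R$ is a cofibrant associative ring orthogonal spectrum. By \cite[III.7.6]{MM}, $R$ has cofibrant underlying orthogonal spectrum, and the pushout-product property of the smash product implies $R^{\sma(k+1)}$ does as well. Theorem~\ref{thm:normisobasic} then guarantees that each $\Delta_n$ is an isomorphism. When $R$ is instead a cofibrant commutative ring orthogonal spectrum, $R^{\sma(k+1)}$ is cofibrant in $\Com$ and Theorem~\ref{thm:comnormisobasic} applies to give the same conclusion. Taking realization of the levelwise isomorphisms, and using that $\aI_{\bR^\infty}^U$ and $\rho_n^*$ are equivalences of categories, yields the desired isomorphism $\tau_n$.

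The main obstacle is the bookkeeping around the last face $d_k$: one must verify that the composition of the internal $g$-twist, the outer block permutation $\alpha$, and the first multiplication of the subdivided bar correctly transports, via $\Delta$ and $\Phi^{C_n}$, to the classical cyclic-permute-then-multiply formula for $d_k$ in $N^{\cyc}_{\sma} R$. Lemma~\ref{lem:multcompat} is essential for eliminating the internal $g$-twist, and tracking the permutation $\alpha$ requires care because $\alpha$ acts on the outer smash product of $(N_e^{C_n} R)^{\sma(k+1)}$ whereas the corresponding permutation on $R^{\sma(k+1)}$ only appears after passing through the diagonal isomorphisms factor by factor.
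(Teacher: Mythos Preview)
Your proposal is essentially correct and follows the paper's own approach: build the simplicial-level diagonal, verify it is a map of cyclic objects, realize, and invoke Theorems~\ref{thm:normisobasic} and~\ref{thm:comnormisobasic} levelwise for the isomorphism claim. You are, in fact, more explicit than the paper about the role of Lemma~\ref{lem:multcompat} in handling the twisted last face $d_k$; the paper introduces that lemma precisely for this purpose but then absorbs its use into the terse assertion ``varying $k$, we get a map of cyclic objects.''

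One point to correct: your justification that $\Phi^{C_n}$ commutes with geometric realization because ``$\Phi^{C_n}$ preserves colimits'' is false as stated. The functor $\Phi^{C_n}=\mathrm{Lan}_{\phi}\circ\Fix^{C_n}$ involves levelwise fixed points, and $\Fix^{C_n}$ does \emph{not} preserve general colimits (cf.\ the Remark after the definition of $\Phi^{H}$, which notes that $\Phi^{H}$ fails to preserve arbitrary coequalizers). What actually makes the commutation work is either (i) the observation that for simplicial $C_n$-spaces with trivial action on the simplicial indexing, space-level fixed points commute with geometric realization, and then $\mathrm{Lan}_{\phi}$ is a genuine left adjoint; or (ii), as the paper does in the proof of Theorem~\ref{thm:levelwisefp}, the properness of the subdivided bar construction (Lemma~\ref{lem:proper}) under the cofibrancy hypothesis. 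For the first part of the theorem (arbitrary $R$), you only need the canonical comparison map $|\Phi^{C_n}(-)_\bullet|\to\Phi^{C_n}|(-)_\bullet|$, not an isomorphism; the paper calls this ``evident'' and composes with it to define $\tau_n$.
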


\begin{proof}
Varying $k$, we get a map of cyclic objects 
\[
N^{\cyc}_{\sma}R\to \Phi^{C_n} \aI_{\bR^{\infty}}^{\tilde{U}} \sd_n N^{\cyc}_{\sma} R
\]
and on realization and change of universe, a map
\[
N^{S^{1}}_{e}R\to
\aI_{\bR^{\infty}}^{U}|\Phi^{C_n} \aI_{\bR^{\infty}}^{\tilde{U}} \sd_n N^{\cyc}_{\sma} R|
\]
of orthogonal $S^{1}$-spectra.  The map $\tau_{n}$ is the composite
with the evident isomorphism of orthogonal $S^{1}$-spectra
\[
\aI_{\bR^{\infty}}^{U}|\Phi^{C_n} \aI_{\bR^{\infty}}^{\tilde{U}} \sd_n N^{\cyc}_{\sma} R|
\iso
\rho_n^* \Phi^{C_n} \aI_{\bR^{\infty}}^{U} |\sd_n N^{\cyc}_{\sma} R|
\iso\rho_n^* \Phi^{C_n} N^{S^{1}}_{e}R.
\]
When $R$ is cofibrant, the maps $\Delta_{n}$ are isomorphisms, and so
therefore are the maps $\tau_{n}$.
\end{proof}

The previous theorem establishes a precyclotomic structure.  For the
cyclotomic structure, we now just need to compare the pointset
geometric fixed point functors with their derived functors.

\begin{theorem}\label{thm:levelwisefp}
Let $R$ be a cofibrant ring orthogonal spectrum or a cofibrant
commutative ring orthogonal spectrum. Then for any $C_{n}<S^{1}$, the point-set
geometric fixed point functor on $N_{e}^{S^{1}}R$ computes the left derived geometric
fixed point functor
\[
L\Phi^{C_{n}}N_{e}^{S^{1}}R\overto{\simeq}
\Phi^{C_{n}}N_{e}^{S^{1}}R.
\]
Moreover, 
\[
\Phi^{C_{n}}N_{e}^{S^{1}}R\iso
\aI_{\bR^{\infty}}^{U}|\Phi^{C_{n}}\aI_{\bR^{\infty}}^{\tilde
U}\sd_{n}N^{\cyc}_{\sma}R|.
\]
\end{theorem}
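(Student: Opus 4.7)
The plan is to compute $\Phi^{C_n}N_e^{S^1}R$ levelwise via the edgewise subdivision of $N^{\cyc}_\sma R$ and then argue that each level is sufficiently cofibrant that the point-set geometric fixed point functor agrees with its left derived functor.  For the displayed isomorphism, I would start from the identification $|N^{\cyc}_\sma R|\iso |\sd_n N^{\cyc}_\sma R|$ as orthogonal spectra with $S^1$-action (where the $C_n<S^1$-action on the right is simplicial).  Since the change of universe $\aI_{\bR^\infty}^{\tilde U}$ and the geometric realization of a simplicial orthogonal spectrum are both computed levelwise on underlying spaces, they commute past one another, giving $\iota^*_{C_n}N_e^{S^1}R\iso |\aI_{\bR^\infty}^{\tilde U}\sd_n N^{\cyc}_\sma R|$.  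The point-set functor $\Phi^{C_n}$ is constructed as a left Kan extension of a levelwise fixed-point functor, so it commutes with all colimits and in particular with geometric realization; the same formal argument shows that $\Phi^{C_n}$ commutes with the outer change of universe back to $U$.  This yields the displayed isomorphism.

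For the comparison between $\Phi^{C_n}$ and $L\Phi^{C_n}$ in the cofibrant associative case, I would note that the $k$-simplex of $\sd_n N^{\cyc}_\sma R$ is naturally $R^{\sma n(k+1)}$ with $C_n$ acting by block cyclic permutation, and this is precisely $\iota^*_{C_n}$ applied to $N_e^{C_n}(R^{\sma(k+1)})$ after an evident change of universe.  When $R$ is a cofibrant associative ring orthogonal spectrum its underlying orthogonal spectrum is cofibrant, so $R^{\sma(k+1)}$ is cofibrant in $\Spec$; Theorem~\ref{thm:derivenorm} then shows that each $N_e^{C_n}(R^{\sma(k+1)})$ is cofibrant in $\Spec^{C_n}_{\tilde U}$, and Theorem~\ref{thm:geoder} shows that $\Phi^{C_n}$ computes $L\Phi^{C_n}$ at each level.

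To promote this from a levelwise statement to one about the realization, I would appeal to Reedy cofibrancy of the simplicial orthogonal $C_n$-spectrum $\aI_{\bR^\infty}^{\tilde U}\sd_n N^{\cyc}_\sma R$: the degeneracies come from inserting unit factors into the cyclic bar construction, so are wedge-summand inclusions, and latching maps are therefore cofibrations.  Since $\Phi^{C_n}$ commutes with realization and preserves levelwise weak equivalences between cofibrant objects (and Reedy cofibrant replacements of the whole simplicial object exist within $\Spec^{C_n}_{\tilde U}$ by standard arguments), the realization of the levelwise derived functor computes $L\Phi^{C_n}$ of the realization.  For the commutative case, $R^{\sma(k+1)}$ is no longer cofibrant in $\Spec$, so I would reduce to the associative case: take a cofibrant replacement $R'\to R$ in $\Ass$ from a cofibrant commutative ring orthogonal spectrum; Proposition~\ref{prop:derive} gives that $N_e^{S^1}R'\to N_e^{S^1}R$ is an $\aF$-equivalence and so identifies the two derived functors, while Theorem~\ref{thm:comnormisobasic}, applied levelwise to $R^{\sma(k+1)}$, identifies the point-set geometric fixed points in the commutative case.

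The main obstacle I expect is the passage from the levelwise equivalence $\Phi^{C_n}\simeq L\Phi^{C_n}$ to one on the realization: this requires carefully verifying Reedy cofibrancy of the subdivided cyclic bar construction as a simplicial object in $\Spec^{C_n}_{\tilde U}$, since equivariant cofibrancy behaves differently on the $k$-simplices of an unsubdivided versus subdivided construction, and the face map $d_k$ involves the twist by $\aI_{\bR^\infty}^{\tilde U}g$ (cf.~Proposition~\ref{prop:diagdiag}) which must be shown to interact correctly with cofibrancy and with the formation of geometric fixed points.
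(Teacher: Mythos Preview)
Your argument has two genuine gaps.

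First, your claim that $\Phi^{C_n}$ ``commutes with all colimits'' because it is ``a left Kan extension of a levelwise fixed-point functor'' is incorrect.  The functor $\Phi^{C_n}$ is the composite $\bP_\phi\circ\Fix^{C_n}$; the left Kan extension $\bP_\phi$ does preserve colimits, but $\Fix^{C_n}$ (levelwise $C_n$-fixed points) does not.  The paper notes this explicitly in the remark following the definition of $\Phi^H$: ``$\Phi^{H}$ does not preserve coequalizers in general.''  Consequently, the displayed isomorphism does not hold for formal reasons.  The paper instead proves a properness lemma (Lemma~\ref{lem:proper}): the simplicial object $\aI_{\bR^\infty}^{\tilde U}\sd_n N^{\cyc}_\sma R$ is proper, meaning its geometric realization is a sequential colimit of pushouts along $h$-cofibrations.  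These particular colimits \emph{are} preserved by $\Fix^{C_n}$ (and hence by $\Phi^{C_n}$), which is what yields the isomorphism.  Your Reedy cofibrancy claim for the associative case would supply a strengthening of this, but the justification you give (``degeneracies are wedge-summand inclusions'') is not right: the degeneracies insert the unit $S\to R$, which is a cofibration when $R$ is cofibrant in $\Ass$, not a summand inclusion.  In the commutative case, Reedy cofibrancy fails and only properness (via $h$-cofibrations) is available, which you do not address.

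Second, your reduction of the commutative case to the associative case via Proposition~\ref{prop:derive} is circular.  That proposition (equivalently Theorem~\ref{thm:Tder}) is proved \emph{using} the cyclotomic structure on $N_e^{S^1}R$, and that cyclotomic structure is established by combining Theorem~\ref{thm:cyclodiag} with the very theorem you are proving.  The paper avoids this by arguing directly: each simplicial level $N_k$ is a smash power of $N_e^{C_n}R$, and Theorems~\ref{thm:normisobasic} and~\ref{thm:comnormisobasic} (the diagonal isomorphisms, which are proved independently of any of the $S^1$-norm material) show that the point-set $\Phi^{C_n}$ computes the derived functor on $N_e^{C_n}R$ in both the associative and commutative cases.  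One then inducts over the pushout filtration of the realization, using properness to control the latching objects.
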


Theorem~\ref{thm:maincyc}, the assertion of the cyclotomic structure
on $N_{e}^{S^{1}}R$ for $R$ a cofibrant ring orthogonal spectrum or
cofibrant commutative ring orthogonal spectrum, is now an immediate
consequence of the previous theorem and Theorem~\ref{thm:cyclodiag}.  
If $R$ only has the homotopy type of a cofibrant object, application of
Proposition~\ref{prop:zconsist} allows us to functorially work with
$\opTR$ and $\opTC$ as models of $TR$ and $TC$.

For the proof of the previous theorem, recall that a simplicial object
in a category enriched in spaces is said to be \term{proper} when for
each $n$ the map from the $k$th latching object to the $k$th level is an
$h$-cofibration.  (Recall that an $h$-cofibration is a map $f\colon
X\to Y$ with the homotopy extension property: Any map $\phi\colon Y\to
Z$ and any path in the space of maps from $X$ to $Z$ starting at $\phi
\circ f$ comes from the restriction of a path in the space of maps
from $Y$ to $Z$ starting at $\phi$.)  The geometric realization of a
proper simplicial object (in a topologically cocomplete category) is
the colimit of a sequence of pushouts of $h$-cofibrations.  This is
relevant to the situation above because of the following lemma.

\begin{lemma}\label{lem:proper}
Let $R$ be a cofibrant ring orthogonal spectrum or a cofibrant
commutative ring orthogonal spectrum. Then for any $C_{n}<S^{1}$, 
\[
\aI_{\bR^{\infty}}^{\tilde U}\sd_{n}N^{\cyc}_{\sma}R
\]
is proper as a simplicial object in $\Spec^{C_{n}}_{\tilde U}$.
\end{lemma}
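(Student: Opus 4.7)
The plan is to prove that each latching map of the simplicial object is an $h$-cofibration, which is the definition of properness. Since the point-set change of universe $\aI_{\bR^{\infty}}^{\tilde U}\colon \Spec^{C_{n}}_{\bR^{\infty}}\to \Spec^{C_{n}}_{\tilde U}$ is a topologically enriched strong symmetric monoidal equivalence of categories (preserving colimits and tensors), it sends $h$-cofibrations to $h$-cofibrations. So it suffices to show that $\sd_{n}N^{\cyc}_{\sma}R$ is proper as a simplicial object in $\Spec^{C_{n}}_{\bR^{\infty}}\iso \Spec^{\aB C_{n}}$.

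First I would unwind the simplicial structure of $\sd_{n}N^{\cyc}_{\sma}R$ using the identification underlying Theorem~\ref{thm:sdcyc}: the $k$-simplices are $R^{\sma n(k+1)}\iso (N_{e}^{C_{n}}R)^{\sma(k+1)}$, and the degeneracy maps coincide with the degeneracies of the cyclic bar construction of the ring orthogonal $C_{n}$-spectrum $N_{e}^{C_{n}}R$ with bimodule coefficients ${}^{g}N_{e}^{C_{n}}R$. Crucially, the twisting by $g$ only affects the last face map $d_{k}$ and not the degeneracies, so the degeneracies are exactly those of the ordinary cyclic bar construction on $N_{e}^{C_{n}}R$: each degeneracy inserts the unit map $\eta\colon S \to N_{e}^{C_{n}}R$ at one of the $k+1$ smash factors. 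It then follows from a standard calculation that the $k$th latching map is canonically identified with the $(k+1)$-fold iterated pushout-product of $\eta$ in the symmetric monoidal category $\Spec^{C_{n}}_{\bR^{\infty}}$.

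The next step is to show that $\eta\colon S \to N_{e}^{C_{n}}R$ is an $h$-cofibration of orthogonal $C_{n}$-spectra. When $R$ is a cofibrant associative ring orthogonal spectrum, the unit $S\to R$ is a cofibration in $\Spec$ (and hence an $h$-cofibration), and smashing with cofibrant orthogonal spectra preserves $h$-cofibrations, so the composite $S \to R^{\sma n} = N_{e}^{C_{n}}R$ is an $h$-cofibration; equivariance is automatic since $S$ has trivial $C_{n}$-action. When $R$ is a cofibrant commutative ring orthogonal spectrum, $S \to R$ may not be a cofibration in the positive stable structure, but it is still an $h$-cofibration by a standard property of cofibrant commutative ring orthogonal spectra (this is part of why commutative ring orthogonal spectra form a well-behaved model category). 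Finally, iterated pushout-products of an $h$-cofibration are $h$-cofibrations in $\Spec^{C_{n}}_{\bR^{\infty}}$, so each latching map is an $h$-cofibration and the simplicial object is proper.

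The main technical obstacle is the simplicial identification in the second paragraph: verifying that $\sd_{n}N^{\cyc}_{\sma}R$ really is (as a simplicial object in $C_{n}$-spectra) the cyclic bar construction on $N_{e}^{C_{n}}R$ with $g$-twisted bimodule coefficients, and in particular that the block-permutation $C_{n}$-action and the cyclic twisting conspire exactly so that the degeneracies coincide with those of the ordinary cyclic bar construction. A secondary point of care is the commutative case: one must verify that the unit $S\to R$ of a cofibrant commutative ring orthogonal spectrum is an $h$-cofibration, a fact that ultimately reduces to the cellular structure of the free commutative algebra monad and the observation that extensions of cells of the form $F_{V}(B_{+})^{(m)}/\Sigma_{m}$ (with $V$ containing a nontrivial trivial summand) preserve the underlying $h$-cofibration of the unit.
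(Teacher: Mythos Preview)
Your strategy matches the paper's: reduce via the change of universe functor to $\Spec^{C_n}_{\bR^\infty}$, then analyze the latching maps there. Your pushout-product description of the latching maps (via the identification of Theorem~\ref{thm:sdcyc}) is exactly what underlies the paper's argument. The paper's published proof is simply terser: in the associative case it asserts that the levels are cofibrant $C_n$-spectra and the latching maps are genuine cofibrations, and in the commutative case it invokes an argument in the style of \cite[VII.7.5]{EKMM}.

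There is, however, a real gap in your execution. You argue that $\eta\colon S \to R^{\sma n}$ is an $h$-cofibration by factoring it as $S\to R\to R\sma R\to\cdots\to R^{\sma n}$ and smashing a non-equivariant $h$-cofibration with cofibrant objects, and then say ``equivariance is automatic since $S$ has trivial $C_n$-action''. But the intermediate objects $R^{\sma j}$ for $j<n$ carry no $C_n$-action compatible with the permutation action on $R^{\sma n}$, so this factorization is purely non-equivariant; all you have established is that $\eta$ is a $C_n$-equivariant map whose underlying non-equivariant map is an $h$-cofibration. That does not by itself give the $C_n$-equivariant homotopy extension property, which is what properness in $\Spec^{C_n}_{\bR^\infty}$ requires. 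The correct input is the Hill--Hopkins--Ravenel analysis of the norm (see \cite[B.89, B.97]{HHR}), which shows directly that these latching maps are genuine $C_n$-cofibrations when $R$ is cofibrant; the pushout-product axiom in the monoidal model category then replaces your appeal to closure of $h$-cofibrations under pushout-products. The same issue recurs in your commutative case; the paper handles it by adapting the EKMM argument directly in the equivariant category rather than reducing to a claim about the unit. (A minor unrelated point: the latching map at level $k$ is $N_e^{C_n}R$ smashed with the $k$-fold, not $(k{+}1)$-fold, pushout-product of $\eta$, since the bimodule-coefficient factor receives no degeneracies; this does not affect the argument.)
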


\begin{proof}
Since $\aI_{\bR^{\infty}}^{\tilde U}$ is a topological left adjoint,
it preserves pushouts and homotopies, and therefore preserves
properness.  Thus, it suffices to show that 
\[
\sd_{n}N^{\cyc}_{\sma}R
\]
is a proper simplicial object in $\Spec^{C_{n}}_{\bR^{\infty}}$.  In
the case when $R$ is a cofibrant ring orthogonal spectrum, each level
is cofibrant as an orthogonal $C_{n}$-spectrum and the inclusion of
the latching object is a cofibration.  In the case when $R$ is
cofibrant as a commutative ring orthogonal spectrum, an argument similar
to~\cite[VII.7.5]{EKMM} shows that the iterated pushouts that form the
latching objects are $h$-cofibrations and the inclusion of the
latching object is an $h$-cofibration.
\end{proof}

\begin{proof}[Proof of Theorem~\ref{thm:levelwisefp}]
Given the discussion above, we see that under the hypotheses of the
theorem, the point-set geometric fixed point functor $\Phi^{C_{n}}$
commutes with geometric realization, giving us the isomorphism 
\[
\Phi^{C_{n}}N_{e}^{S^{1}}R\iso
\aI_{\bR^{\infty}}^{U}|\Phi^{C_{n}}\aI_{\bR^{\infty}}^{\tilde
U}\sd_{n}N^{\cyc}_{\sma}R|.
\]
To prove that the point-set geometric fixed point functor computes the
derived geometric fixed point functor, we just need to see that it
does so on each of the objects involved in the sequence of pushouts
that constructs the geometric realization.  This happens on the levels
of $N\subdot=\aI_{\bR^{\infty}}^{\tilde U}\sd_{n}N^{\cyc}_{\sma}R$ because each
$N_{k}$ is the smash product of copies of $N_{e}^{C_{n}}R$ and it
happens on $N_{e}^{C_{n}}R$ by Theorems~\ref{thm:normisobasic}
and~\ref{thm:comnormisobasic}.  The other pieces are the orthogonal $C_{n}$-spectra
$P_{k}$ defined by the pushout diagram
\[
\xymatrix@-1pc{%
L_{k}\sma \partial \Delta^{k}_{+}\ar[r]\ar[d]&L_{k}\sma \Delta^{k}_{+}\ar[d]\\
N_{k}\sma \partial \Delta^{k}_{+}\ar[r]&P_{k},
}
\]
where $L_{k}$ denotes the latching object.  The point-set geometric
fixed point functor computes the derived geometric fixed point functor
for each $P_{k}$ because it does so for each $N_{k}$ and for each
latching object (by induction).
\end{proof}


Finally, we turn to the question of understanding the derived functors of
$N_e^{S^1}$.  Recall that when dealing with cyclic sets, the
$S^1$-fixed points do not usually carry homotopically meaningful
information.  As a consequence, we will work with the model structure
on $\Spec^{S^{1}}_{U}$ provided by Theorem~\ref{thm:fmodel} with weak
equivalences the $\aFin$-equivalences, i.e., the maps which are
isomorphisms on the homotopy groups of the (categorical or geometric)
fixed point spectra for the 
finite subgroups of $S^{1}$ (irrespective of what happens on the
fixed points for $S^{1}$).  We will now write $\Spec^{S^1,\aFin}_{U}$
for $\Spec^{S^{1}}_{U}$ to emphasize that we are using the 
$\aFin$-equivalences.  We use analogous notation for the categories of
ring orthogonal $S^{1}$-spectra and commutative ring orthogonal
$S^{1}$-spectra. 

We now observe that $N_e^{S^1}$ admits (left) derived functors when
regarded as landing in $\Spec^{S^1,\aFin}_{U}$ and (in the commutative
case) $\Com^{S^{1},\aFin}_{U}$. 
Theorems~\ref{thm:cyclodiag} and~\ref{thm:levelwisefp} have the
following consequence.

\begin{theorem}\label{thm:Tder}
Let $R \to R'$ be a weak equivalence of ring orthogonal
spectra where $R$ and $R'$ is each either a cofibrant ring orthogonal
spectra or a cofibrant commutative ring orthogonal spectra (four cases).  Then the
induced map $N_e^{S^1} R \to N_e^{S^1} R'$ is an $\aFin$-equivalence.
\end{theorem}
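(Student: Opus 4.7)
The plan is to detect $\aFin$-equivalences via the underlying derived geometric fixed points. A map $f$ of orthogonal $S^1$-spectra is an $\aFin$-equivalence if and only if, for every finite $H\leq S^1$, the derived geometric fixed point map $L\Phi^H f$ is a non-equivariant weak equivalence; this is the standard isotropy-separation detection for the family of finite subgroups of $S^1$, inducted on $|H|$ from the base case $H=\{e\}$ via the cofibre sequence $E\mathcal{P}_+\sma X\to X\to \Phi^H X$ for $\mathcal{P}$ the family of proper subgroups of $H$.

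Next I would use Theorems~\ref{thm:cyclodiag} and~\ref{thm:levelwisefp} to collapse every such fixed-point check back to a single underlying equivalence. By Theorem~\ref{thm:levelwisefp} the point-set functor $\Phi^{C_n}$ computes its left derived functor on $N_e^{S^1}R$ and $N_e^{S^1}R'$, while Theorem~\ref{thm:cyclodiag} supplies natural isomorphisms
\[
\tau_n \colon N_e^{S^1} R \overto{\iso} \rho_n^* \Phi^{C_n} N_e^{S^1} R
\qquad\text{and}\qquad
\tau_n \colon N_e^{S^1} R' \overto{\iso} \rho_n^* \Phi^{C_n} N_e^{S^1} R'
\]
of orthogonal $S^1$-spectra. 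Since $\tau_n$ is natural in the ring, the diagram
\[
\xymatrix{
N_e^{S^1} R \ar[r] \ar[d]_{\tau_n}^{\iso} & N_e^{S^1} R' \ar[d]_{\tau_n}^{\iso} \\
\rho_n^* \Phi^{C_n} N_e^{S^1} R \ar[r] & \rho_n^* \Phi^{C_n} N_e^{S^1} R'
}
\]
commutes, so $\Phi^{C_n}$ of our map is isomorphic (after $\rho_n$-relabelling) to the map itself, and in particular its underlying non-equivariant component is isomorphic to $|N^{\cyc}_\sma R|\to |N^{\cyc}_\sma R'|$. The whole problem therefore reduces to showing that this single map of non-equivariant orthogonal spectra is a weak equivalence.

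For the underlying equivalence I would argue levelwise and propagate via properness. At simplicial degree $k$ the map in question is $R^{\sma(k+1)}\to(R')^{\sma(k+1)}$, and this is a non-equivariant weak equivalence because cofibrancy of $R$ and $R'$ in either $\Ass$ or $\Com$ makes their underlying orthogonal spectra flat enough for smash powers to preserve weak equivalences (in the commutative case this is Theorem~\ref{thm:derivenorm} applied non-equivariantly to $N_e^{C_{k+1}}$). Lemma~\ref{lem:proper}, specialised to $n=1$ so that the edgewise subdivision is trivial and the group $C_1$ is the trivial group, then supplies the properness of $N^{\cyc}_\sma R$ and $N^{\cyc}_\sma R'$ as simplicial orthogonal spectra, and properness propagates the levelwise weak equivalences through geometric realisation.

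The main obstacle I anticipate is the commutative case, where cofibrant commutative ring orthogonal spectra need not have cofibrant underlying orthogonal spectra in the standard stable structure; the levelwise smash-power equivalence must therefore be extracted from the flatness packaged in Theorem~\ref{thm:derivenorm} rather than from a naive cofibrancy argument, and the realisation step must rely on the $h$-cofibrancy form of properness from Lemma~\ref{lem:proper} rather than a cell-by-cell induction. With these inputs in hand the four cases in the hypothesis of the theorem are handled uniformly by the single underlying-level verification.
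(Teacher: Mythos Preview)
Your proposal is correct and follows essentially the same approach as the paper. The paper's proof is terser: it simply notes that $N_e^{S^1}R$ and $N_e^{S^1}R'$ are cyclotomic spectra and the map is a map of cyclotomic spectra, so by Proposition~\ref{prop:wkeqivcyc} (applied for each $C_n$ via the full cyclotomic structure) it suffices to check the underlying non-equivariant map $|N^{\cyc}_{\sma}R|\to |N^{\cyc}_{\sma}R'|$, which is done exactly as you describe via levelwise weak equivalences and properness. Your isotropy-separation argument together with the explicit use of the naturality square for $\tau_n$ is just an unpacking of what the cyclotomic-spectrum reduction is doing behind the scenes.
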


\begin{proof}
Since we have shown that $N_{e}^{S^{1}}R$ and $N_{e}^{S^{1}}R'$ are
cyclotomic spectra and the map is a map of cyclotomic spectra, it
suffices to prove that it is a weak equivalence of the underlying
non-equivariant spectra, where we are looking at the map
$|N^{\cyc}_{\sma}R|\to |N^{\cyc}_{\sma}R'|$.  At each simplicial level,
the map $R^{\sma (k+1)}\to R^{\prime {\sma (k+1)}}$is a weak equivalence and the simplicial objects are proper,
so the map on geometric realizations is a weak equivalence.
\end{proof}

In the commutative case, we have the following derived functor
result.

\begin{proposition}\label{prop:Tcommquillen}
Regarded as a functor on commutative ring orthogonal spectra, the
functor $N_e^{S^1}$ is a left Quillen functor with respect to the positive
complete model structure on $\Com$ and the $\aFin$-model structure
on $\Com^{S^1}_{U}$.
\end{proposition}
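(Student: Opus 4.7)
The plan is to verify directly that $N_{e}^{S^{1}}$ sends generating cofibrations and generating acyclic cofibrations of the positive complete (equivalently positive stable, since the group is trivial) model structure on $\Com$ to cofibrations, respectively acyclic cofibrations, of the positive complete $\aFin$-model structure on $\Com^{S^{1}}_{U}$ from Theorem~\ref{thm:fmodel}. Both model structures are transferred along the free commutative ring adjunctions $\bP\colon\Spec\to\Com$ and $\bP^{S^{1},U}\colon\Spec^{S^{1}}_{U}\to\Com^{S^{1}}_{U}$, so their generating (acyclic) cofibrations take the form $\bP(k)$ and $\bP^{S^{1},U}(k')$ for $k,k'$ generating (acyclic) cofibrations of the underlying (equivariant) stable model structures.

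The essential computation is the natural isomorphism
\[
N_{e}^{S^{1}}(\bP A)\;\cong\;\bP^{S^{1},U}\bigl(\aI_{\bR^{\infty}}^{U}(A\sma S^{1}_{+})\bigr)
\]
for $A$ an orthogonal spectrum, where $S^{1}_{+}$ carries its free $S^{1}$-action and $A$ the trivial one. Given Proposition~\ref{prop:commuT}, which identifies $N_{e}^{S^{1}}$ on commutative ring spectra as $\aI_{\bR^{\infty}}^{U}\circ(-\otimes S^{1})$, this isomorphism follows from two formal facts: that $\bP A\otimes S^{1}\cong\bP(A\sma S^{1}_{+})$ (the tensor on the left is taken in $\Com$, and both sides satisfy the same universal property because $\bP$ is left adjoint to the forgetful functor $\Com\to\Spec$), and that $\aI_{\bR^{\infty}}^{U}$ is strong symmetric monoidal and therefore commutes with $\bP$.

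With this identification in hand, consider a generating (acyclic) cofibration of $\Com$, which has the form $\bP(F_{V}i)$ for a positive representation $V$ (trivial as an $S^{1}$-representation) and a standard cell inclusion $i$. Then $\aI_{\bR^{\infty}}^{U}(F_{V}i\sma S^{1}_{+})$ is precisely $(S^{1}/e)_{+}\sma_{e}F_{V}i$ in $\Spec^{S^{1}}_{U}$, which is a generating (acyclic) cofibration of the positive complete $\aFin$-model structure on $\Spec^{S^{1}}_{U}$, indexed by the trivial subgroup $e\in\aFin$ and the positive representation $V$. Applying $\bP^{S^{1},U}$ gives a generating (acyclic) cofibration of the transferred model structure on $\Com^{S^{1}}_{U}$, and the same analysis handles the additional stability cells appearing in the stable generating sets.

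The main obstacle is the natural isomorphism asserted in the second paragraph; most of the work is already done in the proof of Proposition~\ref{prop:commuT}, where a canonical reflexive coequalizer presentation identifies $|N^{\cyc}_{\sma}\bP R|$ with $\bP(R\sma S^{1}_{+})$. Once that identification is carefully unpacked and combined with the symmetric monoidality of $\aI_{\bR^{\infty}}^{U}$, matching the image of a generator to a generator of the target is straightforward and the Quillen condition follows.
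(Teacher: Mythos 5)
Your proof takes a genuinely different route from the paper. The paper's entire proof is one sentence: having established in Proposition~\ref{prop:commuT} that $N_e^{S^1}$ is left adjoint to the forgetful functor $\iota^*\colon\Com^{S^1}_U\to\Com$, it suffices to observe that $\iota^*$ preserves fibrations and acyclic fibrations. This is immediate, since fibrations and weak equivalences in both model structures are detected on homotopy groups (for the $\aFin$-model, on $\pi_*^H$ for finite $H$, including $H=e$), and $\iota^*$ is essentially restriction to the trivial subgroup. You instead check the left-adjoint condition directly on generating (acyclic) cofibrations of the transferred model structure. This is valid in principle and has the merit of producing the explicit identification $N_e^{S^1}(\bP A)\cong\bP^{S^1,U}(\aI_{\bR^\infty}^U(A\sma S^1_+))$, which is independently useful, but it is substantially more work than the paper needs.

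There is a real glossed-over step in your argument. You handle the generating cofibrations $\bP(F_Vi)$ cleanly, since $\aI_{\bR^\infty}^U(F_Vi\sma S^1_+)\cong S^1_+\sma F_Vi$ is literally one of the generators of the positive complete $\aFin$-model structure. But the generating acyclic cofibrations of the positive (complete) stable model structure on $\Spec$ are not just the level acyclic cells $(J^U_G)^+$; they include the additional stability cells $K$ (pushout-products involving the mapping-cylinder inclusions of the maps $\lambda_{V,W}\colon F_{V\oplus W}S^W\to F_V S^0$). Your sentence ``the same analysis handles the additional stability cells'' does not actually go through verbatim: $\aI_{\bR^\infty}^U((-)\sma S^1_+)$ applied to such a cell is not literally one of the generating acyclic cofibrations of the target, so you must argue directly that the result is an $\aFin$-equivalence and a positive cofibration. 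This is provable (one can use that the cells involved have cofibrant domain and codomain, so the change-of-universe computes the derived functor, and then check on $\pi_*^H$ for finite $H$), but it requires a genuine argument that your proposal currently elides. The paper's right-adjoint check avoids this entirely.
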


\begin{proof}
The forgetful functor preserves fibrations and acyclic fibrations.
\end{proof}

\section{The cyclotomic trace}\label{sec:trace}

The modern importance of $THH$ and $TC$ derives from the application
of the trace maps $K \to TC$ and $K \to TC \to THH$ to computing
algebraic $K$-theory.  In this section, we give a construction of the
cyclotomic trace in terms of the norm construction of $THH$.

First, observe that the constructions of Section~\ref{sec:THH}
and~\ref{sec:relTHH} generalize without modification to the setting of
categories enriched in orthogonal spectra:  Specifically, given a
spectral category $\aC$ we define the cyclic bar construction as the
geometric realization of the cyclic orthogonal spectrum with
$k$-simplices 
\[
[k] \mapsto \bigvee_{c_0, \ldots c_k} \aC(c_{1}, c_0) \sma \aC(c_{2},
c_{1}) \sma \ldots \sma \aC(c_{k},c_{k-1})\sma\aC(c_0, c_k).
\]
This construction gives rise to an orthogonal $S^1$-spectrum; we
have the following analogue of Lemma~\ref{lem:cycl}.

\begin{lemma}\label{lem:catcycl}
Let $\aC$ be a category enriched in orthogonal spectra.  Then the
geometric realization of the cyclic bar construction $|N^{\cyc}_{\sma}
\aC|$ is naturally an object in $\Spec^{S^1}_{\bR^{\infty}}$.
\end{lemma}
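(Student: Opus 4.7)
The plan is to reduce this to the already-established Lemma~\ref{lem:cycl} by observing that the proof given there was not really specific to ring orthogonal spectra: what was actually used was only that the cyclic bar construction of a (suitably unital and associative) object in a closed symmetric monoidal category of orthogonal spectra assembles into a cyclic object in orthogonal spectra, and that geometric realization is computed levelwise. Both features persist in the spectrally enriched category setting.

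First I would spell out the cyclic structure maps on $N^{\cyc}_{\sma}\aC$ before realization. The face maps $d_i$ for $0\le i\le k-1$ are induced by composition in $\aC$ (smashing the $(i+1)$st and $(i+2)$nd factors using $\aC(c_{i+1},c_i)\sma \aC(c_{i+2},c_{i+1})\to \aC(c_{i+2},c_i)$), while $d_k$ is induced by composing the first and last factors (passing through the cyclic permutation of wedge summands in the usual way). The degeneracies $s_i$ are induced by the unit maps $S\to \aC(c,c)$ inserting an identity morphism, and the cyclic operator $t_k$ permutes the wedge summands along $(c_0,\dotsc,c_k)\mapsto (c_k,c_0,\dotsc,c_{k-1})$ and simultaneously permutes the smash factors. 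Verifying the cyclic identities is routine, exactly parallel to the verification for $N^{\cyc}_{\sma}R$ when $R$ is a ring orthogonal spectrum; the one-object case recovers precisely the cyclic structure used in Lemma~\ref{lem:cycl}.

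Having established that $N^{\cyc}_{\sma}\aC$ is a cyclic object in the category $\Spec$ of orthogonal spectra, I would invoke the same two ingredients as in the proof of Lemma~\ref{lem:cycl}: by \cite[3.1]{JonesCyclic} the geometric realization of a cyclic object in spaces carries a natural $S^1$-action, and geometric realization of a (cyclic) object in orthogonal spectra is formed levelwise. Continuous naturality of Jones's construction then produces a levelwise compatible $S^1$-action on $|N^{\cyc}_{\sma}\aC|$, i.e., an action through maps of orthogonal spectra. Under the identification noted in Section~\ref{sec:pointset}, orthogonal spectra with $S^1$-action are exactly objects of $\Spec^{S^1}_{\bR^{\infty}}$, giving the claim.

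The only mild technical point, and what I would expect to be the main obstacle, is the verification of the extra cyclic identity involving $t_k$ and $d_k$: this is where the twist in the definition of $d_k$ (composing the first and last factors after cyclically permuting the wedge summands) has to match up with the cyclic operator. This is the multi-object analogue of the calculation that makes the ring case work, and it follows formally from associativity and unitality of composition in $\aC$ together with symmetry of the smash product; no new ideas beyond those already used in Lemma~\ref{lem:cycl} are needed.
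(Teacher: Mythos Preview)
Your proposal is correct and matches the paper's approach exactly: the paper does not even write out a separate proof for this lemma, presenting it simply as ``the following analogue of Lemma~\ref{lem:cycl}'' and relying on the reader to transport that argument verbatim. Your write-up in fact supplies more detail than the paper does, by making explicit the cyclic structure maps on the multi-object bar construction before invoking Jones's result and the levelwise nature of geometric realization.
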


In order to obtain a cyclotomic structure, as in
Theorem~\ref{thm:maincyc}, we need to 
arrange for the mapping spectra in $\aC$ to be cofibrant.  Such a
spectral category is called ``pointwise cofibrant''~\cite[2.5]{BM2}.
Following~\cite[2.7]{BM2}, we have a cofibrant replacement functor on
spectral categories with a fixed object set that in particular
produces pointwise cofibrant spectral categories.

\begin{theorem}\label{thm:hmcyc}
Let $\aC$ be a pointwise cofibrant spectral category, then 
$\aI_{\bR^{\infty}}^{U}|N^{\cyc}_{\sma}\aC|$ has a natural structure
of a cyclotomic spectrum.
\end{theorem}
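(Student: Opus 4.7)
The plan is to adapt the proof of Theorem~\ref{thm:maincyc} (and the supporting Theorems~\ref{thm:cyclodiag} and~\ref{thm:levelwisefp}) to the spectral category setting, replacing smash products $R^{\sma(k+1)}$ by wedge decompositions over tuples of objects. First I would analyze the simplicial $C_n$-object $\sd_n N^{\cyc}_{\sma}\aC$: at level $k$ its $n(k+1)$-indexed summands are indexed by tuples $(c_0,\ldots,c_{n(k+1)-1})$ of objects of $\aC$, with $C_n$ acting by cyclic shift by $k+1$ positions. The $C_n$-fixed (as opposed to merely stabilized) orbits of wedge summands correspond bijectively to $(k+1)$-tuples $(c_0,\ldots,c_k)$ via $n$-fold repetition, and on each such fixed orbit the corresponding wedge summand is canonically identified with the norm
\[
N_e^{C_n}\bigl(\aC(c_1,c_0)\sma\aC(c_2,c_1)\sma\cdots\sma\aC(c_0,c_k)\bigr).
\]
Non-fixed orbits contribute wedge summands freely permuted by $C_n$, whose point-set geometric $C_n$-fixed points are trivial.

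Second, for each tuple $(c_0,\ldots,c_k)$ write $X_{c_0,\ldots,c_k}=\aC(c_1,c_0)\sma\cdots\sma\aC(c_0,c_k)$. Since $\aC$ is pointwise cofibrant, each $X_{c_0,\ldots,c_k}$ is a smash product of cofibrant orthogonal spectra and hence cofibrant. Theorem~\ref{thm:normisobasic} then gives that the diagonal
\[
\Delta\colon X_{c_0,\ldots,c_k}\overto{\cong}\Phi^{C_n}N_e^{C_n}X_{c_0,\ldots,c_k}
\]
is an isomorphism. Wedging over all tuples and using that $\Phi^{C_n}$ annihilates the free orbit summands, I obtain a natural isomorphism at each simplicial level $\tau_n^{(k)}\colon (N^{\cyc}_{\sma}\aC)_k\cong \Phi^{C_n}(\sd_n N^{\cyc}_{\sma}\aC)_k$.

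Third, I would verify that these level-wise maps assemble into a map of cyclic orthogonal spectra. Compatibility with $d_0,\ldots,d_{k-1}$ and the degeneracies follows from the monoidality statement in Proposition~\ref{prop:fixsymmon} together with the explicit description of $\Delta$. Compatibility with the last face $d_k$ uses Lemma~\ref{lem:multcompat} exactly as in the ring case of Theorem~\ref{thm:cyclodiag}: the internal cyclic-permutation factor $\aI_{\bR^{\infty}}^{\tilde U}g$ becomes the identity after applying $\Phi^{C_n}$. Geometrically realizing and applying $\aI_{\bR^{\infty}}^{U}$, together with the natural isomorphism $\aI_{\bR^{\infty}}^{U}|\Phi^{C_n}(-)|\cong\rho_n^{*}\Phi^{C_n}\aI_{\bR^{\infty}}^{U}|-|$, produces the precyclotomic structure map $\tau_n\colon \aI_{\bR^{\infty}}^{U}|N^{\cyc}_{\sma}\aC|\to\rho_n^{*}\Phi^{C_n}\aI_{\bR^{\infty}}^{U}|N^{\cyc}_{\sma}\aC|$.

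Finally, to upgrade from a precyclotomic to a cyclotomic structure I need the point-set $\Phi^{C_n}$ on $\aI_{\bR^{\infty}}^{U}|N^{\cyc}_{\sma}\aC|$ to model the derived functor. For this I would prove the analogue of Lemma~\ref{lem:proper}: pointwise cofibrancy of $\aC$ makes $\aI_{\bR^{\infty}}^{\tilde U}\sd_n N^{\cyc}_{\sma}\aC$ a proper simplicial object of $\Spec^{C_n}_{\tilde U}$, since the latching inclusions are wedges of smash products of cofibrations of cofibrant orthogonal spectra (hence $h$-cofibrations). Properness plus Theorems~\ref{thm:normisobasic} and~\ref{thm:geoder} then give, exactly as in the proof of Theorem~\ref{thm:levelwisefp}, that $\Phi^{C_n}$ commutes with the realization and computes the derived $L\Phi^{C_n}$ on each piece of the realization filtration. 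Combined with the level-wise isomorphism $\tau_n^{(k)}$, this proves $\tau_n$ induces an $\aF_{p}$-equivalence from $\rho_n^{*}L\Phi^{C_n}$ to the identity, giving the cyclotomic structure. The main obstacle will be bookkeeping: tracking the interaction between the wedge decomposition, the $C_n$-action by cyclic shift, and the point-set $\Phi^{C_n}$ carefully enough to justify the identification of fixed orbit summands with norms and the vanishing on non-fixed orbits, and then verifying compatibility of $\Delta$ with the cyclic face map $d_k$ in the presence of this bookkeeping.
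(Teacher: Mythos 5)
Your proposal follows essentially the same route as the paper's proof. Both arguments proceed by decomposing the simplicial level $k$ of $\sd_n N^{\cyc}_\sma\aC$ according to the $C_n$-orbit structure of the indexing tuples, identifying the summand indexed by a fixed (i.e., $(k+1)$-periodic) tuple $(c_0,\ldots,c_k,c_0,\ldots)$ with the norm $N_e^{C_n}\bigl(\aC(c_1,c_0)\sma\cdots\sma\aC(c_0,c_k)\bigr)$, applying the diagonal isomorphism of Theorem~\ref{thm:normisobasic} there, and observing that the point-set $\Phi^{C_n}$ annihilates the remaining orbits. The paper packages this slightly differently — it applies $\Delta$ once to the entire small wedge $\bigvee_{c_0,\ldots,c_k}\aC(c_1,c_0)\sma\cdots\sma\aC(c_0,c_k)$ via Theorem~\ref{thm:normisobasic} and then identifies $\Phi^{C_n}N_e^{C_n}$ of this wedge with $\Phi^{C_n}\aI_{\bR^{\infty}}^{\tilde U}$ of the $q$-tuple wedge via the inclusion of the diagonal summands — but the mathematical content is identical, and the remaining steps you sketch (compatibility with the faces via Lemma~\ref{lem:multcompat}, properness, and the comparison of $\Phi^{C_n}$ with its derived functor) are exactly the paper's.

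One small inaccuracy: when $n$ is composite it is not true that the non-fixed orbits of wedge summands are \emph{freely} permuted by $C_n$. A tuple whose minimal period is $d(k+1)$ for $1<d<n$, $d\mid n$, has stabilizer $C_{n/d}$ and hence lies in an orbit of size $d$ rather than $n$. What you actually need — and what is true — is that every non-fixed orbit is induced from a \emph{proper} subgroup $H<C_n$, and the point-set $\Phi^{C_n}$ of a $C_n$-spectrum induced from a proper subgroup vanishes (e.g., $\Phi^{C_n}\bigl((C_n/H)_+\sma F_V A\bigr)=F_{V^{C_n}}\bigl((C_n/H)_+\sma A\bigr)^{C_n}=\ast$). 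For $n=p$ prime, which is what the $p$-cyclotomic structure maps actually use, the fixed-versus-free dichotomy you state is correct.
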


\begin{proof}
Much of this goes through just as in Section~\ref{sec:THH}.  The only
real divergence is that although levelwise
\[
\aI_{\bR^{\infty}}^{\tilde U}\sd_{n}N^{\cyc}_{\sma}\aC
\]
is no longer given as a smash of norms, the diagonal isomorphisms
\begin{multline*}
\bigvee_{c_0, \ldots c_k} \aC(c_{1}, c_0) \sma \aC(c_{2},
c_{1}) \sma \ldots \sma \aC(c_{k},c_{k-1})\sma\aC(c_0, c_k)\\
\to
\Phi^{C_{n}}\aI_{\bR^{\infty}}^{\tilde U}\biggl(
\bigvee_{c_0, \ldots c_{q}} \aC(c_{1}, c_0) \sma \aC(c_{2},
c_{1}) \sma \ldots \sma \aC(c_{q},c_{q-1})\sma\aC(c_0, c_{q})
\biggr)
\end{multline*}
(where $q=n(k+1)-1$)
arise as the composite of the diagonal isomorphism
\begin{multline*}
\bigvee_{c_0, \ldots c_k} \aC(c_{1}, c_0) \sma \aC(c_{2},
c_{1}) \sma \ldots \sma \aC(c_{k},c_{k-1})\sma\aC(c_0, c_k)\\
\to
\Phi^{C_{n}}N_{e}^{C_{n}}\biggl(
\bigvee_{c_0, \ldots c_{k}} \aC(c_{1}, c_0) \sma \aC(c_{2},
c_{1}) \sma \ldots \sma \aC(c_{k},c_{k-1})\sma\aC(c_0, c_{k})
\biggr)
\end{multline*}
and the isomorphism
\begin{multline*}
\Phi^{C_{n}}N_{e}^{C_{n}}\biggl(
\bigvee_{c_0, \ldots c_{k}} \aC(c_{1}, c_0) \sma \aC(c_{2},
c_{1}) \sma \ldots \sma \aC(c_{k},c_{k-1})\sma\aC(c_0, c_{k})
\biggr)\\
\to
\Phi^{C_{n}}\aI_{\bR^{\infty}}^{\tilde U}\biggl(
\bigvee_{c_0, \ldots c_{q}} \aC(c_{1}, c_0) \sma \aC(c_{2},
c_{1}) \sma \ldots \sma \aC(c_{q},c_{q-1})\sma\aC(c_0, c_{q})
\biggr)
\end{multline*}
induced by the inclusion
\begin{multline*}
\biggl(
\bigvee_{c_0, \ldots c_{k}} \aC(c_{1}, c_0) \sma \aC(c_{2},
c_{1}) \sma \ldots \sma \aC(c_{k},c_{k-1})\sma\aC(c_0, c_{k})
\biggr)^{\sma (n)}\\
\to
\bigvee_{c_0, \ldots c_{q}} \aC(c_{1}, c_0) \sma \aC(c_{2},
c_{1}) \sma \ldots \sma \aC(c_{q},c_{q-1})\sma\aC(c_0, c_{q})
\end{multline*}
of the summands where $c_{i(k+1)+j}=c_{j}$ for all $0<i< n$, $0\leq j<k+1$.
\end{proof}

We simplify notation by writing $THH(\aC)$ for the orthogonal
$S^{1}$-spectrum or cyclotomic spectrum
$\aI_{\bR^{\infty}}^{U}|N^{\cyc}_{\sma}\aC|$. 
From this point, the construction of $TR$ and $TC$ proceeds
identically with the case of ring orthogonal spectra.

We now turn to the construction of the cyclotomic trace.  The trace
map is induced from the inclusion of objects map
\[
\ob(\aC) \to |N^{\cyc}_{\sma} \aC|
\]
that takes $x$ to the identity map $x \to x$ in the zero-skeleton of
the cyclic bar construction.  To make use of this for $K$-theory, we
use the Waldhausen construction of $K$-theory as the geometric
realization of the nerve of the multisimplicial spectral category $w\subdot
S^{(n)}\subdot\aC$ and consider the bispectrum $THH(w\subdot
S^{(n)}\subdot \aC)$.  The construction now proceeds in the usual way
(e.g., see~\cite[1.2.5]{BMtw}).

\section{A description of relative $THH$ as the relative $S^1$-norm}\label{sec:relTHH}

In this section, we extend the work of Section~\ref{sec:THH} to the
setting of $A$-algebras for a commutative ring orthogonal spectrum
$A$.
The category of $A$-modules is a symmetric monoidal category 
with respect to $\sma_{A}$, the smash product over $A$.  As explained
in~\cite[\S A.3]{HHR}, the construction of the indexed smash product can
be carried out in the symmetric monoidal category of $A$-modules.  Our
construction of relative $THH$ will use the associated
$A$-relative norm.

We will write $A_G$ to denote the commutative 
ring orthogonal $G$-spectrum obtained by regarding $A$ as having
trivial $G$-action; i.e., $A_G = \aI_{\bR^{\infty}}^{U} A$.
This is a commutative ring orthogonal $G$-spectrum since
$\aI_{\bR^{\infty}}^{U}$ is a 
symmetric monoidal functor.  For example, if $A$ is the sphere
spectrum then $A_G$ is the $G$-equivariant sphere spectrum. 

\begin{warning}
Although $\aI_{\bR^{\infty}}^{U}$ performs the (derived) change of universe on
stable categories for cofibrant orthogonal spectra, and
$\aI_{\bR^{\infty}}^{U}$ has a left derived functor on commutative
ring orthogonal spectra (Proposition~\ref{prop:comringderI} below), the
underlying object in the stable category 
of $A_{G}$ is not the derived change of universe applied to $A$ except
in rare cases like $A=S$; see Example~\ref{ex:Isux} below. 
As a consequence, in the following result the
comparison map between the left derived functor and the left derived
functor of $\aI_{\bR^{\infty}}^{U}\colon \Spec\to \Spec^{G}_{U}$ is not
an isomorphism.
\end{warning}

\begin{proposition}\label{prop:comringderI}
The functor $\aI_{\bR^{\infty}}^{U}\colon \Com\to
\Com^{G}_{U}$ is a Quillen left adjoint.
\end{proposition}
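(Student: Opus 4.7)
The plan is to identify the right adjoint of $\aI_{\bR^\infty}^U\colon \Com\to\Com^G_U$ and then verify the Quillen property by checking that the left adjoint preserves generating cofibrations and generating acyclic cofibrations. First, I would factor the left adjoint as
\[
\Com \xrightarrow{\tau} \Com^G_{\bR^\infty} \xrightarrow{\aI_{\bR^\infty}^U} \Com^G_U,
\]
where $\tau$ equips a commutative ring orthogonal spectrum with the trivial $G$-action and the second factor is the point-set change-of-universe functor, which is a symmetric monoidal equivalence of categories. A $G$-equivariant map out of a trivial-action object factors uniquely through $G$-fixed points, and Proposition~\ref{prop:fixsymmon} lifts $(-)^G$ to a functor on commutative rings; therefore the right adjoint of $\tau$ is $(-)^G\colon \Com^G_{\bR^\infty}\to\Com$, and the right adjoint of the composite is $B\mapsto (\aI_U^{\bR^\infty}B)^G$.

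Next, being a strong symmetric monoidal left adjoint, $\aI_{\bR^\infty}^U$ commutes with the free commutative algebra monad, giving a natural isomorphism $\aI_{\bR^\infty}^U \bP X \iso \bP\, \aI_{\bR^\infty}^U X$. The generating cofibrations and generating acyclic cofibrations of the positive stable model structure on $\Com$ are $\bP$ applied to the positive cells $F_V^+ i$ and $F_V^+ j$ for positive (non-equivariant) inner product spaces $V$ and $i \in I$, $j \in J$. Under $\aI_{\bR^\infty}^U$ these are sent to $\bP(F_V^+ i)$ and $\bP(F_V^+ j)$ in $\Com^G_U$ where $V$ is viewed as a positive trivial $G$-representation contained in $U$. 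These are precisely instances (the $H = G$ case) of the generating cells of the positive complete stable model structure on $\Com^G_U$ described in Theorems~\ref{thm:normstable} and~\ref{thm:normstablemult}. Since $\aI_{\bR^\infty}^U$ preserves colimits, it therefore carries the entire class of cofibrations (respectively acyclic cofibrations) generated from these cells to cofibrations (respectively acyclic cofibrations) in $\Com^G_U$, which establishes the Quillen property.

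The main obstacle I expect is essentially bookkeeping: one must verify carefully that cells built from positive trivial $G$-representations really do lie in the list of generating cofibrations for the positive complete stable model structure on $\Com^G_U$ as spelled out in \cite[\S B.4]{HHR}. The key observation is that for the subgroup $H = G$, the condition of being a positive $G$-representation in $\iota_G^* U = U$ is satisfied by any positive trivial $G$-representation in $\bR^\infty \subset U$, so the image cells are admissible; once this compatibility is confirmed the Quillen adjunction follows formally.
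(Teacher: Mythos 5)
Your argument uses the same factorization as the paper --- the trivial-action inclusion $\Com\to\Com^{G}_{\bR^{\infty}}$ followed by the point-set change-of-universe equivalence $\Com^{G}_{\bR^{\infty}}\to\Com^{G}_{U}$ --- and arrives at the same right adjoint $B\mapsto(\aI_{U}^{\bR^{\infty}}B)^{G}$. Where you diverge is in the verification: the paper simply records that each of the two factors is already known to be a Quillen left adjoint (the trivial-action inclusion is Quillen left adjoint to $(-)^{G}$, and the change of universe from $\bR^\infty$ to $U$ is Quillen), so the composite is as well, whereas you verify the Quillen property of the composite directly by chasing generating cells and using that the strong monoidality of $\aI_{\bR^{\infty}}^{U}$ lets it commute with $\bP$. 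Both routes work, and yours is more self-contained.

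One point in your cell-by-cell verification needs to be shored up. The generating acyclic cofibrations of the positive \emph{stable} model structure on $\Com$ are not exhausted by $\bP(F_{V}j)$ with $j\in J$ and $V$ positive: a stable model structure also requires the extra generating acyclic cofibrations built from the mapping cylinders of the maps $\lambda_{V,W}\colon F_{V\oplus W}S^{W}\to F_{V}S^{0}$ (the pushout-product ``$k$'' maps of \cite[III.4.3]{MM}, as recalled in the paper's statement of the standard stable model structure). For the argument to be complete you would need to check that $\aI_{\bR^{\infty}}^{U}$ carries $\bP$ applied to those cells to acyclic cofibrations in $\Com^{G}_{U}$ as well. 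This is true --- for trivial $V$ and $W$, the maps $\lambda_{V,W}$ remain $\pi_{*}$-isomorphisms over the complete universe $U$ --- but it should be spelled out rather than subsumed into ``$j\in J$.''
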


\begin{proof}
The functor in question is the composite of the inclusion of $\Com$ in
$\Com^{G}_{\bR^{\infty}}$ as the objects with trivial $G$-action
(which is Quillen left adjoint to the $G$-fixed point functor) and the
Quillen left adjoint $\aI_{\bR^{\infty}}^{U}\colon
\Com^{G}_{\bR^{\infty}}\to \Com^{G}_{U}$.  The Quillen right adjoint
is the composite $(-)^G \circ \aI_{U}^{\bR^{\infty}}$.
\end{proof}

\begin{example}\label{ex:Isux}
For $X$ a non-equivariant positive cofibrant orthogonal spectrum, $\bP
X$ is a cofibrant commutative ring orthogonal spectrum.  We have that $\aI_{\bR^{\infty}}^{U}\bP X=\bP\aI_{\bR^{\infty}}^{U} X$,
whose underlying object in the equivariant stable category is
isomorphic to $\bigvee
E_{G}\Sigma_{n+}\sma_{\Sigma_{n}}\aI_{\bR^\infty}^{U} X^{\sma n}$ by
\cite[III.8.4]{MM}, \cite[B.117]{HHR}.  On the other hand, the
underlying object of $\bP X$ in the non-equivariant stable category is
isomorphic to $\bigvee E\Sigma_{n+}\sma_{\Sigma_{n}}X^{\sma n}$, which
the derived functor on stable categories takes to $\bigvee
E\Sigma_{n+}\sma_{\Sigma_{n}}\aI_{\bR^{\infty}}^{U}X^{\sma n}$. In
general, the commutative ring derived functor is related to the stable
category derived functor by change of operads along $E\Sigma_{*}\to
E_{G}\Sigma_{*}$, cf.~\cite{BlumbergHill}.
\end{example}

For an $A$-algebra $R$, let $N^{\cyc}_{\sma_A} R$ denote the
cyclic bar construction with respect to the smash product over $A$.
The same proof as Lemma~\ref{lem:cycl} implies the following.

\begin{lemma}
Let $R$ be an object in $A\Alg$.  Then the geometric realization of
the cyclic bar construction $|N^{\cyc}_{\sma_A} R|$ is naturally an
object in $A\Mod^{S^1}_{\bR^{\infty}}$.
\end{lemma}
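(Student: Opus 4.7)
The proof will follow exactly the same pattern as Lemma~\ref{lem:cycl}, now carried out inside the symmetric monoidal category $(A\Mod,\sma_{A},A)$ instead of $(\Spec,\sma,S)$. The plan is to (i) exhibit $N^{\cyc}_{\sma_{A}}R$ as a cyclic object in $A\Mod$, (ii) observe that its geometric realization inherits an $A$-module structure, (iii) invoke the general fact that geometric realizations of cyclic objects in a topologically enriched category carry a natural $S^{1}$-action, and (iv) check that these two structures are compatible in such a way that the result lands in $A\Mod^{S^{1}}_{\bR^{\infty}}$.

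For (i), the simplicial object has $k$-simplices $R^{\sma_{A}(k+1)}$ with the usual cyclic structure maps induced from the $A$-algebra multiplication, unit and the cyclic permutation; each face, degeneracy and cyclic operator is a map of $A$-modules because $\sma_{A}$ is symmetric monoidal and the multiplication $R\sma_{A}R\to R$ is $A$-bilinear. For (ii), geometric realization of a simplicial $A$-module is computed levelwise in orthogonal spectra, and since $\sma_{A}$ commutes with the relevant colimits in each variable, $|N^{\cyc}_{\sma_{A}}R|$ is naturally an $A$-module; equivalently, the $A$-module structure on each $R^{\sma_{A}(k+1)}$ assembles into one on the realization. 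For (iii), the standard argument giving the $S^{1}$-action on the realization of a cyclic space (see \cite{JonesCyclic}) is expressed entirely in terms of maps in the topologically enriched base category and applies verbatim, both levelwise for orthogonal spectra and, by continuous naturality in the $A$-enriched setting, for $A$-modules.

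The only point requiring a separate remark---and, as far as there is one, the main thing to verify---is that the resulting $S^{1}$-action on $|N^{\cyc}_{\sma_{A}}R|$ consists of maps of $A$-modules, so that the object really lives in $A\Mod^{S^{1}}_{\bR^{\infty}}$ (i.e.\ in the category of $A$-modules in orthogonal $S^{1}$-spectra with $A$ given its trivial $S^{1}$-action) rather than merely in orthogonal $S^{1}$-spectra with an $A$-action. This is automatic: the $S^{1}$-action is assembled from maps built out of the cyclic operators and unit interval parameter, all of which are morphisms in $A\Mod$, so the unit $\eta\colon A\to |N^{\cyc}_{\sma_{A}}R|$ and the action $A\sma |N^{\cyc}_{\sma_{A}}R|\to |N^{\cyc}_{\sma_{A}}R|$ are strictly $S^{1}$-equivariant when $A$ is given trivial action. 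Finally, as in the proof of Lemma~\ref{lem:cycl}, we identify orthogonal $S^{1}$-spectra indexed on $\bR^{\infty}$ with orthogonal spectra equipped with an $S^{1}$-action (Section~\ref{sec:pointset}), so the same identification internal to $A\Mod$ gives the desired object of $A\Mod^{S^{1}}_{\bR^{\infty}}$.
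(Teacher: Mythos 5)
Your proof is correct and follows essentially the same path the paper takes: the paper simply remarks that ``the same proof as Lemma~\ref{lem:cycl}'' applies, i.e.\ the Jones $S^1$-action on the realization of a cyclic object, continuous/enriched naturality levelwise, and the identification of $\Spec^{S^1}_{\bR^\infty}$ with orthogonal spectra equipped with an $S^1$-action. Your additional spelling-out of the compatibility of the $A$-module structure with the $S^1$-action is exactly what ``the same proof'' carries over to the $A$-enriched setting, so no new idea is needed.
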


Using the point-set change of universe functors we can turn this into
an orthogonal $S^{1}$-spectrum indexed on the complete universe $U$.

\begin{definition}
Let $R$ be a ring orthogonal spectrum.  Define the functor
\[
\AN_e^{S^1} \colon A\Alg \to A\Mod^{S^{1}}_{U}
\]
as the composite
\[
\AN_e^{S^1} R = \aI_{\bR^{\infty}}^{U} |N^{\cyc}_{\sma_A} R|.
\]
\end{definition}

The argument for Proposition~\ref{prop:commuT} also proves the
following relative version.

\begin{proposition}\label{prop:relcommuT}
The restriction of $\AN_e^{S^1}$ to commutative $A$-algebras lifts to
a functor
\[
\AN_e^{S^1} \colon A\CAlg \to A_{S^1}\CAlg^{S^{1}}_{U}
\]
that is left adjoint to the forgetful functor 
\[
\iota^{*}\colon  A_{S^1}\CAlg^{S^{1}}_{U} \to  A\CAlg
\]
\end{proposition}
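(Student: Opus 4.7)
The plan is to mimic the argument for Proposition~\ref{prop:commuT} in the relative setting, factoring the functor as the composition of two left adjoints: first an identification with the tensor in commutative $A$-algebras with $S^{1}$-action, and then the change of universe.

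First, I would refine $|N^{\cyc}_{\sma_{A}}(-)|$ to a functor $A\CAlg \to A\CAlg^{S^{1}}_{\bR^{\infty}}$ (where the latter denotes commutative $A$-algebras in $S^{1}$-spectra on the trivial universe, equivalently, commutative $A$-algebras equipped with an $S^{1}$-action through $A$-algebra maps). For this, observe that geometric realization is strong symmetric monoidal with respect to $\sma_{A}$ on simplicial $A$-modules, via the natural isomorphism $|X\subdot| \sma_{A} |Y\subdot| \iso |X\subdot \sma_{A} Y\subdot|$, and that this isomorphism is $S^{1}$-equivariant when $X\subdot$ and $Y\subdot$ are cyclic objects, exactly as in the absolute case. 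Hence $|N^{\cyc}_{\sma_{A}} R|$ acquires the structure of a commutative $A$-algebra with $S^{1}$-action, functorially in $R \in A\CAlg$.

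Next, I would identify $|N^{\cyc}_{\sma_{A}} R|$ with the tensor $R \otimes_{A} S^{1}$ taken in $A\CAlg$. Writing $\bP_{A}$ for the free commutative $A$-algebra monad on $A\Mod$, the symmetric monoidality of realization gives $|N^{\cyc}_{\sma_{A}} \bP_{A} X| \iso \bP_{A}(X \sma S^{1}_{+})$ for $X \in A\Mod$ (the basepoint in $S^{1}_{+}$ accounts for the units in the cyclic bar construction). For a general commutative $A$-algebra $R$, the canonical reflexive coequalizer $\bP_{A}\bP_{A} R \rightrightarrows \bP_{A} R \to R$ then identifies $|N^{\cyc}_{\sma_{A}} R|$ as the reflexive coequalizer computing the tensor $R \otimes_{A} S^{1}$ in $A\CAlg$. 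Standard formal nonsense shows this tensor is the left adjoint to the forgetful functor $A\CAlg^{S^{1}}_{\bR^{\infty}} \to A\CAlg$.

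Finally, I would apply the point-set change of universe $\aI_{\bR^{\infty}}^{U}$. Since $\aI_{\bR^{\infty}}^{U}$ is strong symmetric monoidal, it sends commutative $A$-algebras with $S^{1}$-action to commutative $A_{S^{1}}$-algebras in $\Spec^{S^{1}}_{U}$, and by \cite[V.1.5]{MM} it is left adjoint to $\aI_{U}^{\bR^{\infty}}$. Composing the two adjunctions yields $\AN_{e}^{S^{1}}$ as the left adjoint to the composite right adjoint $A_{S^{1}}\CAlg^{S^{1}}_{U} \to A\CAlg^{S^{1}}_{\bR^{\infty}} \to A\CAlg$, which is exactly the forgetful functor appearing in the statement. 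The main obstacle I anticipate is bookkeeping: checking that the commutative $A$-algebra structure produced by the symmetric monoidal realization really agrees with the structure induced by the tensor in $A\CAlg$ (so that the coequalizer identification is natural), and keeping track of how ``$A$ with trivial action'' passes to $A_{S^{1}}$ under $\aI_{\bR^{\infty}}^{U}$ in a way compatible with the module structures on both sides.
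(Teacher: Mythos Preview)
Your proposal is correct and follows essentially the same approach as the paper. The paper's proof of this proposition is simply the one-line remark that ``the argument for Proposition~\ref{prop:commuT} also proves the following relative version,'' and your write-up is precisely the relative adaptation of that argument: strong symmetric monoidality of geometric realization over $\sma_{A}$, identification of $|N^{\cyc}_{\sma_{A}} R|$ with the tensor $R\otimes_{A} S^{1}$ via the free/coequalizer presentation, and then change of universe.
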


We now make a non-equivariant observation about relative $THH$
(ignoring the group action temporarily) that informs our description of
the equivariant structure.  Similar theorems have appeared previously
in the literature, e.g., \cite[\S 5]{McCarthyMinasian}.

\begin{lemma}\label{lem:nonequirel}
Let $R$ be an $A$-algebra in orthogonal spectra.  Then there is an
isomorphism
\[
\ATHH[S](R) \sma_{\ATHH[S](A)} A \cong \ATHH(R).
\]
\end{lemma}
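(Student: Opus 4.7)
The plan is to establish the claimed isomorphism at the simplicial level and then pass to geometric realization. Since $\ATHH[S](R)=|N^{\cyc}_{\sma} R|$ and $\ATHH(R)=|N^{\cyc}_{\sma_A} R|$ are the realizations of cyclic simplicial orthogonal spectra, I would construct a natural isomorphism of simplicial orthogonal spectra
\[
N^{\cyc}_{\sma} R \sma_{N^{\cyc}_{\sma} A} A \iso N^{\cyc}_{\sma_A} R,
\]
where the simplicial commutative ring orthogonal spectrum $N^{\cyc}_{\sma} A$ acts on $N^{\cyc}_{\sma} R$ through the $A$-algebra structure map $A\to R$, and acts on the constant simplicial object $A$ through an augmentation to be constructed.

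First I would check that, because $A$ is commutative, the iterated multiplication maps $\mu_k\colon A^{\sma(k+1)}\to A$ assemble into a map $\epsilon\colon N^{\cyc}_{\sma} A \to A$ of simplicial commutative ring orthogonal spectra (viewing $A$ as a constant simplicial object). Compatibility of $\epsilon$ with degeneracies and with the inner face maps $d_i$ for $i<k$ is immediate from unitality and associativity; compatibility with the cyclic face $d_k$, which rewrites $(a_0,\ldots,a_k)$ as $(a_k a_0, a_1,\ldots,a_{k-1})$, additionally uses the commutativity of $A$.

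Next, at each simplicial level I would exhibit an isomorphism
\[
\phi_k\colon R^{\sma(k+1)}\sma_{A^{\sma(k+1)}}A\iso R^{\sma_A(k+1)},
\]
with $A^{\sma(k+1)}$ acting factorwise on $R^{\sma(k+1)}$ and through $\mu_k$ on $A$. The map $\phi_k$ sends the class of $r_0\sma\cdots\sma r_k \sma_{A^{\sma(k+1)}} 1$ to $r_0\sma_A\cdots\sma_A r_k$; the defining relation $r_i a\sma r_{i+1}\sim r_i\sma a r_{i+1}$ of $\sma_A$ arises from the $A^{\sma(k+1)}$-module relation by taking $a_i=a$ and $a_j=1$ for $j\neq i$ and observing that $\mu_k$ of this tuple equals $a$, while the inverse is evident. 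I would then verify that $\phi_\bullet$ is compatible with faces and degeneracies: the inner faces correspond to multiplication of adjacent $R$-factors on both sides, the degeneracies to unit insertions, and the cyclic face $d_k$ is reconciled precisely via the compatibility of $\epsilon$ with $d_k$ established in the previous step.

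Finally, geometric realization of simplicial orthogonal spectra commutes with smash products (computed levelwise) and with the coequalizer defining the relative smash product over a simplicial ring spectrum, so realization of the simplicial isomorphism yields
\[
|N^{\cyc}_{\sma} R|\sma_{|N^{\cyc}_{\sma} A|}A \iso |N^{\cyc}_{\sma_A} R|,
\]
which is the claimed isomorphism. The main obstacle is the bookkeeping in verifying that $\phi_\bullet$ intertwines the cyclic face $d_k$, since this map involves both a cyclic permutation of factors and a multiplication, and one must track carefully how the commutativity of $A$ enters in the base change through $\epsilon$; no conceptual difficulty arises beyond this.
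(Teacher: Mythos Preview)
Your proposal is correct and follows essentially the same approach as the paper: commute the relative smash product with geometric realization to reduce to the levelwise isomorphism $R^{\sma(k+1)}\sma_{A^{\sma(k+1)}}A\cong R^{\sma_A(k+1)}$, which the paper simply declares a ``straightforward calculation.'' You have supplied the details that the paper omits, including the construction of the augmentation $\epsilon$ and the verification of simplicial compatibility.
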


\begin{proof}
Commuting the smash product with geometric
realization reduces the lemma to verifying the formula
\[
(R \sma R \sma \ldots \sma R) \sma_{A \sma A \sma \ldots \sma A} A \cong
R \sma_A R \sma_A \ldots \sma_A R,
\]
which is a straightforward calculation. 
\end{proof}

We now generalize Lemma~\ref{lem:nonequirel} to take advantage of the
equivariant structure. 

\begin{proposition}\label{prop:basechange}
Let $G$ be a finite group.  Let $A$ be a commutative ring orthogonal spectrum and
$M$ an $A$-module.  The $A$-relative norm is obtained by base-change from the
usual norm:
\[
\AN_e^G M \cong N_e^G M \sma_{N_e^G A} A_G
\]
\end{proposition}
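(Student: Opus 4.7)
The plan is to prove the isomorphism first as an isomorphism of orthogonal spectra with $G$-action (i.e., in $\Spec^{\aB G}$), using the general base-change identification of the relative smash product, and then transport across the change-of-universe functor $\aI_{\bR^{\infty}}^{U}$.

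First I would establish the general extension-of-scalars identification: for any commutative ring orthogonal spectrum $A$ and $A$-modules $M_1,\dotsc,M_n$, there is a natural isomorphism
\[
M_1\sma_A\cdots\sma_A M_n\iso (M_1\sma\cdots\sma M_n)\sma_{A^{\sma n}} A,
\]
where $A^{\sma n}$ acts on the source by the factor-wise module structure and on the target $A$ via the iterated multiplication $\mu\colon A^{\sma n}\to A$. Commutativity of $A$ is exactly what is needed for $\mu$ to be an algebra map, so that the base change $(-)\sma_{A^{\sma n}} A$ makes sense.  The identification follows by a Yoneda argument: maps from either side into an $A$-module are naturally in bijection with $n$-linear maps out of $M_1\sma\cdots\sma M_n$.

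Next, specializing to $M_i=M$ with $n=|G|$ and using the ordering of $G$ from Definition~\ref{def:indexedsmash}, I would promote this to a $G$-equivariant identification
\[
M^{\sma_A G} \iso M^{\sma G} \sma_{A^{\sma G}} A
\]
in $\Spec^{\aB G}$. The $G$-actions on both $M^{\sma G}$ and $A^{\sma G}$ are by permutation of factors via the homomorphism $\alpha$, while the target $A$ carries the trivial $G$-action; the key observation is that because $A$ is commutative, $\mu\colon A^{\sma G}\to A$ is invariant under permutations of its inputs and is therefore $G$-equivariant for the trivial action on $A$. Consequently the coequalizer presentation of $\sma_{A^{\sma G}}$ is one in $\Spec^{\aB G}$, and the identification lifts.

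Finally, apply $\aI_{\bR^{\infty}}^{U}$, which is strong symmetric monoidal and a left adjoint (hence preserves the coequalizer defining the relative smash product). This yields
\[
\aI_{\bR^{\infty}}^{U}(M^{\sma_A G}) \iso \aI_{\bR^{\infty}}^{U}(M^{\sma G}) \sma_{\aI_{\bR^{\infty}}^{U}(A^{\sma G})} \aI_{\bR^{\infty}}^{U}(A),
\]
which by the definitions of $\AN_e^G$, $N_e^G$, and $A_G$ is exactly the claimed isomorphism, naturally as $A_G$-modules (the $A_G$-module structure on the right coming from the $N_e^G A$-algebra structure on $A_G$ induced by $\mu$). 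The only nontrivial input is commutativity of $A$, used both to make $\mu$ an algebra map and to make it $G$-equivariant; every remaining step is formal categorical bookkeeping, so there is no substantive obstacle.
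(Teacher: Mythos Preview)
Your proof is correct and arrives at the same conclusion as the paper's, though the organization differs slightly. The paper first constructs a canonical comparison map $N_e^G M \sma_{N_e^G A} A_G \to \AN_e^G M$ in $\Spec^G_U$ using that $N_e^G(-) \to \AN_e^G(-)$ is a monoidal natural transformation, and then observes that this map is an isomorphism because it is one after forgetting the equivariance (invoking the non-equivariant base-change identity, essentially your first step and Lemma~\ref{lem:nonequirel}). You instead establish the isomorphism directly in $\Spec^{\aB G}$ by checking that the coequalizer diagram is $G$-equivariant, and then transport via $\aI_{\bR^{\infty}}^{U}$. Your approach is more explicit about why the equivariance holds (the commutativity of $A$ making $\mu\colon A^{\sma G}\to A$ permutation-invariant), while the paper's is more conceptual in packaging this as a monoidal natural transformation; both rest on the same underlying non-equivariant fact.
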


\begin{proof}
Since $M$ is an $A$-module, we know that $N_e^G M$ is an $N_e^G
A$-module (in the category $\Spec^{G}_{U}$), using the fact that the norm
is a symmetric monoidal functor~\cite[A.53]{HHR}.  The right hand side
is the extension of scalars along the canonical map $N_e^G A \to A_G$
obtained as the adjoint of the natural (non-equivariant) map $A \to
A_G$. 
Because the map $N_{e}^{G}(-)\to \AN_{e}^{G}(-)$ is a monoidal natural
transformation, we obtain a canonical map
from $N_e^G M \sma_{N_e^G A} A_G$ to $\AN_e^G M$; this map is an
isomorphism because it is clearly an isomorphism after forgetting the
equivariance.
\end{proof}

Extending this to $S^1$, if $R$ is an $A$-algebra we have the
following characterization of relative $THH$ as an $S^1$-spectrum
that follows by essentially the same argument.

\begin{proposition}\label{prop:extscal}
Let $R$ be an $A$-algebra in orthogonal spectra.  Then we have an
isomorphism
\[
\AN_e^{S^1} R \cong N_e^{S^1} R \sma_{N_e^{S^1} A} A_{S^1} 
\]
\end{proposition}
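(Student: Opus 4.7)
The plan is to mimic the argument of Proposition~\ref{prop:basechange}, but carried out at the level of cyclic bar constructions before realization and change of universe. Both sides of the asserted isomorphism are built as composites of three operations: form a cyclic orthogonal spectrum, geometrically realize, then apply $\aI_{\bR^\infty}^U$. Since each of these is either a colimit or a strong symmetric monoidal equivalence, a level-wise identification of cyclic objects will propagate to an isomorphism of $A_{S^1}$-modules on the complete universe.

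First I would work at the simplicial level. For each $k$, the natural map
\[
R^{\sma(k+1)} \sma_{A^{\sma(k+1)}} A \to R^{\sma_A (k+1)},
\]
induced by the fact that $A^{\sma(k+1)} \to A$ (via iterated multiplication) equips $A$ with an $A^{\sma(k+1)}$-module structure that is compatible with the $A$-module structure on each smash factor of $R$, is an isomorphism of orthogonal spectra (one reduces, by iterating the two-fold case $R \sma R \sma_{A\sma A} A \iso R \sma_A R$, which expresses both sides as the same coequalizer). Viewing $A$ as the constant cyclic object $c\supdot A$ with the unit $A^{\sma(k+1)} \to A$ arising degeneratedly, one checks that these isomorphisms are compatible with the face, degeneracy, and cyclic operators, so they assemble into an isomorphism
\[
N^{\cyc}_{\sma} R \sma_{N^{\cyc}_{\sma} A} c\supdot A \overto{\iso} N^{\cyc}_{\sma_A} R
\]
of cyclic objects in orthogonal spectra.

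Next I would take geometric realization. Since $|\cdot|$ commutes with colimits and is strong symmetric monoidal, it commutes with relative smash products, so the above descends to an isomorphism
\[
|N^{\cyc}_{\sma} R| \sma_{|N^{\cyc}_{\sma} A|} A \overto{\iso} |N^{\cyc}_{\sma_A} R|
\]
in $A\Mod^{S^1}_{\bR^\infty}$, where the $A$ on the left is the realization of the constant cyclic object and hence carries the trivial $S^1$-action. Finally, applying the strong symmetric monoidal functor $\aI_{\bR^\infty}^U$ (together with its compatibility with relative smash products) and unwinding the definitions $N_e^{S^1}(-) = \aI_{\bR^\infty}^U|N^{\cyc}_{\sma}(-)|$, $\AN_e^{S^1}(-)= \aI_{\bR^\infty}^U|N^{\cyc}_{\sma_A}(-)|$, and $A_{S^1} = \aI_{\bR^\infty}^U A$ yields the desired isomorphism.

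The main obstacle is the bookkeeping in the first step: verifying that the level-wise isomorphism is actually a map of cyclic objects, which requires tracking how the cyclic operator $t_k$ on $R^{\sma_A(k+1)}$ (using the bimodule twist implicit in the $A$-relative smash) matches the one on $R^{\sma(k+1)} \sma_{A^{\sma(k+1)}} A$. As in Proposition~\ref{prop:basechange}, once a natural map has been constructed from the universal property of the quotient, one can check it is an isomorphism after forgetting all the equivariant structure, which reduces to the non-equivariant identity of Lemma~\ref{lem:nonequirel}.
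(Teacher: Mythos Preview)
Your proposal is correct and takes essentially the same approach as the paper, which simply notes that the result ``follows by essentially the same argument'' as Proposition~\ref{prop:basechange}: construct the canonical map from the monoidal natural transformation $N_e^{S^1}(-)\to \AN_e^{S^1}(-)$ and then check it is an isomorphism after forgetting the equivariance, reducing to Lemma~\ref{lem:nonequirel}. Your version is slightly more explicit in unpacking this at the level of cyclic objects before realization, but the content is the same.
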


We now turn to the homotopical analysis of $\AN_e^{S^1}$.  The
following theorem asserts that
the left derived functor of $\AN_{e}^{S^{1}}$ exists.

\begin{theorem}\label{thm:relTder}
Let $R \to R'$ be a weak equivalence of cofibrant 
$A$-algebras.  Then 
the induced map $\AN_e^{S^1} R \to \AN_e^{S^1} R'$ is an
$\aFin$-equivalence. 
\end{theorem}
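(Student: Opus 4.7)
The plan is to verify the $\aFin$-equivalence by checking that the map induces isomorphisms on $\pi^{C_n}_*$ for every finite subgroup $C_n < S^1$. Fix such a $C_n$, set $\tilde U = \iota^*_{C_n} U$, and carry out an edgewise subdivision analysis of $\iota^*_{C_n} \AN_e^{S^1} R$ analogous to Theorem~\ref{thm:sdcyc}, but performed in the symmetric monoidal category of $A$-modules. This should yield a natural isomorphism of orthogonal $C_n$-spectra
\[
\iota^*_{C_n} \AN_e^{S^1}(R) \cong \aI_{\bR^{\infty}}^{\tilde U} |\sd_n N^{\cyc}_{\sma_A} R|,
\]
whose $k$-simplices, after change of universe, are identified with $\AN_e^{C_n}(R^{\sma_A (k+1)})$ equipped with the block-permutation $C_n$-action, and similarly for $R'$.

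At each simplicial level, the map in question is then $\AN_e^{C_n}$ applied to the induced map on iterated $A$-relative smash powers. Since $R$ and $R'$ are cofibrant $A$-algebras (and $A$ is a cofibrant commutative ring orthogonal spectrum, as required for the $A$-relative constructions in Section~\ref{sec:relTHH}), their underlying $A$-modules are cofibrant, and iterated smash products over $A$ preserve both cofibrancy and weak equivalences between cofibrant $A$-modules; hence $R^{\sma_A(k+1)} \to R^{\prime\,\sma_A(k+1)}$ is a weak equivalence between cofibrant $A$-modules at each level. Invoking the $A$-relative version of Theorem~\ref{thm:derivenorm}, the resulting map after applying $\AN_e^{C_n}$ is a $C_n$-equivariant weak equivalence at each simplicial level.

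To promote levelwise $C_n$-equivalences to a $C_n$-equivalence of geometric realizations, I will establish the relative analogue of Lemma~\ref{lem:proper}: the simplicial $C_n$-spectra $\aI_{\bR^{\infty}}^{\tilde U} \sd_n N^{\cyc}_{\sma_A} R$ are proper. The non-relative argument of~\cite[VII.7.5]{EKMM} adapts to the category of $A$-modules to show that the latching maps of $N^{\cyc}_{\sma_A} R$ are $h$-cofibrations when $R$ is a cofibrant $A$-algebra, and both edgewise subdivision and the change-of-universe functor $\aI_{\bR^{\infty}}^{\tilde U}$ (as a topological left adjoint) preserve $h$-cofibrations and homotopies. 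Geometric realization then converts levelwise $C_n$-equivalences of proper simplicial $C_n$-spectra into $C_n$-equivalences, giving the desired isomorphism on $\pi^{C_n}_*$.

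The main obstacle is the bookkeeping required to transport the cofibrancy statements faithfully through the relative setting: one must verify that iterated $A$-relative smash powers of cofibrant $A$-algebras give cofibrant enough objects to invoke Theorem~\ref{thm:derivenorm}, and to produce the $h$-cofibration latching maps needed in the $A_{C_n}$-module category indexed on $\tilde U$. This is exactly where the cofibrancy hypothesis on $R$ and $R'$ (and, implicitly, on $A$) is consumed; up to these technical adaptations, the argument directly parallels the proof of Theorem~\ref{thm:Tder}.
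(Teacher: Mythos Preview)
Your approach is correct and follows the natural strategy of paralleling the absolute case (Theorem~\ref{thm:Tder}): restrict to each $C_n$, edgewise subdivide, identify the levels as $A$-relative norms, invoke Theorem~\ref{thm:derivenorm} for the finite relative norm levelwise, and pass to geometric realization via properness. The bookkeeping you flag (cofibrancy of iterated $\sma_A$-powers, $h$-cofibrancy of latching maps in the $C_n$-equivariant $A$-module category) is genuine but routine.

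The paper, however, takes a different route. Rather than arguing levelwise, it deduces Theorem~\ref{thm:relTder} from the stronger Theorem~\ref{thm:relsmader}: using the base-change identification $\AN_e^{S^1} R \cong N_e^{S^1} R \sma_{N_e^{S^1} A} A_{S^1}$ of Proposition~\ref{prop:extscal}, it shows that for cofibrant $R$ this point-set smash product already computes the \emph{derived} smash product in the $\aFin$-model structure. The proof of Theorem~\ref{thm:relsmader} compares both sides to bar constructions and uses flatness of $\iota^*_{C_n} N_e^{S^1} A$ and $\iota^*_{C_n} N_e^{S^1} R$ (via \cite[B.15, B.147]{HHR}) rather than cofibrancy of relative norms. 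Once Theorem~\ref{thm:relsmader} is in hand, Theorem~\ref{thm:relTder} follows immediately from the absolute case (Theorem~\ref{thm:Tder}) and homotopy invariance of derived smash products.

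Your argument is more direct and more obviously parallel to the absolute case; the paper's argument is less direct but yields the derived-smash-product identification of Proposition~\ref{prop:introsmader} as a byproduct, and this is what is actually needed later for the K\"unneth spectral sequences in Section~\ref{sec:spectralsequences} and for Proposition~\ref{prop:relTdercompder}. One small caution: you cite the $A$-module case of Theorem~\ref{thm:derivenorm}, but the paper itself defers that case to Section~\ref{sec:relTHH}; to avoid any appearance of circularity you should observe that the finite-group statement follows directly from Proposition~\ref{prop:basechange} together with the absolute HHR result, independently of Theorem~\ref{thm:relTder}.
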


To prove this theorem, it suffices to prove the following theorem,
which in particular implies Proposition~\ref{prop:introsmader}.

\begin{theorem}\label{thm:relsmader}
Let $R$ be a cofibrant $A$-algebra.  The smash product
$N_{e}^{S^{1}}R\sma_{N_{e}^{S^{1}}A}A_{S^{1}}$ represents the derived
smash product in the $\aFin$-model structure.
\end{theorem}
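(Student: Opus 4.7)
The plan is to combine the isomorphism $\AN_e^{S^1}R \iso N_e^{S^1}R \sma_{N_e^{S^1}A} A_{S^1}$ from Proposition~\ref{prop:extscal} with a cofibrancy argument showing that the point-set smash product on the right is already homotopical in the $\aFin$-model structure. Since the forgetful functor from $N_e^{S^1}A$-modules to $\Spec^{S^1}_U$ creates $\aFin$-equivalences and fibrations, the lifted $\aFin$-model structure on $N_e^{S^1}A\nhyphen\aM{od}^{S^1}_U$ exists and $(-)\sma_{N_e^{S^1}A}A_{S^1}$ will preserve $\aFin$-equivalences as soon as $N_e^{S^1}R$ is cofibrant (or at least flat) as an $N_e^{S^1}A$-module. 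So the whole task is to establish this cofibrancy for $R$ a cofibrant $A$-algebra.

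The approach I would take is a cellular induction along a cell decomposition of $R$ as a cofibrant $A$-algebra. The base case $R=A$ is trivial, since then $N_e^{S^1}R=N_e^{S^1}A$ is the unit in $N_e^{S^1}A$-modules. For the inductive step, a pushout of $A$-algebras along a generating cofibration $T_A(F_VS^{n-1}_+)\to T_A(F_VD^n_+)$ translates, via the cyclic bar construction $N^{\cyc}_{\sma_A}(-)$, into a proper simplicial object in $N^{\cyc}_\sma A$-modules, each of whose simplicial levels $R^{\sma_A(k+1)}$ decomposes as an $A^{\sma(k+1)}$-module built from free modules on smash powers of the attaching cell. Applying $\aI_{\bR^\infty}^U$ (which preserves $h$-cofibrations, pushouts, and geometric realizations as a topological left adjoint) and taking geometric realization then produces a filtration of $N_e^{S^1}R$ by cell attachments of $N_e^{S^1}A$-modules whose cells are of the form $N_e^{S^1}A\sma i$ for $i$ a cofibration in the $\aFin$-model structure on $\Spec^{S^1}_U$. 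The properness established along the lines of Lemma~\ref{lem:proper} ensures that geometric realization carries the levelwise cofibrancy into $N_e^{S^1}A$-module cofibrancy.

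The main obstacle is the analysis of how the cyclic bar construction $N^{\cyc}_{\sma_A}(T_A M)$ behaves on free $A$-algebras and on pushouts of free cells into an existing cofibrant $A$-algebra. The smash powers $(T_A M)^{\sma_A(k+1)}$ decompose into a sum of ``necklace'' summands indexed by cyclic words in $M$ over $A$, but this decomposition must be tracked through the cyclic face and degeneracy structure to exhibit a proper simplicial filtration of $N^{\cyc}_{\sma_A}R$ over $N^{\cyc}_\sma A$ whose associated graded consists of free $N^{\cyc}_\sma A$-modules on cofibrations of orthogonal spectra. Once this bookkeeping is in place, applying $\aI_{\bR^\infty}^U$ and invoking the fact that smashing with $A_{S^1}$ over $N_e^{S^1}A$ commutes with all the relevant colimits reduces the verification to the assertion that $\aI_{\bR^\infty}^U$ of a cofibration of free $N^{\cyc}_\sma A$-modules on a cofibration of orthogonal spectra is a cofibration of $N_e^{S^1}A$-modules in the $\aFin$-model structure — which is standard. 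With cofibrancy of $N_e^{S^1}R$ as an $N_e^{S^1}A$-module established, the point-set smash product with $A_{S^1}$ computes the derived smash product, proving the theorem.
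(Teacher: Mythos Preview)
Your strategy is genuinely different from the paper's. The paper does \emph{not} attempt to show that $N_e^{S^1}R$ is cofibrant as an $N_e^{S^1}A$-module. Instead it chooses a cofibrant $N_e^{S^1}A$-module approximation $N\to N_e^{S^1}R$ and compares both sides to two-sided bar constructions via the square
\[
\xymatrix{
B(N,N_e^{S^1}A,A_{S^1})\ar[r]\ar[d] & N\sma_{N_e^{S^1}A}A_{S^1}\ar[d]\\
B(N_e^{S^1}R,N_e^{S^1}A,A_{S^1})\ar[r] & N_e^{S^1}R\sma_{N_e^{S^1}A}A_{S^1},
}
\]
showing the other three maps are $\aFin$-equivalences after restriction to each $C_n$. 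The key input is that $\iota^*_{C_n}N_e^{S^1}A$ and $\iota^*_{C_n}N_e^{S^1}R$ are \emph{flat} in the sense of \cite[B.15]{HHR}, as sequential colimits of pushouts along $h$-cofibrations of flat objects (using Theorem~\ref{thm:sdcyc} and \cite[B.147]{HHR}). The bottom map requires a further simplicial argument: via Theorem~\ref{thm:sdcyc} it becomes a map of geometric realizations that is a levelwise weak equivalence because $(N_e^{C_n}R)^{\sma k}\sma{}^gN_e^{C_n}R$ is flat as a module over $(N_e^{C_n}A)^{\sma k}\sma{}^gN_e^{C_n}A$. Flatness is much weaker than module cofibrancy and is available directly from existing results; this is what the bar-construction route buys.

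Your proposal has a real gap at the point you label ``standard.'' You claim that a cellular filtration of $R$ as an $A$-algebra yields, after cyclic bar construction and $\aI_{\bR^\infty}^U$, a filtration of $N_e^{S^1}R$ by attachments of the form $N_e^{S^1}A\sma i$ with $i$ a cofibration in $\Spec^{S^1,\aFin}_U$. But the necklace pieces in $N^{\cyc}_{\sma_A}(T_A M)$ carry the $S^1$-action coming from the cyclic structure---e.g., the weight-$n$ summand involves $M^{\sma_A n}$ with $C_n$ acting by cyclic permutation, induced up---and there is no mechanism in your sketch that identifies these as \emph{free} $N_e^{S^1}A$-modules on genuine $\aFin$-cofibrations (whose generating cells involve $(S^1/H)_+$ and $V\in\aV^{S^1}(U)$). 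Furthermore, $N_e^{S^1}(-)$ does not preserve pushouts of associative $A$-algebras, so the inductive step has no direct formulation; one must instead produce an explicit $N_e^{S^1}A$-module filtration of $N_e^{S^1}R$ with controlled associated graded, and verifying that each layer is an $\aFin$-cofibration is the substance of the matter, not bookkeeping. The paper's approach sidesteps this entirely by working with flatness and bar constructions rather than aiming for cofibrancy.
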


\begin{proof}
Let $N$ be a cofibrant $N_{e}^{S^{1}}A$-module approximation of
$N_{e}^{S^{1}}R$; the assertion is that the map
\[
N\sma_{N_{e}^{S^{1}}A}A_{S^{1}}\to N_{e}^{S^{1}}R\sma_{N_{e}^{S^{1}}A}A_{S^{1}}
\]
is a $\aFin$-equivalence.  
We compare to the bar construction:  Let
$B(N,N_{e}^{S^{1}}A,A_{S^{1}})$ be the geometric realization of the simplicial object with
$k$-simplices
\[
N \sma \underbrace{N_{e}^{S^{1}}A\sma\cdots \sma N_{e}^{S^{1}}A}_{k} \sma A_{S^{1}},
\]
and similarly for $B(N_{e}^{S^{1}}R,N_{e}^{S^{1}}A,A_{S^{1}})$.
Then we have a commutative diagram
\[
\xymatrix{%
B(N,N_{e}^{S^{1}}A,A_{S^{1}})\ar[r]\ar[d]
&N\sma_{N_{e}^{S^{1}}A}A_{S^{1}}\ar[d]
\\
B(N_{e}^{S^{1}}R,N_{e}^{S^{1}}A,A_{S^{1}})\ar[r]
&N_{e}^{S^{1}}R\sma_{N_{e}^{S^{1}}A}A_{S^{1}}
}
\]
We want to show that the righthand map is a $\aFin$-equivalence; we
show that the remaining three maps are $\aFin$-equivalences.  We apply
the change of groups functor $\iota^{*}_{C_{n}}$ and show that they
are weak equivalences of orthogonal $C_{n}$-spectra.  Since
$\iota^{*}_{C_{n}}$ commutes with smash product and geometric
realization, we have isomorphisms
\begin{gather*}
\iota^{*}_{C_{n}}B(N,N_{e}^{S^{1}}A,A_{S^{1}})
\iso
B(\iota^{*}_{C_{n}}N,\iota^{*}_{C_{n}}N_{e}^{S^{1}}A,A_{C_{n}})\\
\iota^{*}_{C_{n}}(N\sma_{N_{e}^{S^{1}}A}A_{S^{1}})\iso
\iota^{*}_{C_{n}}N\sma_{\iota^{*}_{C_{n}}N_{e}^{S^{1}}A}A_{C_{n}}
\end{gather*}
and similarly for $N_{e}^{S^{1}}R$ in place of $N$.

Before proceeding, we note that $\iota^{*}_{C_{n}}N_{e}^{S^{1}}A$ and
$\iota^{*}_{C_{n}}N_{e}^{S^{1}}R$ are flat in the sense of
\cite[B.15]{HHR}.  This can be seen as follows.  $N_{e}^{C_{n}}A$ is
flat by~\cite[B.147]{HHR} and $N_{e}^{C_{n}}R$ is flat being the
sequential colimit of pushouts over $h$-cofibrations of flat objects.
Likewise, $\iota^{*}_{C_{n}}N_{e}^{S^{1}}A$,
$\iota^{*}_{C_{n}}N_{e}^{S^{1}}R$, and $\iota^{*}_{C_{n}}N$ are sequential colimits of pushouts
over $h$-cofibrations of objects that are flat, q.v.\
Theorem~\ref{thm:sdcyc} for $N_{e}^{S^{1}}A$ and $N_{e}^{S^{1}}R$. As
an immediate consequence, we see that the map 
\[
B(N,N_{e}^{S^{1}}A,A_{S^{1}})\to 
B(N_{e}^{S^{1}}R,N_{e}^{S^{1}}A,A_{S^{1}})
\]
is a $\aFin$-equivalence as
\[
B(\iota^{*}_{C_{n}}N,\iota^{*}_{C_{n}}N_{e}^{S^{1}}A,\iota^{*}_{C_{n}}A_{S^{1}})\to 
B(\iota^{*}_{C_{n}}N_{e}^{S^{1}}R,\iota^{*}_{C_{n}}N_{e}^{S^{1}}A,\iota^{*}_{C_{n}}A_{S^{1}})
\]
is a weak equivalence on each simplicial level and the simplicial
objects are proper.

To see that $\iota^{*}_{C_{n}}B(N,N_{e}^{S^{1}}A,A_{S^{1}})\to
\iota^{*}_{C_{n}}(N\sma_{N_{e}^{S^{1}}A}A_{S^{1}})$ is a weak equivalence, let $M$ be a
cofibrant $N_{e}^{S^{1}}A$-module approximation of $A_{S^{1}}$.  Since
smash product commutes with geometric realization, we have compatible
isomorphisms
\begin{gather*}
B(N,N_{e}^{S^{1}}A,N_{e}^{S^{1}}A)\sma_{N_{e}^{S^{1}}A}M\iso
B(N,N_{e}^{S^{1}}A,M)\\
B(N,N_{e}^{S^{1}}A,N_{e}^{S^{1}}A)\sma_{N_{e}^{S^{1}}A}A_{S^{1}}\iso
B(N,N_{e}^{S^{1}}A,A_{S^{1}})
\end{gather*}
Now we have a commutative diagram 
\[
\xymatrix{%
B(N,N_{e}^{S^{1}}A,M)\ar@{{}{}{}}[r]|-{\textstyle \cong}\ar[d]
&B(N,N_{e}^{S^{1}}A,N_{e}^{S^{1}}A)\sma_{N_{e}^{S^{1}}A}M\ar[r]\ar[d]
&N\sma_{N_{e}^{S^{1}}A}M\ar[d]\\
B(N,N_{e}^{S^{1}}A,A_{S^{1}})\ar@{{}{}{}}[r]|-{\textstyle \cong}
&B(N,N_{e}^{S^{1}}A,N_{e}^{S^{1}}A)\sma_{N_{e}^{S^{1}}A}A_{S^{1}}\ar[r]
&N\sma_{N_{e}^{S^{1}}A}A_{S^{1}}.
}
\]
with the bottom composite map becoming the map in question after
applying $\iota^{*}_{C_{n}}$.  The lefthand map
becomes a weak equivalence after applying $\iota^{*}_{C_{n}}$ because
both $\iota^{*}_{C_{n}}N$ and $\iota^{*}_{C_{n}}N_{e}^{S^{1}}A$ are
flat. The top map
is a weak equivalence because $(-)\sma_{N_{e}^{S^{1}}A}M$ preserves
the weak equivalence $B(N,N_{e}^{S^{1}}A,N_{e}^{S^{1}}A)\to N$ and the
righthand map is a weak equivalence because $N\sma_{N_{e}S^{1}}(-)$
preserves the weak equivalence $M\to A_{S^{1}}$.

Finally, to see that the map 
\[
\iota^{*}_{C_{n}}B(N_{e}^{S^{1}}R,N_{e}^{S^{1}}A,A_{S^{1}})\to
\iota^{*}_{C_{n}}N_{e}^{S^{1}}R\sma_{N_{e}^{S^{1}}A}A_{S^{1}}
\]
is a weak equivalence, we apply Theorem~\ref{thm:sdcyc} to observe
that it is induced by a map of simplicial objects
\begin{multline*}
B(N^{\cyc}_{\sma} (N_e^{C_n} R,{^{g}N_e^{C_n} R}),
N^{\cyc}_{\sma} (N_e^{C_n} A,{^{g}N_e^{C_n} A}),A_{C_{n}})\\
\to
N^{\cyc}_{\sma}(N_e^{C_n} R,{^{g}N_e^{C_n} R})\sma_{N^{\cyc}_{\sma}(N_e^{C_n} A,{^{g}N_e^{C_n} A})}A_{C_{n}}.
\end{multline*}
Here at the $k$th level, the map is
\begin{multline*}
B((N_e^{C_n} R)^{\sma(k)}\sma {^{g}N_e^{C_n} R},
(N_e^{C_n} A)^{\sma(k)}\sma {^{g}N_e^{C_n} A},A_{C_{n}})
\\
\to
((N_e^{C_n} R)^{\sma(k)}\sma {^{g}N_e^{C_n} R})\sma_{%
(N_e^{C_n} A)^{\sma(k)}\sma {^{g}N_e^{C_n} A}%
}A_{C_{n}},
\end{multline*}
which is a weak equivalence since 
$(N_e^{C_n} R)^{\sma(k)}\sma {^{g}N_e^{C_n} R}$ is flat as a
module over $(N_e^{C_n} A)^{\sma(k)}\sma {^{g}N_e^{C_n} A}$.
\end{proof}

Similarly, we can extend the homotopical statement of
Proposition~\ref{prop:Tcommquillen} to the relative setting.

\begin{proposition}\label{prop:relTcommquillen}
Regarded as a functor on commutative $A$-algebras, the functor
$\AN_e^{S^1}$ is a left Quillen functor with respect to the positive
complete model structure on $A\CAlg$ and the $\aFin$-model structure
on $A_{S^1}\CAlg_{U}^{S^1}$.
\end{proposition}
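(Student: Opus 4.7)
The plan is to mimic the proof of Proposition~\ref{prop:Tcommquillen} in this relative setting. The adjunction itself is supplied by Proposition~\ref{prop:relcommuT}, so the work is entirely in verifying that the right adjoint $\iota^{*}$ preserves fibrations and acyclic fibrations. By Theorem~\ref{thm:fmodel}, fibrations and weak equivalences in the $\aFin$-positive complete stable model structure on $A_{S^{1}}\CAlg_{U}^{S^{1}}$ are created by the forgetful functor to orthogonal $S^{1}$-spectra indexed on $U$ with the $\aFin$-positive complete stable model structure, and analogously fibrations and weak equivalences in the positive complete model structure on $A\CAlg$ are created by the forgetful functor to orthogonal spectra with the positive stable model structure. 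Hence the question reduces to a statement about the underlying (change-of-universe-then-forget-action) functor $\Spec^{S^{1}}_{U}\to \Spec$.

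First I would handle weak equivalences. Since the trivial subgroup $e$ lies in $\aFin$, any $\aFin$-equivalence of orthogonal $S^{1}$-spectra on $U$ is in particular an isomorphism on $\pi_{*}^{e}$, which is exactly the non-equivariant homotopy groups of the underlying orthogonal spectrum. Thus $\iota^{*}$ sends weak equivalences to weak equivalences.

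Next I would handle fibrations. By the description in Theorem~\ref{thm:normstable} (restricted to the family $\aFin$), the generating acyclic cofibrations of the $\aFin$-positive complete stable model structure on $\Spec^{S^{1}}_{U}$ include, for the subgroup $H=e\in\aFin$, the maps $S^{1}_{+}\sma j^{+}$ with $j^{+}$ ranging over the positive generating acyclic cofibrations for (non-equivariant) orthogonal spectra. A map with the right lifting property against $S^{1}_{+}\sma j^{+}$ descends, under the $(S^{1}_{+}\sma-)\dashv\iota^{*}$ adjunction, to an underlying non-equivariant map with the right lifting property against $j^{+}$; such a map is precisely a positive stable fibration. Combining with the weak-equivalence statement gives that acyclic fibrations are preserved as well.

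There is no real obstacle; the only thing requiring care is the bookkeeping through the layers (change of universe $\aI_{U}^{\bR^{\infty}}$ and forgetting the $S^{1}$-action) to ensure the generating acyclic cofibrations on the $S^{1}$-equivariant side really do include the free-orbit image of the non-equivariant positive generating acyclic cofibrations. Once that is observed, the proof is essentially a one-line verification identical in structure to Proposition~\ref{prop:Tcommquillen}.
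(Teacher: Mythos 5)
Your proposal is correct and takes essentially the same approach as the paper: the paper leaves this proof implicit (stating simply, in the non-relative Proposition~\ref{prop:Tcommquillen}, that the forgetful functor preserves fibrations and acyclic fibrations), and your reduction to the spectrum level plus the adjunction argument against the generating (acyclic) cofibrations free on $H=e$ is exactly the verification that one-liner encodes. Your slight reorganization---checking weak equivalences via $e\in\aFin$ and then deducing preservation of acyclic fibrations from fibrations plus weak equivalences, rather than lifting directly against the generating cofibrations---is logically equivalent and does not change the argument in any essential way.
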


\begin{proposition}\label{prop:relTdercompder}
Let $R \to R'$ be a weak equivalence of $A$-algebras
where $R$ is cofibrant and $R'$ is a cofibrant commutative
$A$-algebra.  Then the
induced map $\AN_e^{S^1} R \to \AN_e^{S^1} R'$ is an $\aFin$-equivalence.
\end{proposition}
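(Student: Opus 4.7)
The plan is to reduce the statement to a comparison of the absolute norm $N_e^{S^1}$ via the base-change formula. By Proposition~\ref{prop:introsmader}, since $R$ is cofibrant as an $A$-algebra and $R'$ is cofibrant as a commutative $A$-algebra, I have natural isomorphisms in the $\aFin$-equivariant stable category
\[
\AN_e^{S^1}R \iso N_e^{S^1}R \dersma_{N_e^{S^1}A} A_{S^1}
\qquad\text{and}\qquad
\AN_e^{S^1}R' \iso N_e^{S^1}R' \dersma_{N_e^{S^1}A} A_{S^1}.
\]
Since the derived smash product preserves $\aFin$-equivalences in each variable, it suffices to show that the induced map $N_e^{S^1}R \to N_e^{S^1}R'$ is itself an $\aFin$-equivalence.

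To establish this, I would factor $R \to R'$ as $R \to QR' \to R'$, where $QR' \to R'$ is a cofibrant replacement of $R'$ in the category of $A$-algebras. Since $R$ is a cofibrant $A$-algebra, the weak equivalence $R \to R'$ lifts through the acyclic fibration $QR' \to R'$ to a weak equivalence $R \to QR'$ of cofibrant $A$-algebras, and an argument parallel to Theorem~\ref{thm:Tder}---using that a cofibrant $A$-algebra over a cofibrant commutative $A$ inherits enough flatness to support the cyclotomic structure on its absolute norm---then yields that $N_e^{S^1}R \to N_e^{S^1}(QR')$ is an $\aFin$-equivalence. The remaining comparison $N_e^{S^1}(QR') \to N_e^{S^1}R'$ is the mixed-cofibrancy analogue of the (commented-out) Proposition~\ref{prop:Tdercompder}: both sides carry natural cyclotomic structures (for $R'$, invoking \cite[B.213, B.215]{HHR} to see that the point-set geometric fixed point functor computes the left-derived one when $R'$ is a cofibrant commutative ring), the map is one of cyclotomic spectra, and by Proposition~\ref{prop:wkeqivcyc} it suffices to verify that $|N^{\cyc}_{\sma} QR'| \to |N^{\cyc}_{\sma} R'|$ is a weak equivalence of underlying non-equivariant spectra; this is the geometric realization of a levelwise weak equivalence of proper simplicial objects, hence itself an equivalence.

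The hard part will be verifying the cyclotomic structures in the mixed case---specifically, that the point-set geometric fixed point functor agrees with the left-derived one on both $N_e^{S^1}(QR')$ and $N_e^{S^1}R'$, and that the relevant subdivided cyclic bar constructions are proper simplicial $C_n$-objects on both sides. This should reduce to a mixed extension of Theorems~\ref{thm:normisobasic}--\ref{thm:comnormisobasic} and Lemma~\ref{lem:proper}, following the cell analysis of \cite[VII.7.5]{EKMM} on the commutative side together with the standard cell structure of cofibrant $A$-algebras on the associative side.
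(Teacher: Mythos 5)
Your opening reduction, via Proposition~\ref{prop:introsmader} and the invariance of the derived smash product, matches the spirit of the paper's argument; but from there you and the paper diverge, and the paper's route is both shorter and avoids the gap you acknowledge.

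The paper handles the $R'$ side by noting that $N_e^{S^1}R'$ is cofibrant as a \emph{commutative $N_e^{S^1}A$-algebra}: since $R'$ is a cofibrant commutative $A$-algebra, $A \to R'$ is a cofibration in $\Com$, and $N_e^{S^1}$ is a left Quillen functor (Proposition~\ref{prop:Tcommquillen}), so $N_e^{S^1}A \to N_e^{S^1}R'$ is a cofibration of commutative ring orthogonal $S^1$-spectra. Consequently $N_e^{S^1}R' \sma_{N_e^{S^1}A} A_{S^1}$ automatically computes the derived smash product, with no need to carry a cyclotomic structure on $N_e^{S^1}R'$ for this purpose. Combined with Theorem~\ref{thm:relsmader} for the $R$ side, the whole proof is two sentences. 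You do not exploit this cofibrancy observation at all.

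Instead, you try to prove $N_e^{S^1}R \to N_e^{S^1}R'$ is an $\aFin$-equivalence by factoring through $QR'$ and invoking a cyclotomic-structure argument ``parallel to Theorem~\ref{thm:Tder}.'' The step you flag as the hard part is a genuine gap, not a technicality you can expect to routinely fill: a cofibrant $A$-algebra $R$ over a cofibrant commutative $A$ is built from $A$ by attaching associative $\bT$-cells, but $A$ itself is only built from $\bP$-cells, so $R$ is in general \emph{not} cofibrant as an associative ring orthogonal spectrum. Theorems~\ref{thm:normisobasic} and~\ref{thm:levelwisefp}, which supply the diagonal isomorphism and the identification of point-set and derived geometric fixed points, therefore do not apply to $R$ or $QR'$; you would have to prove a genuinely new ``mixed'' version of the diagonal isomorphism and properness lemma for $\bP$-over-$\bT$ cell structures. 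The paper's reformulation in terms of $N_e^{S^1}A$-module and $N_e^{S^1}A$-algebra cofibrancy, together with the flatness analysis already carried out in the proof of Theorem~\ref{thm:relsmader}, is designed to sidestep exactly this difficulty.
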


\begin{proof}
By Theorem~\ref{thm:relsmader}, 
\[
\AN_{e}^{S^{1}}R\iso N_{e}^{S^{1}}R\sma_{N_{e}^{S^{1}}A}A_{S_{1}}
\]
represents the derived smash product.  Since $N_{e}^{S^{1}}R'$ is
cofibrant as a commutative $N_{e}^{S^{1}}A$-algebra,
\[
\AN_{e}^{S^{1}}R'\iso N_{e}^{S^{1}}R'\sma_{N_{e}^{S^{1}}A}A_{S_{1}}
\]
also represents the derived smash product.
\end{proof}

\section{When do we have relative cyclotomic structures?}\label{sec:amodcyc}

One application of the perspective of $THH$ as the $S^{1}$-norm is the
construction of relative versions of $TR$ and $TC$ built from
$\AN_e^{S^1} R$, which we discuss in this section.  In previous
drafts, the authors asserted that $\AN_e^{S^1}$ could in general be
endowed with cyclotomic structure or op-pre-cyclotomic structures.
However, as explained below, except for very special choices for $A$
(such as $A=S$), this is not correct.  Some of the difficulties arise
from subtleties of the behavior of the derived functor of change of
universe on commutative ring orthogonal spectra,
q.v. Example~\ref{ex:Isux} above and Example~\ref{ex:Usux} below.
Other difficulties arise from a basic incompatibility of diagonal
maps, as we will explain.

We begin with an example due to Lars Hesselholt that illustrates the
impossibility of a general construction of a nontrivial cyclotomic
structure.

\begin{example}\label{ex:WeSux}
Suppose that we could construct $p$-cyclotomic structures for general
$R$ and $A$, and that the expected naturality holds.  Then in
particular we would have a commutative diagram of ring orthogonal
spectra
\[
\xymatrix{
THH(\bF_p) \ar[r] \ar[d] & THH_{H\bZ}(\bF_p) \ar[d] \\
THH(\bF_p)^{tC_p} \ar[r] & THH_{H\bZ}(\bF_p)^{tC_p}, \\
}
\]
where $(-)^{tC_p}$ denotes the Tate construction.  Passing to homotopy
groups yields a commutative diagram of graded rings
\[
\xymatrix{
S_{\bF_p}(t) \ar[r] \ar[d] & \Gamma_{\bF_p}(t) \ar[d] \\
S_{\bF_p}(t,t^{-1}) \ar[r]^-{=} & S_{\bF_p}(t,t^{-1}), \\
}
\]
where $S_{\bF_p}$ denotes the symmetric algebra and $\Gamma_{\bF_p}$
the divided power algebra.  One then concludes that the composite of
the top horizontal map and the righthand vertical map must be zero in
positive degrees.
\end{example}

In order to understand the situation better, we now describe a natural 
op-precyclotomic structure on $A_{S^1}$.  The geometric fixed point
functor $\Phi^H$ is lax symmetric monoidal, and therefore gives rise
to a functor 
\[
\Phi^H \colon A_{G}\Mod^{G}_{U} \to 
(\Phi^H A_G)\Mod^{G/H}_{U^{H}}
\]
when $H$ is normal in $G$.  In the case of a finite subgroup $C_n <
S^1$, for an $A_{S^{1}}$-module $X$, we have that $\Phi^{C_n} X$ is an
orthogonal $S^1/C_n$-spectrum and a module over
$\Phi^{C_{n}}A_{S^{1}}$.  In fact, it is a module over $A_{S^1/C_n}$.

\begin{proposition}\label{prop:ghtophih}
Let $A$ be a (non-equivariant) cofibrant commutative ring orthogonal
spectrum and let $H$ be a normal subgroup of a compact Lie group $G$.
There is a natural map of commutative ring orthogonal $G/H$-spectra
$A_{G/H}\to \Phi^{H}A_{G}$.
\end{proposition}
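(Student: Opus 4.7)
The plan is to factor the desired map as $A_{G/H} \cong (A_G)^H \to \Phi^H A_G$, where the first arrow is a natural isomorphism built from the level formulas and the second is the canonical natural transformation from categorical to geometric fixed points.

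For the isomorphism $A_{G/H} \cong (A_G)^H$, I would argue levelwise. Given $W \in \aV^{G/H}(U^H)$, the categorical fixed point functor $(-)^H$ accesses the level $W$ by regarding $W$ as an $H$-trivial $G$-representation via the quotient $G\to G/H$ (this is the functor $q\colon \sJ_{G/H}\to \sJ_G^H$). The level formula for the change of universe applied to $A$ (with trivial $G$-action) yields
\[
A_G(W) = \sJ_G(\bR^m, W) \sma_{O(m)} A(\bR^m), \quad m = \dim W.
\]
Since both $\bR^m$ and $W$ are $H$-trivial, the conjugation $G$-action on $\sJ_G(\bR^m, W)$ factors through $G/H$, giving a canonical identification $\sJ_G(\bR^m, W)^H = \sJ_{G/H}(\bR^m, W)$ of $G/H$-spaces; meanwhile $A(\bR^m)$ has trivial $H$-action by hypothesis. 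Hence
\[
A_G(W)^H = \sJ_{G/H}(\bR^m, W) \sma_{O(m)} A(\bR^m) = A_{G/H}(W),
\]
and these identifications are natural in $W$ and compatible with the $\sJ_{G/H}$-structure, assembling to an isomorphism $A_{G/H} \cong (A_G)^H$ of orthogonal $G/H$-spectra on $U^H$.

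For the second map, recall that $\Phi^H$ is defined as $\phi_!\Fix^H$ and $(-)^H$ as $q^*\Fix^H$, where $\phi\circ q = \id$. The unit of the Kan extension adjunction at the $\sJ_G^H$-space $\Fix^H X$ gives a canonical natural transformation $X^H \to \Phi^H X$, and this is a map of lax symmetric monoidal functors $\Spec^G_U \to \Spec^{G/H}_{U^H}$. Applied to $X = A_G$, this gives a map of commutative ring orthogonal $G/H$-spectra $(A_G)^H \to \Phi^H A_G$. Composing with the isomorphism of the first step yields the desired natural map.

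The main point to verify is that the isomorphism $A_{G/H}\cong (A_G)^H$ is one of commutative ring spectra, not merely of orthogonal $G/H$-spectra. The ring structure on $A_{G/H}$ is obtained from the strong symmetric monoidal functor $\aI_{\bR^\infty}^{U^H}$ applied to $A$, while that on $(A_G)^H$ is obtained from the lax symmetric monoidal functor $(-)^H$ applied to the commutative ring $A_G = \aI_{\bR^\infty}^U A$. Unwinding the canonical lax monoidal structure map of $(-)^H$ at the level of the explicit formulas above, one checks that both multiplications are induced from $A\sma A\to A$ through the identification of $\sJ_G(\bR^m, W)^H$ with $\sJ_{G/H}(\bR^m,W)$, and hence agree. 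The naturality in $A$ is immediate from the construction, and the cofibrancy hypothesis on $A$ plays no role in defining the point-set map itself (it is relevant only for downstream homotopical considerations).
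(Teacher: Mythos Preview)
Your proof is correct and is essentially a repackaging of the paper's argument. The paper constructs the map $\lambda\colon X_{G/H}\to \Phi^{H}X_{G}$ directly from the tautological coequalizer presentations of both sides, using the observation that for $V$ an $H$-fixed $G$-inner product space one has $X_{G/H}(V)\cong X_{G}(V)=(X_{G}(V))^{H}$; it then verifies that $\lambda$ is lax symmetric monoidal by inspecting the explicit maps on the terms of the presentation. Your factorization $A_{G/H}\cong (A_{G})^{H}\to \Phi^{H}A_{G}$ is the same map: the levelwise identity you write down is exactly the paper's observation $X_{G/H}(V)\cong (X_{G}(V))^{H}$, and the unit-of-Kan-extension map $(-)^{H}\to \Phi^{H}$ you invoke is precisely what the paper's coequalizer comparison produces. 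The only difference is organizational: you quote the canonical transformation $\omega\colon (-)^{H}\to \Phi^{H}$ of \cite[V.4.3]{MM} as a black box and appeal to its monoidality, whereas the paper unwinds it at the level of the presentations. Both approaches leave the compatibility of the monoidal structures as a routine check; your remark that cofibrancy of $A$ plays no role in the point-set construction is also correct and worth noting.
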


\begin{proof}
By adjunction, maps $A_{G/H} \to \Phi^H A_{G}$ are in bijective
correspondence with maps $A \to (\Phi^H A_{G})^{G/H}$.  The natural
map in question can thus be constructed as the adjoint of the composite
\[
A \to (A_G)^G \cong ((A_G)^{H})^{G/H} \to (\Phi^H A_G)^{G/H}.
\]
Alternatively, we can give a direct construction as follows.
Let $X$ be an arbitrary non-equivariant orthogonal spectrum and write
$X_{G}$ for the application of the point-set functor
$\aI_{\bR^{\infty}}^{U}$.  We write $\Phi^{H}X_{G}$ as the coequalizer
\[
\xymatrix@C-1pc{
\bigvee\limits_{V,W<U}
\sJ^{U}_{G}(V,W)^{H}\sma F_{W^{H}}S^{0} \sma (X_{G}(V))^{H}
\ar@<.5ex>[r]\ar@<-.5ex>[r]
&\bigvee\limits_{V<U}F_{V^{H}}S^{0}\sma (X_{G}(V))^{H}
}
\]
in orthogonal $G/H$-spectra.
For $V$ an $H$-fixed $G$-inner product space, we can also regard $V$
as a $G/H$-inner product space, and we have
\[
X_{G/H}(V)\iso X_{G}(V) = (X_{G}(V))^{H}.
\]
Writing $X_{G/H}$ as the coequalizer
\[
\xymatrix@C-1pc{
\bigvee\limits_{V,W<U^{H}}
\sJ^{U^{H}}_{G/H}(V,W)\sma F_{W}S^{0}\sma X_{G/H}(V)
\ar@<.5ex>[r]\ar@<-.5ex>[r]
&\bigvee\limits_{V<U^{H}}F_{V}S^{0}\sma X_{G/H}(V),
}
\]
we get a canonical natural map of orthogonal $G/H$-spectra $\lambda
\colon X_{G/H}\to \Phi^{H}X_{G}$.  The symmetric monoidal
transformation $\Phi^{H}X_{G}\sma \Phi^{H}Y_{G}\to \Phi^{H}(X_{G}\sma
Y_{G})$ is induced by the natural map
\[
F_{V_{1}^{H}}S^{0} \sma (X_{G}(V_{1}))^{H}
\sma F_{V_{2}^{H}}S^{0}\sma (Y_{G}(V_{2}))^{H}
\to
F_{(V_{1}\oplus V_{2})^{H}}S^{0}\sma ((X_{G}\sma
Y_{G})(V_{1}\oplus V_{2}))^{H},
\]
and we see that $\lambda$ is also lax symmetric monoidal.  Applying
these observations to the commutative ring orthogonal spectrum $A$ and
the multiplication map $A\sma A\to A$, we see that $\lambda$ induces a
map of commutative ring orthogonal $G/H$-spectra $A_{G/H}\to
\Phi^{H}A_{G}$, natural in the commutative ring orthogonal spectrum
$A$. 
\end{proof}

We now specialize this to the subgroup $C_n < S^{1}$ and an
$A_{S^{1}}$-module $X$. Pulling back along the $n$th root 
isomorphism $\rho_{n} \colon S^{1}\to S^{1}/C_n$ gives rise to
an orthogonal $S^1$-spectrum $\rho^{*}_{n}\Phi^{C_n}X$ that is a
module over $A_{S^{1}}\iso \rho^{*}_{n}A_{S^{1}/C_n}$.  

\begin{definition}\label{defn:relcyclo}
An op-$p$-precyclotomic spectrum relative to $A$ consists of an 
$A_{S^1}$-module $X$ together with a map of $A_{S^1}$-modules
\[
\gamma \colon X \to \rho_{p}^{*}\Phi^{C_{p}} X.
\]
\end{definition}

Proposition~\ref{prop:ghtophih} thus constructs an op-$p$-precylotomic
spectrum structure on $A_{S^1}$.  However, it is important to be clear
about what this does and doesn't prove: specifically, we do not in
general know that $\Phi^{C_{p}} A_{S^1}$ computes the derived
geometric fixed points.  This problem can be circumvented by working
with $E_\infty$ objects in orthogonal spectra instead of strict
commutative rings; in this case, for a cofibrant $E_\infty$ ring
orthogonal spectrum $A$ we do obtain a version of the map from
Proposition~\ref{prop:ghtophih} that lands in the derived geometric fixed
 points. 

One would now hope to use the same argument as for
Theorem~\ref{thm:cyclodiag} to construct an op-$p$-precyclotomic
structure on $\AN_e^{S^1} R$.  Unfortunately, there is a basic
compatibility issue which we now explain.  It is possible to construct
an $A$-relative version of the diagonal map 
\[
\Delta_A \colon X \to \Phi^{G} \AN_e^G X.
\]
(a special case of the analogue of Proposition~\ref{prop:diag}), which
we can now state using Proposition~\ref{prop:ghtophih}.

\begin{proposition}\label{prop:reldiag}
Let $A$ be a commutative ring orthogonal spectrum and let $X$ be an
$A$-module. For any finite group $G$, there is a natural diagonal map
\[
 \Delta_A \colon  X \to \Phi^G \AN_e^G X.
\]
of $A$-modules, where the $A$-module action on the right is induced by
the composite map $A \to \Phi^G N_e^G A \to \Phi^G \AN_e^G X$. 
\end{proposition}

\begin{proof}
The map itself is constructed as the composite 
\[
 X\overto{\Delta} \Phi^{G} N_{e}^{G}X \to
 \Phi^{G}(A_{G}\sma_{N_{e}^{G}A}N_{e}^{G}X)\iso \Phi^{G}\AN_{e}^{G}X,
\]
where the last isomorphism is Proposition~\ref{prop:basechange}.  To
show that this is a map of $A$-modules as specified, it suffices to
show that the natural transformation $\Id \to \Phi^G N_e^G$ is lax
monoidal, as the second part of the composite clearly is.

Recall from the proof of Theorem~\ref{thm:comnormisobasic} that the
diagonal map can be described as follows: for every (non-equivariant)
inner product space $Z$, the universal property of the indexed smash
product gives a map of based $G$-spaces $N_{e}^{G}(X(Z))\to
(N_{e}^{G}X)(\Ind_{e}^{G} Z)$, which restricts on the diagonal to a
map  
\[
X(Z)\to (N_{e}^{G}X(\Ind_{e}^{G}
Z))^{G}=(\Fix^{G}(N_{e}^{G}X))(\Ind_{e}^{G} Z).
\]
Passing to the left Kan extension $\bP_{\phi}$ along the
fixed point functor $\phi \colon \sJ^{G}_{G}\to
\sJ_{e}$ on the right then induces a map
\[
X(Z)\to (\Phi^{G}(N_{e}^{G}X))((\Ind_{e}^{G} Z)^{G}) =(\Phi^{G}(N_{e}^{G}X))(Z).
\]
Let $X$ and $Y$ be spectra.  In what follows, we write $I_e^G$ for
$\Ind_e^G$ to save space.  A direct space-level verification shows
that the diagram  
\[
\xymatrix@C-1pc{
N_e^{G}(X(V)) \sma N_e^{G}(Y(W)) \ar[r] \ar[dd] & N_e^G(X(V) \sma
Y(W)) \ar[r] & N_e^G\left((X \sma Y)(V \oplus W)\right) \ar[d] \\
&& (N_e^G (X \sma Y))(I_e^G (V \oplus W)) \ar[d]_-{\cong} \\
(N_e^G X)(I_e^G V) \sma (N_e^G Y)(I_e^G W) \ar[rr] && (N_e^G (X \sma
Y))(I_e^G V \oplus I_e^G W) \\
}
\]
commutes.  Restricting to the diagonal we find that the diagram
\[
\xymatrix{
X(V) \sma Y(W) \ar[dd] \ar[r] & (X \sma Y)(V \oplus W) \ar[d] \\
& (\Fix^{G}(N_e^{G}(X \sma Y)))(I_e^{G}(V \oplus W)) \ar[d]_-{\cong} \\
(\Fix^{G}(N_e^{G}X))(I_e^{G}V) \sma
(\Fix^{G}(N_e^{G}Y))(I_e^{G}W) \ar[r] &
(\Fix^{G}(N_e^{G}(X \sma Y)))(I_e^{G}V \oplus I_e^{G}W)) \\
}
\]
commutes.  Finally, since left Kan extension is lax monoidal, passing
to the left Kan extension along the fixed point functor $\phi$ yields
the commutative diagram 
\[
\xymatrix{
X(V) \sma Y(W) \ar[dd]_-{\Delta \sma \Delta} \ar[r] & (X \sma Y)(V \oplus
W) \ar[dd]^-{\Delta} \\
&\\
(\Phi^{G}(N_e^G X))(V) \sma (\Phi^{G}(N_e^G Y))(W) \ar[r] & (\Phi^{G}(N_e^G (X \sma Y)))(V \oplus W). 
}
\]
The top map is clearly the canonical product map, and
inspection of the construction of the lax monoidal structure on 
$\Phi^G$~\cite[V.4.7]{MM} implies that the bottom map is the monoidal
product on $\Phi^G N_e^G$.

\end{proof}

The following example indicates some of the complexity of the behavior
of this diagonal map.

\begin{example}\label{ex:Usux}
In the previous theorem, consider the case when $R=A$ and $A=\bP
F_{\bR}S^{0}$ is the free commutative ring orthogonal spectrum on
$F_{\bR}S^{0}\simeq S^{-1}$.  When $n=2$, 
\begin{align*}
\Fix^{C_{2}}\bP
F_{\bR}S^{0}(W)&=\bigvee_{m}(\sJ_{S^{1}}(\bR^{m},W)/\Sigma_{m})^{C_{2}}\\
&\iso \bigvee_{m}\biggl(\bigvee_{f\colon C_{2}\to \Sigma_{m}} 
\sJ_{S^{1}}(f^{*}\bR^{m},W)^{C_{2}}\biggr)/\Sigma_{m},
\end{align*}
where the inner sum is over homomorphisms $f\colon C_{2}\to \Sigma_{m}$ and
$\Sigma_{m}$ acts on the set of such $f$ by conjugation (as well as
acting on $\bR^{m}$ by permuting coordinates).  Here $f^{*}\bR^{m}$
denotes $\bR^{m}$ with $C_{2}$-action coming from $f$ and the
coordinate permutation action of $\Sigma_{m}$. We can then calculate 
\[
\Phi^{C_{2}}\bP F_{\bR}S^{0}\iso 
\bigvee_{m}\biggl(\bigvee_{f\colon C_{2}\to \Sigma_{m}} 
F_{(f^{*}\bR^{m})^{C_{2}}}S^{0}\biggr)/\Sigma_{m}.
\]
The summands with $f$ the trivial map contribute a summand of
$A_{S^{1}/C_{2}}$, but the remaining summands make non-trivial
contributions of orthogonal $G/H$-spectra of the form
$F_{(\bR^{m})^{\sigma}}S^{0}/Z(\sigma)$ where $\sigma$ is an order $2$
element of $\Sigma_{m}$, $Z(\sigma)$ is its centralizer, and
$(f^{*}\bR^{m})^{\sigma}$ is its fixed points.  In this case we see
that the natural map of Proposition~\ref{prop:ghtophih} is split, and
in general it is split for free commutative ring orthogonal spectra,
but the splitting is not natural and so does not extend to a splitting
for arbitrary commutative ring orthogonal spectra $A$.
\end{example}

Although one might hope to use Proposition~\ref{prop:reldiag} to
construct an op-cyclotomic structure on $\ATHH$, there is an issue
related to the fact that the map of commutative ring orthogonal
spectra
\[
A\to \Phi^{G}N_{e}^G A\to \Phi^{G}A_{G}
\]
inducing the $A$-module structure on the relative diagonal is in
general not the same map as the canonical map given in
Proposition~\ref{prop:ghtophih}.  In order to elucidate the basic
incompatibility, we use the description of $\ATHH$ in terms of
base change given by Proposition~\ref{prop:extscal}.  Since
$\Phi^{C_p}$ commutes with smash product, the required structure
amounts to the data of the following commutative diagram
\[
\xymatrix{
N_e^{S^1} R \ar[d] & N_e^{S^1} A \ar[d]\ar[r]\ar[l] &
A_{S^1} \ar[d] \\
\Phi^{C_p} N_e^{S^1} R & \Phi^{C_p} N_e^{S^1} \ar[r] \ar[l] A
& \Phi^{C_p} A_{S^1}.\\ }
\]
The left-hand square commutes by naturality.  But using the
op-precyclotomic structure from Proposition~\ref{prop:ghtophih}, the
right-hand diagram does not in general commute!

However, this diagram does commute (essentially by hypothesis) in the
case that $A$ is the underlying non-equivariant commutative ring
orthogonal spectrum of a $p$-cyclotomic commutative ring orthogonal
$S^1$-spectrum $\underbar{A}$, the canonical map $N_e^{S^1}
A \to \underbar{A}$ is a map of $p$-cyclotomic spectra, and $R$ is an
$A$-algebra.  Specifically, we can immediately deduce
Theorem~\ref{thm:relwhencyc} from the introduction (using
Theorem~\ref{thm:relsmader} to retain homotopical control).

\begin{theorem}
Let $A$ be a cofibrant commutative ring orthogonal spectrum that is
$\iota_e^* \underbar{A}$ for a cofibrant $p$-cyclotomic commutative
ring orthogonal $S^1$-spectrum $\underbar{A}$.  Moreover, assume that
the canonical counit map $N_e^{S^1} A \to \underbar{A}$ is a
$p$-cyclotomic map.  Let $R$ be a cofibrant $A$-algebra.  Then the
derived smash product
\[
\AN_e^{S^1} R \cong N_e^{S^1} R \sma_{N_e^{S^1} A} \underbar{A}
\]
is a $p$-cyclotomic spectrum.
\end{theorem}

The same argument proves a slightly more general version of this
result, where instead we let $A$ be a commutative ring orthogonal
spectrum, $R$ an $A$-algebra, and $M$ a coefficient spectrum which is an
$N_e^{S^1} A$-module and a $p$-cyclotomic spectrum.  The following is
Theorem~\ref{thm:relwhencycmod} from the introduction.

\begin{theorem}
Let $A$ be a cofibrant commutative ring orthogonal spectrum and $R$ a
cofibrant $A$-algebra.  Let $M$ be a $p$-cyclotomic object in $N_e^{S^1}
A$-modules.  Then the derived smash product 
\[
N_e^{S^1} R \sma_{N_e^{S^1} A} M
\]
is a $p$-cyclotomic spectrum.
\end{theorem}

Under the hypotheses of Theorem~\ref{thm:relwhencyc}, using the
relative analogues of Definitions~\ref{defn:zTR} and~\ref{defn:zTC},
we obtain analogues of $TR$ and $TC$ which we denote $\ATR$ and
$\ATC$.  These constructions are evidently functorial, which proves
Theorem~\ref{thm:trace} from the introduction.

\section{$THH$ of ring $C_{n}$-spectra}\label{sec:cncyclo}

For $G$ a finite group and $H < G$ a subgroup, the norm $N_{H}^{G}$
provides a functor from orthogonal $H$-spectra to orthogonal
$G$-spectra.  In this section, we generalize this construction to a
relative norm $N_{C_{n}}^{S^{1}}$, which we view as a
``$C_{n}$-relative $THH$''.  We begin with an explicit construction in
terms of a cyclic bar construction, which generalizes the simplicial
object studied in Section~\ref{sec:THH} on the edgewise
subdivision of the cyclic bar construction. 

\begin{definition}\label{defn:ncntos1}
Let $R$ be an associative ring orthogonal $C_{n}$-spectrum indexed on
the trivial universe $\bR^{\infty}$.
Let $N_{\sma}^{\cyc,C_{n}}R$ denote the simplicial object that in
degree $q$ is $R^{\sma(q+1)}$, has degeneracy $s_{i}$ (for $0\leq i\leq
q)$ induced by the inclusion of the unit in the $(i+1)$-st factor, has
face maps $d_{i}$ for $0\leq i<q$ induced by multiplication of the $i$th
and $(i+1)$st factors.  The last face map $d_{q}$ is given as
follows.  Let $\alpha_{q}$ be the automorphism of $R^{\sma(q+1)}$ that
cyclically permutes the factors, putting the last factor in the zeroth
position, and then acts on that factor by the generator $g=e^{2\pi
i/n}$ of $C_{n}$.  The last face map is $d_{q}=d_{0}\circ \alpha_{q}$.
\end{definition}

The previous definition constructs a simplicial object but not a
cyclic object.  Nevertheless, it does have extra structure of the same
sort found on the edgewise subdivision of a cyclic object.  The
operator $\alpha_{q}$ in simplicial degree $q$ is the generator of a
$C_{n(q+1)}$-action (the action obtained by regarding $R^{\sma(q+1)}$ as
an indexed smash product for $C_{n}<C_{n(q+1)}$).  The faces,
degeneracies, and operators $\alpha_q$ satisfy the following relations in
addition to the usual simplicial relations:
\begin{align*}
\alpha_{q}^{n(q+1)}&=\id\\
d_{0}\alpha_{q}&=d_{q}\\
d_{i}\alpha_{q}&=\alpha_{q-1}d_{i-1}\text{ for }1\leq i\leq q\\
s_{i}\alpha_{q}&=\alpha_{q+1}s_{i-1}\text{ for }1\leq i\leq q\\
s_{0}\alpha_{q}&=\alpha_{q+1}^{2}s_{q}
\end{align*}
This defines a $\Lambda^{\op}_{n}$-object in the notation of
\cite[1.5]{BHM}.  As explained in \cite[1.6--8]{BHM}, the geometric
realization has an $S^{1}$-action extending the $C_{n}$-action.

\begin{definition}\label{defn:Cnnorm}
Let $R$ be an associative ring orthogonal $C_{n}$-spectrum indexed on
the universe $\tilde U=\iota^{*}_{C_{n}}U$.  The relative norm
$N_{C_{n}}^{S^{1}}R$ is defined as the composite functor
\[
N_{C_{n}}^{S^{1}}R=\aI_{\bR^{\infty}}^{U}
| N^{\cyc,C_{n}}_{\sma}(\aI_{\tilde U}^{\bR^{\infty}}R) |
\]
\end{definition}

When $R$ is a commutative ring orthogonal $C_{n}$-spectrum, we have
the following analogue of Proposition~\ref{prop:commuT}.

\begin{proposition}
The restriction of $N_{C_{n}}^{S^{1}}$ to $\Com^{C_{n}}_{\tilde U}$ lifts
to a functor
\[
N_{C_{n}}^{S^{1}} \colon \Com^{C_{n}}_{\tilde U} \to \Com^{S^1}_{U}
\]
that is left adjoint to the forgetful functor 
\[
\iota^{*} \colon \Com^{S^1}_{U} \to \Com^{C_{n}}_{\tilde U}.
\]
\end{proposition}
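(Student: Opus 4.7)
The plan is to adapt Proposition~\ref{prop:commuT} by replacing the absolute tensor $R\otimes S^{1}$ with the relative tensor $R\otimes_{C_{n}}S^{1}$ described in the introduction, namely the reflexive coequalizer in commutative ring orthogonal spectra
\[
\xymatrix@C-1pc{
(i^{*}R)\otimes C_{n}\otimes S^{1}\ar@<.5ex>[r]\ar@<-.5ex>[r]
&(i^{*}R)\otimes S^{1}\ar[r]&R\otimes_{C_{n}}S^{1},
}
\]
where $i^{*}$ forgets the $C_{n}$-action, one arrow uses the action map $(i^{*}R)\otimes C_{n}\to i^{*}R$ and the other uses the (free) $C_{n}$-action on $S^{1}$ via $C_{n}<S^{1}$.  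The residual translation action of $S^{1}$ on itself commutes with the $C_{n}$-action, so $R\otimes_{C_{n}}S^{1}$ is canonically a commutative ring orthogonal $S^{1}$-spectrum on $\bR^{\infty}$, and a direct computation from the universal property of the coequalizer exhibits $(-)\otimes_{C_{n}}S^{1}$ as left adjoint to the restriction $\iota^{*}\colon \Com^{S^{1}}_{\bR^{\infty}}\to \Com^{C_{n}}_{\bR^{\infty}}$.

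The first step is to lift $|N^{\cyc,C_{n}}_{\sma}R|$ to an object of $\Com^{S^{1}}_{\bR^{\infty}}$.  The simplicial object $N^{\cyc,C_{n}}_{\sma}R$ is in fact a $\Lambda^{\op}_{n}$-object in commutative ring orthogonal spectra, because its face, degeneracy, and cyclic-type operators $\alpha_{q}$ are all built from the multiplication, unit, cyclic permutation of smash factors, and multiplication by the generator $g\in C_{n}$, each of which preserves the commutative ring structure.  Since geometric realization is strong symmetric monoidal and takes $\Lambda^{\op}_{n}$-objects to objects with an $S^{1}$-action extending the $C_{n}$-action as in \cite[1.6--8]{BHM}, $|N^{\cyc,C_{n}}_{\sma}R|$ is a commutative ring orthogonal spectrum with a compatible $S^{1}$-action.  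The second step is the identification $|N^{\cyc,C_{n}}_{\sma}R|\iso R\otimes_{C_{n}}S^{1}$ in $\Com^{S^{1}}_{\bR^{\infty}}$.  For $R=\bP X$ free on a $C_{n}$-spectrum $X$, the isomorphism $|N^{\cyc}_{\sma}\bP X|\iso \bP(X\sma S^{1}_{+})$ from the proof of Proposition~\ref{prop:commuT} combined with the twisted face $d_{q}=d_{0}\circ \alpha_{q}$ implements precisely the coequalizer relation identifying $|N^{\cyc,C_{n}}_{\sma}\bP X|$ with $\bP(X\sma_{C_{n}}S^{1}_{+})\iso \bP X\otimes_{C_{n}}S^{1}$.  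General commutative ring orthogonal $C_{n}$-spectra arise as reflexive coequalizers of free ones, and since both $|N^{\cyc,C_{n}}_{\sma}(-)|$ and $(-)\otimes_{C_{n}}S^{1}$ preserve reflexive coequalizers, the identification extends to arbitrary $R$.

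Applying the strong symmetric monoidal equivalence $\aI_{\bR^{\infty}}^{U}$ now produces $N_{C_{n}}^{S^{1}}R\iso \aI_{\bR^{\infty}}^{U}(R\otimes_{C_{n}}S^{1})$ in $\Com^{S^{1}}_{U}$.  The forgetful functor in the proposition factors as $\aI_{U}^{\bR^{\infty}}$ followed by $\iota^{*}$, and $\aI_{\bR^{\infty}}^{U}$ is its left adjoint (being the inverse equivalence); composing with the adjunction $(-)\otimes_{C_{n}}S^{1}\dashv \iota^{*}$ established in the first paragraph yields the required left adjoint.  The main technical obstacle is the identification in step two: the twisted face $d_{q}$ in Definition~\ref{defn:ncntos1} combines a cyclic permutation of smash factors with multiplication by $g$, and one must verify carefully that after realization these combine to give exactly the two arrows in the coequalizer defining $R\otimes_{C_{n}}S^{1}$; once this bookkeeping is settled in the free case, the rest of the argument proceeds formally.
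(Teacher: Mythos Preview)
Your proposal is correct and follows essentially the same approach that the paper intends: the paper states this proposition as the direct analogue of Proposition~\ref{prop:commuT} and does not give a separate proof, while the introduction already records the identification $N_{C_{n}}^{S^{1}}R\iso \aI_{\bR^{\infty}}^{U}(R\otimes_{C_{n}}S^{1})$ via the coequalizer you wrote down and says that ``choosing an appropriately subdivided model of the circle produces the isomorphism between the two descriptions.'' Your argument carries out exactly this adaptation, using the $\Lambda^{\op}_{n}$-structure in place of the cyclic structure and the relative tensor $R\otimes_{C_{n}}S^{1}$ in place of $R\otimes S^{1}$; the reduction to the free case followed by the reflexive coequalizer extension matches the structure of the proof of Proposition~\ref{prop:commuT}.
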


We now describe the homotopical properties of the relative norm.  The
following analogue of Theorem~\ref{thm:Tder} has the same proof.

\begin{theorem}
Let $R \to R'$ be a weak equivalence of cofibrant associative ring
orthogonal $C_{n}$-spectra.  Then $N_{C_{n}}^{S^{1}}R \to
N_{C_{n}}^{S^{1}}R'$ is a $\aFin$-equivalence. 
\end{theorem}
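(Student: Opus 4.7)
The plan is to run the argument for Theorem~\ref{thm:Tder} with the obvious $C_n$-relative modifications. First I would set up the analogue of Theorem~\ref{thm:sdcyc} in this setting: for any finite subgroup $C_m < S^1$, I would identify $\iota_{C_{mn}}^* N_{C_n}^{S^1} R$ as the geometric realization of a cyclic bar construction in $\Spec^{C_{mn}}$. Concretely, using the $m$-fold edgewise subdivision of the $\Lambda_n^{\op}$-object $N^{\cyc,C_n}_{\sma} R$, one obtains a simplicial orthogonal $C_{mn}$-spectrum whose $q$-simplices are $N^{C_{mn}}_{C_n} R \sma (N^{C_{mn}}_{C_n} R)^{\sma q}$, with bimodule structure twisted by the generator of $C_{mn}/C_n$ (in direct analogy with the $\g{(-)}$ construction in Theorem~\ref{thm:sdcyc}). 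Combined with the diagonal isomorphism from Theorem~\ref{thm:normiso}, this produces natural diagonal isomorphisms assembling into an analogue of Theorem~\ref{thm:cyclodiag}, exhibiting a precyclotomic-like structure on $N_{C_n}^{S^1}R$ when $R$ is cofibrant.

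Second, I would use these diagonal isomorphisms, together with the analogue of Theorem~\ref{thm:levelwisefp}, to reduce the claim that $N_{C_n}^{S^1}R \to N_{C_n}^{S^1}R'$ is an $\aFin$-equivalence to the statement that the underlying map of non-equivariant orthogonal spectra $|N^{\cyc,C_n}_{\sma}(\aI_{\tilde U}^{\bR^\infty} R)| \to |N^{\cyc,C_n}_{\sma}(\aI_{\tilde U}^{\bR^\infty} R')|$ is a weak equivalence. The point is that the diagonal isomorphisms identify each $\Phi^{C_m} N_{C_n}^{S^1}R$ with something built functorially out of smash powers of $R$, so a non-equivariant weak equivalence on the realizations propagates to all the geometric fixed points for finite subgroups of $S^1$.

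Third, I would verify the underlying non-equivariant weak equivalence directly. A cofibrant associative ring orthogonal $C_n$-spectrum is in particular cofibrant as an underlying orthogonal spectrum (since the generating cofibrations in the $C_n$-equivariant model structure on associative ring orthogonal spectra are built from $C_{n+} \sma_H (-)$ applied to cell maps). Thus at each simplicial degree $q$, the map $R^{\sma(q+1)} \to R'^{\sma(q+1)}$ is a weak equivalence of orthogonal spectra. The simplicial objects $N^{\cyc,C_n}_{\sma}R$ and $N^{\cyc,C_n}_{\sma}R'$ are proper by the same argument as in Lemma~\ref{lem:proper} (the only new feature is the twist $\alpha_q$ in the last face map, which is an isomorphism and so preserves the $h$-cofibration property of the latching maps), so the map on geometric realizations is a weak equivalence.

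The main obstacle will be verifying the analogue of Theorem~\ref{thm:sdcyc} for $\iota_{C_{mn}}^* N_{C_n}^{S^1}R$, since $N^{\cyc,C_n}_{\sma}R$ is only a $\Lambda_n^{\op}$-object rather than a genuine cyclic object, and the twist $\alpha_q$ in the last face map interacts nontrivially with the $C_n$-action on $R$. The bookkeeping for the twisted bimodule structure at the level of the edgewise subdivision, and the identification of the resulting $C_{mn}$-action, is where the argument departs most visibly from the $R$ non-equivariant case, though the underlying strategy is unchanged.
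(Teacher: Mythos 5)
The skeleton of your argument (edgewise subdivision, properness, level-by-level weak equivalences) matches the paper's intent, but the reduction in your second paragraph has a genuine gap. You assert that the $\aFin$-equivalence reduces to the underlying non-equivariant weak equivalence of $|N^{\cyc,C_n}_{\sma}R|\to|N^{\cyc,C_n}_{\sma}R'|$, on the grounds that the diagonal isomorphisms identify each $\Phi^{C_m}N_{C_n}^{S^1}R$ with ``something built functorially out of smash powers of $R$.'' This is false exactly when $\gcd(m,n)>1$. Applying Theorem~\ref{thm:normiso} with $G=C_{mn}$, $H=C_n$, $K=C_m$, the diagonal identifies $\Phi^{C_m}N_{C_n}^{C_{mn}}(R^{\sma(q+1)})$ with a norm applied to $\Phi^{C_d}(R^{\sma(q+1)})$ for $d=\gcd(m,n)$. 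This depends on the \emph{geometric fixed points} $\Phi^{C_d}R$, not on smash powers of $R$, and the non-equivariant weak equivalence of the realizations gives you no control over $\Phi^{C_d}R\to\Phi^{C_d}R'$. (This is the same reason the paper only produces a $p$-cyclotomic structure on $N_{C_n}^{S^1}R$ for $p\nmid n$: for $p\mid n$ the structure map would go through $\Phi^{C_p}R$.)

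The fix is that the hypothesis ``$R\to R'$ is a weak equivalence of cofibrant orthogonal $C_n$-spectra'' must be used in its full equivariant strength, not only at the underlying level. For each finite $C_m<S^1$, one shows directly that $\Phi^{C_m}N_{C_n}^{S^1}R\to\Phi^{C_m}N_{C_n}^{S^1}R'$ is a weak equivalence of orthogonal spectra: by properness $\Phi^{C_m}$ commutes with realization, so it suffices to check level by level after $m$-fold edgewise subdivision, where the map is $\Phi^{C_m}N_{C_n}^{C_{mn}}(R^{\sma(q+1)})\to\Phi^{C_m}N_{C_n}^{C_{mn}}(R'^{\sma(q+1)})$. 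Since $R$ and $R'$ are cofibrant as orthogonal $C_n$-spectra, so are their smash powers, and both the norm (Theorem~\ref{thm:derivenorm}) and the geometric fixed points (Theorem~\ref{thm:geoder}) preserve weak equivalences between cofibrant objects; this is where the $C_n$-equivariant hypothesis on $R\to R'$ enters. Your third paragraph is correct but only treats the case $m=1$.
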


In the commutative case, we have the following analogue of
Proposition~\ref{prop:Tcommquillen} (also using an identical proof).

\begin{theorem}
Regarded as a functor on commutative ring orthogonal $C_{n}$-spectra, the
functor $N_{C_{n}}^{S^{1}}$ is a left Quillen functor with respect to
the positive complete model structure on $\Com^{C_{n}}$ and the positive complete
$\aFin$-model structure on $\Com^{S^1}_{U}$.
\end{theorem}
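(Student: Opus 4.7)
The plan is to imitate the argument for Proposition~\ref{prop:Tcommquillen}: by the preceding proposition, $N_{C_n}^{S^1}$ is already identified as the left adjoint to the forgetful functor $\iota^* \colon \Com^{S^1}_U \to \Com^{C_n}_{\tilde U}$, so by the standard recognition criterion for Quillen adjunctions it suffices to verify that $\iota^*$ preserves fibrations and acyclic fibrations. Both checks will reduce to the observation that every closed subgroup of $C_n$ is finite, hence lies in the family $\aFin$ of finite subgroups of $S^1$.

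For the weak equivalences, an $\aFin$-equivalence of commutative ring orthogonal $S^1$-spectra is by definition a map inducing isomorphisms on $\pi_*^H$ for every $H\in\aFin$, while a stable equivalence of commutative ring orthogonal $C_n$-spectra requires isomorphisms on $\pi_*^H$ for every closed $H<C_n$. Since each such $H$ is a finite cyclic subgroup of $S^1$ and therefore lies in $\aFin$, and since $\pi_*^H$ is computed compatibly before and after applying $\iota^*_{C_n}$, every $\aFin$-equivalence restricts to a stable equivalence of $C_n$-spectra.

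For the fibrations, Theorems~\ref{thm:normstablemult} and~\ref{thm:fmodel} characterize the fibrations on either side as those maps whose underlying map of orthogonal spectra has the right lifting property against generating acyclic cofibrations of the form $G_+\sma_H j$, where $j$ is a generating positive acyclic cofibration of orthogonal $H$-spectra and $H$ ranges over the relevant family of subgroups (all finite $H<S^1$ on the $\aFin$-$S^1$-side; all closed $H<C_n$ on the $C_n$-side). By the Frobenius adjunction $G_+\sma_H(-)\dashv \iota^*_H$, lifting against $G_+\sma_H j$ is equivalent to the image under $\iota^*_H$ having the lifting property against $j$; since $\iota^*_H\iota^*_{C_n}=\iota^*_H$ for $H<C_n$, and the index set of subgroups on the $C_n$-side is contained in that on the $S^1$-side, every $\aFin$-fibration restricts to a positive complete stable fibration. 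This shows $\iota^*$ is a right Quillen functor, so $N_{C_n}^{S^1}$ is a left Quillen functor. The argument is essentially formal; the only substantive content is the containment of families of subgroups, which is immediate.
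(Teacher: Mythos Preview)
Your proof is correct and follows exactly the approach the paper takes: the paper's proof of the analogous Proposition~\ref{prop:Tcommquillen} is the single sentence ``The forgetful functor preserves fibrations and acyclic fibrations,'' and the present theorem is stated to have an identical proof. Your elaboration---reducing to the observation that every closed subgroup of $C_n$ lies in $\aFin$, and using the induction/restriction adjunction to compare lifting conditions---is precisely the content behind that sentence; the only minor imprecision is that for the \emph{stable} model structures the generating acyclic cofibrations include stabilizing maps beyond the level $j$'s, but these are likewise of the form $G_+\sma_H(\text{map of }H\text{-spectra})$, so your Frobenius-adjunction argument goes through unchanged.
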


We now turn to the question of the cyclotomic structure.

\begin{theorem}
Let $R$ be a cofibrant associative ring orthogonal $C_{n}$-spectrum.
If $p$ is prime to $n$, then $N_{C_{n}}^{S^{1}}R$
has the natural structure of a $p$-cyclotomic spectrum.
\end{theorem}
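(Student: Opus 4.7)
The plan is to mimic the construction of the cyclotomic structure on the absolute norm from Theorems~\ref{thm:cyclodiag} and~\ref{thm:levelwisefp}, replacing the cyclic bar construction $N^{\cyc}_{\sma}R$ by its $\Lambda_{n}^{\op}$-analogue $N^{\cyc,C_{n}}_{\sma}R$ of Definition~\ref{defn:ncntos1}. The essential algebraic input, which is precisely where the hypothesis $\gcd(n,p) = 1$ enters, is that inside $C_{np}$ the subgroups $C_{n}$ and $C_{p}$ satisfy $C_{n}\cap C_{p}=\{e\}$ and $C_{n}\cdot C_{p}=C_{np}$, so that $C_{np}/C_{p}\iso C_{n}$.

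First I would construct a $p$-fold edgewise subdivision $\sd_{p}N^{\cyc,C_{n}}_{\sma}R$ of our $\Lambda_{n}^{\op}$-object, along the lines of the subdivision machinery in~\cite[\S 1]{BHM}; the extra $C_{p}$-action carried by the subdivision is inherited from the $\alpha_{q}$-operators. Its realization, after change of universe, is isomorphic to $\iota_{C_{p}}^{*}N_{C_{n}}^{S^{1}}R$. Level $k$ of the subdivision is $R^{\sma p(k+1)}$, which, promoted to the complete $C_{np}$-universe, is canonically the indexed smash-power $N_{C_{n}}^{C_{np}}(R^{\sma(k+1)})$ with its block-permutation $C_{p}$-action. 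I would then apply Theorem~\ref{thm:normiso} with $G=C_{np}$, $H=C_{n}$, $K=C_{p}$: coprimality gives $H\cap K=\{e\}$ and $HK/K\iso C_{n}$, yielding a natural isomorphism of orthogonal $C_{n}$-spectra
\[
\Delta \colon R^{\sma(k+1)} \overto{\iso} \Phi^{C_{p}} N_{C_{n}}^{C_{np}}R^{\sma(k+1)}.
\]
After checking naturality of $\Delta$ with respect to the faces, degeneracies, and $\alpha_{q}$-operators of $N^{\cyc,C_{n}}_{\sma}R$, these assemble to an isomorphism of $\Lambda_{n}^{\op}$-objects. Geometric realization and change of universe then produce the precyclotomic structure isomorphism
\[
\tau_{p}\colon N_{C_{n}}^{S^{1}}R \overto{\iso} \rho_{p}^{*}\Phi^{C_{p}}N_{C_{n}}^{S^{1}}R,
\]
in exact analogy with Theorem~\ref{thm:cyclodiag}.

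To upgrade this precyclotomic structure to a genuine $p$-cyclotomic structure, I would follow the template of Theorem~\ref{thm:levelwisefp} and Lemma~\ref{lem:proper}: cofibrancy of $R$ as an associative ring orthogonal $C_{n}$-spectrum makes each simplicial level cofibrant (over the appropriate universe) and each latching map an $h$-cofibration, so the $p$-subdivided simplicial object is proper. Properness ensures that the point-set functor $\Phi^{C_{p}}$ commutes with the cell filtration of the geometric realization, hence agrees on $N_{C_{n}}^{S^{1}}R$ with the left derived functor $L\Phi^{C_{p}}$. Combined with the isomorphism $\tau_{p}$, this shows that $N_{C_{n}}^{S^{1}}R$ is $p$-cyclotomic in the sense of Definition~\ref{def:pcyclotomic}.

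The main obstacle will be the compatibility check for the twisted last face map $d_{q}=d_{0}\circ \alpha_{q}$ of Definition~\ref{defn:ncntos1}: the twist by the generator $g\in C_{n}$ must be shown to agree under $\Delta$ with the corresponding $\alpha$-twist on $N^{\cyc,C_{n}}_{\sma}R$ after applying $\Phi^{C_{p}}$ and the change of universe. Coprimality is essential here, because under the decomposition $C_{np}\iso C_{n}\times C_{p}$, the element $g$ becomes a generator of the quotient $C_{np}/C_{p}$, and Proposition~\ref{prop:diagdiag} then identifies $\Phi^{C_{p}}(\aI_{\bR^{\infty}}^{\tilde U}g)$ with $\aI_{\bR^{\infty}}^{U^{C_{p}}}\bar g$ acting as precisely the generator of the residual $C_{n}$. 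Without the coprimality hypothesis, $g$ would descend to a proper subgroup of that residual $C_{n}$, the twists would fail to match, and the simplicial map underlying $\tau_{p}$ could not be constructed from the diagonal alone --- one would need additional ``$C_{n}$-cyclotomic'' data on $R$.
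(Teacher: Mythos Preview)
Your proposal is correct and follows essentially the same approach as the paper: identify the $C_{pn}$-restriction of $N_{C_{n}}^{S^{1}}R$ as the realization of a simplicial $C_{pn}$-spectrum with $k$-simplices $N_{C_{n}}^{C_{pn}}(R^{\sma(k+1)})$, apply the diagonal of Proposition~\ref{prop:diag}/Theorem~\ref{thm:normiso} with $(G,H,K)=(C_{np},C_{n},C_{p})$ levelwise to obtain $\tau_{p}$, and invert. The paper's proof is terser---it omits the properness argument comparing $\Phi^{C_{p}}$ with $L\Phi^{C_{p}}$ and the explicit $d_{q}$-compatibility check you flag---but your added detail is accurate and in the spirit of the earlier Theorems~\ref{thm:cyclodiag} and~\ref{thm:levelwisefp}.
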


\begin{proof}
As in the proof of Theorem~\ref{thm:cyclodiag}, we can identify
$\iota^{*}_{C_{pn}}N_{C_{n}}^{S^{1}} R$ as the geometric realization
of a simplicial orthogonal $C_{pn}$-spectrum of the form
\[
N_{C_{n}}^{C_{pn}}(R^{\sma(\ssdot+1)}).
\]
Since $p$ is prime to $n$, by Proposition \ref{prop:diag} we have a diagonal map $R^{\sma(q+1)}\to
\Phi^{C_{p}}N_{C_{n}}^{C_{pn}} R^{\sma(q+1)}$, which again 
commutes with the simplicial structure and induces a diagonal map
\[
\tau_{p}\colon 
N_{C_{n}}^{S^{1}}R\to 
\rho^{*}_{p}\Phi^{C_{p}}N_{C_{n}}^{S^{1}}R.
\]
Under the hypothesis that $R$ is cofibrant as an orthogonal
$C_{n}$-spectrum, Theorem~\ref{thm:normiso} shows that the diagonal map
$R^{\sma(q+1)}\to \Phi^{C_{p}}N_{C_{n}}^{C_{pn}}R^{\sma(q+1)}$ is an
isomorphism, and it follows that $\tau_{p}$ is an isomorphism.  The
inverse gives the $p$-cyclotomic structure map.
\end{proof}

As usual, we can construct $TR_{C_{n}} R$ and $TC_{C_{n}} R$ from the
cyclotomic structure on $N_{C_{n}}^{S^{1}} R$.  And as before, when
$R$ only has the homotopy type of a cofibrant object, application of
Proposition~\ref{prop:zconsist} allows us to work with $\opTR_{C_{n}}$
and $\opTC_{C_{n}}$.

When $p$ divides $n$, the diagonal map is of the form
\[
N_{C_{n/p}}^{S^{1}}\Phi^{C_{p}}R\to \Phi^{C_{p}}N_{C_{n}}^{S^{1}} R,
\]
and is an isomorphism when $R$ is cofibrant as an orthogonal
$C_{n}$-spectrum or as a commutative ring orthogonal
$C_{n}$-spectrum.  In these cases, we can get a $p$-cyclotomic structure
map if we have one on $R$ of the following form.

\begin{definition}\label{defn:cncyclo}
For $p \mid n$, a $C_{n}$ $p$-cyclotomic spectrum consists of an orthogonal
$C_{n}$-spectrum $X$ together with a map of orthogonal $C_{n}$-spectra
\[
t\colon N_{C_{n/p}}^{C_{n}}\Phi^{C_{p}}X\to X
\]
that induces a weak equivalence to $X$ from the derived composite functor.
\end{definition}

\begin{proposition}
Assume $p \mid n$ and 
let $R$ be an associative ring orthogonal $C_{n}$-spectrum with a
$C_{n}$ $p$-cyclotomic structure such that the structure map $t$ is a
ring map.  Then $N_{C_{n}}^{S^{1}}R$ has the natural
structure of a $p$-cyclotomic spectrum.
\end{proposition}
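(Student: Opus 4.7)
The plan is to construct the $p$-cyclotomic structure map
\[
\tau_p \colon \rho_p^* \Phi^{C_p} N_{C_n}^{S^1} R \to N_{C_n}^{S^1} R
\]
by applying the $C_n$ $p$-cyclotomic structure map $t$ levelwise in an edgewise-subdivided simplicial model of $N_{C_n}^{S^1} R$. First I would establish a relative analogue of Theorem~\ref{thm:sdcyc}: by subdividing the $\Lambda_n^{\op}$-object $N^{\cyc,C_n}_\sma R$ of Definition~\ref{defn:ncntos1} by a factor of $p$, one identifies $\iota^*_{C_{np}} N_{C_n}^{S^1} R$ with the realization of a simplicial orthogonal $C_{np}$-spectrum whose $q$-th level is (up to change of universe) $N_{C_n}^{C_{np}}(R^{\sma(q+1)})$, with face and degeneracy maps inherited from the $\Lambda_n^{\op}$-structure. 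Cofibrancy of $R$ forces the corresponding simplicial object to be proper, as in Lemma~\ref{lem:proper}, so that the point-set functor $\Phi^{C_p}$ commutes with its realization.

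Next I would apply the diagonal isomorphism of Theorem~\ref{thm:normiso} with $G = C_{np}$, $K = C_p \lhd G$, and $H = C_n$. Since $p \mid n$, one has $H \cap K = C_p$, $HK/K \cong C_{n/p}$, and $G/K \cong C_n$, so the diagonal map
\[
N_{C_{n/p}}^{C_n} \Phi^{C_p}(R^{\sma(q+1)}) \to \Phi^{C_p} N_{C_n}^{C_{np}}(R^{\sma(q+1)})
\]
is an isomorphism at each simplicial level. Because $R$ is cofibrant, $\Phi^{C_p}$ is strongly monoidal on its smash powers, and $N_{C_{n/p}}^{C_n}$ is always strongly monoidal, so the left-hand side is canonically isomorphic to $(N_{C_{n/p}}^{C_n} \Phi^{C_p} R)^{\sma(q+1)}$; composing with $t^{\sma(q+1)}$ produces a levelwise map back to $R^{\sma(q+1)}$.

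The main obstacle will be verifying that these levelwise maps assemble into a morphism of $\Lambda_n^{\op}$-objects. Compatibility with the inner face maps $d_i$ for $0 \le i < q$ and with the degeneracies is immediate from $t$ being a ring map together with the monoidality identifications. The substantive check is compatibility with the rotation operators $\alpha_q$ (and therefore with $d_q = d_0 \circ \alpha_q$), where the cyclic action mixes a block permutation with an internal twist by the canonical generator of $C_n$; this reduces, via Proposition~\ref{prop:diagdiag} and the relative analogue of Lemma~\ref{lem:multcompat}, to the $C_n$-equivariance of $t$. Once $\tau_p$ has been constructed as a map of orthogonal $S^1$-spectra (after realization and the change of universe $\aI_{\bR^\infty}^U$, using that pullback along $\rho_p$ converts the residual $S^1/C_p$-action on $\Phi^{C_p}$ into the desired $S^1$-action), the requisite $\aF_p$-equivalence property on the derived geometric fixed points will follow from the assumption on $t$, combined with properness of the subdivided simplicial object (so that realization preserves levelwise weak equivalences) and the argument of Theorem~\ref{thm:levelwisefp} identifying the point-set and derived geometric fixed point functors in this setting.
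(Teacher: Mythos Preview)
Your proposal is correct and amounts to a detailed unpacking of the argument the paper leaves implicit. The paper gives no separate proof of this proposition; it relies on the paragraph immediately preceding Definition~\ref{defn:cncyclo}, which asserts that when $p\mid n$ the diagonal map has the form
\[
N_{C_{n/p}}^{S^{1}}\Phi^{C_{p}}R \longrightarrow \rho_p^*\Phi^{C_{p}}N_{C_{n}}^{S^{1}} R
\]
and is an isomorphism for cofibrant $R$. The intended construction is then the composite: invert this diagonal, identify $N_{C_{n/p}}^{S^{1}}\cong N_{C_n}^{S^1}\circ N_{C_{n/p}}^{C_n}$ by transitivity of norms, and apply the functor $N_{C_n}^{S^1}$ to the ring map $t$. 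Your simplicial-level construction, applying $t^{\sma(q+1)}$ after the monoidality identifications and checking compatibility with the $\Lambda_n^{\op}$-operators, is exactly what underlies both the diagonal isomorphism and the transitivity identification, so the two approaches coincide once unwound.

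Two minor remarks. First, you correctly impose cofibrancy of $R$ to make the levelwise diagonals isomorphisms and $\Phi^{C_p}$ strongly monoidal; the proposition as stated omits this hypothesis, but the surrounding discussion in the paper makes clear it is intended. Second, the paper's more global formulation has the advantage that the simplicial compatibility check you flag as ``the main obstacle'' is absorbed into the functoriality of $N_{C_n}^{S^1}$ on ring maps together with the already-established $S^1$-equivariance of the diagonal; your explicit verification via Proposition~\ref{prop:diagdiag} is what that functoriality reduces to.
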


At present, we do not know if the previous proposition is interesting.
For any (non-equivariant) ring orthogonal spectrum $R'$,
$R=N_{e}^{C_{n}}R'$ satisfies the hypothesis of the previous
proposition, and $N_{C_{n}}^{S^{1}}R\iso N_{e}^{S^{1}}R'$. The
assumptions on $t$ imply that such an $R$ must be a norm from a
subgroup of order relatively prime to $p$; however, the structure map
$t$ does not necessarily preserve it.

\section{Spectral sequences for $\ATR$}\label{sec:spectralsequences}

In this section we present four spectral sequences for computing
$\ATR$.  In each case we actually have two spectral sequences, one
graded over the integers and a second graded over $RO(S^{1})$.  We
follow the modern convention of denoting an integral grading with $*$
and an $RO(S^{1})$-grading with $\star$.
Although the two look formally similar, they are very different
computationally, for reasons explained in the introduction to
\cite{LewisMandell2}: the Tor terms are computed using very different
notions of projective module.  Specifically, for $V$ a non-trivial
representation $\upi^{(-)}_{*}(\Sigma^{V}R)$ cannot be expected to be
projective as a $\upi^{(-)}_{*}R$ Mackey functor module; however, 
$\upi^{(-)}_{\star}(\Sigma^{V}R)$ is of course 
projective as a $\upi^{(-)}_{\star}R$ Mackey functor module, being
just a shift of the free module $\upi^{(-)}_{\star}R$.

\subsection{The absolute to relative spectral sequence}

The equivariant homotopy groups $\pi^{C_{n}}_{*}(N_{e}^{S^{1}}R)$ are the
$TR$-groups $TR^{n}_{*}(R)$ and so $\pi^{C_{n}}_{*}(\AN_{e}^{S^{1}}R)$
are by definition the relative $TR$-groups $\ATR^{n}_{*}(R)$.  

\begin{notation}Let
\begin{align*}
TR^{(-)}_{*}(R)&=\upi^{(-)}_{*}(N_{e}^{S^{1}}(R))
&TR^{(-)}_{\star}(R)&=\upi^{(-)}_{\star}(N_{e}^{S^{1}}(R))\\
\ATR^{(-)}_{*}(R)&=\upi^{(-)}_{*}(\AN_{e}^{S^{1}}(R))
&\ATR^{(-)}_{\star}(R)&=\upi^{(-)}_{\star}(\AN_{e}^{S^{1}}(R))\\
\end{align*}
\end{notation}

Using the isomorphism of Proposition~\ref{prop:extscal}
\[
\AN_{e}^{S^{1}}(R)\iso N_{e}^{S^{1}}(R)\sma_{N_{e}^{S^{1}}A}A_{S^{1}},
\]
we can apply the K\"unneth spectral sequences of \cite{LewisMandell2}
to compute the relative $TR$-groups from the absolute $TR$-groups and
Mackey functor $\uTor$.  Technically, to apply \cite{LewisMandell2}
and for ease of statement, we restrict to a finite subgroup
$H<S^{1}$.  Recall that for a commutative ring orthogonal spectrum
$A$, $A_H$ denotes $I_{\bR^{\infty}}^{\tilde{U}} A$ where $\tilde{U}$
is the complete $S^{1}$-universe regarded as a complete $H$-universe,
and we regard $A$ as an $H$-trivial orthogonal $H$-spectrum. 

\begin{theorem}
Let $A$ be a cofibrant commutative ring orthogonal spectrum and let
$R$ be a cofibrant associative $A$-algebra or cofibrant commutative
$A$-algebra.  For each finite subgroup $H<S^{1}$, there is a natural
strongly convergent spectral sequence of $H$-Mackey functors
\[
\uTor^{TR^{(-)}_{*}(A)}_{*,*}
(TR^{(-)}_{*}(R),\upi_{*}^{(-)}(A_{H}))
\quad\Longrightarrow\quad \ATR^{(-)}_{*}(R),
\]
compatible with restriction among finite subgroups of $S^{1}$.
\end{theorem}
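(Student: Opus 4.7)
The plan is to derive this spectral sequence as a direct application of the Lewis--Mandell K\"unneth spectral sequence~\cite{LewisMandell2} to the smash product decomposition from Proposition~\ref{prop:extscal}. Restricting to the finite subgroup $H<S^{1}$ we have an isomorphism of $\iota_{H}^{*}N_{e}^{S^{1}}A$-modules
\[
\iota_{H}^{*}\AN_{e}^{S^{1}}R\iso \iota_{H}^{*}N_{e}^{S^{1}}R\sma_{\iota_{H}^{*}N_{e}^{S^{1}}A}\iota_{H}^{*}A_{S^{1}},
\]
and by construction $\iota_{H}^{*}A_{S^{1}}\iso A_{H}$ as a commutative ring orthogonal $H$-spectrum. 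Under the hypotheses that $A$ is cofibrant and $R$ is cofibrant as an $A$-algebra or commutative $A$-algebra, Theorem~\ref{thm:relsmader} guarantees that this smash product also represents the derived smash product in the $\aFin$-model structure, which is the input required for the K\"unneth spectral sequence.

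First I would verify the ring and module cofibrancy hypotheses needed to invoke~\cite{LewisMandell2}: namely, that $\iota_{H}^{*}N_{e}^{S^{1}}A$ is a suitably nice commutative ring orthogonal $H$-spectrum and that $\iota_{H}^{*}N_{e}^{S^{1}}R$ and $A_{H}$ are modules over it with well-behaved homotopy groups. For the former, $N_{e}^{S^{1}}A$ is built from norms $N_{e}^{H}A$ (which are flat by~\cite[B.147]{HHR}) via a bar-type construction of flat objects over $h$-cofibrations, as used in the flatness argument in the proof of Theorem~\ref{thm:relsmader}; this makes the module $\iota_{H}^{*}N_{e}^{S^{1}}R$ flat as well. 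Once inside the framework of~\cite{LewisMandell2}, the standard K\"unneth spectral sequence immediately gives strongly convergent spectral sequences
\[
E^{2}_{*,*}=\uTor^{\upi_{*}^{(-)}(\iota_{H}^{*}N_{e}^{S^{1}}A)}_{*,*}\bigl(\upi_{*}^{(-)}(\iota_{H}^{*}N_{e}^{S^{1}}R),\upi_{*}^{(-)}(A_{H})\bigr)\Longrightarrow\upi_{*}^{(-)}(\iota_{H}^{*}\AN_{e}^{S^{1}}R).
\]

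Next I would identify the terms with the notation of the statement. By definition $\upi_{*}^{(-)}(\iota_{H}^{*}N_{e}^{S^{1}}R)=TR^{(-)}_{*}(R)$ as an $H$-Mackey functor (and similarly for $A$), and $\upi_{*}^{(-)}(\iota_{H}^{*}\AN_{e}^{S^{1}}R)=\ATR^{(-)}_{*}(R)$. This rewrites the spectral sequence in the form claimed in the statement. The $RO(S^{1})$-graded version is obtained identically by appealing to the $RO(H)$-graded K\"unneth spectral sequence of~\cite{LewisMandell2}; the pairings on $\upi_{\star}^{(-)}$ are what make the $E^{2}$-term formally a $\uTor$ of Mackey functor modules, albeit with the radically different notion of projectivity noted in the section introduction.

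The main obstacle, and the last step, is compatibility with the restriction maps between different finite subgroups $H'<H<S^{1}$. For this I would exploit the naturality of the isomorphism in Proposition~\ref{prop:extscal}: the forgetful functor $\iota_{H'}^{H,*}$ is strong symmetric monoidal and commutes with $\aI_{\bR^{\infty}}^{(-)}$ and with realization of simplicial objects, so the decomposition $\AN_{e}^{S^{1}}R\iso N_{e}^{S^{1}}R\sma_{N_{e}^{S^{1}}A}A_{S^{1}}$ is natural under restriction of groups. The Lewis--Mandell construction of the K\"unneth spectral sequence proceeds via a bar construction resolution, which is preserved by the change-of-groups functor $\iota_{H'}^{H,*}$; hence the induced map of spectral sequences on $E^{2}$-terms is the evident restriction map on Mackey functor $\uTor$, and the spectral sequence is natural in $H$ as claimed.
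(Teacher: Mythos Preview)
Your proposal is correct and follows essentially the same approach as the paper: the paper states the theorem as a direct application of the Lewis--Mandell K\"unneth spectral sequence~\cite{LewisMandell2} to the derived smash product identification of Proposition~\ref{prop:extscal} (with Theorem~\ref{thm:relsmader} ensuring the smash product is derived), after restricting to a finite subgroup $H<S^{1}$. The paper's brief justification of the restriction compatibility emphasizes the algebraic fact that free $K$-Mackey functor modules restrict to free $H$-Mackey functor modules (since finite $K$-sets restrict to finite $H$-sets), which is the underlying reason your bar-construction-naturality argument works.
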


Compatibility with restriction among finite subgroups of $S^{1}$
refers to the fact that for $H<K$, the restriction of the $K$-Mackey
functor $\uTor$ to an $H$-Mackey functor is canonically isomorphic to
the $H$-Mackey functor $\uTor$ and the corresponding isomorphism on
$E^{\infty}$-terms induces the same filtration on $\upi_{*}$.
(Free $K$-Mackey functor modules restrict to free
$H$-Mackey functor modules essentially because finite $K$-sets
restrict to finite $H$-sets.)

We also have corresponding K\"unneth spectral sequences graded on
$RO(H)$ for $H < S^{1}$ or $RO(S^{1})$.  We choose to state our
results in terms of the $RO(S^{1})$-grading because this makes the
behavior of the restriction among subgroups easier to describe; the
restriction maps $RO(S^1) \to RO(H)$ are surjective, and as a result
$\uTor$-groups calculated in $RO(H)$-graded homological algebra
restrict naturally to $\uTor$-groups calculated in
$RO(S^1)$-graded homological algebra. In the following theorem,
$\star$ denotes the $RO(S^{1})$-grading.

\begin{theorem}
Let $A$ be a cofibrant commutative ring orthogonal spectrum and let
$R$ be a cofibrant associative $A$-algebra or cofibrant commutative
$A$-algebra.  For each finite subgroup $H<S^{1}$, there is a natural
strongly convergent spectral sequence of $H$-Mackey functors
\[
\uTor^{TR^{(-)}_{\star}(A)}_{*,\star}
(TR^{(-)}_{\star}(R),\upi_{\star}^{(-)}(A_{H}))
\quad\Longrightarrow\quad \ATR^{(-)}_{\star}(R),
\]
compatible with restriction among finite subgroups of $S^{1}$.
\end{theorem}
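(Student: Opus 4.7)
The plan is to reduce the statement to the $RO$-graded K\"unneth spectral sequence of \cite{LewisMandell2}, applied to the base-change identification given by Proposition~\ref{prop:extscal} and Theorem~\ref{thm:relsmader}. The argument closely parallels that of the preceding integer-graded theorem; the only new input is the passage from $RO(H)$-grading to $RO(S^{1})$-grading together with compatibility of the spectral sequences under restriction between finite subgroups.

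First, for each finite $H<S^{1}$, I would restrict via $\iota^{*}_{H}$. Since restriction is symmetric monoidal and Proposition~\ref{prop:extscal} is natural,
\[
\iota^{*}_{H}\AN_{e}^{S^{1}}R \iso \iota^{*}_{H}N_{e}^{S^{1}}R \sma_{\iota^{*}_{H}N_{e}^{S^{1}}A} A_{H}.
\]
The flatness analysis from the proof of Theorem~\ref{thm:relsmader} (using \cite[B.147]{HHR} together with closure of flat modules under sequential $h$-cofibration colimits) shows that $\iota^{*}_{H}N_{e}^{S^{1}}R$ and $\iota^{*}_{H}N_{e}^{S^{1}}A$ are flat in $\Spec^{H}$, so this point-set smash product represents the derived smash product of $\iota^{*}_{H}N_{e}^{S^{1}}A$-modules. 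The $RO(H)$-graded K\"unneth spectral sequence of \cite{LewisMandell2} then yields a strongly convergent spectral sequence of $H$-Mackey functors, with $E^{2}$-term $\uTor$ over $\upi^{(-)}_{\star}(\iota^{*}_{H}N_{e}^{S^{1}}A)$, converging to $\upi^{(-)}_{\star}(\iota^{*}_{H}\AN_{e}^{S^{1}}R)$. Pulling back along the surjection $RO(S^{1})\twoheadrightarrow RO(H)$, which by construction identifies the $RO(H)$-graded homotopy of any $\iota^{*}_{H}X$ with the restriction of $\upi^{(-)}_{\star}(X)$, converts this into the claimed $RO(S^{1})$-graded form.

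Compatibility with restriction between $H<K<S^{1}$ is then essentially formal: Lewis--Mandell's $K$-equivariant spectral sequence is constructed from a resolution by free $K$-Mackey functor modules on orbits $K/L$, and restricting such an orbit along $\iota^{*}_{H}$ yields a free $H$-Mackey functor module on the finite $H$-set $K/L$. Hence restriction carries projective resolutions to projective resolutions and identifies the restriction of the $K$-Tor with the $H$-Tor, compatibly with the filtration on the abutments (which is itself obtained by restriction from the $K$-spectrum $\iota^{*}_{K}\AN_{e}^{S^{1}}R$). The main obstacle is thus not a deep homotopy-theoretic point but rather this bookkeeping: one must verify that the Lewis--Mandell construction, which a priori is built inside orthogonal $H$-spectra for a fixed $H$, assembles coherently under change of ambient subgroup and lifts canonically from $RO(H)$ to $RO(S^{1})$-grading. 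Both verifications reduce to the fact that the construction depends only on free resolutions of Mackey functor modules, the two-sided bar construction, and the functor $\upi^{(-)}_{\star}$, all of which commute with $\iota^{*}_{H}$ and with pull-back of grading, so no further input is required beyond the flatness already used in Theorem~\ref{thm:relsmader}.
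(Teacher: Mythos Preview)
Your proposal is correct and follows essentially the same approach as the paper: restrict to a finite subgroup $H$, apply Proposition~\ref{prop:extscal} (with Theorem~\ref{thm:relsmader} ensuring the smash product is derived) to feed into the Lewis--Mandell K\"unneth spectral sequence, and then handle the $RO(S^{1})$-grading via the surjection $RO(S^{1})\to RO(H)$ together with the observation that free $K$-Mackey functor modules restrict to free $H$-Mackey functor modules. The paper in fact does not give a separate proof of this theorem beyond the surrounding discussion, so your write-up simply makes explicit what the paper leaves as an immediate consequence of that setup.
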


\subsection{The simplicial filtration spectral sequence}

The spectral sequence of the preceding subsection essentially gives a
computation of the relative theory in terms of absolute theory.  More
often we expect to use the relative theory to compute the absolute
theory.  Non-equivariantly, the isomorphism
\begin{equation}\label{eq:changebaseringtxt}
THH(R) \sma A \cong \ATHH(R \sma A)
\end{equation}
gives rise to a K\"unneth spectral sequence
\[
\Tor^{A_*(R \sma_{S} R^{\op})}_{*,*}(A_*(R), A_*(R))
\quad\Longrightarrow\quad A_* (THH(R)).
\]
As employed by B\"okstedt, an Adams spectral sequence can then in
practice be used to compute the homotopy groups of $THH(R)$.  For
formal reasons, the isomorphism~\eqref{eq:changebaseringtxt} still
holds equivariantly, but now we have three different versions of the
non-equivariant K\"unneth spectral sequence (none of which have quite
as elegant an $E^{2}$-term) which we use in conjunction with
equation~\eqref{eq:changebaseringtxt}.

The first equivariant spectral sequence generalizes the K\"unneth
spectral sequence in the special case when $\pi_{*}A$ is a field.
Non-equivariantly, it derives from the simplicial filtration of the
cyclic bar construction; equivariantly, we restrict to a finite
subgroup $H<S^{1}$ and look at the simplicial filtration on the
$n$th edgewise subdivision (described in the proof of
Theorem~\ref{thm:Tder}).

\begin{theorem}
Let $A$ be a cofibrant commutative ring orthogonal spectrum and let
$R$ be a cofibrant associative $A$-algebra or cofibrant commutative
$A$-algebra.  Let $H$ be a finite subgroup of $S^{1}$.
\begin{enumerate}
\item There is a natural
spectral sequence strongly converging to the integer graded $H$-Mackey functor
$\ATR^{(-)}_{*}(R)$ with $E^{1}$-term
\[
\uE^{1}_{s,t}=\upi_{t}(\AN_{e}^{H}(R^{\sma(s+1)})).
\]
\item There is a natural
spectral sequence strongly converging to the $RO(S^{1})-$
graded $H$-Mackey functor 
$\ATR^{(-)}_{\star}(R)$ with $E^{1}$-term
\[
\uE^{1}_{s,\tau}=\upi_{\tau}(\AN_{e}^{H}(R^{\sma(s+1)})).
\]
\end{enumerate}
The $E^{2}$-terms of both spectral sequences are compatible with
restriction among finite subgroups of $S^{1}$.
\end{theorem}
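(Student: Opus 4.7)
The plan is to obtain the spectral sequences as the standard skeletal filtration spectral sequences associated to a proper simplicial presentation of $\iota^{*}_{H}\AN_{e}^{S^{1}}R$. The starting point is the observation (used already in the proof of Theorem~\ref{thm:Tder} via Theorem~\ref{thm:sdcyc}) that for $H=C_{n}<S^{1}$, the $n$-fold edgewise subdivision of the cyclic bar construction gives an identification of orthogonal $H$-spectra
\[
\iota_{H}^{*}\AN_{e}^{S^{1}}R \iso \aI_{\bR^{\infty}}^{\tilde U}|\sd_{n}N^{\cyc}_{\sma_{A}}R|,
\]
whose simplicial $s$-th level is $R^{\sma_{A}n(s+1)}$ with the block $C_{n}$-action by cyclic permutation of blocks (together with internal twists as in Section~\ref{sec:THH}). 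After applying $\aI_{\bR^{\infty}}^{\tilde U}$, this simplicial level is canonically isomorphic to $\AN_{e}^{H}(R^{\sma_{A}(s+1)})$, essentially by Proposition~\ref{prop:basechange} together with the definition of the indexed smash product.

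The first main step is to verify that the simplicial object $\aI_{\bR^{\infty}}^{\tilde U}\sd_{n}N^{\cyc}_{\sma_{A}}R$ is proper in $A_{H}\Mod^{H}_{\tilde U}$. This is exactly the relative analogue of Lemma~\ref{lem:proper}: $\aI_{\bR^{\infty}}^{\tilde U}$ is a topological left adjoint so preserves $h$-cofibrations and pushouts, and the latching maps of $\sd_{n}N^{\cyc}_{\sma_{A}}R$ are $h$-cofibrations because $R$ is cofibrant as an associative or commutative $A$-algebra (the commutative case uses the standard EKMM-style analysis of iterated pushouts from a cofibrant commutative algebra, cf.~\cite[VII.7.5]{EKMM}). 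Properness means that the inclusions of skeleta $|\sk_{s-1}X\subdot|\hookrightarrow|\sk_{s}X\subdot|$ are $h$-cofibrations and the associated cofibers are $\Sigma^{s}(X_{s}/L_{s}X\subdot)$, so that the filtration by skeleta produces a strongly convergent homotopy spectral sequence on $H$-Mackey functor homotopy groups.

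The second step is to identify the $E^{1}$-term. The cofiber of the skeletal filtration at stage $s$ is weakly equivalent to $\Sigma^{s}\AN_{e}^{H}(R^{\sma_{A}(s+1)})$ modulo the degeneracies, and by the standard reindexing one obtains
\[
\uE^{1}_{s,t}=\upi_{t}^{(-)}(\AN_{e}^{H}(R^{\sma_{A}(s+1)})).
\]
One may either take this ``unnormalized'' $E^{1}$ or pass to the normalized version (which would replace $R^{\sma_{A}(s+1)}$ by $\bar R^{\sma_{A}(s+1)}$); the formulation in the statement records the unnormalized $E^{1}$. Strong convergence to $\ATR^{(-)}_{*}(R)$ is immediate from properness and the fact that the skeleta $|\sk_{s}|$ exhaust $|\sd_{n}N^{\cyc}_{\sma_{A}}R|$ as a filtered colimit of $h$-cofibrations. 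Compatibility with the restriction maps among finite subgroups $H'<H$ of $S^{1}$ follows because all of the constructions (edgewise subdivision, change of universe, relative norm) commute with $\iota^{*}_{H'}$ up to canonical natural isomorphism, and thus the filtration is preserved; the resulting natural maps on $E^{2}$-terms factor through the canonical restriction on the Mackey functor homotopy groups of each simplicial level.

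The $RO(S^{1})$-graded version is obtained verbatim: the skeletal filtration is a filtration by $h$-cofibrations of genuine orthogonal $H$-spectra, so indexing the long exact sequences on $\upi^{(-)}_{\tau}$ for $\tau\in RO(S^{1})$ (viewed as an $RO(H)$-grading via restriction) yields a spectral sequence with the analogous $E^{1}$-term converging to $\ATR^{(-)}_{\star}(R)$. The principal technical obstacle is the proof of properness in the commutative case, where the latching maps are iterated pushouts in $A\CAlg$ rather than cofibrations of underlying orthogonal spectra; once this is known, the rest of the argument is formal.
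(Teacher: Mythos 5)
Your overall strategy matches the paper's: the spectral sequence comes from the skeletal filtration of the geometric realization of the $|H|$-fold edgewise subdivision $\sd_{|H|}N^{\cyc}_{\sma_A}R$, whose $s$th level identifies with $\AN_e^H(R^{\sma_A(s+1)})$, with properness of the simplicial object (the relative analogue of Lemma~\ref{lem:proper}) supplying convergence. The paper offers no more proof than the remark preceding the theorem, so the elaboration you give is the intended argument.

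However, your compatibility-with-restriction argument contains a gap. You assert that ``all of the constructions (edgewise subdivision, change of universe, relative norm) commute with $\iota^*_{H'}$ ... and thus the filtration is preserved.'' That is not correct: the filtration on $\iota^*_{H}\AN_e^{S^1}R$ uses $\sd_{|H|}$, while the filtration used to build the $H'$-spectral sequence uses $\sd_{|H'|}$, and for $H'<H$ these are genuinely different filtrations with different $E^1$-terms. Concretely, for $H=C_{mn}$ and $H'=C_m$, restricting the $H$-filtration yields $E^1_{s,t}=\upi_t(\iota^*_{C_m}\AN_e^{C_{mn}}(R^{\sma_A(s+1)}))\cong \upi_t(\AN_e^{C_m}(R^{\sma_A n(s+1)}))$, which is not $\upi_t(\AN_e^{C_m}(R^{\sma_A(s+1)}))$. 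This is exactly why the theorem is stated as a compatibility of $E^2$-terms rather than $E^1$-terms. The correct argument, which the paper sketches after the theorem statement, uses $\sd_{mn}=\sd_n\sd_m$ together with the fact that the $n$-fold subdivision of a simplicial abelian group (or Mackey functor) induces an isomorphism on homology; the subdivision operators then give the required identification of $E^2$-terms. You should replace the claim that the filtration is preserved with this subdivision-invariance argument. The remainder of your proof --- properness, identification of $E^1$, the $RO(S^1)$-graded variant --- is fine.
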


To see the compatibility with restriction among subgroups, we note
that for $H=C_{mn}$, the $E^{2}$-term $(E^{2}_{*,\tau})^{C_{m}}$ is
the homology of the simplicial object 
\[
\sd_{n} \pi_{\star}^{C_{m}}((N_{e}^{C_{m}}A)^{\sma(\ssdot+1)}).
\]
For $H<K$, the subdivision operators then induce an isomorphism on
$E_{2}$-terms. 

In general, we do not know how to describe the $E^{2}$-term of these
spectral sequences.  One can formulate box-flatness hypotheses that
would permit the identification of the $E^{2}$-term as a kind of
Mackey functor Hochschild homology~\cite{AlgHoch}; however, such
hypotheses will rarely hold in practice.  On the other hand, when
$A=H\bF$ for $\bF$ a field, for formal reasons, the $E^{1}$-term is a
purely algebraic functor of the graded vector space $\pi_{*}R$.  We
conjecture that the $E^{2}$-term is a functor of the graded
$\bF$-algebra $\pi_{*}R$.

\subsection{The cyclic filtration spectral sequence}

We have a second spectral sequence arising from the filtration on
cyclic objects constructed by Fiedorowicz and Gajda
\cite{FiedorowiczGajda}.  Although they work in the context of spaces,
their arguments generalize to provide an $\aFin$-equivalence
\[
|EX\subdot|\to |X\subdot|
\]
for cyclic orthogonal spectra, where $E$ is the evident orthogonal
spectrum generalization of the construction in their Definition~1:
\[
EX\subdot = \int_{[m]\in \Lambda_{\text{face}}}X_{m}\sma \Lambda(\ssdot,[m])_{+}
\]
The proof of their Proposition~1 (which in fact only gives an
$\aFin$-equivalence for spaces) also applies in the orthogonal spectrum
context, substituting geometric fixed points for fixed points, to
prove the $\aFin$-equivalence for orthogonal spectra.  Change of
universe $\aI_{\bR^{\infty}}^{U}$ commutes with geometric realization,
and we use the coend filtration of $EX\subdot$ for
$X\subdot=N^{\cyc}_{\sma_{A}}R$ to obtain the following
Fiedorowicz-Gajda cyclic filtration spectral sequences.

\begin{theorem}
Let $A$ be a cofibrant commutative ring orthogonal spectrum and let
$R$ be a cofibrant associative $A$-algebra or cofibrant commutative
$A$-algebra.  Let $H$ be a finite subgroup of $S^{1}$.
\begin{enumerate}
\item There is a natural
spectral sequence of integer graded $H$-Mackey functors strongly converging to
$\ATR^{(-)}_{*}(R)$ with $E^{1}$-term 
\[
\uE^{1}_{s,t}=\upi_{t}(\aI_{\bR^{\infty}}^{U}(S^{1}_{+}\sma_{C_{s+1}}
R^{\sma(s+1)})).
\]
\item There is a natural
spectral sequence of $RO(S^{1})$-graded $H$-Mackey functors strongly
converging to 
$\ATR^{(-)}_{\star}(R)$ with $E^{1}$-term
\[
\uE^{1}_{s,\tau}=\upi_{\tau}(\aI_{\bR^{\infty}}^{U}(S^{1}_{+}\sma_{C_{s+1}}
R^{\sma(s+1)})).
\]
\end{enumerate}
The $E^{1}$-terms are compatible with restriction among finite subgroups
of $S^{1}$.
\end{theorem}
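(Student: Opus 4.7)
The plan is to build the spectral sequence from the coend filtration on the Fiedorowicz--Gajda replacement $EN^{\cyc}_{\sma_{A}}R$.  The paper has already noted that $|EX\subdot|\to |X\subdot|$ is an $\aFin$-equivalence for cyclic orthogonal spectra, so $\aI_{\bR^{\infty}}^{U}|EN^{\cyc}_{\sma_{A}}R|$ is $\aFin$-equivalent to $\AN_{e}^{S^{1}}R$ and therefore computes $\ATR^{(-)}_{*}(R)$ on all finite-subgroup fixed points.  The advantage of the $E$-construction for us is that the coend presentation $EX\subdot=\int^{[m]\in \Lambda_{\text{face}}}X_{m}\sma \Lambda(\ssdot,[m])_{+}$ has an evident filtration by the degree of $[m]$.

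Writing $F_{s}$ for the sub-$S^{1}$-spectrum of $|EN^{\cyc}_{\sma_{A}}R|$ obtained by truncating the coend to objects of degree $\leq s$, the $F_{s}$ assemble into an exhaustive filtration by $h$-cofibrations of orthogonal $S^{1}$-spectra.  The key identification is that the successive quotient $F_{s}/F_{s-1}$ is isomorphic to $S^{1}_{+}\sma_{C_{s+1}}R^{\sma_{A}(s+1)}$, with the $C_{s+1}$-action on the smash power coming from the cyclic structure of $N^{\cyc}_{\sma_{A}}R$ at level $s$ and the action on $S^{1}$ being rotation.  This rests on the computation that $|\Lambda(\ssdot,[s])|$ is homeomorphic to $S^{1}$ with a free $C_{s+1}$-action (arising from the cyclic automorphisms of $[s]$ in $\Lambda$), together with the observation that the coend collapses this free cyclic action against the cyclic structure of $N^{\cyc}_{\sma_{A}}R$.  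Since $\aI_{\bR^{\infty}}^{U}$ is a topological left adjoint and commutes with colimits, applying it yields a filtration of orthogonal $S^{1}$-spectra on the complete universe with the stated subquotients; the filtration spectral sequence then has the advertised $E^{1}$-page and strongly converges to $\ATR^{(-)}_{*}(R)$, with strong convergence guaranteed by the cofibrancy hypothesis on $R$ (a direct analogue of Lemma~\ref{lem:proper} applied to the coend filtration).  The $RO(S^{1})$-graded version is identical with $\upi_{\tau}$ replacing $\upi_{t}$, and compatibility with restriction among subgroups is built in because the coend, change of universe, smash power, and homotopy Mackey functors are all natural under restriction.

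The main obstacle will be the careful unwinding of the coend $\int^{[m]\in \Lambda_{\text{face}}}R^{\sma_{A}(m+1)}\sma \Lambda(\ssdot,[m])_{+}$ to extract the filtration quotients in exactly the form $S^{1}_{+}\sma_{C_{s+1}}R^{\sma_{A}(s+1)}$.  One must show that, modulo the lower filtration step, the remaining data identifies the free cyclic rotation action on $|\Lambda(\ssdot,[s])|\cong S^{1}$ with the cyclic permutation action on $R^{\sma_{A}(s+1)}$ coming from the cyclic structure; this is precisely what produces the $\sma_{C_{s+1}}$ rather than a plain smash product in the $E^{1}$-term, and it requires unpacking the Fiedorowicz--Gajda construction at the level of non-degenerate simplices of $\Lambda(\ssdot,[s])$ and matching them against the $C_{s+1}$-action on $N^{\cyc}_{\sma_{A}}R$ given by Theorem~\ref{thm:sdcyc}.
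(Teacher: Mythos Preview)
Your proposal is correct and follows essentially the same approach as the paper: the paper's argument is precisely the paragraph preceding the theorem, which sets up the Fiedorowicz--Gajda coend filtration on $EN^{\cyc}_{\sma_{A}}R$ and invokes their identification of the filtration quotients to read off the $E^{1}$-term. Your expanded discussion of the identification $F_{s}/F_{s-1}\simeq S^{1}_{+}\sma_{C_{s+1}}R^{\sma_{A}(s+1)}$ and the compatibility with restriction simply fills in the details the paper leaves to the cited reference.
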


\subsection{The relative cyclic bar construction spectral sequence}

The third spectral sequence directly involves Mackey functor $\uTor$.
For an $A$-algebra $R$, let $\gAN_e^{C_n} R$ denote the ($\AN^{C_n}_e
R$,$\AN^{C_n}_e R$)-bimodule obtained by twisting the left action of
$\AN_e^{C_n} R$ on $\AN_e^{C_n} R$ by the generator $g=e^{2\pi i/n}$
of $C_n$.  We can identify the $C_n$-homotopy type of $\AN_e^{S^1} R$
in terms of this bimodule,
\[
\AN_{e}^{S^{1}}R\iso \aI_{\tilde U}^{U}N^{\cyc}_{\sma_{A}}
(\AN_e^{C_n} R, \gAN_e^{C_n} R),
\]
where the cyclic bar construction on the right is taken in the
symmetric monoidal category of $A$-modules in orthogonal
$C_{n}$-spectra and $\tilde U=\iota^{*}_{C_{n}}U$ denotes $U$ viewed
as a complete $C_{n}$-universe.  A consequence of this description is
that the main theorem of~\cite{LewisMandell2} constructing the
equivariant K\"unneth spectral sequence applies:

\begin{theorem}
Let $A$ be a cofibrant commutative ring orthogonal spectrum and let
$R$ be a cofibrant associative $A$-algebra or cofibrant commutative
$A$-algebra.  Fix $n>0$.
\begin{enumerate}
\item There is a
natural strongly convergent spectral sequence of integer graded
$C_{n}$-Mackey functors
\[
E^{2}_{*,*}=\uTor_{*,*}^{N_{e}^{C_{n}}(R\sma_{A}
R^{\op})}(\upi_{*}\AN_{e}^{C_{n}}R, \upi_{*}\gAN_{e}^{C_{n}} R)
\Longrightarrow \ATR^{(-)}_{*}(R).
\]
\item There is a
natural strongly convergent spectral sequence of $RO(S^{1})$-graded
$C_{n}$-Mackey functors
\[
E^{2}_{*,\star}=\uTor_{*,\star}^{N_{e}^{C_{n}}(R\sma_{A}
R^{\op})}(\upi_{\star}\AN_{e}^{C_{n}}R, \upi_{\star}\gAN_{e}^{C_{n}} R)
\Longrightarrow \ATR^{(-)}_{\star}(R).
\]
\end{enumerate}
\end{theorem}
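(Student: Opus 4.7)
The plan is to realize $\iota^*_{C_n}\AN_e^{S^1}R$ as a two-sided bar construction over the enveloping algebra of $\AN_e^{C_n}R$ in $A_{C_n}$-modules, and then apply the equivariant K\"unneth spectral sequence of \cite{LewisMandell2}. The cyclic bar construction identification
\[
\iota^*_{C_n}\AN_e^{S^1}R \iso \aI_{\tilde U}^{U}|N^{\cyc}_{\sma_A}(\AN_e^{C_n}R, \gAN_e^{C_n} R)|
\]
is already asserted in the excerpt preceding the theorem, and can be established by repeating the argument for Theorem~\ref{thm:sdcyc} inside the symmetric monoidal category of $A_{C_n}$-modules: the $n$th edgewise subdivision $\sd_n N^{\cyc}_{\sma_A} R$, together with the strong symmetric monoidality of the $A$-relative norm on $A$-modules, identifies the $q$th simplicial level with $(\AN_e^{C_n}R)^{\sma_{A_{C_n}}(q+1)}$ equipped with the appropriately twisted $C_n$-action produced by the last face map of Definition~\ref{defn:ncntos1}.

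Next I would recognize the realization of this cyclic bar construction as a standard model for the derived smash product
\[
\AN_e^{C_n}R \dersma_{\AN_e^{C_n}R \sma_{A_{C_n}} (\AN_e^{C_n}R)^{op}} \gAN_e^{C_n}R.
\]
The enveloping algebra in the $A_{C_n}$-module category is $\AN_e^{C_n}(R\sma_A R^{op})$, again by monoidality of the $A$-relative norm on $A$-modules, and by Proposition~\ref{prop:basechange} this is the extension of scalars of $N_e^{C_n}(R\sma_A R^{op})$ along the canonical map $N_e^{C_n}A\to A_{C_n}$. Cofibrancy of $A$ and of $R$ as an $A$-algebra, together with the flatness arguments used in the proof of Theorem~\ref{thm:relsmader}, guarantee that the simplicial object $\sd_n N^{\cyc}_{\sma_A} R$ is proper (in the sense of Lemma~\ref{lem:proper}) after restricting to $C_n$ and that the levelwise point-set two-sided bar construction computes the derived smash product.

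With these identifications in hand, I would apply the integer-graded equivariant K\"unneth spectral sequence of \cite{LewisMandell2} in part (1), and its $RO(S^1)$-graded analogue in part (2), to the derived smash product above. The spectral sequence computes $\upi_*^{(-)}$ of $\iota^*_{C_n}\AN_e^{S^1}R=\ATR^{(-)}_*(R)$ as Mackey functor $\uTor$ over the homotopy Mackey functors of the enveloping algebra, with inputs $\upi_*\AN_e^{C_n}R$ and $\upi_*\gAN_e^{C_n}R$. Strong convergence follows in the usual way from the proper simplicial filtration. To match the form of the $E^2$-term in the statement, one replaces $\upi_*\AN_e^{C_n}(R\sma_A R^{op})$ by $\upi_* N_e^{C_n}(R\sma_A R^{op})$ using a cell resolution of $\AN_e^{C_n}R$ by $N_e^{C_n}(R\sma_A R^{op})$-bimodules obtained by pulling back along $N_e^{C_n}(R\sma_A R^{op})\to \AN_e^{C_n}(R\sma_A R^{op})$; the same cell resolution can be used for both modules, and the base change along $N_e^{C_n}A\to A_{C_n}$ is then absorbed into the bar construction.

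The main obstacle I anticipate is controlling these cofibrancy and flatness hypotheses simultaneously with the base change from $N_e^{C_n}(R\sma_A R^{op})$ to the enveloping algebra $\AN_e^{C_n}(R\sma_A R^{op})$. Concretely, one must exhibit a resolution of $\AN_e^{C_n}R$ whose homotopy Mackey functors yield a projective resolution of $\upi_*\AN_e^{C_n}R$ as a $\upi_*N_e^{C_n}(R\sma_A R^{op})$-module, and verify the parallel statement in $RO(S^1)$-grading, where projectivity and flatness are harder to obtain. As in the proof of Theorem~\ref{thm:relsmader}, the decisive technical input is the flatness of $\iota^*_{C_n}N_e^{S^1}A$ and $\iota^*_{C_n}N_e^{S^1}R$ following \cite[B.147]{HHR} and \cite[B.15]{HHR}, which permits the replacement of $\sma_{\AN_e^{C_n}(\cdot)}$ by $\sma_{N_e^{C_n}(\cdot)}$ up to $\aFin$-equivalence on cofibrant-enough terms and secures the identification of the $E^2$-term stated in the theorem.
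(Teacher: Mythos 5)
Your overall approach matches the paper's: restrict to $C_n$, identify $\iota^*_{C_n}\AN_e^{S^1}R$ with the cyclic bar construction $N^{\cyc}_{\sma_{A}}(\AN_e^{C_n}R, \gAN_e^{C_n}R)$ taken in $A$-modules in orthogonal $C_n$-spectra, and invoke the equivariant K\"unneth spectral sequence of \cite{LewisMandell2}. That is essentially the whole content of the paper's argument, which consists of the remark immediately preceding the theorem statement, and your explicit identification of the enveloping algebra as $\AN_e^{C_n}(R\sma_A R^{\op}) = \AN_e^{C_n}R\sma_{A_{C_n}}(\AN_e^{C_n}R)^{\op}$ and your appeal to the flatness arguments from Theorem~\ref{thm:relsmader} are in the spirit of the text.

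The step that does not hold up is your final reconciliation: replacing the $\Tor$ base $\upi_*\AN_e^{C_n}(R\sma_A R^{\op})$ with $\upi_*N_e^{C_n}(R\sma_A R^{\op})$ via an $N_e^{C_n}(R\sma_A R^{\op})$-cell resolution of $\AN_e^{C_n}R$. That device does not merely repackage the $E^2$-term; it changes the abutment. A K\"unneth spectral sequence built from an $N_e^{C_n}(R\sma_A R^{\op})$-cell filtration converges to
\[
\upi_*\bigl(\AN_e^{C_n}R \dersma_{N_e^{C_n}(R\sma_A R^{\op})} \gAN_e^{C_n}R\bigr),
\]
which is generally different from $\upi_*(\AN_e^{C_n}R \dersma_{\AN_e^{C_n}(R\sma_A R^{\op})} \gAN_e^{C_n}R)$, the latter being what the cyclic bar construction actually realizes. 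Already for $R=A$ one sees the difference: the cyclic bar construction gives $\iota^*_{C_n}\AN_e^{S^1}A \iso A_{C_n}$, whereas $\AN_e^{C_n}A \dersma_{N_e^{C_n}(A\sma_A A^{\op})} \gAN_e^{C_n}A \iso A_{C_n}\dersma_{N_e^{C_n}A} A_{C_n}$, and the lax monoidal map $N_e^{C_n}A \to A_{C_n}$ is far from an $\aFin$-equivalence for typical $A$ (compare Examples~\ref{ex:Isux} and~\ref{ex:Usux}). Moreover ``the base change along $N_e^{C_n}A\to A_{C_n}$ is absorbed into the bar construction'' does not repair this, since the base change produces two independent $A_{C_n}$-factors rather than reinstating a single one. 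The spectral sequence you actually obtain from the argument is the one over $\upi_*\AN_e^{C_n}(R\sma_A R^{\op})$; you should not attempt to force the $\Tor$-superscript to read $N_e^{C_n}(R\sma_A R^{\op})$ by a resolution trick, since that trades the correct target for a different one.
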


We see no reason why the $E^{2}$-terms for the spectral sequences of
the previous theorem should be compatible under restriction among
finite subgroups of $S^{1}$.

\section{Adams operations}\label{sec:adams}

In this section, we study the circle power operations on $THH(R)$ for
a commutative ring $R$ and on $\ATHH(R)$ for a commutative $A$-algebra
$R$.  Such operations were first defined on Hochschild homology by
Loday~\cite{Loday} and Gerstenhaber-Schack~\cite{GerstenhaberSchack}
and explained by McCarthy \cite{McCarthy} in terms of covering maps of
the circle and extended to $THH$ by \cite{MSV}.
Following~\cite[4.5.3]{BCD}, we refer to these as \term{Adams
operations} and denote as $\psi^{r}$ (though in older
literature~\cite[4.5.16]{LodayCyclic}, the Adams operations differ by
a factor of the operation number $r$).  Specifically, we study how the
operations interact with the equivariance, and we show that when $r$
is prime to $p$, $\psi^{r}$ descends to an operation on $TR(R)$,
$TC(R)$, cf.~\cite[\S7]{BCD}.  We study the effect of $\psi^{r}$ on
$TR_{0}(R)$ and $TC_{0}(R)$, where we shown it is the identity on
$TR_{0}(R)$ when $R$ is connective.

We recall the construction of McCarthy's Adams operations, which
ultimately derives from the identification of $N^{\cyc}_{\sma_{A}}R$
as the tensor $R\otimes S^{1}$ in the category of commutative
$A$-algebras.  Using the standard model for the circle as the
geometric realization of a simplicial set $S^{1}\subdot$ (with one
$0$-simplex and one non-degenerate $1$-simplex), the tensor
identification is just observing that $N^{\cyc}_{\sma_A}R$ is the
simplicial object obtained by taking $S^{1}\subdot$ coproduct factors
of $R$ in simplicial degree $\ssdot$,
\[
N^{\cyc}_{\sma_{A}}R=R\otimes S^{1}\subdot.
\]
The operation $\psi^{r}$ is induced by the $r$-fold covering map
\[
q_{r}\colon S^{1}\to S^{1},\qquad 
e^{i\theta}\mapsto e^{ri\theta}.
\]
after tensoring with $R$.

\begin{definition}\label{def:Adams}
Let $A$ be a commutative ring orthogonal spectrum and $R$ a commutative
$A$-algebra.  For $r\neq0$, the Adams operation 
\[
\psi^r\colon \ATHH(R) \to \ATHH(R)
\]
is the map of (non-equivariant) commutative $A$-algebras obtained as
the tensor of $R$ with the covering map $q_{r}\colon S^{1}\to S^{1}$.
\end{definition}

We will study the equivariance of $\psi^{r}$ using the $C_{n}$-action
that arises on the edgewise subdivision $\sd_{n}$ of a cyclic set.  To
make this section more self-contained, we again recall from~\cite[\S1]{BHM}
how this works.
There are natural homeomorphisms
\[
\delta_{n}\colon |\sd_{n}X|\to |X|
\]
for the $n$-fold edgewise subdivision of a simplicial space or
simplicial orthogonal spectrum, and
canonical isomorphisms of simplicial objects
$\sd_r \sd_s X\to \sd_{rs} X$, which together make the following
diagram commute~\cite[1.12]{BHM}:
\begin{equation}
  \label{eq:subdiv-compat}
\begin{gathered}
\xymatrix{
|\sd_r \sd_s X| \ar[r] \ar[d]_{\delta_r} &
|\sd_{rs} X| \ar[d]^{\delta_{rs}} \\
|\sd_s X| \ar[r]_{\delta_s} &
|X|.
}
\end{gathered}
\end{equation}
When $X$ has a cyclic structure, $\sd_n X$ comes with a
natural $C_n$-equivariant structure which on the geometric realization
is the restriction to $C_{n}$ of the natural $S^{1}$-action; moreover,
in the diagram above, the left hand isomorphism is
$C_s$-equivariant~\cite[1.7--8]{BHM}.  

We have a simplicial model of $\psi^{r}$ by McCarthy's observation
that $q_{r}$ is the geometric realization of a quotient map of simplicial
sets $\sd_{r}S^{1}\subdot\to S^{1}\subdot$.  By naturality, 
diagram~\eqref{eq:subdiv-compat} is compatible with this quotient map.

\begin{proposition}\label{thm:Adamsequiv}
Let $A$ be a commutative ring orthogonal spectrum and $R$ a
commutative $A$-algebra.  For $r\neq 0$ and $n$ relatively prime to
$r$, the restriction of $q_{r}$ is the multiplication by $r$
isomorphism $C_{n}\to C_{n}$ and the Adams operations $\psi^{r}$ is a
map of commutative ring orthogonal $C_{n}$-spectra
\[
\psi^{r}\colon  \iota^{*}_{C_{n}}\AN_{e}^{S^{1}}R\to
q_{r}^{*}\iota^{*}_{C_{n}}\AN_{e}^{S^{1}}R.
\]
Moreover, for $s$ relatively prime to $n$, the formula
\[
(q_r)^*(\psi^s) \circ \psi^r = \psi^{rs}\colon
\iota^{*}_{C_{n}}\AN_{e}^{S^{1}}R\to
q_{rs}^{*}\iota^{*}_{C_{n}}\AN_{e}^{S^{1}}R.
\]
holds.
\end{proposition}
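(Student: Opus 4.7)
My plan is to deduce both statements from a simplicial model for the Adams operation. The key observation, already invoked in the discussion preceding the statement, is that $q_{r}\colon S^{1}\to S^{1}$ is the geometric realization of the canonical quotient map of simplicial sets $\sd_{r}S^{1}\subdot\to S^{1}\subdot$. Via the identification $|N^{\cyc}_{\sma_{A}}R|\iso R\otimes S^{1}$ of commutative $A$-algebras (as in the proof of Proposition~\ref{prop:relcommuT}), tensoring with $R$ presents $\psi^{r}$ as the geometric realization of a map of simplicial commutative $A$-algebras, then promoted through $\aI_{\bR^{\infty}}^{U}$. Under this identification, the $S^{1}$-action on $\AN_{e}^{S^{1}}R$ corresponds to the $S^{1}$-action on $R\otimes S^{1}$ induced from the action of $S^{1}$ on itself by left multiplication.

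For the equivariance claim, I would first verify at the level of spaces the relation $q_{r}(\zeta\cdot x)=\zeta^{r}\cdot q_{r}(x)$ for $\zeta\in S^{1}$, which is just the fact that $q_{r}$ is a group homomorphism. Applying $R\otimes(-)$ and $\aI_{\bR^{\infty}}^{U}$ transports this to the identity $\psi^{r}\circ\zeta = \zeta^{r}\circ\psi^{r}$ on $\AN_{e}^{S^{1}}R$ for every $\zeta\in S^{1}$. When $\gcd(r,n)=1$, the endomorphism $\zeta\mapsto \zeta^{r}$ of $C_{n}$ is the automorphism $q_{r}|_{C_{n}}$, so this identity is precisely the assertion that $\psi^{r}$ defines a map of orthogonal $C_{n}$-spectra $\iota^{*}_{C_{n}}\AN_{e}^{S^{1}}R\to q_{r}^{*}\iota^{*}_{C_{n}}\AN_{e}^{S^{1}}R$. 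That this map is a morphism of commutative ring orthogonal spectra is automatic, since $R\otimes(-)$ factors through $A\CAlg$ and $\aI_{\bR^{\infty}}^{U}$ is strong symmetric monoidal.

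For the composition formula, the relation $q_{s}\circ q_{r}=q_{rs}$ of maps $S^{1}\to S^{1}$ together with functoriality of $R\otimes(-)$ yields $\psi^{s}\circ\psi^{r}=\psi^{rs}$ as maps of underlying orthogonal spectra. Combining this with the equivariance already established, pulling $\psi^{s}$ back along the automorphism $q_{r}|_{C_{n}}$ gives $(q_{r})^{*}\psi^{s}\colon q_{r}^{*}\iota^{*}_{C_{n}}\AN_{e}^{S^{1}}R\to q_{rs}^{*}\iota^{*}_{C_{n}}\AN_{e}^{S^{1}}R$ via the identification $q_{r}^{*}q_{s}^{*}=(q_{s}\circ q_{r})^{*}=q_{rs}^{*}$, so the composite $(q_{r})^{*}\psi^{s}\circ\psi^{r}$ equals $\psi^{rs}$ as needed. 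The main technical task is the initial matching of the rotational $S^{1}$-action on $R\otimes S^{1}$ with the cyclic-realization $S^{1}$-action on $|N^{\cyc}_{\sma_{A}}R|$; once that standard identification is in hand, together with the coherence provided by~\eqref{eq:subdiv-compat}, everything else reduces to bookkeeping through the covering $q_{r}$.
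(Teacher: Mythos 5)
Your argument is correct, and it takes a genuinely different route from the paper's. The paper proves equivariance at the simplicial level: it notes that the quotient map $\sd_{r}S^{1}\subdot\to S^{1}\subdot$ gives rise, after $n$-fold edgewise subdivision, to a $C_{n}$-equivariant map of simplicial sets $\sd_{n}\sd_{r}S^{1}\to(q_{r}|_{C_{n}})^{*}\sd_{n}S^{1}$, then tensors levelwise with $R$ and applies $\aI_{\bR^{\infty}}^{\tilde U}$, and finally invokes diagram~\eqref{eq:subdiv-compat} to identify the realizations. This makes the $C_{n}$-equivariance manifest combinatorially, degree by degree, without ever appealing to the $S^{1}$-action on a geometric realization. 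You instead work at the realization level: you identify the cyclic-realization $S^{1}$-action on $|N^{\cyc}_{\sma_{A}}R|$ with the rotational action on $R\otimes S^{1}$ and then transport the identity $q_{r}(\zeta\cdot x)=\zeta^{r}\cdot q_{r}(x)$ through $R\otimes(-)$ and $\aI_{\bR^{\infty}}^{U}$. This is conceptually cleaner and gives the composition law essentially for free from $q_{s}\circ q_{r}=q_{rs}$, but it does put genuine weight on the point-set matching of the two $S^{1}$-actions, which you appropriately flag as the real technical input; the paper's simplicial route is more self-contained because the edgewise subdivision machinery (Proposition~\ref{prop:relcommuT} and the discussion around~\eqref{eq:subdiv-compat}) is already in place from Section~\ref{sec:THH}. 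One bookkeeping point worth making explicit in your version: since the paper's definition (Definition~\ref{def:Adams}) gives $\psi^{r}$ only as a non-equivariant map of $A$-algebras, you should note that the map $\aI_{\bR^{\infty}}^{U}(R\otimes q_{r})$, which carries the twisted $S^{1}$-equivariance, has $\psi^{r}$ as its underlying map, and that $\iota^{*}_{C_{n}}$, $q_{r}^{*}$, and $\aI_{\bR^{\infty}}^{U}$ commute in the required way.
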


\begin{proof}
As above, the $r$-fold covering map defining the Adams operations
becomes a $C_n$-equivariant map
\[
\sd_n (\sd_r S^1) \to (q_r\mid_{C_{n}})^*(\sd_n S^1).
\]
Tensoring levelwise and applying $\aI_{\bR^{\infty}}^{\tilde U}$, we
obtain a map of simplicial commutative $A$-algebras
\[
\aI_{\bR^{\infty}}^{\tilde U}(R \otimes (\sd_n \sd_r S^1)) \to q_r^*
\aI_{\bR^{\infty}}^{\tilde U}(R \otimes \sd_n S^1).
\]
The result now follows from diagram~(\ref{eq:subdiv-compat}) and its
compatibility with the covering projections $q_r$.
\end{proof}

In the case when $p \nmid r$, the previous proposition shows that in
particular the operation $\psi^{r}$ should pass to categorical
$C_{p^{n}}$-fixed points (in the derived category of $A$).  Taking
fibrant replacements, we get a map (of non-equivariant $A$-modules)
\[
\psi^{r}\colon (\AN_{e}^{S^{1}}R)_{f}^{C_{p^{n}}}\to (\AN_{e}^{S^{1}}R)_{f}^{C_{p^{n}}}
\]
making the diagram
\[
\xymatrix{%
(\AN_{e}^{S^{1}}R)_{f}^{C_{p^{n+1}}}\ar[r]^{\psi^{r}}\ar[d]_{F}
&(\AN_{e}^{S^{1}}R)_{f}^{C_{p^{n+1}}}\ar[d]^{F}\\
(\AN_{e}^{S^{1}}R)_{f}^{C_{p^{n}}}\ar[r]_{\psi^{r}}
&(\AN_{e}^{S^{1}}R)_{f}^{C_{p^{n}}} }
\]
commute, where $F$ is the natural inclusion of fixed-points.  Passing
to the homotopy limit, we get an Adams operation $\psi^{r}$ on
$\ATF(R)$.

We next argue that for $p \nmid r$, the Adams operation $\psi^{r}$
descends to $TR(R)$ and $TC(R)$.

\begin{theorem}\label{thm:AdamsTRTC}
Let $R$ be a commutative ring orthogonal spectrum.  For $p \nmid r$, the Adams operation
$\psi^{r}$ induces maps
\[
\psi^{r}\colon TR(R)\to TR(R)
\]
and
\[
\psi^{r}\colon TC(R)\to TC(R).
\]

\end{theorem}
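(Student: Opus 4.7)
The plan is to promote the Adams operation $\psi^{r}\colon \iota^{*}_{C_{p^{n}}}\AN_{e}^{S^{1}}R\to q_{r}^{*}\iota^{*}_{C_{p^{n}}}\AN_{e}^{S^{1}}R$ of Proposition~\ref{thm:Adamsequiv} to a self-map on each term of the homotopy limit systems of Definitions~\ref{defn:zTR} and~\ref{defn:zTC}, and then to check compatibility with the structure maps. First I would fix a fibrant replacement functor $(-)_{f}$ in the $\aF_{p}$-model structure on $A$-modules and set $X=\AN_{e}^{S^{1}}R$. Since $p\nmid r$, the restriction $q_{r}|_{C_{p^{n}}}$ is an automorphism of $C_{p^{n}}$, so taking $C_{p^{n}}$-fixed points satisfies $(q_{r}^{*}X_{f})^{C_{p^{n}}}=(X_{f})^{C_{p^{n}}}$ and $(\psi^{r})_{f}^{C_{p^{n}}}$ gives an endomorphism of $(X_{f})^{C_{p^{n}}}$ in the derived category of $A$-modules.

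Next I would check compatibility with the Frobenius maps $F$ and the restriction zigzag. Compatibility with $F$ is immediate from naturality of the inclusion of fixed points, since $\psi^{r}$ is $C_{p^{n+1}}$-equivariant (through $q_{r}$). The key point is compatibility with the op-precyclotomic structure map $\gamma=\tau_{p}\colon X\to \rho_{p}^{*}\Phi^{C_{p}}X$ built from the diagonal $\Delta_{A}$ as in Theorem~\ref{thm:relcyclodiag}. By Proposition~\ref{thm:Adamsequiv}, $\psi^{r}$ is realized by applying $R\otimes(-)$ to the quotient map $\sd_{r}S^{1}\subdot \to S^{1}\subdot$, and the $C_{p}$-equivariant structure is given by edgewise subdivision $\sd_{p}$. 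The compatibility of the two subdivisions encoded in diagram~\eqref{eq:subdiv-compat} and the identity $q_{r}\circ \rho_{p}=\rho_{p}\circ q_{r}$ (both being the $r$-th power map on $S^{1}$) give, after applying $\Phi^{C_{p}}$ and pulling back along $\rho_{p}$, a commutative square
\[
\xymatrix@C-.5pc{
X\ar[r]^-{\psi^{r}}\ar[d]_-{\gamma}& q_{r}^{*}X\ar[d]^-{q_{r}^{*}\gamma}\\
\rho_{p}^{*}\Phi^{C_{p}}X\ar[r]_-{\rho_{p}^{*}\Phi^{C_{p}}\psi^{r}}& q_{r}^{*}\rho_{p}^{*}\Phi^{C_{p}}X,
}
\]
using naturality of $\Delta_{A}$ at each simplicial level to verify the square. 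Since $p\nmid r$ the twist by $q_{r}$ drops out after taking $C_{p^{n-1}}$-fixed points and fibrant replacement, producing an endomorphism of $((\rho_{p}^{*}\Phi^{C_{p}}X_{f})_{f})^{C_{p^{n-1}}}$ that commutes with both arrows of the zigzag $R$ in Definition~\ref{defn:zTR}.

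Finally I would assemble: the compatible collection of self-maps on the terms $(X_{f})^{C_{p^{n}}}$ and $((\rho_{p}^{*}\Phi^{C_{p}}X_{f})_{f})^{C_{p^{n-1}}}$ intertwined with $F$ and the $R$-zigzag defines a self-map of the diagram whose homotopy limit is $\opATR(R)$, and hence yields $\psi^{r}\colon \opATR(R)\to \opATR(R)$; the $F$-compatibility established above upgrades this to a self-map of the $\opATC(R)$ diagram. Naturality in the derived category of $A$ follows because each ingredient (the covering $q_{r}$, the diagonal $\Delta_{A}$, $\Phi^{C_{p}}$, $\rho_{p}^{*}$, fibrant replacement, and homotopy limits) is functorial in $A$-modules.

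The main obstacle is the third step: one must verify that the zigzag
\[
(X_{f})^{C_{p^{n}}}\longrightarrow ((\rho_{p}^{*}\Phi^{C_{p}}X_{f})_{f})^{C_{p^{n-1}}}\longleftarrow (X_{f})^{C_{p^{n-1}}}
\]
really does commute strictly (not merely up to homotopy) with $\psi^{r}$ after fibrant replacement; this amounts to pinning down that the twisting $q_{r}^{*}$ on source and target of $\gamma$ match up via the interchange $q_{r}\circ \rho_{p}=\rho_{p}\circ q_{r}$ and the diagonal $\sd_{r}\sd_{p}\iso \sd_{p}\sd_{r}$ of~\cite[1.12]{BHM} applied to the edgewise subdivision of $N^{\cyc}_{\sma_{A}}R$, and then propagating this identification through the (monoidal, but not symmetric monoidal) fibrant replacement at each spot in the zigzag.
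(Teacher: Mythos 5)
Your proof takes essentially the same approach as the paper: reduce to the cofibrant case, show that $\psi^{r}$ commutes with the op-$p$-cyclotomic structure map $\gamma=\tau_{p}$, and deduce this from the naturality of the subdivision-compatibility square~\eqref{eq:subdiv-compat}. The paper's proof is much terser (two sentences), but the steps you spell out — untwisting $q_{r}^{*}$ on $C_{p^{n}}$-fixed points, $F$-compatibility, and the interchange $\sd_{r}\sd_{p}\iso\sd_{p}\sd_{r}$ applied levelwise — are precisely what is compressed into the paper's "clear from naturality" claim.
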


\begin{proof}
It suffices to consider the case when $R$ is cofibrant and to show
that $\psi^{r}$ commutes with the op-$p$-cyclotomic structure map
\[
\gamma =\tau_{p}\colon N_{e}^{S^{1}}R\to
\rho^{*}_{p}\Phi^{C_{p}}\aI_{\bR^{\infty}}^{\tilde U}|\sd_{p} N^{\cyc} R|.
\]
This is clear from the naturality of~\eqref{eq:subdiv-compat}.
\end{proof}

Finally, we provide the following computation for the action of the
Adams operations on $TR_{0}$ and $TC_{0}$.

\begin{theorem}\label{thm:Adamspi0}
Let $R$ be a cofibrant commutative ring orthogonal spectrum.  Assume
that $R$ is connective. Then for $p \nmid r$, the Adams operation
$\psi^{r}$ acts by the identity on $TR_{0}(R)$.
\end{theorem}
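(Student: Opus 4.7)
The plan is to leverage the connectivity hypothesis to reduce the computation to the zero-skeleton of the cyclic bar construction, where the $r$-fold covering $q_{r}$ is manifestly the identity. I would proceed in three stages.

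First, I would establish that $\AN_{e}^{S^{1}}R$ is connective as an orthogonal $S^{1}$-spectrum. Each simplicial level $R^{\sma_{A}(k+1)}$ is connective: under the cofibrancy hypothesis, the point-set iterated smash product over $A$ represents the derived smash product, which preserves connectivity when $A$ and $R$ are connective. By Lemma~\ref{lem:proper}, the simplicial object $N^{\cyc}_{\sma_{A}}R$ is proper, so its geometric realization is built by iterated pushouts along $h$-cofibrations of connective orthogonal spectra, hence is connective; $\aI_{\bR^{\infty}}^{U}$ then preserves connectivity.

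Second, I would show that the natural unit inclusion $R \to \AN_{e}^{S^{1}}R \to \opATR(R)$ surjects onto $\opATR_{0}(R)$. Non-equivariantly, one has $\pi_{0}\AN_{e}^{S^{1}}R \cong \pi_{0}R$ by the classical commutative Hochschild calculation. Under connectivity this identification should persist through the $\aF_{p}$-fibrant replacement, the categorical fixed-point functors $(-)^{C_{p^{n}}}$, the derived geometric fixed-points, and the backwards restriction-type maps in the zig-zag of Definition~\ref{defn:zTR}. The Milnor-type short exact sequence computing $\pi_{0}$ of the defining homotopy limit for $\opATR(R)$ then produces a surjection from a limit of copies of $\pi_{0}R$, in fact induced by the unit inclusion at any single zero-simplex.

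Finally, the composite $R \to \AN_{e}^{S^{1}}R \xrightarrow{\psi^{r}} \AN_{e}^{S^{1}}R$ agrees with the unit inclusion, because the covering $q_{r}\colon \sd_{r}S^{1}_{\ssdot} \to S^{1}_{\ssdot}$ is the identity on the unique zero-simplex of $S^{1}_{\ssdot}$; since the Adams operation is just the tensor of $R$ with $q_{r}$ in commutative $A$-algebras, it preserves the zero-skeleton pointwise. Passing through to $\opATR$, this shows $\psi^{r}$ fixes the image of $\pi_{0}R$ in $\opATR_{0}(R)$; combined with the surjectivity from the second stage, $\psi^{r}$ acts by the identity on $\opATR_{0}(R)$. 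The main obstacle is the second stage: specifically, verifying that the $\aF_{p}$-fibrant replacement and the iterated (categorical and geometric) fixed-point constructions interact with connectivity in such a way that $\pi_{0}$ of each term in the defining homotopy limit is indeed controlled by $\pi_{0}R$ via the unit maps, and in particular that the wrong-way restriction maps in the zig-zag of Definition~\ref{defn:zTR} do not introduce additional $\pi_{0}$-classes.
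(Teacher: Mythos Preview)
Your second stage contains a fatal gap: the unit map $\pi_{0}R \to \pi_{0}^{C_{p^{n}}}\AN_{e}^{S^{1}}R$ is \emph{not} surjective in general, so the surjectivity-plus-fixing-the-image strategy cannot conclude. Already in the absolute case $A=S$, Hesselholt--Madsen identify $\pi_{0}^{C_{p^{n}}}N_{e}^{S^{1}}R \cong W_{n+1}(\pi_{0}R)$, the length-$(n{+}1)$ $p$-typical Witt vectors; for $R=S$ this is free abelian of rank $n+1$, and for $R=H\bF_{p}$ it is $\bZ/p^{n+1}$, in neither case a quotient of $\pi_{0}R$. Connectivity does force the \emph{non-equivariant} $\pi_{0}$ to be $\pi_{0}R$, but passage to categorical $C_{p^{n}}$-fixed points genuinely enlarges $\pi_{0}$---this enlargement is exactly how the Witt vector structure arises---so there is no way to make ``$\pi_{0}$ of each term in the homotopy limit controlled by $\pi_{0}R$ via the unit'' true. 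The obstacle you flag at the end is not a technical wrinkle but the actual content of the theorem.

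The paper proceeds quite differently. Proposition~\ref{prop:extscal}, Theorem~\ref{thm:relsmader}, and the edge of the equivariant K\"unneth spectral sequence give a box-product decomposition
\[
\upi_{0}(\AN_{e}^{S^{1}}R)\cong (\upi_{0}N_{e}^{S^{1}}R)\mathop{\square}_{\upi_{0}N_{e}^{S^{1}}A}(\upi_{0}A_{S^{1}})
\]
of Mackey functors, reducing to the two factors separately. On $\upi_{0}A_{S^{1}}=\upi_{0}\ATHH(A)$ the claim is immediate, since $\ATHH(A)$ is isomorphic to the constant cyclic object on $A$ and the Adams operations are literally the identity. On $\upi_{0}N_{e}^{S^{1}}R$, connectivity reduces to $R=H\pi_{0}R$, after which one invokes the Hesselholt--Madsen isomorphism $TR^{n+1}_{0}(R)\cong W_{n+1}(\pi_{0}R)$ and a rigidity argument: $\psi^{r}$ gives a natural ring endomorphism of the functor $W_{n+1}(-)$ compatible with restriction, and over $\bQ$-algebras the ghost coordinates show the only such endomorphisms are permutations of factors, with compatibility with $R$ forcing the identity. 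Representability then propagates this to all rings.
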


\begin{proof}

Writing $R_{0}=\pi_{0}R$, the hypothesis of connectivity
implies that
\[
\pi_{0}TR(R)\iso \pi_{0}TR(R_0),
\]
and so it suffices to consider the case when
$R=HR_0$. By~\cite[Addendum~3.3]{HMfinitealgs}, we 
have a canonical isomorphism of $TR_{0}(R)$ with the $p$-typical Witt
ring $W(R_{0})$ and canonical isomorphisms of $\pi_{0}^{C_{p^{n}}} N_e^{S^1} R$
with $W_{n+1}(R_{0})$, the $p$-typical Witt vectors of length $n+1$.
Letting $R_{0}$ vary over all commutative rings, $\psi^{r}$ then
restricts to natural transformations $\psi^{r}_{n+1}$ of rings $W_{n+1}(-)\to
W_{n+1}(-)$, compatible with the restriction maps. We complete the
proof by arguing that such a natural transformation must be the identity. 

Since $W_{n+1}$ is representable, it suffices to prove that
$\psi^{r}_{n+1}$ is the identity when $R_{0}$ is the representing
object $\bZ[x_{0},\dotsc,x_{n}]$, or, since this is torsion free, when
$R_{0}=\bQ[x_{0},\dotsc,x_{n}]$. \textit{A fortiori}, it suffices to prove
$\psi^{r}_{n+1}$ is the identity when $R_{0}$ is a $\bQ$-algebra.
Since for a $\bQ$-algebra $W_{n+1}(R_{0})$ is isomorphic as a ring to
the Cartesian product of $n+1$ copies of $R_{0}$ via the ghost
coordinates, the only possible natural ring endomorphisms of $W_{n+1}$
are the maps that permute the factors.  Since $\psi^{r}$ commutes with
the restriction map $R$ on $TR(R)$, and on the ghost coordinates the
restriction map induces the projection onto the first $n$ factors, it
follows by induction that $\psi^{r}_{n+1}$ is the identity.
\end{proof}

\begin{corollary}\label{cor:AdamsTCpi0}
Let $R$ be a commutative ring orthogonal spectrum.  Assume that $R$ is
connective and that $p \nmid r$.  Then $TC_{0}(R)$ has the Frobenius
invariants of $W(\pi_{0}R)$ as a quotient and the action of $\psi^{r}$
descends to the identity map on this quotient.
\end{corollary}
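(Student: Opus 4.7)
The plan is to combine Theorem~\ref{thm:Adamspi0} with the Frobenius fiber sequence that defines $\opATC$. From the construction of $\opATC$ as a homotopy equalizer of $F$ and $\id$ on a fibrant replacement of $\opATR$ (Definitions~\ref{defn:zTR} and~\ref{defn:zTC}), one obtains a fiber sequence $\opATC(R)\to\opATR(R)\xrightarrow{F-\id}\opATR(R)$, whose associated long exact sequence on homotopy yields a natural surjection $\opATC_{0}(R)\twoheadrightarrow\opATR_{0}(R)^{F}$ onto the Frobenius invariants.

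To identify $W(\pi_{0}R)^{F}$ as a quotient of $\opATR_{0}(R)^{F}$, the next step is to use the Postnikov truncation $R\to H\pi_{0}R$, which is a map of commutative $A$-algebras since $R$ is connective, and which induces an isomorphism on $\opATR_{0}$ by the same connectivity argument used in the proof of Theorem~\ref{thm:Adamspi0}. This reduces to the case $R=HR_{0}$. For this case, one combines the identification $\pi_{0}^{C_{p^{n}}}N_{e}^{S^{1}}(HR_{0})\cong W_{n+1}(R_{0})$ from~\cite[Addendum~3.3]{HMfinitealgs} with the K\"unneth identification
\[
\upi_{0}(\AN_{e}^{S^{1}}(HR_{0}))\cong\upi_{0}(N_{e}^{S^{1}}(HR_{0}))\mathop{\square}_{\upi_{0}(N_{e}^{S^{1}}A)}\upi_{0}(A_{S^{1}})
\]
appearing in the proof of Theorem~\ref{thm:Adamspi0}. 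Passing to the limit over $n$ along the restriction maps and extracting the projection onto the first tensor factor produces a natural surjection $\opATR_{0}(HR_{0})\twoheadrightarrow W(R_{0})$ respecting the Frobenius; restricting to $F$-invariants and composing with the surjection of the previous paragraph gives the required quotient map $\opATC_{0}(R)\twoheadrightarrow W(\pi_{0}R)^{F}$.

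Finally, by Theorem~\ref{thm:AdamsTRTC} the Adams operation $\psi^{r}$ (for $p\nmid r$) is defined on $\opATC(R)$ compatibly with the op-$p$-precyclotomic structure, and hence with the Frobenius fiber sequence. The proof of Theorem~\ref{thm:Adamspi0} in fact shows that $\psi^{r}$ acts as the identity on the whole Mackey functor $\upi_{0}(\AN_{e}^{S^{1}}R)$ (since it acts trivially on each factor of the K\"unneth decomposition: trivially on $\upi_{0}N_{e}^{S^{1}}R\cong \upi_{0}W_{n+1}(\pi_{0}R)$ by the Witt-vector argument, and trivially on $\upi_{0}A_{S^{1}}$ because $\ATHH(A)$ arises from a constant cyclic object). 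Therefore $\psi^{r}$ acts as the identity on $\opATR_{0}(R)$ and descends to the identity on the quotient $W(\pi_{0}R)^{F}$, completing the proof. The main obstacle will be producing the natural projection $\opATR_{0}(HR_{0})\twoheadrightarrow W(R_{0})$ from the box-product structure, which requires verifying that the first-factor projection is well defined modulo the box-product relations imposed by the unit map $\upi_{0}N_{e}^{S^{1}}A\to\upi_{0}A_{S^{1}}$.
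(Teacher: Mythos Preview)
The paper offers no separate proof: it simply remarks that a connective commutative $A$-algebra $R$ admits a canonical map of commutative $A$-algebras $R\to H\pi_{0}R$, and declares the corollary to follow from Theorem~\ref{thm:Adamspi0} and its proof. Your proposal is considerably more elaborate than this, and while the overall shape (fiber sequence for $\opATC$, naturality in $R$, invoking the Witt-vector identification) is reasonable, you over-engineer the argument in a way that creates two genuine problems.

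First, you claim that $R\to H\pi_{0}R$ induces an \emph{isomorphism} on $\opATR_{0}$ ``by the same connectivity argument used in the proof of Theorem~\ref{thm:Adamspi0}.'' That proof only establishes $\pi_{0}TR(R)\cong\pi_{0}TR(R_{0})$ in the \emph{absolute} case; the analogous statement for $\opATR_{0}$ over a general $A$ would require connectivity of $\ATHH(R)$, hence connectivity of $A$, which the corollary deliberately does not assume (Theorem~\ref{thm:Adamspi0} does; the corollary drops it). Fortunately no isomorphism is needed here: the map $R\to H\pi_{0}R$ only has to produce a map out of $\opATC_{0}(R)$ intertwining $\psi^{r}$, and naturality gives that for free.

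Second, your route through the box-product identification forces you to manufacture a ``projection onto the first tensor factor'' $\opATR_{0}(HR_{0})\twoheadrightarrow W(R_{0})$, which you correctly flag as the main obstacle. There is no such projection from a box product in general, and this difficulty is an artifact of insisting on working over $A$ throughout. The paper's approach sidesteps this entirely: once one passes along $R\to H\pi_{0}R$ (and, implicitly, changes base along $A\to H\pi_{0}R$ via the Proposition on functoriality in Section~\ref{sec:amodcyc}), one lands in a situation where both base and algebra are the connective Eilenberg--Mac Lane spectrum $H\pi_{0}R$, so Theorem~\ref{thm:Adamspi0} applies directly and the Witt-vector identification from its proof gives the quotient without any box-product manipulation.

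In short, your argument for why $\psi^{r}$ acts trivially on the target is fine; the construction of the quotient map is where your approach diverges from the paper's and runs into trouble. Replacing the claimed isomorphism by the mere naturality map, and changing base rather than projecting out of a box product, brings your argument in line with the paper's one-line derivation.
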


\begin{example}\label{ex:cpinf}
When we take $R=S$ to be the sphere spectrum, \cite[\S5]{BHM}
identifies $TC(S)\phat$ as $(S\vee \Sigma \bC P^{\infty}_{-1})\phat$,
where $\bC P^{\infty}_{-1}$ denotes the Thom spectrum of the virtual
bundle $-L$, where $L$ denotes the tautological line bundle.  More to
the point, $\Sigma \bC P^{\infty}_{-1}$ is the homotopy fiber of the
$S^{1}$-transfer $\Sigma \Sigma^{\infty}_{+}\bC P^{\infty}\to S$.  The
tom Dieck splitting identifies 
\[
TR^{n}(S)\phat\htp 
   \prod_{0\leq m\leq n} (\Sigma^{\infty}_{+}B(C_{p^{n}}/C_{p^{m}}))\phat
\iso 
\prod_{0\leq k\leq n} (\Sigma^{\infty}_{+}B(C_{p^{k}}))\phat.
\]
For $p\nmid r$, $\psi^{r}$ acts on $THH(S)$ as the identity (on the
point set level), and so acts on the $C_{p^{n}}$-fixed points via the
multiplication by $r$ map $C_{p^{n}}\to C_{p^{n}}$.  It therefore
induces the corresponding multiplication by $r$ map on each
classifying space $B(C_{p^{n}}/C_{p^{m}})$ in each factor in
$TR^{n}(S)$; note that multiplication by $r$ on $C_{p^{n}}/C_{p^{m}}$
is multiplication by $r$ on $C_{p^{k}}$ (under the canonical
isomorphism).  This allows us to determine the action of $\psi^{r}$ on
$TC(S)$.  The computation of $TC(S)$ in \cite[\S5]{BHM} and
\cite[\S4.4]{MadsenTraces} uses a weak equivalence
\[
(\Sigma \Sigma^{\infty}_{+}\bC P^{\infty})\phat \simeq 
\holim (\Sigma^{\infty}_{+}BC_{p^{k}})\phat,
\]
and the action of $\psi^{r}$ on $BC_{p^{k}}$ is compatible with
the action of $\psi^{r}$ on $(\Sigma \Sigma^{\infty}_{+}\bC
P^{\infty})\phat$ given by multiplication by $r$ on the suspension and
the action on $\bC P^{\infty}\simeq K(\bZ,2)$ induced by the
multiplication by $r$ on $\bZ$. The fiber sequence 
\[
\Sigma \bC P^{\infty}_{-1}\to \Sigma \Sigma^{\infty}_{+}\bC
P^{\infty}\to S
\]
has a consistent action of $\psi^{r}$ (where we use the trivial action
on $S$).  After $p$-completion, the action of $\{r\mid\   p \nmid r\}$
extends to an action of the units of $\bZ\phat$.  The Teichm\"uller
character then gives an action of $(\bZ/p)^{\times}$ and (since $p-1$
is invertible in $\bZ\phat$) a splitting into $p-1$ ``eigenspectra''
wedge summands.  This decomposition of $TC(S)\phat$ is well-known and
plays a role in Rognes' cohomological analysis of $Wh(*)\phat$ at
regular primes \cite[\S5]{Rognes-WHP}. 
\end{example}

\section{Madsen's remarks}

In his CDM notes \cite[p.~218]{MadsenTraces}, Madsen describes the
restriction map, and notes that the inverse is not as readily
accessible even in the algebraic setting since ``$\Delta(r)=r\otimes
\dotsb \otimes r$ is not linear''.  Yet in our framework, we naturally get
the inverse to the cyclotomic structure map, rather than the
cyclotomic structure map itself. At first blush, this seems to pose a
curious contradiction.  The answer arises from the transfer: $v\mapsto
v^{\otimes p}$ is linear modulo the ideal generated by the transfer,
and this is exactly the ideal killed by $L\Phi^{H}$.

The observation that the ideal killed by $L\Phi^{H}$ coincides with
the ideal generated by the transfer is essentially a formal
consequence of the definition of the derived geometric fixed point
functor: $L\Phi^{H}(X)=(X\sma \widetilde{E\aP})^{H}$ is a composite of
the categorical fixed points with the localization killing cells of
the form $S^{1}/K$ for $K$ a proper subgroup of $H$. Computationally,
this means that all transfers from proper subgroups of $H$ are killed.

The observation that the algebraic diagonal map is linear modulo the
transfer is more interesting.  In particular, this question highlights
the issue of constructing an algebraic model of the norm functor that
correctly reflects the homotopy theory.  We first consider the naive
smash power which is simply the $C_{p}$-module $(\mathbb
Z\{x,y\})^{\otimes p},$ where $\mathbb Z\{x,y\}$ is the free abelian group on the set
$\{x,y\}$. Inside is the element $(x+y)^{\otimes p}$, which is
obviously in the fixed points of the $C_{p}$-action.  In this context,
Madsen's remark boils down to the fact that $(x+y)^{\otimes p}$ is not 
$x^{\otimes p}+y^{\otimes p}$.  We can expand $(x+y)^{\otimes p}$
using a non-commutative version of the binomial theorem as follows.  Observing that the full
symmetric group $\Sigma_{p}$ acts on the tensor power (and the $C_{p}$-action is just the obvious restriction), if we group all terms with $i$
tensor factors of $x$ and $p-i$ tensor factors of $y$, then we see
that the symmetric group permutes these and a subgroup conjugate to
$\Sigma_{i}\times\Sigma_{p-i}$ stabilizes each element.  We therefore
see that the sum of all of such terms for a fixed $i$ can be expressed
as the transfer 
\[
\Tr_{\Sigma_{i}\times\Sigma_{p-i}}^{\Sigma_{p}} x^{\otimes i}\otimes
y^{\otimes (p-i)}.
\]
Letting $i$ vary and summing the terms (and then restricting back
to $C_{p}$) shows that 
\[
(x+y)^{\otimes p}=x^{\otimes p}+y^{\otimes
p}+\Res_{C_{p}}^{\Sigma_{p}}\big(\sum_{i=1}^{p-1}\Tr_{\Sigma_{i}\times\Sigma_{p-i}}^{\Sigma_{p}}x^{\otimes
i}y^{\otimes (p-i)}\big).
\]
All of the terms involving transfers are in the ideal generated by
transfers by definition, and so we conclude that the
$p$\textsuperscript{th} power map is linear modulo these.

However, this algebraic model is not the correct analogue of the norm.
First, when we reduce modulo the transfer from proper subgroups in the
$p$th tensor power of a ring, then we also kill the transfer of the
element $1$. This then takes us from $\mathbb Z$-modules to $\mathbb
Z/p$-modules. Second, the fixed point Mackey functor associated to the
$p$th tensor power functor is not the right algebraic version of the
norm.

There are now several constructions of a norm functor in the category
of Mackey functors that exhibit the correct homotopy-theoretic
behavior.  Mazur describes one for cyclic $p$-groups \cite{Mazur},
Hill-Hopkins gives one for a general finite group by stepping through
the norm in spectra \cite{HillHopkinsLocalization}, and subsequently
Hoyer gave a purely algebraic definition for all finite groups and
showed it to be equivalent to the others \cite{HoyerThesis}.  One of
the basic properties of the algebraic norm is that the norm from
$H$-Mackey functors to $G$-Mackey functors is the functor underlying
the left adjoint to the forgetful functor from $G$-Tambara functors to
$H$-Tambara functors. In particular, since $\pi_0(R)$ for $R$ a
commutative ring $G$-spectrum is a $G$-Tambara functor~\cite{Brun}, the
algebraic norm precisely mirrors the multiplicative behavior of the
norm in spectra.  A more detailed exposition of the connection between
the algebraic norm and $THH$ will appear in~\cite{AlgHoch}.

In this context, if $R$ is a commutative ring, then the inverse map considered by Madsen is exactly the
universally defined norm map 
\[
N_{e}^{C_{p}}\colon R\to N_{e}^{C_{p}}(R)(C_{p}/C_{p})
\]
underlying the Tambara functor structure.  While this map is not
linear, it is so modulo the transfer~\cite{Tambara}.  In fact, just as
in topology, this map is a right inverse to the ``geometric fixed
points'' functor $\Phi^{C_{p}}$ on Mackey functors, the map which takes a Mackey
functor $\underline{M}$ and returns the quotient group
$\underline{M}(G/G)/\im(\Tr)$, where $\im(\Tr)$ denotes the image of
the transfer: $\Phi^{C_{p}}\circ N_{e}^{C_{p}} = Id$.

We close by illustrating this all with an example which shows the
failure of the ``naive'' tensor power approach and the strength (and
relative computability) of the Tambara functor approach to the
algebraic norm.  Let $p=2$, and let $R=\mathbb Z[x]$.  Then the
two-fold tensor power, $C_{2}$-equivariantly, is
\[
\mathbb Z[C_{2}\cdot x]=\mathbb Z[x,gx].
\]
The transfer ideal is generated by $2$ and $x+gx$, and modulo $2$ and $x+gx$, the map $x\mapsto x\cdot gx$ induces the canonical surjection
\[
\mathbb Z[x]\to \mathbb Z/2[x\cdot gx].
\]
In this example, the map from $R$ to the quotient of the fixed points of $R^{\otimes 2}$ by the ideal given by the transfer is not an isomorphism; we can interpret the failure to be an isomorphism as a failure to correctly interpret the transfer of the element $1$. In particular, restricting to the submodule generated by $1$ we implicitly computed
\[
N_{e}^{C_{2}}\mathbb Z=\mathbb Z,
\]
endowed with the trivial action. This is not what the algebraic norm computes for us!

For $G=C_{2}$ and for $R=\mathbb Z[x]$, the fixed points of $N_{e}^{C_{2}}(\mathbb Z[x])$ are the ring
\[
\mathbb Z[t,y,x\cdot gx]/(t^{2}-2t, ty-2y),
\]
with the elements $t$ and $y$ the transfers of $1$ and $x$ respectively (the restriction map takes $t$ to $2$, $y$ to $x+gx$ and $x\cdot gx$ to itself). In particular, we observe that the unit $1$ generates not a copy of $\mathbb Z$ but rather a copy of the Burnside ring $\mathbb Z[t]/t^{2}-2t$. Thus, modulo the image of the transfer, this ring is simply $\mathbb Z[x\cdot gx]$, and the norm map $x\mapsto x\cdot gx$ is an isomorphism.

\end{document}